\newcommand{\bbC}{{\mathbb{C}}}
\newcommand{\bbD}{{\mathbb{D}}}
\newcommand{\bbE}{{\mathbb{E}}}
\newcommand{\bbR}{{\mathbb{R}}}
\newcommand{\bbZ}{{\mathbb{Z}}}
\newcommand{\calB}{{\mathcal{B}}}
\newcommand{\calE}{{\mathcal{E}}}
\newcommand{\calM}{{\mathcal M}}
\newcommand{\calP}{{\mathcal P}}
\newcommand{\dott}{\,\cdot\,}
\newcommand{\lb}{\label}
\newcommand{\f}{\frac}
\newcommand{\ul}{\underline}
\newcommand{\ol}{\overline}
\newcommand{\ti}{\tilde  }
\newcommand{\wti}{\widetilde  }
\newcommand{\cvh}{\text{\rm{cvh}}}
\newcommand{\dist}{\text{\rm{dist}}}
\newcommand{\capa}{\text{\rm{cap}}}
\newcommand{\disc}{\text{\rm{disc}}}
\newcommand{\spec}{\text{\rm{spec}}}
\newcommand{\ran}{\text{\rm{ran}}}
\newcommand{\ess}{\text{\rm{ess}}}
\newcommand{\ac}{\text{\rm{ac}}}
\newcommand{\s}{\text{\rm{s}}}
\newcommand{\dc}{\text{\rm{d}}}
\newcommand{\intt}{\text{\rm{int}}}
\newcommand{\supp}{\text{\rm{supp}}}
\newcommand{\bi}{\bibitem}
\newcommand{\beq}{\begin{equation}}
\newcommand{\eeq}{\end{equation}}
\newcommand{\ba}{\begin{align}}
\newcommand{\ea}{\end{align}}
\newcommand{\veps}{\varepsilon}
\newcounter{smalllist}
\newenvironment{SL}{\begin{list}{{\rm\roman{smalllist})}}{%
\setlength{\topsep}{0mm}\setlength{\parsep}{0mm}\setlength{\itemsep}{0mm}%
\setlength{\labelwidth}{2em}\setlength{\leftmargin}{2em}\usecounter{smalllist}%
}}{\end{list}}
\newcommand{\bigtimes}{\mathop{\mathchoice%
{\smash{\vcenter{\hbox{\LARGE$\times$}}}\vphantom{\prod}}%
{\smash{\vcenter{\hbox{\Large$\times$}}}\vphantom{\prod}}%
{\times}%
{\times}%
}\displaylimits}
\DeclareMathOperator{\Real}{Re}
\DeclareMathOperator{\Ima}{Im}
\DeclareMathOperator{\diam}{diam}
\DeclareMathOperator*{\wlim}{w-lim}
\numberwithin{equation}{section}
\newtheorem{theorem}{Theorem}[section]
\newtheorem*{p2.1}{Proposition 2.1}
\newtheorem{proposition}[theorem]{Proposition}
\newtheorem{mt}[theorem]{Metatheorem}
\newtheorem{lemma}[theorem]{Lemma}
\newtheorem{corollary}[theorem]{Corollary}
\theoremstyle{definition}
\newtheorem{example}[theorem]{Example}
\newtheorem{conjecture}[theorem]{Conjecture}
\newtheorem{oq}[theorem]{Open Question}
\newtheorem{op}[theorem]{Open Project}
\theoremstyle{remark}
\newtheorem*{definition}{Definition}
\theoremstyle{definition}
\newtheorem*{remark}{Remark}
\newtheorem*{remarks}{Remarks}
\newcommand{\abs}[1]{\lvert#1\rvert}
\begin{document}
\title{Equilibrium Measures and Capacities in Spectral Theory}
\author[B.~Simon]{Barry Simon$^*$}

\thanks{$^*$ Mathematics 253-37, California Institute of Technology, Pasadena, CA 91125.
E-mail: bsimon@caltech.edu. Supported in part by NSF grants DMS-0140592 and DMS-0652919 and
U.S.--Israel Binational Science Foundation (BSF) Grant No.\ 2002068}

\date{August 23, 2007}
\keywords{Potential theory, spectral theory, regular orthogonal polynomials}
\subjclass[2000]{Primary: 31A15, 05E35, 34L05.
Secondary: 31A35, 33D45, 34P05}

\begin{abstract}  This is a comprehensive review of the uses of potential theory in
studying the spectral theory of orthogonal polynomials.  Much of the article
focuses on the Stahl--Totik theory of regular measures, especially the case
of OPRL and OPUC.  Links are made to the study of ergodic Schr\"odinger
operators where one of our new results implies that, in complete generality,
the spectral measure is supported on a set of zero Hausdorff dimension
(indeed, of capacity zero) in the region of strictly positive Lyapunov
exponent.  There are many examples and some new conjectures and indications
of new research directions.  Included are appendices on potential theory and
on Fekete--Szeg\H{o} theory.

\end{abstract}

\maketitle

\section{Introduction} \lb{s1}

This paper deals with applications of potential theory to spectral and inverse spectral theory,
mainly to orthogonal polynomials especially on the real line (OPRL) and unit circle (OPUC).
This is an area that has traditionally impacted both the orthogonal polynomial community and
the spectral theory community with insufficient interrelation. The OP approach emphasizes the
procedure of going from measure to recursion parameters, that is, the inverse spectral problem,
while spectral theorists tend to start with recursion parameters and so work with direct
spectral theory.

Potential theory ideas in the orthogonal polynomial community go back at least to a deep
1919 paper of Faber \cite{Faber} and a seminal 1924 paper of Szeg\H{o} \cite{Sz24} with
critical later developments of Kalm\'ar \cite{Kal} and Erd\"os--Tur\'an \cite{ET}. The modern
theory was initiated by Ullman \cite{Ull} (see also \cite{Ull84,Ull85,Ull89,UW,TU,UWZ,Wyn}
and earlier related work of Korovkin \cite{Kor} and Widom \cite{Wid}), followed by an often
overlooked paper of Van Assche \cite{vA}, and culminating in the comprehensive and deep
monograph of Stahl--Totik \cite{StT}. (We are ignoring the important developments connected to
variable weights and external potentials, which are marginal to the themes we study;
see \cite{ST} and references therein.)

On the spectral theory community side, theoretical physicists rediscovered Szeg\H{o}'s potential
theory connection of the growth of polynomials and the density of zeros---this is called the Thouless
formula after \cite{Thou}, although discovered slightly earlier by Herbert and Jones \cite{HJ}.
The new elements involve ergodic classes of potentials, especially Kotani theory (see
\cite{Kot,S168,Kot89,Kot97,S169,Dam07}).

One purpose of this paper is to make propaganda on both sides: to explain some of the main aspects
of the Stahl--Totik results to spectral theorists and the relevant parts of Kotani theory to
the OP community. But this article looks forward even more than it looks back. In thinking through
the issues, I realized there were many interesting questions to examine. Motivated in part by
the remark that one can often learn from wrong conjectures \cite{Jit07}, I make several conjectures
which, depending on your point of view, can be regarded as either bold or foolhardy (I especially
have Conjectures~\ref{Con8.7} and \ref{Con8.10} in mind).

The potential of a measure $\mu$ on $\bbC$ is defined by
\begin{equation}\lb{1.1}
\Phi_\mu(x) =\int \log\abs{x-y}^{-1}\, d\mu(y)
\end{equation}
which, for each $x$, is well defined (although perhaps $\infty$) if $\mu$ has compact support.
The relevance of this to polynomials comes from noting that if $P_n$ is a monic polynomial,
\begin{equation}\lb{1.2}
P_n(z)=\prod_{j=1}^n (z-z_j)
\end{equation}
and $d\nu_n$ its zero counting measure, that is,
\begin{equation}\lb{1.3}
\nu_n =\f{1}{n} \sum_{j=1}^n \delta_{z_j}
\end{equation}
the point measure with $n\nu_n(\{w\})=$ multiplicity of $w$ as a root of $P_n$, then
\begin{equation}\lb{1.4}
\abs{P_n(z)}^{1/n} =\exp (-\Phi_{\nu_n}(z))
\end{equation}

If now $d\mu$ is a measure of compact support on $\bbC$, let $X_n(z)$ and $x_n(z)$ be the monic
orthogonal and orthonormal polynomials for $d\mu$, that is,
\begin{equation}\lb{1.5}
X_n(z)=z^n + \text{lower order}
\end{equation}
with
\begin{equation}\lb{1.6}
\int \ol{X_n(z)}\, X_m (z)\, d\mu(z) =\|X_n\|_{L^2}^2 \delta_{nm}
\end{equation}
and
\begin{equation}\lb{1.7}
x_n(z) =\f{X_n(z)}{\|X_n\|_{L^2}}
\end{equation}
Here and elsewhere $\|\cdot\|$ without a subscript means the $L^2$ norm for the
measure currently under consideration.

When $\supp(d\mu)\subset\bbR$, we use $P_n, p_n$ and note (see \cite{Szb,FrB}) there are
Jacobi parameters $\{a_n,b_n\}_{n=1}^\infty \in [(0,\infty)\times\bbR]^\infty$, so
\begin{gather}
xp_n(x) = a_{n+1} p_{n+1}(x) + b_{n+1} p_n(x) + a_n p_{n-1}(x) \lb{1.8} \\
\|P_n\| =a_1 \dots a_n \mu(\bbR)^{1/2} \notag
\end{gather}
and if $\supp(d\mu)\subset\partial\bbD$, the unit circle, we use $\Phi_n,\varphi_n$ and
note (see \cite{Szb,GBk,OPUC1,OPUC2}) there are Verblunsky coefficients
$\{\alpha_n\}_{n=0}^\infty \in\bbD^\infty$, so
\begin{gather}
\Phi_{n+1}(z) =z\Phi_n - \bar\alpha_n \Phi_n^*(z) \lb{1.9} \\
\|\Phi_n(z)\| = \rho_0 \dots \rho_{n-1} \mu(\partial\bbD)^{1/2} \lb{1.10}
\end{gather}
where
\begin{equation}\lb{1.11}
\Phi_n^*(z) =z^n \, \ol{\Phi_n (1/\bar z)} \qquad
\rho_j =(1-\abs{\alpha_j}^2)^{1/2}
\end{equation}
As usual, we will use $J$ for the Jacobi matrix formed from the parameters in the OPRL case,
that is, $J$ is tridiagonal with $b_j$ on diagonal and $a_j$ off-diagonal.

The $X_n$ minimize $L^2$ norms, that is,
\begin{equation}\lb{1.12}
\|X_n\|_{L^2 (d\mu)} = \min\{\|Q_n\|_{L^2} \mid Q_n(z)=z^n + \text{lower order}\}
\end{equation}
Given a compact $E\subset\bbC$, the Chebyshev polynomials are defined by ($L^\infty$ is the
sup norm over $E$)
\begin{equation}\lb{1.13}
\|T_n\|_{L^\infty (E)} =\min \{\|Q_n\|_{L^\infty} \mid Q_n(z) =z^n + \text{lower order}\}
\end{equation}
These minimum conditions suggest that extreme objects in potential theory, namely, the capacity,
$C(E)$, and equilibrium measure, $d\rho_E$, discussed in \cite{Helms,Land,M-F,Ran,ST} and Appendix~A
will play a role (terminology from Appendix~A is used extensively below). In fact, going back
to Szeg\H{o} \cite{Sz24} (we sketch a proof in Appendix~B), one knows

\begin{theorem}[Szeg\H{o} \cite{Sz24}]\lb{T1.1} For any compact $E\subset\bbC$ with
Chebyshev polynomials, $T_n$, one has
\begin{equation}\lb{1.14}
\lim_{n\to\infty}\, \|T_n\|_{L^\infty (E)}^{1/n} =C(E)
\end{equation}
\end{theorem}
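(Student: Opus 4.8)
The plan is to establish \eqref{1.14} via two matching inequalities, using the potential-theoretic characterization of capacity as $C(E) = \exp(-\inf_\mu \|\Phi_\mu\|_{L^\infty(E)})$-type variational principle, together with a subadditivity argument for the lower bound.

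For the upper bound $\limsup_n \|T_n\|_{L^\infty(E)}^{1/n} \le C(E)$, I would construct near-optimal competitors from the equilibrium measure $d\rho_E$. Assuming $C(E) > 0$ (the case $C(E) = 0$ needs separate, easy treatment: Fekete points force $\|T_n\|^{1/n} \to 0$), the equilibrium potential satisfies $\Phi_{\rho_E}(z) \le -\log C(E)$ quasi-everywhere on $E$, in fact on all of $E$ when $E$ is regular. The idea is to approximate $d\rho_E$ by discrete measures sitting at Fekete points $w_1^{(n)}, \dots, w_n^{(n)}$ of $E$, form the monic polynomial $Q_n(z) = \prod_{j=1}^n (z - w_j^{(n)})$, and use \eqref{1.4} to write $|Q_n(z)|^{1/n} = \exp(-\Phi_{\nu_n}(z))$. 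Since the Fekete-point counting measures converge weakly to $d\rho_E$ and the Fekete construction controls $\Phi_{\nu_n}$ from above on $E$ by (an error term plus) $-\log C(E)$, one gets $\|Q_n\|_{L^\infty(E)}^{1/n} \le C(E)(1 + o(1))$; since $\|T_n\| \le \|Q_n\|$ by \eqref{1.13}, the upper bound follows. This is essentially Fekete's half of Fekete--Szeg\H{o}, which Appendix~B presumably develops, so I would cite that.

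For the lower bound $\liminf_n \|T_n\|_{L^\infty(E)}^{1/n} \ge C(E)$, I would argue directly from the zero counting measure of $T_n$. Write $M_n = \|T_n\|_{L^\infty(E)}$ and let $d\nu_n$ be the zero counting measure of $T_n$; by \eqref{1.4}, $\Phi_{\nu_n}(z) \ge -\frac{1}{n}\log M_n$ for all $z \in E$. Integrating this against the equilibrium measure $d\rho_E$ and using the symmetry of the logarithmic kernel,
\[
-\frac1n \log M_n \;\le\; \int \Phi_{\nu_n}\, d\rho_E \;=\; \int \Phi_{\rho_E}\, d\nu_n \;\le\; -\log C(E),
\]
where the last inequality uses that $\Phi_{\rho_E} \le -\log C(E)$ on $E \supseteq \supp(d\nu_n)$ (Frostman's theorem; quasi-everywhere suffices since $\nu_n$ is finitely supported away from polar exceptional sets — one must check the zeros of $T_n$ avoid the polar set, or perturb). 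Hence $M_n^{1/n} \ge C(E)$ for every $n$, which is even stronger than the liminf statement.

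The main obstacle is the interchange of integration / use of Fubini for the logarithmic kernel, since $\Phi_\mu$ can be $+\infty$ or $-\infty$ and is only lower semicontinuous; one must justify $\int \Phi_{\nu_n} d\rho_E = \int \Phi_{\rho_E} d\nu_n$ and handle the quasi-everywhere (rather than everywhere) validity of Frostman's inequality when $E$ is not regular. The clean way is to restrict attention first to regular compact $E$ (where $\Phi_{\rho_E} \equiv -\log C(E)$ on $E$ and everything is continuous), prove the result there, and then deduce the general case by an outer-regularity / monotonicity argument approximating $E$ by regular compacts $E_\delta \supseteq E$ with $C(E_\delta) \downarrow C(E)$; this handles the polar-set subtleties automatically. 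I would also invoke Theorem~\ref{T1.1}'s own hypotheses minimally and point to Appendix~A for the capacity facts and Appendix~B for the Fekete--Szeg\H{o} machinery used in the upper bound.
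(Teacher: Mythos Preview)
Your proposal is essentially correct and close in spirit to the paper's Appendix~B proof, with one small correction and one genuine variation worth noting. For the upper bound you correctly defer to the Fekete machinery, which is exactly what the paper does: it proves $\|T_n\|_E^{1/n}\le\|T_n^R\|_E^{1/n}\le\zeta_{n+1}(E)$ via Fekete polynomials (Proposition~\ref{PB.2}) and then $\lim\zeta_n\le C(E)$ by lower semicontinuity of the energy under weak limits of Fekete counting measures (Proposition~\ref{PB.4}). For the lower bound, your integration argument $-\tfrac1n\log M_n\le\int\Phi_{\nu_n}\,d\rho_E=\int\Phi_{\rho_E}\,d\nu_n\le-\log C(E)$ is a clean variant of the paper's route; the paper instead invokes the Bernstein--Walsh lemma (Theorem~\ref{TA.7B}) and reads off $\|P_n\|_E\ge C(E)^n$ from the asymptotics at $z\to\infty$ (Proposition~\ref{PB.3}). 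Both yield the sharp per-$n$ inequality, and your version arguably avoids one intermediate lemma. One correction: you write ``on $E\supseteq\supp(d\nu_n)$'', but the zeros of $T_n$ need \emph{not} lie in $E$ (e.g.\ $E=\partial\bbD$, $T_n(z)=z^n$). Fortunately this does not damage the argument, because Frostman's inequality \eqref{A.17} holds for \emph{all} $z\in\bbC$, not merely q.e.\ on $E$; so $\int\Phi_{\rho_E}\,d\nu_n\le\log(C(E)^{-1})$ regardless of where the zeros sit, and your worries about polar sets and perturbation are unnecessary here.
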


This has an immediate corollary (it appears with this argument in Widom \cite{Wid} and may well
be earlier):

\begin{corollary}\lb{C1.2} Let $\mu$ be a measure of compact support, $E$, in $\bbC$. Let $X_n
(z;d\mu)$ be its monic OPs. Then
\begin{equation}\lb{1.15}
\limsup_{n\to\infty}\, \|X_n\|_{L^2 (\bbC,d\mu)}^{1/n} \leq C(E)
\end{equation}
\end{corollary}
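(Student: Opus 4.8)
The plan is to compare the $L^2(d\mu)$ norm of the monic orthogonal polynomial $X_n$ with the $L^\infty(E)$ norm of the Chebyshev polynomial $T_n$, exploiting the $L^2$-minimality property \eqref{1.12}. The key observation is that $T_n$ is itself a competitor in the minimization \eqref{1.12}: it is monic of degree $n$. Therefore
\begin{equation}\lb{P1}
\|X_n\|_{L^2(d\mu)} \leq \|T_n\|_{L^2(d\mu)}.
\end{equation}

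Next I would bound the $L^2(d\mu)$ norm of $T_n$ by its sup norm over $E$. Since $\supp(d\mu) = E$, for any continuous function $f$ on $E$ one has $\|f\|_{L^2(d\mu)}^2 = \int_E \abs{f}^2\, d\mu \leq \|f\|_{L^\infty(E)}^2\, \mu(E)$. Applying this with $f = T_n$ and combining with \eqref{P1} gives
\begin{equation}\lb{P2}
\|X_n\|_{L^2(d\mu)} \leq \mu(E)^{1/2}\, \|T_n\|_{L^\infty(E)}.
\end{equation}
Now take $n$-th roots and let $n\to\infty$. The factor $\mu(E)^{1/2}$ is a fixed positive constant (here I may assume $\mu$ is nontrivial, so $\mu(E) \in (0,\infty)$), hence $\mu(E)^{1/(2n)} \to 1$. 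By Theorem~\ref{T1.1}, $\|T_n\|_{L^\infty(E)}^{1/n} \to C(E)$. Taking $\limsup$ of both sides of \eqref{P2} raised to the power $1/n$ therefore yields $\limsup_{n\to\infty} \|X_n\|_{L^2(\bbC,d\mu)}^{1/n} \leq C(E)$, which is \eqref{1.15}.

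There is essentially no hard part here: the entire argument is the elementary chain \eqref{P1}–\eqref{P2} together with Theorem~\ref{T1.1}, which we are permitted to assume. The only point requiring a word of care is the trivial case $C(E) = 0$ (polar $E$), where the inequality still holds since the right side of \eqref{P2} then has $n$-th root tending to $0$ and the left side is nonnegative; and the degenerate case where $\mu$ has finite support, which is not really relevant since then only finitely many $X_n$ are defined. Otherwise the estimate is uniform and immediate, which is why the corollary follows "with this argument" directly from Szeg\H{o}'s theorem.
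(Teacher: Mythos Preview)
Your proof is correct and follows essentially the same route as the paper: use the $L^2$-minimality \eqref{1.12} to compare $X_n$ with the Chebyshev polynomial $T_n$, bound $\|T_n\|_{L^2(d\mu)}$ by $\mu(E)^{1/2}\|T_n\|_{L^\infty(E)}$, take $n$-th roots, and invoke Theorem~\ref{T1.1}. The only addition you make is the remark on the degenerate cases $C(E)=0$ and finitely supported $\mu$, which the paper omits.
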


\begin{proof} By \eqref{1.12},
\begin{equation}\lb{1.16}
\|X_n\|_{L^2 (\bbC,d\mu)}^{1/n} \leq \|T_n\|_{L^2 (\bbC,d\mu)}^{1/n}
\end{equation}
where $T_n$ are the Chebyshev polynomials for $E$. On $E$, $\abs{T_n(z)} \leq \|T_n\|_{L^\infty (E)}$
so, since $\supp(d\mu)=E$,
\begin{equation}\lb{1.17}
\|X_n\|_{L^2(\bbC,d\mu)}^{1/n} \leq \|T_n\|_{L^\infty (E)}^{1/n} \mu(E)^{1/2n}
\end{equation}
and \eqref{1.15} follows from \eqref{1.14}.
\end{proof}

For OPRL and OPUC, the relation \eqref{1.15} says
\begin{alignat}{2}
\limsup (a_1 \dots a_n)^{1/n} &\leq C(E) \qquad &&\text{(OPRL)} \lb{1.18x} \\
\limsup (\rho_1 \dots \rho_n)^{1/n} & \leq C(E) \qquad &&\text{(OPUC)} \lb{1.19x}
\end{alignat}
\eqref{1.18x} is a kind of thickness indication of the spectrum of discrete Schr\"odinger
operators (with $a_j\equiv 1$) where it is not widely appreciated that $C(E)\geq 1$.

In many cases that occur in spectral theory, one considers discrete and essential spectrum.
In this context, $\sigma_\ess(d\mu)$ is the nonisolated points of $\supp(d\mu)$. $\sigma_\dc
(d\mu)=\supp(d\mu)\setminus\sigma_\ess(d\mu)$ is a countable discrete set. If $d\nu$ is
any measure with finite Coulomb energy $\nu (\sigma_\dc (d\mu))=0$, thus $C(\supp(d\mu)) =
C(\sigma_\ess(d\mu))$; so we will often consider $E=\sigma_\ess(d\mu)$ in \eqref{1.15}.
In fact, as discussed in Appendix~A after Theorem~\ref{TA.8}, we should take
$E=\sigma_\capa (d\mu)$.

The inequality \eqref{1.16} suggests singling out a special case. A measure $d\mu$ of compact
support, $E$, on $\bbC$ is called {\it regular\/} if and only if
\begin{equation}\lb{1.13x}
\lim_{n\to\infty}\, \|X_n\|_{L^2(d\mu)}^{1/n} =C(E)
\end{equation}
For $E=[-1,1]$, this class was singled out by Ullman \cite{Ull}; the general case is due to
Stahl--Totik \cite{StT}.

\begin{example}\lb{E1.3} The {\it Nevai class\/}, $N(a,b)$, with $a>0$, $b\in\bbR$, is the set
of probability measures on $\bbR$ whose Jacobi parameters obey
\begin{equation}\lb{1.14x}
a_n\to a \qquad b_n\to b
\end{equation}
The Jacobi matrix with $a_n\equiv a$, $b_n\equiv b$ is easily seen to have spectrum
\begin{equation}\lb{1.15x}
E(a,b) =[b-2a, b+2a]
\end{equation}
so, by \eqref{A.5a},
\begin{equation}\lb{1.16x}
C(E(a,b))=a
\end{equation}
By Weyl's theorem, if $\mu\in N(a,b)$, $\sigma_\ess(\mu)=E(a,b)$, so $\mu$ is regular.
\qed
\end{example}

\begin{example}\lb{E1.4} Here is an example of a regular measure on $\bbR$ not in a Nevai class.
Let $d\mu$ be the measure with Jacobi parameters $b_n\equiv 0$ and
\begin{equation}\lb{1.17x}
a_n=\begin{cases}
1 & n\neq k^2\text{ for all } k \\
\f12 & n=k^2\text{ for some } k
\end{cases}
\end{equation}
Clearly, $\lim(a_1\dots a_n)^{1/n} =1$, so if $\supp(d\mu)=[-2,2]$ (which has capacity $1$ by
\eqref{1.16x}), we will have a regular measure not in a Nevai class. Since $\left(
\begin{smallmatrix} 0&c\\c&0\end{smallmatrix}\right) \geq\left(\begin{smallmatrix}
-c&0\\ 0&-c\end{smallmatrix}\right)$ for any $c>0$, the Jacobi matrix $J$ associated to
$d\mu$ is bounded below by a diagonal matrix with elements either $-\f12$, $-1$, $-\f32$,
or $-2$ (for $n=1,2,k^2$ or $k^2+1$ and otherwise), so $J\geq -2$. Similarly, $J\leq 2$.
Thus, $\sigma(J)=\supp(d\mu)\subset [-2,2]$. On the other hand, since $\lim(a_1 \dots a_n)^{1/n}
=1$, \eqref{1.18x} implies $C(E)\geq 1$ where $E=\supp(d\mu)$. If $E\subsetneqq [-2,2]$, it is
missing an open subset and so $\abs{E}<4$ and $C(E) <1$ (by \eqref{A.35}). Thus $C(E)\geq 1$
implies $E=[-2,2]$. Alternatively, using plane wave trial functions cut off to live in
$[k^2+2, (k+1)^2-1]$, we easily see directly that $[-2,2]\subset\sigma(J)$.

This example has no a.c.\ spectrum by results of Remling \cite{Remppt}. In Section~\ref{s8}
(see Example~\ref{E8.11}), we have models which are regular, not in Nevai class with nonempty
a.c.\ spectrum.
\qed
\end{example}

\begin{example}\lb{E1.4A} The CN (for Ces\`aro--Nevai) class was introduced by Golinskii-Khrushchev
\cite{KhGo} for OPUC and it has an OPRL analog. For OPRL, it says
\begin{equation}\lb{1.24a}
\f{1}{n}\, \biggl[\, \sum_{j=1}^n \, \abs{a_j-1} + \abs{b_j}\biggr] \to 0
\end{equation}

Example~\ref{E1.4} is in this class and is regular, but it is not true that every element of CN for
OPRL is regular; for example, if $a_n\equiv 1$ and each $b_j$ is $0$ or $1$ so \eqref{1.24a} holds
but with arbitrarily long strings of only $0$'s and also of only $1$'s, e.g., $b_j=1$ if $n^2
\leq j\leq n^2+n$ and $b_j=0$ otherwise. Then $\sigma (J)=[-2,3]$ with $C(\sigma(J))=\f54$, but
$(a_1 \dots a_n)^{1/n}\to 1$. (To see that $\sigma (J)=[-2,3]$, let $J_+, J_-$ be the matrices
with $a_n\equiv 1$ and $b_n\equiv 1$ (for $J_+$) and $b_n\equiv 0$ (for $J_-$). Then $-2\leq
J_-\leq J\leq J_+\leq 3$, so $\sigma(J)\subset [-2,3]$. On the other hand, because of the long
strings, a variational argument shows $\sigma(J_+)\cup\sigma (J_-)\subset \sigma(J)$.) However,
for OPUC, the subset of CN class with $\sup_n\abs{\alpha_n}<1$ consists of only regular measures.
For by Theorem~4.3.17 of \cite{OPUC1}, $\sigma (d\mu)=\partial\bbD$ with capacity $1$. On the
other hand, if $A=\sup_n\abs{\alpha_n}$ and
\[
L(A)=-\f{\log (1-A)}{2A}
\]
then (since $\log \abs{1-x}$ is convex)
\[
-\log \rho_j \leq L(A) \abs{\alpha_j}^2
\]
so
\[
\f{1}{n}\, \sum_{j=0}^{n-1} \log\rho_j \leq L(A) \, \f{1}{n}\, \sum_{j=0}^{n-1} \,
\abs{\alpha_j}^2 \leq L(A)\, \f{1}{n} \, \sum_{j=0}^{n-1} \, \abs{\alpha_j}
\]
goes to zero so $(\rho_0\dots \rho_{n-1})^{1/n}\to 1$. It is easy to see that if $\sup_n
\abs{\alpha_n}<1$ is dropped, regularity can be lost.
\qed
\end{example}

\begin{example}\lb{E1.5} Let $a_n\equiv\f12$ and let $b_n=\pm 1$ chosen as identically
distributed random variables. As above, all these random $J$'s have $-2\leq J\leq 2$, so
$\supp(d\mu)\subset [-2,2]$. Since there will be, with probability $1$, long stretches of
$b_n\equiv 1$ or $b_n\equiv -1$, it is easy to see $\supp(d\mu)\supset ([-1,1]+1)\cup
([-1,1]-1)=[-2,2]$. Thus, a typical random $d\mu$ has support $[-2,2]$ with capacity $1$,
but obviously $\lim (a_1 \dots a_n)^{1/n}=\f12$. This shows there are measures which are
not regular. By Example~\ref{E1.3}, random slowly decaying Jacobi matrices are regular, so
neither randomness nor pure point measures necessarily destroy regularity. We return to
pure point measures in Theorem~\ref{T5.5} and Corollary~\ref{C5.6}.
\qed
\end{example}

Section~\ref{s8} has many more examples of regular measures. Regularity is important because
of its connections to zero distributions and to root asymptotics. Let $d\nu_n$ denote the
zero distribution for $X_n(z)$ defined by \eqref{1.3}. Then

\begin{theorem}\lb{T1.6} Let $d\mu$ be a measure on $\bbR$ with $\sigma_\ess(d\mu)=E$ compact
and $C(E)>0$. If $\mu$ is regular, then $d\nu_n$ converges weakly to $d\rho_E$, the equilibrium
measure for $E$.
\end{theorem}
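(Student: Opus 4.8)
The plan is to exploit the product formula \eqref{1.4}, which says $\abs{X_n(z)}^{1/n}=\exp(-\Phi_{\nu_n}(z))$, and to show that any weak limit point $d\nu_\infty$ of the sequence $d\nu_n$ must coincide with $d\rho_E$. Since all the $\nu_n$ are probability measures supported on (a neighborhood of) the convex hull of $E$ — the zeros of orthogonal polynomials lie in the convex hull of the support — the family $\{\nu_n\}$ is tight, so it suffices to identify the limit of any weakly convergent subsequence, which I relabel $\nu_{n_k}\to\nu_\infty$. Because $E$ has positive capacity and the equilibrium measure $d\rho_E$ is the unique probability measure on $E$ minimizing the Coulomb energy, the strategy is to prove two things about $\nu_\infty$: first, that $\nu_\infty$ is supported on $E$ (or at least on a set differing from $E$ by capacity zero, which for energy purposes is the same); and second, that its potential satisfies $\Phi_{\nu_\infty}(z)\le \Phi_{\rho_E}(z)=-\log C(E)$ quasi-everywhere on $E$, which by the standard domination/uniqueness principle of potential theory (Appendix~A) forces $\nu_\infty=\rho_E$.

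The quantitative input is regularity. From \eqref{1.13x}, $\|X_n\|_{L^2(d\mu)}^{1/n}\to C(E)$. The first key step is a lower bound on $\abs{X_n}$ in terms of its $L^2$ norm: by a standard argument (e.g.\ using that for a polynomial $Q_n$ and $z$ outside the convex hull one can bound $\abs{Q_n(z)}$ below, or more robustly using the Bernstein--Walsh lemma together with the $L^2\!-\!L^\infty$ comparison on $\supp d\mu$), one gets for $z$ in the unbounded component of $\bbC\setminus E$ that $\liminf \abs{X_n(z)}^{1/n}\ge$ something expressible via the Green's function $g_E(z)$, while on the other side the Chebyshev comparison of Corollary~\ref{C1.2} gives $\limsup \abs{X_n(z)}^{1/n}$ controlled from above. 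Concretely, the Bernstein--Walsh inequality gives $\abs{X_n(z)}\le \|X_n\|_{L^\infty(E)} e^{n g_E(z)}$ and regularity plus $\|X_n\|_{L^\infty(E)}^{1/n}\le C(E)$ (which follows since $\|X_n\|_{L^2}\le\|X_n\|_{L^\infty(E)}\mu(E)^{1/2}$ combined with a reverse inequality for the extremal polynomial) yields $\limsup_n \abs{X_n(z)}^{1/n}\le C(E) e^{g_E(z)}$ locally uniformly off $E$. Translating through \eqref{1.4}: $\liminf_n \Phi_{\nu_n}(z)\ge -\log C(E)-g_E(z)=\Phi_{\rho_E}(z)$ for $z\notin E$. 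Passing to the subsequence and using lower semicontinuity of potentials under weak convergence (the potential of the weak limit is $\le\liminf$ of the potentials — one has to be careful about the direction here, using Fatou and the fact that $\log\abs{z-y}^{-1}$ is bounded below on compacts), one deduces $\Phi_{\nu_\infty}(z)\ge\Phi_{\rho_E}(z)$ off $E$.

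The complementary inequality is the heart of the matter. Here one uses that $X_n$ minimizes the $L^2$ norm, which controls how large $\abs{X_n}$ can be on the bulk of $E$: if $\nu_\infty$ put mass outside $E$, or were more ``spread out'' than $\rho_E$, the polynomial would be too large somewhere on $E$ to have small $L^2(d\mu)$ norm, contradicting regularity — this is where one needs that $d\mu$ genuinely lives on all of $E$ (in the capacity sense), i.e.\ a lower bound $\mu(\{z: \dist(z,x)<\delta\})>0$ quasi-everywhere, which holds for $E=\sigma_\capa(d\mu)$. The clean way to package this: the principle of descent / upper envelope theorem gives that $\Phi_{\nu_\infty}\le\liminf\Phi_{\nu_{n_k}}$ holds except on a polar set, and combined with a maximum-principle argument comparing $\Phi_{\nu_\infty}$ to the constant $-\log C(E)$ on $E$ (using that $\nu_\infty(E)=1$, to be established from the location of zeros being asymptotically on $E$ by regularity), one gets $\Phi_{\nu_\infty}(z)=\Phi_{\rho_E}(z)$ on $E$ up to capacity zero. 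Then the uniqueness of the equilibrium measure — the only probability measure whose potential equals $-\log C(E)$ q.e.\ on $E$ — finishes the proof. The main obstacle I anticipate is controlling the possible escape of zeros toward the ``holes'' of $E$ (bounded components of the complement) or, in the non-connected case, mass of $\nu_\infty$ sitting at points of positive capacity where $\mu$ happens to be thin; ruling this out cleanly is exactly what regularity \eqref{1.13x} plus the minimality \eqref{1.12} are for, and it is the step that requires the most care with the potential-theoretic fine topology and the Appendix~A domination principle rather than any hard new estimate.
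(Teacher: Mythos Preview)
Your proposal has a genuine gap in the step you label ``concretely.'' You want to apply Bernstein--Walsh with the bound $\limsup_n \|X_n\|_{L^\infty(E)}^{1/n}\le C(E)$, and you justify this by ``$\|X_n\|_{L^2}\le\|X_n\|_{L^\infty(E)}\mu(E)^{1/2}$ combined with a reverse inequality.'' But that inequality goes the wrong way: it gives only a \emph{lower} bound on $\|X_n\|_{L^\infty(E)}$ in terms of $\|X_n\|_{L^2}$. The upper bound $\limsup\|X_n\|_{L^\infty(E)}^{1/n}\le C(E)$ is in fact one of the equivalent characterizations of regularity (essentially condition (ii) of Theorem~\ref{T1.8} restricted to $E$), and in the paper it is derived \emph{from} the zero-distribution result you are trying to prove. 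So the argument as written is circular. Your ``complementary inequality'' paragraph is too vague to repair this; in particular the claim that $\mu$ gives positive mass to every neighborhood q.e.\ on $E$ is neither assumed nor generally true.

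The paper's route is the one you mention in passing and then abandon: the elementary \emph{lower} bound $\liminf\abs{p_n(z)}^{1/n}>1$ for $z$ outside the convex hull of $\supp(d\mu)$ (Proposition~\ref{P2.2}, proved by a CD-kernel argument). Via \eqref{1.4} this gives, for any weak limit $\nu_\infty$ along a subsequence with $(a_1\cdots a_n)^{1/n}\to A$, the pointwise bound $\Phi_{\nu_\infty}(z)\le\log(A^{-1})$ off the convex hull, hence everywhere by lower semicontinuity. Integrating against $d\nu_\infty$ yields $\calE(\nu_\infty)\le\log(A^{-1})$. Regularity means $A=C(E)$, so $\calE(\nu_\infty)\le\log(C(E)^{-1})=\calE(\rho_E)$, and uniqueness of the energy minimizer forces $\nu_\infty=\rho_E$. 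No Bernstein--Walsh, no $L^\infty$ control on $X_n$, and no second inequality are needed.
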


\begin{remarks} 1. For $E=[0,1]$, ideas close to this occur in Erd\"os--Tur\'an \cite{ET}. The
full result is in Stahl--Totik \cite{StT} who prove a stronger result. Rather than $E\subset\bbR$,
they need that the unbounded component, $\Omega$, of $\bbC\setminus E$ is dense in $\bbC$.
We will prove Theorem~\ref{T1.6} in Section~\ref{s2}.

\smallskip
2. The result is false for measures on $\partial\bbD$. Indeed, it fails for $d\mu=\f{d\theta}
{2\pi}$, that is, $\alpha_n\equiv 0$ and $\Phi_n(z)=z^n$. However, there is a result for
paraorthogonal polynomials and for the balayage of $d\nu_n$. Theorem~\ref{T1.6} is true if
$\supp(d\mu)\subset\partial\bbD$ but is not all of $\partial\bbD$. We will discuss
this further in Section~\ref{s3} where we also prove a version of Theorem~\ref{T1.6}
for OPUC.
\end{remarks}

For later purposes, we note

\begin{proposition}\lb{P1.6A} Let $d\mu$ be a measure on $\bbR$ with $\sigma_\ess (d\mu)=E$
compact. Then any limit of $d\nu_n$ is supported on $E$.
\end{proposition}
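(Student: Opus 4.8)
The plan is to combine one classical fact about zeros of orthogonal polynomials on $\bbR$ with the structure of $\bbR\setminus E$. The elementary ingredient I would isolate is: \emph{if $I\subset\bbR$ is an open interval with $I\cap\supp(d\mu)=\emptyset$, then $X_n$ has at most one zero in $I$.} This has a short proof by orthogonality. If $y_1<y_2$ were two zeros of $X_n$ lying in $I$, then (since $y_1,y_2$ are zeros of $X_n$) the function $S(x)=X_n(x)^2/[(x-y_1)(x-y_2)]$ is a polynomial of degree $2n-2$. For $x\in\supp(d\mu)$ one has $x\le y_1$ or $x\ge y_2$, because $\supp(d\mu)\cap(y_1,y_2)\subseteq\supp(d\mu)\cap I=\emptyset$; hence $(x-y_1)(x-y_2)\ge 0$ on $\supp(d\mu)$ and so $S\ge 0$ there, with $S\not\equiv 0$. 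Since $\supp(d\mu)$ is infinite this forces $\int S\,d\mu>0$. On the other hand $X_n(x)/[(x-y_1)(x-y_2)]$ is a polynomial of degree $n-2<n$, so $\int S\,d\mu=\int X_n\cdot\bigl(X_n/[(x-y_1)(x-y_2)]\bigr)\,d\mu=0$ by \eqref{1.6}, a contradiction.

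Next I would localize inside each gap. Write $\bbR\setminus E$ as an at most countable disjoint union of open intervals $I_\ell$. Since $\sigma_\dc(d\mu)=\supp(d\mu)\setminus E$ is a discrete set all of whose accumulation points lie in $E=\sigma_\ess(d\mu)$, any compact $K$ with $K\cap E=\emptyset$ satisfies $\#\bigl(\supp(d\mu)\cap K\bigr)=:m_K<\infty$. Those $m_K$ points split $K$ into at most $m_K+1$ pieces whose interiors are disjoint from $\supp(d\mu)$, so by the displayed fact $X_n$ has at most one zero in each such interior; counting also the $m_K$ division points, $X_n$ has at most $2m_K+1$ zeros in $K$. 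Hence
\[
\nu_n(K)\le\frac{2m_K+1}{n}\longrightarrow 0\qquad(n\to\infty).
\]

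Finally, let $d\nu$ be any weak limit, say $d\nu_{n_j}\to d\nu$ (such limit points exist in the vague topology). Fix a gap $I_\ell$ and a compact $K\Subset I_\ell$, and choose a bounded open $U$ with $K\subset U$ and $\overline U\Subset I_\ell$. By the open-set half of the portmanteau theorem and the bound just proved,
\[
\nu(K)\le\nu(U)\le\liminf_j\nu_{n_j}(U)\le\liminf_j\nu_{n_j}(\overline U)\le\liminf_j\frac{2m_{\overline U}+1}{n_j}=0.
\]
As $K\Subset I_\ell$ was arbitrary, inner regularity gives $\nu(I_\ell)=0$, and summing over $\ell$ yields $\nu(\bbR\setminus E)=0$, i.e.\ $\supp(d\nu)\subseteq E$, which is the assertion.

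I expect the only real subtlety to be in this last step: one must invoke the \emph{open}-set inequality $\nu(U)\le\liminf_j\nu_{n_j}(U)$ — the closed-set inequality $\limsup_j\nu_{n_j}(\overline U)\le\nu(\overline U)$ points the wrong way — and one cannot work directly with the possibly unbounded gap $I_\ell$, since $\mu$ may have infinitely many mass points accumulating at an endpoint of $I_\ell$; passing through compact (hence bounded) subsets handles both difficulties simultaneously. The first two steps are standard.
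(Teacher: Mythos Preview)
Your proof is correct and takes essentially the same approach as the paper: both rest on the classical fact that $P_n$ has at most one zero in any open interval disjoint from $\supp(d\mu)$, and then count zeros near the finitely many isolated points of the support lying in a given gap. The paper simply cites this fact and argues more tersely (each point of $\bbR\setminus E$ has a neighborhood with $\nu_n$-mass $\le 3/n$), whereas you supply a proof of the lemma and handle the weak-convergence step explicitly; your intermediate bound ``$\nu_n(K)\le(2m_K+1)/n$ for any compact $K$ with $K\cap E=\emptyset$'' tacitly uses that $K$ is an interval, but since in your application $\overline U$ may be chosen as a closed subinterval of $I_\ell$, this is harmless.
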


\begin{proof} It is known (see \cite[Sect.~1.2]{OPUC1}) that if $(a,b)\cap\supp(d\mu)=
\emptyset$, then $P_n(x)$ has at most one zero in $(a,b)$. It follows that if $e$ is an
isolated point of $\supp(d\mu)$, then $(e-\delta, e+\delta)$ has at most three zeros for
$\delta$ small (with more argument, one can get two). Thus, points not in $E$ have
neighborhoods, $N$, with $\nu_n(N)\leq\f{3}{n}$.
\end{proof}

Stahl--Totik \cite{StT} also have the following almost converse (their Sect.~2.2)---for $E\subset
\bbR$, we prove a slightly stronger result; see Theorem~\ref{T2.4A}.

\begin{theorem}\lb{T1.7} Let $d\mu$ be a measure on $\bbR$ with $\sigma_\ess(d\mu)=E$ compact
and $C(E)>0$. Suppose that $d\nu_n\to d\rho_E$, the equilibrium measure. Then either $d\mu$
is regular or there exists a Borel set, $X$, with $d\mu(\bbR\setminus X)=0$ and $C(X)=0$.
\end{theorem}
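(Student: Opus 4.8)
The plan is to run a contrapositive/dichotomy argument. Suppose $d\nu_n\to d\rho_E$ weakly but $d\mu$ is \emph{not} regular; I want to produce a Borel set $X$ carrying $d\mu$ with $C(X)=0$. The natural quantity to track is the upper envelope of the potentials of $d\nu_n$. Writing $\|X_n\|^{1/n}$ in terms of the counting measure via \eqref{1.4}, one has $\|X_n\|^{2/n}_{L^2(d\mu)}=\int \exp(-2\Phi_{\nu_n}(z))\,d\mu(z)$. Since $\mu$ is not regular, Corollary~\ref{C1.2} gives $\limsup\|X_n\|^{1/n}_{L^2(d\mu)}=C(E')$ for some closed $E'\subseteq E$ with $C(E')<C(E)$, or more precisely there is a subsequence along which $\|X_n\|^{1/n}\to\beta<C(E)$. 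The first step is therefore to extract such a subsequence and set $\gamma=\log(C(E)/\beta)>0$.

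The second step is to compare $\Phi_{\nu_n}$ with $\Phi_{\rho_E}$. On the unbounded component $\Omega$ of $\bbC\setminus E$, the equilibrium potential satisfies $\Phi_{\rho_E}(z)=-\log C(E)-g_\Omega(z)\le -\log C(E)$, with equality q.e.\ on $E$ (this is Frostman's theorem / the content of Appendix~A). The principle of descent and the weak convergence $d\nu_n\to d\rho_E$ give $\liminf_n \Phi_{\nu_n}(z)\ge \Phi_{\rho_E}(z)$ uniformly on compacts, and in fact $\Phi_{\nu_n}\to\Phi_{\rho_E}$ uniformly on compact subsets of $\Omega$. The idea now is: on the set where $d\mu$ lives, $\exp(-2\Phi_{\nu_n})$ is eventually close to $\exp(-2\Phi_{\rho_E})\le C(E)^{-2}$ ``from above only up to q.e.\ exceptions,'' so for the $L^2$ norm to be asymptotically \emph{smaller} than $C(E)$, the measure $d\mu$ must be concentrating, to leading exponential order, on the exceptional set where $\Phi_{\rho_E}$ is strictly larger than its q.e.\ value $-\log C(E)$ — i.e.\ on a set of capacity zero.

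To make this quantitative, the third step is to define, for $\gamma>0$,
\[
B_\gamma=\Bigl\{z\in E : \liminf_{n\to\infty}\Phi_{\nu_n}(z)\ge -\log C(E)+\tfrac{\gamma}{2}\Bigr\}.
\]
On the complement $E\setminus B_\gamma$ one has $\Phi_{\nu_n}(z)< -\log C(E)+\tfrac{\gamma}{2}$ infinitely often, hence $\exp(-2\Phi_{\nu_n}(z))$ exceeds $C(E)^{2}e^{-\gamma}$ along a subsequence; if $d\mu(E\setminus B_\gamma)>0$, Fatou's lemma (applied carefully along the subsequence, after passing to a further subsequence so the liminf is realized) forces $\limsup\|X_n\|^{2/n}\ge C(E)^2 e^{-\gamma}>\beta^2$, a contradiction once $\gamma$ is chosen with $e^{-\gamma}$ just above $\beta^2/C(E)^2$. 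Therefore $d\mu$ is supported on $B_\gamma$ up to a null set. The final step is to show $C(B_\gamma)=0$: since $\Phi_{\rho_E}=-\log C(E)$ q.e.\ on $E$ while on $B_\gamma$ we would need $\liminf\Phi_{\nu_n}\ge -\log C(E)+\gamma/2>\Phi_{\rho_E}$, and since (by the lower envelope theorem for the potentials of the weakly convergent $d\nu_n$) $\liminf_n\Phi_{\nu_n}(z)=\Phi_{\rho_E}(z)$ for q.e.\ $z$, the set $B_\gamma$ differs from a subset of $\{z:\Phi_{\rho_E}(z)>-\log C(E)\}$ by a polar set — and that latter set is itself polar. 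Hence $B_\gamma$ has capacity zero, and $X=B_\gamma$ (intersected with $\supp d\mu$) does the job. The slightly stronger statement promised as Theorem~\ref{T2.4A} presumably sharpens the dependence of $X$ on $\gamma$ or identifies $X$ explicitly with a $\limsup$ set.

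The main obstacle I anticipate is the rigorous use of the lower-envelope theorem for potentials of weakly convergent measures — one must be careful that $d\nu_n\to d\rho_E$ weakly does \emph{not} by itself give pointwise convergence of potentials on $E$ (only the inequality $\liminf\Phi_{\nu_n}\ge\Phi_{\rho_E}$ plus convergence off $\supp d\rho_E$), so the identification ``$\liminf\Phi_{\nu_n}=\Phi_{\rho_E}$ q.e.'' needs the full strength of the lower-envelope theorem from potential theory (Appendix~A), and the interchange of $\liminf$ with $\int\!d\mu$ via Fatou must be set up along a single subsequence realizing the bad asymptotics. Everything else is bookkeeping with Frostman's theorem and the definitions of capacity and equilibrium measure.
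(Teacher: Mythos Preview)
Your overall architecture is right and matches the paper's proof (given as Theorem~\ref{T2.4A}): pass to a subsequence where $\|X_{n(j)}\|^{1/n(j)}\to\beta<C(E)$, use the upper envelope theorem (Theorem~\ref{TA.4}) to see that $\liminf_j\Phi_{\nu_{n(j)}}(x)=\Phi_{\rho_E}(x)=-\log C(E)$ for q.e.\ $x\in E$, and then argue that $\mu$ must live on the polar exceptional set. The identification $C(B_\gamma)=0$ in your last step is exactly this.

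The gap is in your third step. First a slip: the formula $\|X_n\|^{2/n}=\int\exp(-2\Phi_{\nu_n})\,d\mu$ is false; what holds is $\|X_n\|^{2}=\int\exp(-2n\Phi_{\nu_n})\,d\mu$. More seriously, Fatou's lemma does not do what you need. On $E\setminus B_\gamma$ you only know that \emph{for each} $z$ there are infinitely many $n$ with $|P_n(z)|^{2/n}>C(E)^2e^{-\gamma}$, but these $n$'s depend on $z$; there is no single subsequence ``realizing the liminf'' simultaneously for all $z$, so you cannot integrate the pointwise bound to control $\|P_n\|$. Fatou gives $\int\liminf\le\liminf\int$, which goes the wrong way here. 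Note also that even if you could show $\limsup_n\|X_n\|^{2/n}\ge C(E)^2e^{-\gamma}$ over the \emph{full} sequence, this is no contradiction: $\beta$ is only a subsequential limit and the full $\limsup$ may well be $C(E)^2$.

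The correct mechanism --- and what the paper actually uses --- is a Chebyshev/Borel--Cantelli argument (Lemma~\ref{L4.1}, or equivalently the BGK bound \eqref{4.10}): from $\|P_{n(j)}\|^{1/n(j)}\to\beta$ one gets directly that for $\mu$-a.e.\ $x$,
\[
\limsup_j |P_{n(j)}(x)|^{1/n(j)}\le\beta,\qquad\text{i.e.}\qquad \liminf_j\Phi_{\nu_{n(j)}}(x)\ge-\log\beta>-\log C(E).
\]
This is the contrapositive of what you were trying to prove, obtained cleanly without any synchronization issue. Comparing with the q.e.\ identity $\liminf_j\Phi_{\nu_{n(j)}}=-\log C(E)$ on $E$ forces $\mu$ onto the polar exceptional set. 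So your $B_\gamma$ is the right object; you just need Lemma~\ref{L4.1} (working along the subsequence $n(j)$) in place of Fatou.
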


\begin{remarks} 1. As an example where such an $X$ exists even though $C(\supp(\mu))>0$, consider
a $\mu$ which is dense pure point on $[-2,2]$.

\smallskip
2. In Section~\ref{s8}, we will see explicit examples where $d\nu_n\to d\rho_E$ but
$d\mu$ is not regular.
\end{remarks}

The other connection is to root asymptotics of the OPs. Recall the Green's function, $G_E(z)$,
is defined by \eqref{A.26a}; it vanishes q.e.\ (quasi-everywhere, defined in Appendix~A) on $E$,
is harmonic on $\Omega$, and asymptotic to $\log\abs{z}-\log C(E) +o(1)$ as $\abs{z}\to\infty$.
The main theorem on root asymptotics is:

\begin{theorem}\lb{T1.8} Let $E\subset\bbC$ be compact and let $\mu$ be a measure of compact
support with $\sigma_\ess(\mu)=E$. Then the following are equivalent:
\begin{SL}
\item[{\rm{(i)}}] $\mu$ is regular, that is, $\lim_{n\to\infty} \|X_n\|_{L^2(d\mu)}^{1/n}=C(E)$.
\item[{\rm{(ii)}}] For all $z$ in $\bbC$, uniformly on compacts,
\begin{equation}\lb{1.18}
\limsup\abs{x_n(z)}^{1/n} \leq e^{G_E(z)}
\end{equation}
\item[{\rm{(iii)}}] For q.e.\ $z$ in $\partial\Omega$ {\rm{(}}with $\Omega$ the unbounded connected
component of $\bbC\setminus E${\rm{)}}, we have
\begin{equation}\lb{1.19}
\limsup \abs{x_n(z)}^{1/n}\leq 1
\end{equation}
\end{SL}
Moreover, if {\rm{(i)--(iii)}} hold, then {\rm{(}}here $\cvh =$ closed convex hull{\rm{)}}
\begin{SL}
\item[{\rm{(iv)}}] For every $z\in\bbC\setminus\cvh(\supp(d\mu))$, we have
\begin{equation}\lb{1.20}
\lim_{n\to\infty}\, \abs{x_n(z)}^{1/n} = e^{G_E(z)}
\end{equation}
\item[{\rm{(v)}}] For q.e.\ $z\in\partial\Omega$,
\begin{equation}\lb{1.21}
\limsup_{n\to\infty} \, \abs{x_n(z)}^{1/n} =1
\end{equation}
\item[{\rm{(vi)}}] For any sequence $Q_n(z)$ of polynomials of degree $n$, for all $z\in\bbC$,
\begin{equation}\lb{1.22}
\limsup \biggl| \f{Q_n(z)}{\|Q_n\|_{L^2(d\mu)}}\biggr|^{1/n} \leq e^{G_E(z)}
\end{equation}
\end{SL}
\end{theorem}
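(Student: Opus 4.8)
The plan is to prove the equivalence of (i), (ii), (iii) by the cycle (i)$\Rightarrow$(ii)$\Rightarrow$(iii)$\Rightarrow$(i) and then deduce (iv)--(vi). Write $\nu_n$ for the zero counting measure of $X_n$; its support lies in the fixed compact $\cvh(\supp(d\mu))$, so $\{\nu_n\}$ is weak-$*$ precompact, and I will use freely the Appendix~A facts $G_E=-\log C(E)-\Phi_{\rho_E}$, Frostman's inequality $\Phi_{\rho_E}\le-\log C(E)$ on $\bbC$ (with equality q.e.\ on $E$, whence $G_E=0$ q.e.\ on $E$), the principle of descent and the lower-envelope theorem for potentials, the extended maximum principle for subharmonic functions, and the fact that the irregular points of $\partial\Omega$ form a polar set. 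The step (ii)$\Rightarrow$(iii) is immediate, since $\partial\Omega\subset E$ and $G_E=0$ q.e.\ on $E$. For (iii)$\Rightarrow$(i): by Corollary~\ref{C1.2} we have $\limsup\|X_n\|^{1/n}\le C(E)$, so it suffices to prove $\liminf\|X_n\|^{1/n}\ge C(E)$. Pass to a subsequence realizing $\ell:=\liminf\tfrac1n\log\|X_n\|$ and, refining, with $\nu_{n_k}\to\nu$. From $\tfrac1n\log|x_n(z)|=-\Phi_{\nu_n}(z)-\tfrac1n\log\|X_n\|$, condition \eqref{1.19} gives $\liminf_k\Phi_{\nu_{n_k}}(z)\ge-\ell$ for q.e.\ $z\in\partial\Omega$, so by the lower-envelope theorem $\Phi_\nu\ge-\ell$ q.e.\ on $\partial\Omega$. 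Then $u:=-\Phi_\nu-G_E$ is subharmonic and bounded above on $\Omega$, tends to $\log C(E)$ at $\infty\in\Omega$, and satisfies $\limsup_{\Omega\ni z\to\zeta}u(z)\le\ell$ for q.e.\ $\zeta\in\partial\Omega$; the extended maximum principle gives $u\le\ell$ on $\Omega$, and evaluating at $\infty$ yields $\log C(E)\le\ell$, i.e.\ (i).

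The implication (i)$\Rightarrow$(ii) is the heart of the matter and the main obstacle. Let $v$ be the upper semicontinuous regularization of $\limsup_n\tfrac1n\log|X_n|$; since the $X_n$ are monic with zeros in a fixed compact, $v$ is subharmonic with $v(z)=\log|z|+o(1)$ near $\infty$, and regularity ($\tfrac1n\log\|X_n\|\to\log C(E)$) makes (ii) equivalent --- using Hartogs' lemma for the local uniformity --- to the pointwise estimate $v\le-\Phi_{\rho_E}$ on $\bbC$. Part of this is free: from $\|x_n\|_{L^2(d\mu)}=1$ and Chebyshev's inequality, $\mu(\{|x_n|\ge e^{n\varepsilon}\})\le e^{-2n\varepsilon}$, which is summable, so Borel--Cantelli gives $\limsup_n|x_n|^{1/n}\le1$, i.e.\ $v\le\log C(E)$, \emph{$d\mu$-almost everywhere} on $E$. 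If one knew this bound \emph{quasi-everywhere} on $E$, then $v\le-\Phi_{\rho_E}$ would follow by a routine maximum-principle argument: on $\Omega$ the function $v-\log C(E)-G_E$ is subharmonic, bounded above, $o(1)$ at $\infty$, and $\le0$ at q.e.\ regular boundary point (use upper semicontinuity of $v$ and $G_E(z)\to0$ there), and the bounded components of $\bbC\setminus E$ and $E$ itself are treated the same way. So everything comes down to upgrading ``$d\mu$-a.e.'' to ``q.e.''\ --- precisely the place where regularity must enter essentially, since it is false without it (Example~\ref{E1.5} gives a non-regular $d\mu$ for which $\limsup_n|x_n|^{1/n}$ is not bounded by $1$ on a large subset of $\supp(d\mu)$). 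I would carry this out along the lines of the Stahl--Totik theory, bounding the Christoffel function $\lambda_n(z)=(\sum_{j=0}^n|x_j(z)|^2)^{-1}$ from below: if $P_n$ realizes $\lambda_n(z)=\min\{\|P\|^2_{L^2(d\mu)}:\deg P\le n,\ P(z)=1\}$ and $c_n$ is its leading coefficient, then $|c_n|=e^{n\Phi_{\sigma_n}(z)}$ for $\sigma_n$ the zero distribution of $P_n$, and \eqref{1.12} gives $\lambda_n(z)\ge|c_n|^2\|X_n\|^2$, so with $\|X_n\|^{1/n}\to C(E)$ the required bound $\liminf_n\lambda_n(z)^{1/n}\ge e^{-2G_E(z)}$ reduces, via the principle of descent, to showing that every limiting zero distribution of these extremal polynomials \emph{potential-dominates} $\rho_E$. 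Establishing that for a \emph{regular} measure --- morally, that regularity forces the extremal polynomials to be as spread out as the equilibrium measure --- is the step I expect to absorb the real work; it is the core of \cite{StT}.

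With (i)--(iii) in hand, the remaining parts are short. For (vi): for $Q_n$ with $\deg Q_n\le n$ and $\|Q_n\|_{L^2(d\mu)}=1$, expanding in the orthonormal polynomials and Cauchy--Schwarz give $|Q_n(z)|\le(\sum_{j\le n}|x_j(z)|^2)^{1/2}$, and feeding the locally uniform bound (ii) into a geometric-series estimate (using $G_E\ge0$) gives $\limsup(\sum_{j\le n}|x_j(z)|^2)^{1/2n}\le e^{G_E(z)}$, hence \eqref{1.22}. For (iv): on $(\bbC\cup\{\infty\})\setminus\cvh(\supp(d\mu))$ the $X_n$ have no zeros, so $g_n:=\tfrac1n\log|x_n|-G_E$ is harmonic there with $g_n(\infty)\to0$; (ii) bounds $\{g_n\}$ above locally uniformly, and the zeros of $X_n$ staying in the fixed compact $\cvh(\supp(d\mu))$ bounds it below, so $\{g_n\}$ is a normal family, every limit $g$ is harmonic with $g\le0$ and $g(\infty)=0$, hence $g\equiv0$ by the maximum principle, so $g_n\to0$ locally uniformly, which is \eqref{1.20}. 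For (v): ``$\limsup\le1$'' is (iii); for the reverse, let $\Psi$ be the upper semicontinuous regularization of $\limsup_n\tfrac1n\log|x_n|$. By (ii) $\Psi\le G_E$, and by (iv) $\Psi=G_E$ on the nonempty open set $\bbC\setminus\cvh(\supp(d\mu))\subset\Omega$; thus $G_E-\Psi$ is a nonnegative superharmonic function on the connected domain $\Omega$ vanishing on an open subset, so $\Psi\equiv G_E$ on $\Omega$ by the minimum principle. For q.e.\ $z_0\in\partial\Omega$ --- regular and with $G_E(z_0)=0$ --- upper semicontinuity of $\Psi$ together with $G_E(z)\to0$ as $\Omega\ni z\to z_0$ forces $0\le\Psi(z_0)\le G_E(z_0)=0$; since $\Psi$ agrees with $\limsup_n\tfrac1n\log|x_n|$ off a polar set, this gives \eqref{1.21} q.e.\ on $\partial\Omega$.
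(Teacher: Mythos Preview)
Your cycle (ii)$\Rightarrow$(iii)$\Rightarrow$(i) and the deductions of (iv)--(vi) from (i)--(iii) are fine; in particular your (iii)$\Rightarrow$(i) via the extended maximum principle evaluated at $\infty$ is essentially the paper's integration against $d\rho_E$ in disguise, and your (vi) is the same Cauchy--Schwarz/expansion argument the paper uses.

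The genuine gap is in (i)$\Rightarrow$(ii). You correctly isolate the difficulty --- upgrading the Borel--Cantelli bound $\limsup|x_n|^{1/n}\le 1$ from $d\mu$-a.e.\ to q.e.\ on $E$ --- and then propose to attack it through Christoffel functions and limiting zero distributions of extremal polynomials, but you explicitly do not carry this out (``the step I expect to absorb the real work''). As written this is not a proof of (i)$\Rightarrow$(ii), only a plan that defers to \cite{StT}.

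The paper avoids this difficulty entirely by reversing the order: it proves (i)$\Rightarrow$(iv) \emph{first}, directly, and only then deduces (ii) by subharmonicity on $\cvh(\supp d\mu)$. The key input you are missing is Proposition~\ref{P2.2}: a CD-formula argument shows that for $z\notin H:=\cvh(\supp d\mu)$ one has the \emph{lower} bound $\liminf|p_n(z)|^{1/n}>1$. Combined with the identity $|p_n(z)|^{1/n}=(a_1\cdots a_n)^{-1/n}\exp(-\Phi_{\nu_n}(z))$ (your Thouless formula, eq.~\eqref{2.15}), this forces any subsequential limit $\nu_\infty$ of $\nu_n$ to satisfy $\Phi_{\nu_\infty}\le\log(A^{-1})$ everywhere, hence $\calE(\nu_\infty)\le\log(A^{-1})$; regularity $A=C(E)$ then pins $\nu_\infty=\rho_E$ by uniqueness of the equilibrium measure (Theorem~\ref{T2.4}). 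So $\nu_n\to\rho_E$, and for $z\notin H$ the potential is continuous in the measure, giving $|p_n(z)|^{1/n}\to C(E)^{-1}e^{-\Phi_{\rho_E}(z)}=e^{G_E(z)}$, which is exactly (iv). Then (ii) on $H$ is a one-line maximum principle. This route is considerably shorter than the Christoffel-function program you sketch, because it never needs to control $|x_n|$ \emph{on} $E$ directly; the work is done off the convex hull where the zeros are absent and the potential behaves well.

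Once you have $\nu_n\to\rho_E$, (v) also becomes a one-liner via the upper envelope theorem (Theorem~\ref{TA.4}) and Frostman, rather than the superharmonic minimum-principle argument you give; but your argument for (v) is correct too.
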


\begin{remark} It is easy to see that (iv), (v) or (vi) are equivalent to (i)--(iii).
\end{remark}

For $E\subset\bbR$, we will prove this in Section~\ref{s2}. For $E\subset\partial\bbD$,
we prove it in Section~\ref{s3}.

The original result asserting cases where regularity holds was proven in 1940!

\begin{theorem}[Erd\"os--Tur\'an \cite{ET}]\lb{T1.9} Let $d\mu$ be supported on
$[-2,2]$ and suppose
\begin{equation}\lb{1.23}
d\mu(x) =w(x)\, dx + d\mu_\s
\end{equation}
with $d\mu_\s$ singular. Suppose $w(x) >0$ for a.e.\ $x$ in $[-2,2]$. Then $\mu$ is regular.
\end{theorem}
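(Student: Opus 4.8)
The plan is to verify criterion (iii) of Theorem~\ref{T1.8} with $E=[-2,2]$, so that regularity follows. Since $E=[-2,2]$ has $C(E)=1$ and $\Omega=\bbC\setminus[-2,2]$ with $\partial\Omega=[-2,2]$, and the Green's function vanishes everywhere on $E$, the goal reduces to showing that for (Lebesgue-)a.e.\ $x\in[-2,2]$ one has $\limsup_n \abs{x_n(x)}^{1/n}\le 1$; ``q.e.'' on $[-2,2]$ is implied by ``a.e.''\ since sets of capacity zero in $\bbR$ have Lebesgue measure zero. Equivalently, writing $x_n$ for the orthonormal polynomials, I want $\abs{x_n(x)}^{1/n}$ not to grow exponentially a.e.

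First I would exploit the hypothesis $w(x)>0$ a.e.\ together with orthonormality. The key quantity is $\int \abs{x_n(x)}^2\, d\mu(x)=1$, which gives $\int \abs{x_n(x)}^2 w(x)\, dx \le 1$. The idea is a Fatou/Borel--Cantelli argument: fix $\veps>0$ and consider the ``bad set'' $B_n=\{x\in[-2,2]: \abs{x_n(x)}\ge e^{n\veps}\}$. On $B_n$ we have $\abs{x_n(x)}^2 w(x)\ge e^{2n\veps}w(x)$, so $e^{2n\veps}\int_{B_n} w(x)\,dx\le 1$, i.e.\ $\int_{B_n}w\,dx\le e^{-2n\veps}$. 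Since $w>0$ a.e., the measure $w\,dx$ is mutually absolutely continuous with Lebesgue measure on $[-2,2]$, and $\sum_n \int_{B_n}w\,dx<\infty$ forces $\sum_n \abs{B_n}<\infty$ as well --- but this last step needs care, because $w$ can be arbitrarily small on parts of $[-2,2]$, so small $w$-measure need not mean small Lebesgue measure uniformly. The honest route is: $(w\,dx)(\limsup_n B_n)=0$ by Borel--Cantelli for the measure $w\,dx$, and then since $w\,dx\sim dx$, also $\abs{\limsup_n B_n}=0$. Hence for a.e.\ $x$, $x\notin B_n$ for all large $n$, i.e.\ $\limsup_n \abs{x_n(x)}^{1/n}\le e^{\veps}$. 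Letting $\veps\downarrow 0$ through a countable sequence gives $\limsup_n\abs{x_n(x)}^{1/n}\le 1$ for a.e.\ $x\in[-2,2]$, which is exactly (iii).

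The one genuine subtlety --- and what I expect to be the main obstacle --- is passing from ``a.e.'' (Lebesgue) on $[-2,2]$ to ``q.e.\ on $\partial\Omega$'' as literally demanded by (iii): a priori a Lebesgue-null subset of $[-2,2]$ could have positive capacity. However, this is not actually needed here, because the argument above controls $\limsup\abs{x_n}^{1/n}$ off a Lebesgue-null set, and to invoke Theorem~\ref{T1.8}(iii)$\Rightarrow$(i) one should instead run the Borel--Cantelli estimate directly against the equilibrium measure $d\rho_E$ --- or simply note that $d\rho_{[-2,2]}(x)=\frac{1}{\pi\sqrt{4-x^2}}\,dx$ is purely absolutely continuous with respect to Lebesgue measure on $[-2,2]$, so $(d\rho_E)(\limsup_n B_n)=0$ follows from $\abs{\limsup_n B_n}=0$. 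Since sets of capacity zero have $\rho_E$-measure zero, ``$\rho_E$-a.e.''\ is weaker than ``q.e.,'' so in fact it is cleanest to record the stronger conclusion that $\limsup\abs{x_n(x)}^{1/n}\le 1$ holds $d\rho_E$-a.e.\ and observe this suffices: re-examining the proof of (iii)$\Rightarrow$(i) in Section~\ref{s2}, only $\rho_E$-a.e.\ control is used (one integrates $\frac1n\log\abs{x_n}$ against $d\rho_E$ and applies Fatou). Thus the chain is: orthonormality $\Rightarrow$ Chebyshev-type bound $\int_{B_n} w\,dx\le e^{-2n\veps}$ $\Rightarrow$ Borel--Cantelli $\Rightarrow$ a.e.\ subexponential growth $\Rightarrow$ criterion (iii) $\Rightarrow$ regularity, with the measure-theoretic bookkeeping between $w\,dx$, Lebesgue measure, and $d\rho_E$ being the only place requiring attention.
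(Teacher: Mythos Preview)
Your proof is correct and follows essentially the same route as the paper's Section~\ref{s4} proof of the more general Theorem~\ref{T1.10}: a Chebyshev/Borel--Cantelli argument (the paper's Lemma~\ref{L4.1}, or equivalently \eqref{4.8}--\eqref{4.10}) gives $\limsup\abs{p_n(x)}^{1/n}\le 1$ for $\mu$-a.e.\ $x$, and the hypothesis $d\rho_E\ll d\mu$ upgrades this to $\rho_E$-a.e. You package the concluding step via Theorem~\ref{T1.8}(iii)$\Rightarrow$(i) while the paper carries out the integration against $d\rho_E$ directly, but your observation that only $\rho_E$-a.e.\ (not literally q.e.) control is needed there is exactly what makes both arguments work.
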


\begin{remarks} 1. Erd\"os--Tur\'an \cite{ET} worked on $[-1,1]$ and had $d\mu_\s =0$.

\smallskip
2. We now have a stronger result than this---namely, Rakhmanov's theorem (see \cite[Ch.~9]{OPUC2}).
If $w(x)>0$ for a.e.\ $x$, one knows $a_n\to 1$ (and $b_n\to 0$) much more than $(a_1 \dots a_n)^{1/n}
\to 1$ (equivalently, we have ratio asymptotics on the $p$'s and not just root asymptotics).
Regularity is a ``poor man's" Rakhmanov's theorem. But unlike Rakhmanov's theorem which is
only known for a few other $E$'s (see \cite{DKS07,Remppt} and the discussion in Section~\ref{s8}),
this weaker version holds very generally.

\smallskip
3. In this case, $d\rho_E$ is equivalent to $dx$, so \eqref{1.23} and $W(x) >0$ for a.e.\ $x$ is
equivalent to saying that $\rho_E$ is $\mu$-a.c.
\end{remarks}

In Section~\ref{s4}, we will prove the following vast generalization of the
Erd\"os--Tur\'an result:

\begin{theorem}[Widom \cite{Wid}] \lb{T1.10} Let $\mu$ be a measure on $\bbR$ with compact
support and $E=\sigma_\ess (d\mu)$ and $C(E) >0$. Suppose $d\rho_E$ is the equilibrium measure
for $E$ and
\begin{equation}\lb{1.24}
d\mu = w(x)\, d\rho_E(x) + d\mu_\s
\end{equation}
where $d\mu_\s$ is $d\rho_E$-singular. Suppose $w(x)>0$ for $d\rho_E$-a.e.\ $x$. Then $\mu$ is
regular.
\end{theorem}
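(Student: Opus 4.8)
The plan is to deduce Theorem~\ref{T1.10} (Widom's theorem) from the equivalence of regularity with the root-asymptotic bound in Theorem~\ref{T1.8}, in particular from criterion (iii): it suffices to show that for $d\rho_E$-a.e.\ (equivalently, q.e.\ on $\partial\Omega$, since $d\rho_E$ and q.e.\ on $\partial\Omega$ are mutually compatible for the conclusion we need) point $x_0\in\partial\Omega$ one has $\limsup_n \abs{x_n(x_0)}^{1/n}\le 1$. Actually, the cleaner route is to prove directly that $\liminf_n \|X_n\|_{L^2(d\mu)}^{1/n}\ge C(E)$ is automatic from Corollary~\ref{C1.2}'s companion lower bound (the zero counting measures live near $E$ by Proposition~\ref{P1.6A}, and a standard potential-theoretic argument gives $\liminf \|X_n\|^{1/n}\ge C(E)$ always), so the whole content is the upper bound $\limsup_n \|X_n\|_{L^2(d\mu)}^{1/n}\le C(E)$, and for that I would use the orthonormal polynomials together with a mean-value / Nevai-type argument.

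First I would set $\kappa_n = \|X_n\|_{L^2(d\mu)}^{-1}$ so that $x_n = \kappa_n X_n$, and recall regularity is equivalent to $\lim \kappa_n^{1/n} = C(E)^{-1}$, i.e.\ to $\limsup \kappa_n^{1/n}\le C(E)^{-1}$ since the reverse inequality is automatic. Suppose, for contradiction, that along a subsequence $\kappa_n^{1/n}\to L > C(E)^{-1}$. The key identity is the reproducing/extremal one: for \emph{any} monic polynomial $Q_n$ of degree $n$, $\int \abs{x_n}^2 \,d\mu = 1$ while $\int \abs{Q_n}^2\, d\mu \ge \kappa_n^{-2}$. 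I would instead exploit that $\int \abs{x_n(x)}^2 w(x)\, d\rho_E(x) \le 1$. The strategy (this is the Widom/Stahl--Totik argument) is: since $w>0$ $d\rho_E$-a.e., for any $\veps>0$ the set $S_\veps = \{x\in E : w(x) > \veps\}$ has $\rho_E(S_\veps)\to 1$ as $\veps\to 0$; hence $\veps \int_{S_\veps} \abs{x_n}^2\, d\rho_E \le 1$, so $\int_{S_\veps}\abs{x_n}^2\, d\rho_E \le \veps^{-1}$. Now I would invoke the Nikolskii-type / Bernstein--Walsh machinery from potential theory: a polynomial of degree $n$ whose $L^2(d\rho_E)$ mass on a set of positive equilibrium measure is at most $C^n$-controlled must itself be $L^\infty(E)$-controlled by essentially $C(E)^{-n}$ up to subexponential factors — more precisely, using the Bernstein--Walsh lemma $\abs{P_n(z)} \le \|P_n\|_{L^\infty(E)} e^{nG_E(z)}$ and a lemma that converts an $L^2(d\rho_E)$ bound on a set of full equilibrium measure (or positive measure) into an $L^\infty(E)$ bound with only subexponential loss.

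The core lemma I would isolate and prove is the following: if $P_n$ has degree $n$ and $\int_{E}\abs{P_n}^2\, d\rho_E \le M_n$ with $M_n^{1/n}\to 1$, then $\limsup_n \|P_n\|_{L^\infty(E)}^{1/n} \le 1$. This follows because on any small disc $D(x_0, r)$ with $x_0$ in the (fine) support of $\rho_E$, the subharmonicity of $\abs{P_n}^2$ plus the fact that $\rho_E$ cannot be too thin (it has positive capacity density, quantified via $G_E$ and the maximum principle) forces $\abs{P_n(x_0)}^2$ to be comparable to its average, and the averages are controlled by $M_n$ times geometric factors $r^{-O(n)}$ which are killed by taking $n$th roots and then $r\to 0$. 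Combining: $\abs{x_n(z)}^{1/n} \le \|x_n\|_{L^\infty(E)}^{1/n} e^{G_E(z)} \to e^{G_E(z)}$ after we absorb the $\veps^{-1}$ constant, giving exactly criterion (ii) of Theorem~\ref{T1.8}, hence regularity.

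The main obstacle — and the part that needs real care rather than a routine estimate — is the transference lemma: going from an $L^2(w\,d\rho_E)$ bound (with $w$ only positive a.e., so possibly vanishing on a dense set and blowing up elsewhere) to a uniform $L^\infty(E)$ bound with only subexponential loss. The subtlety is that $E$ can be a very thin set (positive capacity but, say, Lebesgue measure zero, totally disconnected), so one cannot use one-dimensional Bernstein inequalities; one must use genuinely two-dimensional potential theory — the Bernstein--Walsh inequality, the representation of $G_E$, and a Cartan-type estimate showing a degree-$n$ polynomial cannot be small on a subset of $E$ carrying most of the equilibrium measure without being correspondingly small on all of $E$. I would handle this by first reducing (via $\rho_E(S_\veps)\to 1$ and regularity of $\rho_E$) to $S$ compact with $\rho_E(S) > 1-\delta$, then comparing $C(S)$ to $C(E)$ (they converge as $\delta\to 0$ by continuity of capacity under such exhaustion, using that $\rho_E$ has finite energy), and finally applying the already-established equivalence in Theorem~\ref{T1.8} on the set $S$ to bootstrap. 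This last move — restricting to $S$, using regularity relative to $S$ of the restricted measure, and letting $\delta\to 0$ — is, I expect, the cleanest way to package the hard analytic step, at the cost of invoking Theorem~\ref{T1.8} in a slightly self-referential-looking but legitimate way.
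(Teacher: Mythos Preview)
Your first paragraph has the directions reversed. Corollary~\ref{C1.2} gives $\limsup_n \|X_n\|^{1/n}\le C(E)$ \emph{always}; there is no ``companion lower bound'' that is automatic. The entire content of Widom's theorem is precisely $\liminf_n \|X_n\|^{1/n}\ge C(E)$---if that were automatic, every compactly supported measure would be regular, contradicting Example~\ref{E1.5}. You straighten this out in the second paragraph (correctly identifying $\limsup \kappa_n^{1/n}\le C(E)^{-1}$ as the target), but the opening is muddled.

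More seriously, your route through an $L^\infty(E)$ bound has a real gap at the step you yourself flag. The ``core lemma'' you state---that $\int_E |P_n|^2\,d\rho_E \le M_n$ with $M_n^{1/n}\to 1$ forces $\limsup \|P_n\|_{L^\infty(E)}^{1/n}\le 1$---is not proved by your sketch. Subharmonicity of $|P_n|^2$ compares values to \emph{area} averages over discs, not to $\rho_E$-averages over subsets of a thin set $E\subset\bbR$; there is no mechanism in your argument linking the two. Your fallback (restrict to compact $S$ with $\rho_E(S)>1-\delta$, note $C(S)\to C(E)$, and ``apply Theorem~\ref{T1.8} on $S$'') does not make sense as written: Theorem~\ref{T1.8} characterizes regularity of a \emph{measure}, it is not a polynomial inequality you can invoke on an arbitrary compact set. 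And even granting an $L^\infty(S)$ bound, passing to $L^\infty(E)$ needs control of $\sup_{E} G_S$, which need not be small merely because $C(S)$ is close to $C(E)$.

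The paper's proof (Section~\ref{s4}, following Van Assche) sidesteps all of this by never seeking an $L^\infty$ bound. Fix a subsequence with $\|P_{n(j)}\|^{1/n(j)}\to A$ and $d\nu_{n(j)}\to d\nu_\infty$. A Chebyshev/Borel--Cantelli argument (Lemma~\ref{L4.1}) gives, for $d\mu$-a.e.\ $x$, $\limsup |P_{n(j)}(x)|^{1/n(j)}\le A$, i.e.\ $\liminf \Phi_{\nu_{n(j)}}(x)\ge \log(A^{-1})$. The upper envelope theorem (Theorem~\ref{TA.4}) says $\Phi_{\nu_\infty}(x)=\liminf \Phi_{\nu_{n(j)}}(x)$ for q.e.\ $x$. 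Hence $\Phi_{\nu_\infty}(x)\ge\log(A^{-1})$ off the union of a $\mu$-null set and a capacity-zero set. Now the hypothesis $d\rho_E\ll d\mu$ is used exactly once: it makes this union $\rho_E$-null. Integrating against $d\rho_E$ and using the symmetry \eqref{A.2} and Frostman's bound \eqref{A.17},
\[
\log(A^{-1}) \le \int \Phi_{\nu_\infty}\,d\rho_E = \int \Phi_{\rho_E}\,d\nu_\infty \le \log(C(E)^{-1}),
\]
so $A\ge C(E)$. No $L^\infty$ estimate, no Nikolskii or Remez inequality, no restriction to subsets---just pointwise a.e.\ control plus potentials. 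Your observation $\int |x_n|^2 w\,d\rho_E\le 1$ is correct and could feed directly into this same Borel--Cantelli step (giving $\limsup|x_n(x)|^{1/n}\le 1$ for $\rho_E$-a.e.\ $x$, since $w>0$ a.e.), after which the argument above finishes; but that \emph{is} the paper's proof, not the $L^\infty$ route you pursued.
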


\begin{remarks} 1. As above, $\text{\eqref{1.24}} + w(x) >0$ is equivalent to saying that $d\rho_E$
is absolutely continuous with respect to $d\mu$.

\smallskip
2. Widom's result is much more general than what we have in this theorem. His $E$ is a general
compact set in $\bbC$. His polynomials are defined by general families of minimum conditions,
for example, $L^p$ minimizers. Most importantly, he has a general family of support conditions
that, as he notes in a one-sentence remark, include the case where $d\rho_E$ is a.c.\ with
respect to $d\mu$. Because of its spectral theory connection, we have focused on the $L^2$ minimizers,
although it is not hard to accommodate more general ones.  We focus on the $w>0$ case because
if one goes beyond that, it is better to look at conditions that depend on weights and not
just supports of the measure as Widom (and Ullman \cite{Ull}) do (see Theorem~\ref{T1.11} below).

\smallskip
3. In Section~\ref{s4}, we will give a proof of this theorem due to Van Assche \cite{vA} who
mentions Widom's paper but says it is not clear his hypotheses apply despite an explicit (albeit
terse) aside in Widom's paper. Stahl--Totik \cite{StT} state Theorem~\ref{T1.10} explicitly.
They seem to be unaware of Van Assche's paper or Widom's aside.
\end{remarks}

It was Geronimus \cite{Ger} who seems to have first noted that there are non-a.c.\ measures which are
regular (and later Widom \cite{Wid} and Ullman \cite{Ull}). Of course,  with the discovery of Nevai
class measures which are not a.c. \cite{Sim157,Sim178,Totik,Lub88,Lub91,MagVan,S234,S265}, there are
many examples, but given a measure, one would like to know effective criteria. Stahl--Totik
\cite[Ch.~4]{StT} have many, of which we single out:

\begin{theorem}[Stahl--Totik \cite{StT}]\lb{T1.11} Let $E$ be a finite union of disjoint closed
intervals in $\bbR$. Suppose $\mu$ is a measure on $\bbR$ with $\sigma_\ess (d\mu)=E$, and for any
$\eta >0$ {\rm{(}}$\abs{\cdot}$ is Lebesgue measure{\rm{)}},
\begin{equation}\lb{1.25}
\lim_{m\to\infty}\, \bigl| \bigl\{x\bigm| \mu ([x-\tfrac{1}{m}\, , x+\tfrac{1}{m}])
\leq e^{-m\eta}\bigr\}\bigr| =0
\end{equation}
Then $\mu$ is regular.
\end{theorem}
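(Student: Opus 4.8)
The plan is to use the root-asymptotics characterization of regularity from Theorem~\ref{T1.8}: it suffices to show that for $d\rho_E$-a.e.\ $x\in E$ one has $\limsup_n \abs{x_n(x)}^{1/n}\le 1$, since then condition (iii) holds q.e.\ on $\partial\Omega=E$ (using that $d\rho_E$ is mutually absolutely continuous with Lebesgue measure on a finite union of intervals, so a $d\rho_E$-a.e.\ statement is an a.e.\ statement, and sets of positive capacity in $E$ have positive Lebesgue measure). The mechanism converting the hypothesis \eqref{1.25} into a pointwise bound on $\abs{x_n(x)}$ is the standard Christoffel-function/extremality estimate: for the monic OP $X_n$ and any point $x$,
\begin{equation*}
\abs{X_n(x)}^2 \,\mu\bigl([x-\tfrac1m,x+\tfrac1m]\bigr) \le \int \abs{X_n(y)}^2\,d\mu(y)\,\cdot\,\sup_{\abs{y-x}\le 1/m}\Bigl|\tfrac{X_n(y)}{X_n(x)}\Bigr|^2
\end{equation*}
is too crude; instead one should compare $X_n$ on the small interval to its value at the center using a Bernstein--Markov (Remez-type) inequality on the interval $[x-\tfrac1m,x+\tfrac1m]$, or more cleanly use the reproducing-kernel bound $\abs{x_n(x)}^{-2}\le \lambda_n(x):=\min\{\int\abs{Q}^2 d\mu : Q(x)=1,\deg Q\le n\}$ together with a trial polynomial supported (in $L^2(d\mu)$ sense) near $x$.

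Concretely, the key step is: fix $x$, let $m\sim n$ (say $m=\lfloor \kappa n\rfloor$ for a constant $\kappa$ to be chosen), and build a degree-$n$ polynomial $Q_{n,x}$ with $Q_{n,x}(x)=1$ that is exponentially small off the interval $I_m=[x-\tfrac1m,x+\tfrac1m]$ and bounded by a subexponential factor on $I_m$. A product of a Chebyshev polynomial of the single interval $E$ of degree $\sim \veps n$ (which on $E$ is bounded by roughly $C(E)^{\veps n}$, controlling the global $L^2(d\mu)$ mass away from $x$) times a low-degree polynomial peaked at $x$ does the job; the peaking factor of degree $\sim (1-\veps)n$ can be taken so that it is $\le 1$ on $I_m$ and $\ge$ (something like) $2^{(1-\veps)n}$ off $I_m$ when $m\asymp n$, so that the off-$I_m$ contribution to $\int\abs{Q_{n,x}}^2 d\mu$ is dominated. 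Then $\lambda_n(x)\le \int\abs{Q_{n,x}}^2\,d\mu \le C\,\mu(I_m)\cdot(\text{subexp})+ (\text{exp.\ small})\cdot\mu(E)$, and hypothesis \eqref{1.25} says that for any $\eta>0$, for all large $m$ the set of $x$ with $\mu(I_m)> e^{-m\eta}$ has full measure in the limit; a Borel--Cantelli / $\liminf$ argument over $m$ extracts a full-$d\rho_E$-measure set of $x$ on which $\mu(I_m)\ge e^{-m\eta}$ for infinitely many (in fact, eventually all, after passing to the right subsequence) $m$. Combining, $\liminf_n \lambda_n(x)^{1/n}\ge e^{-C\eta\kappa}$-ish; letting $\eta\downarrow 0$ gives $\lim_n \lambda_n(x)^{1/n}=1$, hence $\limsup_n\abs{x_n(x)}^{1/n}\le 1$, for $d\rho_E$-a.e.\ $x$.

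The main obstacle I anticipate is the interchange of the two limiting parameters $n$ and $m$: \eqref{1.25} gives, for each fixed $\eta$, a statement about the Lebesgue measure of a \emph{bad set} $B_m(\eta)=\{x:\mu(I_m)\le e^{-m\eta}\}$ going to $0$, but I need a single set $X_{\mathrm{good}}$ of full measure such that for each $x\in X_{\mathrm{good}}$ and each $\eta>0$ the bound $\mu(I_{m_n})\ge e^{-m_n\eta}$ holds for all large $n$ along the sequence $m_n\asymp n$. Since $\abs{B_m(\eta)}\to 0$ does not by itself give summability, I would first pass to a subsequence $m_j$ with $\sum_j \abs{B_{m_j}(\eta_j)}<\infty$ for a sequence $\eta_j\downarrow 0$, apply Borel--Cantelli to get a full-measure set on which $\mu(I_{m_j})\ge e^{-m_j\eta_j}$ eventually, and then use monotonicity of $r\mapsto\mu([x-r,x+r])$ plus monotonicity of $n\mapsto\lambda_n(x)$ to fill the gaps between consecutive $m_j$; this is where the finite-union-of-intervals hypothesis and the $d\rho_E\sim dx$ equivalence get used to ensure ``full Lebesgue measure'' is the same as ``full $d\rho_E$ measure'' and hence ``capacity of the exceptional set is zero.'' A secondary technical point is the construction and estimation of the peaking polynomial $Q_{n,x}$ uniformly in $x\in E$; on a finite union of intervals this is routine (Chebyshev polynomials of $E$ are well understood, and a peaked factor is obtained from powers of a linear function or from the Fej\'er kernel rescaled to $I_m$), but one must check the subexponential bound on $I_m$ carefully so that it contributes only $e^{o(n)}$.
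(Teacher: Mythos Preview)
Your argument has a fundamental gap in the direction of the Christoffel-function inequality. You write $\abs{x_n(x)}^{-2}\le \lambda_n(x)$, but the correct relation is $\lambda_n(x)^{-1}=\sum_{j=0}^n \abs{p_j(x)}^2\ge \abs{x_n(x)}^2$, i.e., $\abs{x_n(x)}^{-2}\ge \lambda_n(x)$. To bound $\abs{x_n(x)}$ from above via $\lambda_n$ you therefore need a \emph{lower} bound on $\lambda_n(x)$; but the trial-polynomial construction you propose yields $\lambda_n(x)\le\int \abs{Q_{n,x}}^2\,d\mu$, an \emph{upper} bound. So your mechanism produces no useful information: an upper bound on $\lambda_n(x)$, however sharp, only bounds $K_n(x,x)$ from below and says nothing about $\limsup\abs{x_n(x)}^{1/n}$ from above. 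A genuine lower bound on $\lambda_n(x)$ would require showing that \emph{every} degree-$n$ polynomial $Q$ with $Q(x)=1$ has $\int\abs{Q}^2\,d\mu$ not too small, which is essentially the regularity statement itself. (A smaller point: your claim that ``sets of positive capacity in $E$ have positive Lebesgue measure'' is false---a positive-capacity Cantor set inside an interval of $E$ has measure zero. What does work is that the proof of (iii)$\Rightarrow$(i) in Theorem~\ref{T1.8} only integrates against $d\rho_E$, and $d\rho_E\ll dx$ on $E$, so a Lebesgue-a.e.\ bound would indeed suffice; your conclusion there is right even though the stated reason is not.)

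The paper's proof is entirely different and does not go through condition (iii) at all: it bounds $\|P_n\|_{L^2(d\mu)}$ from below directly. The two ingredients are the Chebyshev lower bound $\|P_n\|_{L^\infty(E)}\ge C(E)^n$ (Proposition~\ref{PB.3}) and the Remez inequality (Proposition~\ref{P5.2}), which together force the set $\{x\in E:\abs{P_n(x)}\ge C(E)^n e^{-2c(\delta_1)n}\}$ to have Lebesgue measure at least $\delta_1$. Since this set is a union of $O(n)$ intervals (Lemma~\ref{L5.3}), it contains intervals of length $\gtrsim \delta_1/n$. For $n$ large, hypothesis \eqref{1.25} makes the bad set $B_{Mn}(\delta_2)$ of measure less than $\delta_1/4$, so the two sets intersect; at a point $x$ in the intersection one has simultaneously $\abs{P_n}\ge C(E)^n e^{-2c(\delta_1)n}$ on $[x-\tfrac{1}{Mn},x+\tfrac{1}{Mn}]$ and $\mu([x-\tfrac{1}{Mn},x+\tfrac{1}{Mn}])\ge e^{-Mn\delta_2}$. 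Integrating gives $\|P_n\|_{L^2}^2\ge C(E)^{2n}e^{-(4c(\delta_1)+M\delta_2)n}$, and sending $\delta_1,\delta_2\downarrow 0$ finishes. Note that no Borel--Cantelli or subsequence extraction is needed: \eqref{1.25} is used only as a soft measure bound for each large $n$.
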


\begin{theorem}[Stahl--Totik \cite{StT}]\lb{T1.12}  Let $E$ be a finite union of disjoint closed
intervals in $\bbR$. Suppose $\mu$ is a measure on $\bbR$ and that $\mu$ is regular. Then for any
$\eta >0$,
\begin{equation}\lb{1.26}
\lim_{m\to\infty} \, C \bigl( \bigl\{x\in E \bigm| \mu ([x-\tfrac{1}{m}\, , x+\tfrac{1}{m}])
\leq e^{-m\eta}\bigr\}\bigr) =0
\end{equation}
\end{theorem}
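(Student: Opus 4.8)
The plan is to establish the contrapositive-flavored statement directly: assuming $\mu$ is regular, I want to show that the sets
\[
S_m(\eta) = \bigl\{x\in E \bigm| \mu([x-\tfrac1m, x+\tfrac1m]) \leq e^{-m\eta}\bigr\}
\]
have capacity tending to zero. The natural mechanism linking regularity to pointwise lower bounds on $\mu$ of small intervals is the Christoffel function: for $x\in E$, the bound $\lambda_n(x;d\mu)^{-1} = \sum_{j=0}^{n} |p_j(x)|^2 \geq |p_n(x)|^2$ together with regularity (in the form of Theorem~\ref{T1.8}(i)$\Leftrightarrow$(iii), which gives $\limsup |x_n(z)|^{1/n}\le 1$ q.e.\ on $E$) lets one control how small $\mu$ of a short interval can be at q.e.\ point. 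More precisely, a standard reproducing-kernel argument gives $\lambda_n(x) \leq \mu([x-\delta,x+\delta]) + (\text{error involving } \|x_n\|_{L^\infty} \text{ and } \delta)$, and on a finite union of intervals one has good polynomial inequalities (Bernstein--Walsh, Markov brothers) to make the error term manageable when $\delta \sim 1/n$.

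First I would set up the Christoffel-function upper bound: for $x_0\in E$ and the interval $I = [x_0 - 1/m, x_0+1/m]$, use a trial polynomial of degree $\sim m$ that is $\approx 1$ near $x_0$ and decays away, to get $\lambda_m(x_0) \leq C\,\mu(I) + (\text{small})$, hence a lower bound $\mu(I) \geq c\,\lambda_m(x_0) - (\text{small})$. Second, I would invoke regularity to get a lower bound on $\lambda_m(x_0)$ for $x_0$ off an exceptional set of small capacity: regularity says $\lim \|X_n\|^{1/n} = C(E)$, and combined with the Bernstein--Walsh lemma and the q.e.\ statement \eqref{1.19}, one obtains that $\lambda_m(x_0)^{1/m} \to 1$ for q.e.\ $x_0 \in E$ — more usefully, that $\{x_0 : \lambda_m(x_0) \leq e^{-m\eta}\}$ has capacity $\to 0$. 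This is essentially the nontrivial direction of a Máté--Nevai--Totik-type estimate. Third, I would combine: $S_m(\eta) \subseteq \{x_0 : \lambda_{m'}(x_0) \leq e^{-m'\eta'}\}$ for suitable $m' \sim m$, $\eta' \sim \eta$ (absorbing the constants and the ``small'' error from step one using that $E$ is a finite union of intervals, where $dx$ and $d\rho_E$ are mutually bounded), and conclude $C(S_m(\eta)) \to 0$.

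The main obstacle I expect is step two — quantifying how regularity forces $\lambda_m$ not to be too small except on a set of small \emph{capacity} (not merely small Lebesgue measure or q.e.). The naive q.e.\ statement \eqref{1.19} only gives that the bad set has capacity zero in the limit, but here we need a uniform-in-$m$ capacity decay for the finite-level sets $\{\lambda_m \leq e^{-m\eta}\}$. The standard route is to estimate $\int_E \log(1/\lambda_m(x))\, d\rho_E(x)$ or, better, to use that $-\frac1m \log |x_m(z)|$ converges in an appropriate potential-theoretic sense to $-G_E(z)$ and that $\log \lambda_m^{-1}(x)/m$ is dominated by $2\max_j \frac1j\log|x_j(x)|$, whose potential-theoretic $\limsup$ is controlled by regularity via the equilibrium potential; one then uses a Chebyshev/Markov-inequality argument in terms of energy or capacity (e.g.\ if a set had capacity bounded below, the corresponding equilibrium measure would produce a contradiction with the regularity estimate $\int \log|x_n| \, d\rho \to 0$). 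Making this quantitative on a finite union of intervals, where explicit Markov-type polynomial inequalities and the equivalence $d\rho_E \asymp dx$ are available, is where the real work lies; the rest is bookkeeping with the relation between $m$, $\delta = 1/m$, and the polynomial degree.
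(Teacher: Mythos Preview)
The paper does not actually prove Theorem~1.12; it is stated with attribution to Stahl--Totik \cite{StT}, and the remarks immediately following it announce only a proof of the companion Theorem~1.11 (in Section~5). So there is no argument in the paper to compare your proposal against.

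On the substance of your sketch: the Christoffel-function route is the natural one and is essentially how Stahl--Totik proceed. Your step one (bounding $\lambda_m(x_0)$ above by $\mu([x_0-\tfrac1m,x_0+\tfrac1m])$ plus a small error via a needle polynomial of degree $\sim m$) is correct and standard on a finite union of intervals. You have also correctly located the real difficulty in step three: Theorem~1.8(iii)/(v) gives only a q.e.\ limiting statement, whereas you need the quantitative decay $C(\{x\in E:\lambda_m(x)\le e^{-m\eta'}\})\to 0$. Your suggestion to run a capacitary Chebyshev argument against $\tfrac1m\log K_m(x,x)$ is the right instinct, but as written it is a plan, not a proof. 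What makes it close is a point you gesture at but do not pin down: for $E$ a finite union of closed intervals, $E$ is regular for the Dirichlet problem, and regularity of $\mu$ together with the Bernstein--Walsh lemma then yields the \emph{uniform} bound $\sup_{x\in E}|p_n(x)|^{1/n}\to 1$ (equivalently $\|p_n\|_E^{1/n}\to 1$). Once you have that, $K_m(x,x)=\sum_{j\le m}|p_j(x)|^2\le (m+1)e^{o(m)}$ uniformly on $E$, and the sets $\{x:\lambda_m(x)\le e^{-m\eta'}\}$ are eventually empty for each fixed $\eta'>0$, which is far stronger than the capacity statement you need. Without making that uniformity explicit, the sketch has a genuine gap at exactly the place you flagged.
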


\begin{remarks} 1. We will prove Theorem~\ref{T1.11} in Section~\ref{s5}.

\smallskip
2. Stahl--Totik \cite{StT} state these results for $E=[-1,1]$, but it is easy to accommodate
finite unions of disjoint closed intervals; see Corollary~\ref{C6.6}.
\end{remarks}

In Section~\ref{s6}, we turn to structural results (all due to Stahl--Totik \cite{StT})
connected to inheritance of regularity when measures have a relation, for example, when
restrictions of regular measures are regular.

Section~\ref{s7} discusses relations of potential theory and ergodic Jacobi matrices. This theory
concerns OPRL (or OPUC) whose recursion coefficients are samples of an ergodic process---as examples,
totally random or almost periodic cases. In that case, various ergodic theorems guarantee the existence
of $\lim (a_1 \dots a_n)^{1/n}$, of $d\nu_\infty \equiv \lim d\nu_n$, and of a natural Lyapunov
exponent, $\gamma(z)$, which off of $\supp(d\mu)$ is $\lim \abs{p_n(z;d\mu)}^{1/n}$ and is
subharmonic on $\bbC$. In that section, we will prove some of the few new results of this paper:

\begin{theorem}\lb{T1.13} Let $d\mu_\omega$ be the measures associated to an ergodic family of OPRL,
$d\nu_\infty$ and $\gamma$ its density of states and Lyapunov exponent. Let $E=\supp(d\nu_\infty)$.
Then the following are equivalent:
\begin{alignat}{2}
&\text{\rm{(a)}} \qquad && \gamma(x)=0 \text{ for } d\rho_E\text{-a.e. } x \notag \\
&\text{\rm{(b)}}  \qquad && \lim_{n\to\infty}\, (a_1 (\omega)\dots a_n(\omega))^{1/n} =C(E) \lb{1.27}
\end{alignat}
for a.e.\ $\omega$. Moreover, if \text{\rm{(a)}} and \text{\rm{(b)}} hold, then
\begin{equation}\lb{1.34a}
d\nu_\infty (x) =d\rho_E (x)
\end{equation}
with $d\rho_E$ the equilibrium measure for $E$. Conversely, if \eqref{1.34a} holds,
either \text{\rm{(a)}} and \text{\rm{(b)}} hold, or else for a.e.\ $\omega$, $d\mu_\omega$ is
supported on a set of capacity zero.
\end{theorem}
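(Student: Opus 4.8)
The plan is to build everything on the root-asymptotics machinery of Theorem~\ref{T1.8} together with the ergodic-theoretic fact that, for an ergodic family, $\lim_n (a_1(\omega)\dots a_n(\omega))^{1/n}$ exists a.e.\ and equals a deterministic constant $A$, and that $\gamma(z)$ is the a.e.\ limit of $\tfrac1n\log\abs{p_n(z;d\mu_\omega)}$ off $\supp(d\mu_\omega)$, subharmonic on $\bbC$, with $\gamma(z)=\log\abs z-\log A+o(1)$ at infinity. The first step is to record the Thouless-type formula $\gamma(z)=-\Phi_{d\nu_\infty}(z)-\log A$ (equivalently $\gamma(z)=\int\log\abs{z-y}\,d\nu_\infty(y)-\log A$), which comes from integrating \eqref{1.4} against the ergodic average and using that $d\nu_n\to d\nu_\infty$. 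Comparing with the defining properties of $G_E$ (harmonic on $\Omega$, $\log\abs z-\log C(E)+o(1)$ at infinity, vanishing q.e.\ on $E$), one sees $\gamma\ge G_E$ always, with equality iff $A=C(E)$ and $\gamma$ vanishes q.e.\ on $E$; this is the bridge between (a) and (b).

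Next I would prove (b)$\Rightarrow$(a)$\Rightarrow$(b). Given (b), i.e.\ $A=C(E)$, regularity of $d\mu_\omega$ holds for a.e.\ $\omega$ by the definition \eqref{1.13x} (recalling $\|P_n\|=a_1\cdots a_n$ up to a harmless constant), so by Theorem~\ref{T1.8}(ii) we get $\limsup_n\abs{x_n(z;d\mu_\omega)}^{1/n}\le e^{G_E(z)}$ uniformly on compacts; combined with the ergodic identification of the limit as $e^{\gamma(z)}$ off the support and upper-semicontinuity/subharmonicity arguments, this forces $\gamma(z)\le G_E(z)$ on all of $\bbC$, hence $\gamma=G_E$, hence $\gamma=0$ q.e.\ on $E$ and in particular $d\rho_E$-a.e.\ (since $d\rho_E$ does not charge the polar exceptional set). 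For the converse direction, if $\gamma=0$ $d\rho_E$-a.e., then $\int\gamma\,d\rho_E=0$; using the Thouless formula and the reproducing property of the equilibrium potential ($\Phi_{d\rho_E}=\log(1/C(E))$ q.e.\ on $E$, hence $d\rho_E$-a.e.) together with Fubini, $0=\int\gamma\,d\rho_E=\int\Phi_{d\rho_E}\,d\nu_\infty-\log A=\log(1/C(E))-\log A$ once one checks $\supp(d\nu_\infty)\subset E$ so that the q.e.\ statement applies $\nu_\infty$-a.e.; this gives $A=C(E)$, which is (b), and then (a) follows from the forward direction. Finally \eqref{1.34a}: once $\gamma=G_E$, apply the distributional Laplacian, $\Delta\gamma=2\pi(d\nu_\infty-\text{harmonic part})$ against $\Delta G_E=2\pi(\delta_\infty-d\rho_E)$ in the usual potential-theoretic sense, to conclude $d\nu_\infty=d\rho_E$.

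For the last sentence—the dichotomy when \eqref{1.34a} holds but (a),(b) may fail—I would run the argument of Theorem~\ref{T1.7}: from $d\nu_\infty=d\rho_E$ and $d\nu_n\to d\nu_\infty$ a.e.\ we are exactly in the hypothesis "$d\nu_n\to d\rho_E$" of that theorem, so for a.e.\ $\omega$ either $d\mu_\omega$ is regular (which by the forward direction is equivalent to (a) and (b) holding, and by ergodicity this is an a.e.\ all-or-nothing event) or there is a Borel set $X_\omega$ with $d\mu_\omega(\bbR\setminus X_\omega)=0$ and $C(X_\omega)=0$, i.e.\ $d\mu_\omega$ lives on a capacity-zero set. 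The only point needing care is the measurable/ergodic bookkeeping: the event "$d\mu_\omega$ regular" is shift-invariant (it depends only on the tail of the Jacobi parameters through $\lim(a_1\cdots a_n)^{1/n}$), so has probability $0$ or $1$, which is what lets us state the dichotomy cleanly for a.e.\ $\omega$ rather than $\omega$ by $\omega$.

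The main obstacle I anticipate is the potential-theoretic rigor in passing from pointwise root-asymptotic bounds to the global inequality $\gamma\le G_E$ and then to the identity of measures: one must handle the q.e.\ exceptional (polar) sets correctly, justify the Thouless formula (upper semicontinuity of potentials, no loss of mass at infinity since supports are uniformly compact), and invoke the principle that two subharmonic functions with the same Riesz mass and the same behavior at infinity agree. All of these are standard (they are collected in Appendix~A), but stitching them together with the ergodic limits—especially controlling $\abs{p_n(z)}^{1/n}$ on $E$ itself, where the off-support formula for $\gamma$ does not directly apply—is the delicate part, and is exactly where Theorem~\ref{T1.8}(iii) (the q.e.\ bound on $\partial\Omega$) does the heavy lifting.
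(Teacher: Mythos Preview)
Your proposal is essentially correct, but it is more circuitous than the paper's proof and misses an economy that the paper exploits.  In your (a)$\Rightarrow$(b) paragraph you compute (modulo a sign slip---your intermediate expression should read $-\int\Phi_{\rho_E}\,d\nu_\infty-\log A$, not $+\int\Phi_{\rho_E}\,d\nu_\infty-\log A$) the identity
\[
\int \gamma(x)\,d\rho_E(x)=\log C(E)-\log A,
\]
which is exactly the paper's \eqref{1.34b}.  The paper simply proves this identity first (via Fubini \eqref{A.2}, Frostman, and the Thouless formula \eqref{7.16}) and then observes that since $\gamma\ge 0$, it immediately gives (a)$\Leftrightarrow$(b): the integral vanishes iff $\gamma=0$ $\rho_E$-a.e., and the integral vanishes iff $A=C(E)$.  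Your (b)$\Rightarrow$(a) detour through Theorem~\ref{T1.8} and the pointwise comparison $\gamma\lessgtr G_E$ is therefore unnecessary; moreover, your claimed inequality ``$\gamma\ge G_E$ always'' is asserted without proof and is not as immediate as you suggest (on $\Omega$ the difference $\gamma-G_E$ is harmonic, not sub- or superharmonic, so a one-line maximum-principle argument does not apply), and the step ``upper-semicontinuity/subharmonicity arguments force $\gamma\le G_E$ on all of $\bbC$'' is likewise underjustified.

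Two smaller points.  First, the Fubini step needs $\Phi_{\rho_E}=\log(C(E)^{-1})$ to hold $\nu_\infty$-a.e., not just q.e.; you flag ``$\supp(d\nu_\infty)\subset E$'' as the issue, but that is the definition of $E$---the real point is that $\nu_\infty$ does not charge polar sets, which the paper secures via $\calE(\nu_\infty)<\infty$ (inequality \eqref{7.20}, itself a consequence of $\gamma\ge 0$).  Second, for \eqref{1.34a} the paper just invokes Theorem~\ref{T1.6} (regularity $\Rightarrow d\nu_n\to d\rho_E$) rather than your distributional-Laplacian argument; both are fine, and your use of Theorem~\ref{T1.7} for the converse dichotomy matches the paper exactly.
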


\begin{remarks} 1. We will prove that for a.e.\ $\omega$,
\begin{equation}\lb{1.34b}
\lim_{n\to\infty}\, (a_1 (\omega) \dots a_n (\omega))^{1/n} =C(E) \exp \biggl( -\int \gamma(x)\,
d\rho_E(x)\biggr)
\end{equation}

\smallskip
2. In Section~\ref{s8}, we will see examples where $d\nu_\infty =d\rho_E$
but \eqref{1.27} fails. Of course, (a) also fails.
\end{remarks}

The following is an ultimate version of what is sometimes called the Pastur--Ishii theorem
(see Section~\ref{s7}).

\begin{theorem}\lb{T1.15A}  Let $d\mu_\omega$ be a family of measures associated to an ergodic
family of OPRL and let $\gamma$ be its Lyapunov exponent. Let $S\subset\bbR$ be the Borel
set of $x\in\bbR$ with $\gamma(x) >0$. Then for a.e.\ $\omega$, there exists $Q_\omega$
of capacity zero so $d\mu_\omega (S\setminus Q_\omega) =0$. In particular, $d\mu_\omega
\restriction S$ is of local Hausdorff dimension zero.
\end{theorem}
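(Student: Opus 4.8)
The plan is to derive Theorem~\ref{T1.15A} from the machinery already assembled, principally the root-asymptotics dichotomy of Theorem~\ref{T1.8} together with the standard potential-theoretic fact that a function which is subharmonic on $\bbC$, harmonic and with the right growth near infinity, dominates the Green's function, and that the set where a sequence of subharmonic functions fails to be suitably small is a polar (capacity zero) set off an exceptional null set. Concretely, I would first recall that for an ergodic family, the Lyapunov exponent $\gamma(z)=\lim_n \frac1n\log\abs{p_n(z;d\mu_\omega)}$ exists for a.e.\ $\omega$ for each fixed $z$ (and off $\supp d\mu_\omega$ it is genuinely a limit, subharmonic on $\bbC$, and $\omega$-independent), and that the Thouless formula gives $\gamma(z)=\int\log\abs{z-x}\,d\nu_\infty(x)-\log C(E) + (\text{correction})$; in the notation of the paper, $\gamma(z)-G_E(z)$ is a nonnegative subharmonic function that vanishes q.e.\ on $E$. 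The set $S=\{x:\gamma(x)>0\}$ is then, up to a set of $\rho_E$-measure zero which is polar, exactly the part of $E$ where the spectral measure must ``concentrate on thin sets.''

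The heart of the argument is a Borel--Cantelli / subharmonicity estimate. Fix $\veps>0$ and let $S_\veps=\{x\in\bbR: \gamma(x)>\veps\}$. For a.e.\ $\omega$, one has $\frac1n\log\abs{p_n(x;d\mu_\omega)}\to\gamma(x)$ for (Lebesgue-, hence $\rho_E$-)a.e.\ $x$, but to control the measure $d\mu_\omega$ itself I would instead use the quantitative upshot of regularity-type arguments: there is a set $Q_\omega$ with $C(Q_\omega)=0$ such that for $x\notin Q_\omega$, $\limsup_n\abs{x_n(x;d\mu_\omega)}^{1/n}\le 1$ is \emph{not} what happens on $S$; rather, on $S$ the orthonormal polynomials grow like $e^{\gamma(x)}>1$, and since $\sum_n |x_n(x)|^2\,\mu(\{x\})$-type bounds (more precisely, the fact that $\int\abs{x_n}^2\,d\mu=1$ for all $n$) force $d\mu_\omega$ to give essentially no mass to points where $\abs{x_n}$ grows exponentially. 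The clean way to package this: if $x$ is in the support of $d\mu_\omega$ and $\limsup\abs{x_n(x)}^{1/n}=e^{\gamma(x)}$, then for the $\mu_\omega$-a.e.\ point one still has $\abs{x_n(x)}^{1/n}\to 1$ by Máté--Nevai--Totik-style bounds (valid for regular measures) --- so the conflict $e^{\gamma(x)}>1$ can be reconciled only on a $\mu_\omega$-null set. I would make this precise by the energy / capacity argument: the set where a normalized subharmonic-in-the-limit sequence exceeds its q.e.\ bound is polar, so $\{x\in S_\veps : \limsup\abs{x_n(x;d\mu_\omega)}^{1/n}\le e^{\veps/2}\}$ has capacity zero, call it $Q_\omega^{(\veps)}$; off that set $\abs{x_n(x)}$ grows at least like $e^{n\veps/2}$, and then $\int_{S_\veps\setminus Q_\omega^{(\veps)}}\abs{x_n}^2\,d\mu_\omega\le 1$ forces $\mu_\omega(S_\veps\setminus Q_\omega^{(\veps)})\le e^{-n\veps}$, hence $=0$.

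Having shown $\mu_\omega(S_\veps)=0$ off a capacity-zero set for each fixed $\veps>0$, I take $\veps=1/k$, set $Q_\omega=\bigcup_k Q_\omega^{(1/k)}$ (a countable union of polar sets, still of capacity zero --- this is a standard fact about polar sets, cf.\ Appendix~A), and note $S=\bigcup_k S_{1/k}$, so $\mu_\omega(S\setminus Q_\omega)=0$ for a.e.\ $\omega$. The final sentence of the theorem is then immediate: a set of capacity zero has Hausdorff dimension zero (indeed, logarithmic capacity zero is strictly stronger), and ``local Hausdorff dimension zero'' for $d\mu_\omega\restriction S$ means exactly that $d\mu_\omega\restriction S$ is carried by a set of Hausdorff dimension zero, which $\bbR\setminus Q_\omega$ intersected with $S$ provides after removing $Q_\omega$.

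The step I expect to be the main obstacle is the quantitative passage from ``$\gamma(x)>0$ pointwise'' to ``$\mu_\omega$ gives no mass to $S$ off a polar set'' --- i.e.\ making rigorous that the set where $\limsup_n\abs{x_n(x;d\mu_\omega)}^{1/n}$ is strictly smaller than $e^{\gamma(x)}$ (while still being $\ge 1$ on the bulk of $S$) is polar. For regular measures this follows from the equivalences in Theorem~\ref{T1.8}, but here $d\mu_\omega$ need not be regular; the resolution is that for the \emph{ergodic} density of states one always has $d\nu_\infty$ with $\gamma = G_E + (\text{const})$ off $S$, and the subharmonicity of $\gamma$ together with the fact that $\frac1n\log\abs{x_n(z;d\mu_\omega)}$ is a sequence of subharmonic functions whose $\limsup$ is $\le \gamma$ q.e.\ (by an upper-envelope / Hartogs-type lemma from Appendix~A) and $=\gamma$ a.e.\ lets one localize the ``bad'' set as polar. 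I would be careful to invoke only the one-sided bound $\limsup\abs{x_n}^{1/n}\le e^{G_E}$ which needs regularity on $E\setminus S$ but is automatic on $S$ via the upper envelope of subharmonic functions --- this is exactly where Kotani theory's input (that $\gamma$ itself is the a.e.\ limit, and subharmonic) does the real work, and is why the hypothesis is merely ``ergodic'' rather than ``regular.''
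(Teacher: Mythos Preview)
Your core strategy is the paper's: combine (a) the $\mu_\omega$-a.e.\ polynomial bound $\limsup_n\abs{p_n(x)}^{1/n}\le 1$ with (b) the q.e.\ statement $\limsup_n\abs{p_n(x)}^{1/n}=e^{\gamma(x)}$ coming from the upper envelope theorem applied to $\Phi_{\nu_n}\to\Phi_{\nu_\infty}$ together with the Thouless formula, and observe that on $S=\{\gamma>0\}$ these are incompatible except on the capacity-zero exceptional set $Q_\omega$ from (b). The paper's proof is five lines: cite \eqref{4.10} for (a), Theorem~\ref{TA.4} and \eqref{7.16} for (b), done.

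However, your write-up obscures this and introduces one genuine slip and several unnecessary detours. The slip: your sentence ``off that set $\abs{x_n(x)}$ grows at least like $e^{n\veps/2}$, and then $\int_{S_\veps\setminus Q_\omega^{(\veps)}}\abs{x_n}^2\,d\mu_\omega\le 1$ forces $\mu_\omega(S_\veps\setminus Q_\omega^{(\veps)})\le e^{-n\veps}$'' has a uniformity gap---the lower bound $\abs{x_n(x)}\ge e^{n\veps/2}$ holds only for $n\ge N(x)$ with $N(x)$ depending on $x$, so for a fixed $n$ you cannot bound the measure of the whole set. The correct packaging is Lemma~\ref{L4.1} (Borel--Cantelli from $\|p_n\|_{L^2(d\mu_\omega)}=1$), or equivalently \eqref{4.8}--\eqref{4.10}, which gives $\abs{p_n(x)}\le C(x)(n+1)$ for $\mu_\omega$-a.e.\ $x$ directly. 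This bound holds for \emph{every} measure of compact support; it has nothing to do with regularity, M\'at\'e--Nevai--Totik, or Kotani theory, and you should drop all of those references. Likewise the $\veps$-stratification $S=\bigcup_k S_{1/k}$ and the countable union of polar sets are unnecessary: once you have $\limsup\abs{p_n(x)}^{1/n}\le 1$ for $\mu_\omega$-a.e.\ $x$ and $\limsup\abs{p_n(x)}^{1/n}=e^{\gamma(x)}$ off a single polar set $Q_\omega$, the conclusion $\mu_\omega(S\setminus Q_\omega)=0$ is immediate with no $\veps$.
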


We should explain what is really new in this theorem. It has been known since Pastur \cite{Pas80}
and Ishii \cite{Ishii} that for ergodic Schr\"odinger operators, the spectral measures are
supported on the eigenvalues union the bad set where Lyapunov behavior fails (this bad set
actually occurs, e.g., \cite{AvS83,S236}). The classic result is that the bad set has
Lebesgue measure zero. The new result here (elementary given a potential theoretic
point of view!) is that the bad set has capacity zero.

Section~\ref{s8} describes examples, open questions, and conjectures. Section~\ref{s9}
has some remarks on the possible extensions of these ideas to continuum Schr\"odinger
operators. Appendix~A is a primer of potential theory and Appendix~B proves
Theorem~\ref{T1.1} on Chebyshev polynomials.

\medskip
It is a pleasure to thank Jonathan Breuer, Jacob Christiansen, David Damanik, Svetlana
Jitomirskaya, Yoram Last, Christian Remling, Vilmos Totik, and Maxim Zinchenko for useful
discussions. I would also like to thank Ehud de Shalit and Yoram Last for the hospitality
of the Einstein Institute of Mathematics of the Hebrew University where part of this paper
was written.

\section{Regular Measures for OPRL} \lb{s2}

In this section, our main goal is to prove Theorems~\ref{T1.6} and \ref{T1.8} for OPRL. The key
will be a series of arguments familiar to spectral theorists as the Thouless formula, albeit
in a different (nonergodic) guise. The key will be an analog of positivity of the Lyapunov
exponent off the spectrum.

\begin{lemma} \lb{L2.1}
\begin{SL}
\item[{\rm{(a)}}] Let $J$ be a bounded Jacobi matrix and let $H$ be the convex hull of the
spectrum of $J$. For any $\varphi\in L^2(\bbR,d\mu)$,
\begin{equation} \lb{2.1}
\abs{\langle\varphi, (J-z)\varphi\rangle} \geq \dist(z,H) \|\varphi\|^2
\end{equation}
\item[{\rm{(b)}}] The Jacobi parameters $a_n$ obey {\rm{(}}recall $a_n >0${\rm{)}}
\begin{equation} \lb{2.2}
a_n\leq \tfrac12\, \diam(H)
\end{equation}
\end{SL}
\end{lemma}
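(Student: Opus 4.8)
The plan is to prove part (a) first and then deduce part (b) as an easy corollary. For part (a), let $z = x + iy$ with $x, y \in \bbR$, and write $J - z = (J - x) - iy$. Since $J$ is self-adjoint, for any $\varphi$ with $\|\varphi\| = 1$ we have $\langle \varphi, (J-x)\varphi\rangle \in \bbR$, so $\langle\varphi, (J-z)\varphi\rangle = \langle\varphi, (J-x)\varphi\rangle - iy$. Taking absolute values, $\abs{\langle\varphi,(J-z)\varphi\rangle}^2 = \langle\varphi,(J-x)\varphi\rangle^2 + y^2$. The key point is then that, by the spectral theorem, $\langle\varphi,(J-x)\varphi\rangle = \int (\lambda - x)\, d\mu_\varphi(\lambda)$ where $\mu_\varphi$ is the spectral measure of $\varphi$, which is supported on $\spec(J) \subset H$. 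So if $H = [\alpha,\beta]$ (the convex hull of a bounded real set is a closed interval, with $\alpha = \min\spec(J)$, $\beta = \max\spec(J)$), then $\lambda - x$ ranges over $[\alpha - x, \beta - x]$, and averaging keeps us in that interval: $\langle\varphi,(J-x)\varphi\rangle \in [\alpha - x, \beta - x]$. Combining these two facts, $\abs{\langle\varphi,(J-z)\varphi\rangle}$ is the distance in $\bbC$ from $z$ to a real point lying in $[\alpha,\beta] = H$, hence is at least $\dist(z,H)$. Rescaling by $\|\varphi\|^2$ gives \eqref{2.1}. There is essentially no obstacle here; the only thing to be slightly careful about is the case $\|\varphi\| = 0$, which is trivial.

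For part (b), the idea is to test \eqref{2.1}, or rather the underlying computation, against the vectors $\delta_n$ and $\delta_{n\pm1}$ in $\ell^2$. Concretely, consider $\varphi = \f{1}{\sqrt2}(\delta_n + e^{i\theta}\delta_{n+1})$ for a phase $\theta$ to be chosen. Then $\|\varphi\|^2 = 1$, and since $J$ is tridiagonal with $\langle\delta_n, J\delta_{n+1}\rangle = a_{n+1}$ (using the paper's indexing), a direct computation gives $\langle\varphi, J\varphi\rangle = \f12(b_n + b_{n+1}) + \Real(e^{i\theta} a_{n+1}) = \f12(b_n + b_{n+1}) + a_{n+1}\cos\theta$ since $a_{n+1} > 0$. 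As $\theta$ ranges over $[0,2\pi)$, this quantity sweeps out the interval $[\f12(b_n+b_{n+1}) - a_{n+1},\ \f12(b_n+b_{n+1}) + a_{n+1}]$, an interval of length $2a_{n+1}$. But by the self-adjointness/spectral argument of part (a), every value of $\langle\varphi, J\varphi\rangle$ lies in $H = [\alpha,\beta]$, whose length is $\diam(H)$. Therefore $2a_{n+1} \leq \diam(H)$, which is \eqref{2.2} (with the index shifted — since the estimate holds for every $n$, it holds for all the $a_n$).

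The "main obstacle," such as it is, is purely bookkeeping: getting the Jacobi-parameter indexing to line up with the normalization in \eqref{1.8} (whether $a_{n+1}$ or $a_n$ sits between sites $n$ and $n+1$), and making sure the two-dimensional test space argument is phrased cleanly — one can either invoke \eqref{2.1} with a well-chosen $z$ off the real axis and optimize, or, more transparently, just observe directly that $\langle\varphi, J\varphi\rangle \in H$ for all unit $\varphi$ and that the rank-one family above already exhausts an interval of length $2a_n$ inside $H$. I would use the latter, as it makes part (b) a one-line consequence of the spectral theorem rather than of the specific inequality \eqref{2.1}. No deep input is needed beyond the spectral theorem for bounded self-adjoint operators and the fact that the convex hull of a compact subset of $\bbR$ is a closed interval whose endpoints are the min and max of the set.
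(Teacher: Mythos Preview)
Your proof is correct. Part~(a) is essentially the same as the paper's argument, just phrased in coordinates: the paper picks a unimodular $\omega$ so that $\Real[(x-z)\omega]\geq d$ for all $x\in H$ (a separating-line argument), while you split $z$ into real and imaginary parts and use the Pythagorean identity. Both rest on the spectral theorem and the fact that $H$ is an interval.

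Part~(b) is where you genuinely diverge. The paper uses the orthogonal-polynomial identity $a_n=\int x\,p_n p_{n-1}\,d\mu=\int (x-c)\,p_n p_{n-1}\,d\mu$ (subtracting the center $c$ of $H$ is free by orthogonality), then bounds by $\sup_{\sigma(J)}|x-c|=\tfrac12\diam(H)$ via Cauchy--Schwarz. Your argument instead works directly in $\ell^2$: the numerical range of $J$ is contained in $H$, and the one-parameter family $\tfrac{1}{\sqrt2}(\delta_n+e^{i\theta}\delta_{n+1})$ already traces out an interval of length $2a_n$ inside it. The paper's route is marginally shorter and exploits the OP integral formula; yours is more transparently operator-theoretic and would work verbatim for any self-adjoint tridiagonal matrix without reference to the measure $d\mu$. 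Either is fine here.
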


\begin{proof} In a spectral representation, $J$ is multiplication by $x$. If $d=\dist(z,H)$,
there is $\omega\in\partial\bbD$ with $\Real[(x-z)\omega] \geq d$ for all $x\in H$. Thus
$\Real (\langle \varphi, (J-z)\varphi\rangle\omega)\geq d\|\varphi\|^2$, which yields \eqref{2.1}.

Let $D=\f12\diam(H)$ and $c=$ center of $H$ so $H=[c-D, c+D]$. Then
\begin{align*}
a_n &= \int x p_n p_{n-1}\, d\mu \\
&= \int (x-c) p_n p_{n-1}\, d\mu \\
&\leq \sup_{\sigma (J)} \, \abs{x-c} = D
\end{align*}
proving \eqref{2.2}.
\end{proof}

The following is related to the proof of Theorem~4.3.15 in \cite{OPUC1}:

\begin{proposition}\lb{P2.2} Let $d\mu$ be a measure on $\bbR$ of compact support, $p_n(x,d\mu)$
the normalized OPRL, and $H$ the convex hull of the support of $d\mu$. For $z\notin H$, let $d(z) =
\dist(z,H)$. Let $D=\f12 \diam(H)$. Then for such $z$,
\begin{equation} \lb{2.3}
\abs{p_n(z,d\mu)}^2 \geq\biggl( \f{d}{D}\biggr)^2 \biggl( 1+ \biggl( \f{d}{D}\biggr)^2\biggr)^{n-1}
\end{equation}
In particular, $p_n(z)\neq 0$ for all $n$ and
\begin{equation} \lb{2.4x}
\liminf \abs{p_n (z,d\mu)}^{1/n} \geq\biggl( 1+\biggl(\f{d}{D}\biggr)^2\biggr)^{1/2} >1
\end{equation}
\end{proposition}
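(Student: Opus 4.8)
The plan is to test the quadratic form of $J-z$ against the Christoffel--Darboux kernel of degree $n-1$, feed the outcome into Lemma~\ref{L2.1}, and then exploit the fact that the resulting estimate is \emph{self-improving}: it forces the partial sums $S_n:=\sum_{j=0}^{n-1}|p_j(z)|^2$ to grow geometrically with ratio $1+(d/D)^2$.

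Concretely, fix $n\ge 1$, normalize $\mu$ to be a probability measure (so $p_0\equiv 1$), and set
\[
v = v_n(\cdot) := \sum_{j=0}^{n-1} p_j(z)\,p_j(\cdot)\ \in\ L^2(d\mu),
\]
which is nonzero (its $p_0$-coefficient equals $1$) and has $\|v\|^2=S_n$. Since $j\mapsto p_j(z)$ satisfies the same three-term recursion \eqref{1.8} as $j\mapsto p_j(x)$, the Christoffel--Darboux formula gives the telescoping identity
\[
(x-z)\,v(x)=a_n\bigl[p_{n-1}(z)\,p_n(x)-p_n(z)\,p_{n-1}(x)\bigr],
\]
i.e.\ $(J-z)v=a_n p_{n-1}(z)p_n-a_n p_n(z)p_{n-1}$ in the orthonormal basis. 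Pairing with $v$, the term along $p_n$ disappears because $\deg v<n$, and only the off-diagonal piece survives, so
\[
\bigl|\langle v,(J-z)v\rangle\bigr|=a_n\,|p_n(z)|\,|p_{n-1}(z)|.
\]

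Next I would apply Lemma~\ref{L2.1}. Part~(a), together with the fact that the convex hull of $\sigma(J)$ is exactly $H$, gives $|\langle v,(J-z)v\rangle|\ge \dist(z,H)\,\|v\|^2=d\,S_n$, and part~(b) gives $a_n\le D$. Writing $r:=d/D$ and combining,
\[
|p_n(z)|\,|p_{n-1}(z)|\ \ge\ r\,S_n\qquad (n\ge 1),
\]
which, starting from $p_0=1$, already shows $p_n(z)\ne 0$ for all $n$. Since $S_n\ge |p_{n-1}(z)|^2$, dividing yields the ratio bound $|p_n(z)|/|p_{n-1}(z)|\ge r$, and multiplying the two inequalities gives
\[
|p_n(z)|^2=\bigl(|p_n(z)|\,|p_{n-1}(z)|\bigr)\cdot\frac{|p_n(z)|}{|p_{n-1}(z)|}\ \ge\ r^2 S_n .
\]
Hence $S_{n+1}=S_n+|p_n(z)|^2\ge(1+r^2)S_n$; iterating from $S_1=|p_0(z)|^2=1$ gives $S_n\ge(1+r^2)^{n-1}$, and therefore $|p_n(z)|^2\ge r^2 S_n\ge r^2(1+r^2)^{n-1}$, which is \eqref{2.3}. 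Taking $(2n)$-th roots and letting $n\to\infty$ yields $\liminf_n|p_n(z)|^{1/n}\ge(1+r^2)^{1/2}>1$ since $d>0$, which is \eqref{2.4x}.

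The one genuinely delicate point is the choice of test vector, so it is worth saying why it is the right one. If one only used $\|(J-z)v\|\ge d\|v\|$ one would get $a_n^2\bigl(|p_n(z)|^2+|p_{n-1}(z)|^2\bigr)\ge d^2 S_n$, hence merely $S_{n+1}\ge r^2 S_n$, which is too weak (for $r<1$ it does not even force $S_n\to\infty$). The improvement comes from pairing $v$ with itself: because $v$ has no $p_n$-component, the diagonal contributions cancel and one is left with the \emph{product} $|p_n(z)|\,|p_{n-1}(z)|$, and it is precisely this product form, together with the trivial $S_n\ge|p_{n-1}(z)|^2$, that lets the recursion for $S_n$ close with the factor $1+r^2$ instead of $r^2$. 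Everything else — verifying the Christoffel--Darboux telescoping, that $\sigma(J)=\supp(d\mu)$ so Lemma~\ref{L2.1} applies with the $H$ in the statement, and the identity $\|v\|^2=S_n$ — is routine bookkeeping.
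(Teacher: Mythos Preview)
Your proof is correct and follows essentially the same approach as the paper: both test the quadratic form of $J-z$ against the Christoffel--Darboux kernel, invoke Lemma~\ref{L2.1} to obtain $|p_n(z)p_{n-1}(z)|\ge r\,S_n$ with $r=d/D$, and then bootstrap this into geometric growth of the partial sums $S_n$. The only noteworthy difference is in the passage from the product bound to $|p_n(z)|^2\ge r^2 S_n$: the paper uses the AM--GM-type inequality $2|xy|\le \alpha x^2+\alpha^{-1}y^2$ with $\alpha=r$, whereas you first extract the ratio bound $|p_n|/|p_{n-1}|\ge r$ from $S_n\ge|p_{n-1}|^2$ and then multiply it against the product bound---a slightly slicker route to the same inequality.
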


\begin{remark} Of course, it is well known that $p_n$ has all its zeros on $H$.
\end{remark}

\begin{proof} Let $\varphi_n(x)$ be the function
\begin{equation} \lb{2.4}
\varphi_n(x) =\sum_{j=0}^n p_n(z) p_n(x)
\end{equation}
which has components $\varphi_n=\langle p_0(z), \dots, p_n(z), 0, 0, \dots\rangle$ in $p_n(x)$ basis.
Then, by the recursion relation,
\begin{equation} \lb{2.5}
[(J-z) \varphi_n]_j =\begin{cases}
0 & j\neq n,\, n+1 \\
-a_{n+1} p_{n+1}(z) & j=n \\
a_{n+1} p_n (z) & j=n+1
\end{cases}
\end{equation}
(a version of the CD formula!). Thus,
\begin{equation} \lb{2.6}
\langle\varphi_n, (J-z)\varphi_n\rangle =-a_{n+1} p_{n+1}(z)\, \ol{p_n(z)}
\end{equation}
and \eqref{2.1} becomes
\begin{equation} \lb{2.7}
a_{n+1} \abs{p_{n+1}(z) p_n(z)} \geq d\sum_{j=0}^n \, \abs{p_j(z)}^2
\end{equation}
By \eqref{2.2},
\begin{equation} \lb{2.8}
\abs{p_{n+1}(z) p_n(z)} \geq \f{d}{D}\, \sum_{j=0}^n \, \abs{p_j(z)}^2
\end{equation}

Next use $2\abs{xy}\leq\alpha x^2 + \alpha^{-1} y^2$ for any $\alpha$ to see
\begin{equation} \lb{2.9}
\abs{p_{n+1}(z) p_n(z)} \leq \f12\, \f{d}{D}\, \abs{p_n(z)}^2 + \f12\, \f{D}{d}\,
\abs{p_{n+1}(z)}^2
\end{equation}
which, with \eqref{2.8}, implies
\begin{equation} \lb{2.10}
\abs{p_{n+1}(z)}^2 \geq \biggl( \f{d}{D}\biggr)^2 \sum_{j=0}^n\, \abs{p_j(z)}^2
\end{equation}
This implies that
\begin{equation} \lb{2.11}
\sum_{j=0}^{n+1}\, \abs{p_j(z)}^2 \geq \biggl[ 1+\biggl(\f{d}{D}\biggr)^2\biggr] \sum_{j=0}^n\,
\abs{p_j(z)}^2
\end{equation}
so, since $p_0(z)=1$, we obtain
\begin{equation} \lb{2.12}
\sum_{j=0}^n \, \abs{p_j(z)}^2 \geq \biggl[ 1+\biggl( \f{d}{D}\biggr)^2\biggr]^n
\end{equation}
\eqref{2.12} plus \eqref{2.10} imply \eqref{2.3}, and that implies \eqref{2.4x}.
\end{proof}

\begin{remark} \eqref{2.4x} is also related to Schnol's theorem (see \cite{SL99,Sh} and
\cite[Lemma~4.3.13]{OPUC1}) and to Combes--Thomas estimates \cite{CGes03,Agmon}.
\end{remark}

This yields the key estimate, given the following equality:

\begin{theorem}\lb{T2.3} Let $d\mu$ be a measure of compact support on $\bbR$ with $H$ the
convex hull of $\supp(d\mu)$. Let $n(j)$ be a subsequence {\rm{(}}i.e., $n(1)<n(2) <n(3) < \dots$
in $\{0,1,2,\dots\}${\rm{)}} so that the zero counting measures $d\nu_{n(j)}$ have a weak limit
$d\nu_\infty$ and so that $(a_1\dots a_{n(j)})^{1/n(j)}$ has a nonzero limit $A$. Then, for
any $z\notin H$,
\begin{equation} \lb{2.13}
\lim_{j\to\infty}\, \abs{p_{n(j)}(z)}^{1/n(j)} =A^{-1} \exp(-\Phi_{\nu_\infty}(z))
\end{equation}
where $\Phi_\nu$ is the potential of $\nu$. In particular,
\begin{equation} \lb{2.14}
\exp(-\Phi_{\nu_\infty}(z)) > A
\end{equation}
\end{theorem}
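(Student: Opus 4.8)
The plan is to combine the lower bound on $|p_n(z)|$ from Proposition~\ref{P2.2} with the elementary identity \eqref{1.4}\textendash\eqref{1.7} relating $|X_n|^{1/n}$ to the potential of the zero counting measure, and then take limits along the subsequence $n(j)$. First I would recall that since $p_n = X_n/\|X_n\|$ and $\|X_n\| = a_1\cdots a_n\,\mu(\bbR)^{1/2}$ by \eqref{1.8}, we have for $z\notin H$ (so in particular away from all zeros of $X_n$, which lie in $H$)
\[
\abs{p_{n}(z)}^{1/n} = \frac{\abs{X_n(z)}^{1/n}}{(a_1\cdots a_n)^{1/n}\,\mu(\bbR)^{1/2n}}
= \frac{\exp(-\Phi_{\nu_n}(z))}{(a_1\cdots a_n)^{1/n}\,\mu(\bbR)^{1/2n}},
\]
using \eqref{1.4} with $P_n = X_n$, $\nu_n = \nu_n$. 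The factor $\mu(\bbR)^{1/2n}\to 1$, and along the subsequence $(a_1\cdots a_{n(j)})^{1/n(j)}\to A$ by hypothesis, so the whole problem reduces to showing $\Phi_{\nu_{n(j)}}(z)\to\Phi_{\nu_\infty}(z)$ for each fixed $z\notin H$.

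The convergence $\Phi_{\nu_{n(j)}}(z)\to\Phi_{\nu_\infty}(z)$ is the substantive point, and it is where I expect the main obstacle to lie. The function $y\mapsto \log|z-y|^{-1}$ is continuous and bounded on $H$ whenever $z\notin H$: indeed $|z-y|$ is bounded above by $\diam(H)+\dist(z,H)$ and below by $\dist(z,H) = d > 0$ uniformly for $y\in H$. By Proposition~\ref{P1.6A} — or rather the elementary fact that all zeros of $X_n$ lie in the convex hull $H$ of $\supp(d\mu)$, hence each $\nu_n$ is supported in the fixed compact set $H$ — the measures $\nu_{n(j)}$ and their weak limit $\nu_\infty$ all live on $H$. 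Since $\log|z-\cdot|^{-1}$ is a bounded continuous function on $H$, weak convergence $\nu_{n(j)}\to\nu_\infty$ gives directly
\[
\Phi_{\nu_{n(j)}}(z) = \int_H \log\abs{z-y}^{-1}\,d\nu_{n(j)}(y) \longrightarrow \int_H \log\abs{z-y}^{-1}\,d\nu_\infty(y) = \Phi_{\nu_\infty}(z).
\]
So no delicate potential-theoretic lower-semicontinuity argument is needed here precisely because we have restricted to $z\notin H$; that restriction is exactly what makes the integrand a nice test function.

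Combining these, $\abs{p_{n(j)}(z)}^{1/n(j)}\to A^{-1}\exp(-\Phi_{\nu_\infty}(z))$, which is \eqref{2.13}. For \eqref{2.14}, I would simply invoke the lower bound \eqref{2.4x} of Proposition~\ref{P2.2}: $\liminf_n \abs{p_n(z)}^{1/n}\geq (1+(d/D)^2)^{1/2} > 1$, so in particular the limit along $n(j)$ satisfies $A^{-1}\exp(-\Phi_{\nu_\infty}(z)) \geq (1+(d/D)^2)^{1/2} > 1$, giving $\exp(-\Phi_{\nu_\infty}(z)) > A$. (One could alternatively get the strict inequality just from $>1$ together with $A>0$, but the quantitative bound from \eqref{2.4x} is cleaner and is what the proposition was set up to deliver.) The only mild care needed is that the subsequential hypotheses are used twice — once for $\nu_{n(j)}$ and once for $(a_1\cdots a_{n(j)})^{1/n(j)}$ — but both are granted in the statement, so the argument is essentially a bookkeeping exercise once the weak-convergence step is in hand.
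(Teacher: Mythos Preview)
Your proof is correct and follows essentially the same approach as the paper: both use the identity $|p_{n(j)}(z)|^{1/n(j)}(a_1\cdots a_{n(j)})^{1/n(j)}=\exp(-\Phi_{\nu_{n(j)}}(z))$ (up to the harmless factor $\mu(\bbR)^{1/2n(j)}\to 1$, which you track explicitly and the paper suppresses), pass to the limit via continuity of $y\mapsto\log|z-y|^{-1}$ on $H$ for $z\notin H$, and then invoke \eqref{2.4x} to get the strict inequality \eqref{2.14}. Your write-up is somewhat more detailed on why weak convergence suffices, but the argument is the same.
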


\begin{proof} \eqref{1.4} says that
\begin{equation} \lb{2.15}
\abs{p_{n(j)}(z)}^{1/n(j)}(a_1 \dots a_{n(j)})^{1/n(j)} = \exp (-\Phi_{\nu_{n(j)}}(z))
\end{equation}
For $z\notin H$, $\log\abs{z-y}^{-1}$ is continuous on $H$ so since $\nu_n$ and so $\nu_\infty$
are supported on $H$ (indeed, $\nu_\infty$ is supported on $\sigma_\ess(d\mu)$), $\Phi_{\nu_{n(j)}}(z)
\to \Phi_{\nu_\infty}(z)$ and \eqref{2.15} implies \eqref{2.13}. By \eqref{2.4x},
$\text{LHS of \eqref{2.13}} > 1$, which implies \eqref{2.14}.
\end{proof}

{\it Note}: \eqref{2.14} implies that $\Phi_{\nu_\infty}$ is bounded above which, by arguments of
Craig--Simon \cite{CrS83}, implies $\nu_\infty ((-\infty, E])$ is $\log$-H\"older continuous. This is
a new result, although the fact for ergodic Jacobi matrices is due to Craig--Simon \cite{CrS83}.

\smallskip

This yields an independent proof of Corollary~\ref{C1.2} for OPRL, and more:

\begin{theorem}\lb{T2.4} Under the hypotheses of Theorem~\ref{T2.3}, if $E=\sigma_\ess(d\mu)$,
then $A\leq C(E)$, and if $A=C(E)$, then $d\nu_\infty =d\rho_E$, the equilibrium measure for $E$.
In particular, if $\mu$ is regular {\rm{(}}i.e., $\lim (a_1 \dots a_n)^{1/n} =C(E)${\rm{)}}, then
$d\nu_n \to d\rho_E$ and \eqref{1.20} holds for $z\notin H$.
\end{theorem}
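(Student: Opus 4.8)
The plan is to turn the pointwise bound \eqref{2.14} into a \emph{global} upper bound on the potential of $\nu_\infty$, feed that into the variational characterization of capacity, and read off the equality case from uniqueness of the equilibrium measure.

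First I would record the input from Theorem~\ref{T2.3}: rewriting \eqref{2.14}, $\Phi_{\nu_\infty}(z)<-\log A$ for every $z\notin H$; and, by Proposition~\ref{P1.6A} together with compactness of $H$ (so the weak limit loses no mass), $\nu_\infty$ is a probability measure supported on $E=\sigma_\ess(d\mu)$. The one step that needs an argument is to upgrade the potential bound from $\bbC\setminus H$ to all of $\bbC$. Since $-\Phi_{\nu_\infty}(z)=\int\log\abs{z-y}\,d\nu_\infty(y)$ is subharmonic, hence upper semicontinuous, and $\bbC\setminus H$ is dense in $\bbC$ ($H$ being a bounded real interval), the strict inequality $-\Phi_{\nu_\infty}>\log A$ on $\bbC\setminus H$ forces $-\Phi_{\nu_\infty}\ge\log A$ on all of $\bbC$, that is,
\[
\Phi_{\nu_\infty}(z)\le -\log A\qquad\text{for all }z\in\bbC .
\]
In particular $\nu_\infty$ has bounded potential, hence finite logarithmic energy, so $C(E)>0$. (This is exactly the ``$\Phi_{\nu_\infty}$ bounded above'' used in the note after Theorem~\ref{T2.3}.)

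Integrating the last display against $d\nu_\infty$ gives $I(\nu_\infty):=\int\Phi_{\nu_\infty}\,d\nu_\infty\le -\log A$. Since $-\log C(E)$ is, by definition, the least logarithmic energy among probability measures on $E$, and $\nu_\infty$ is such a measure, $-\log C(E)\le I(\nu_\infty)\le -\log A$, i.e.\ $A\le C(E)$. If $A=C(E)$, then $I(\nu_\infty)=-\log C(E)$ equals the equilibrium energy, so $\nu_\infty$ is a minimizer; by uniqueness of the minimizer (strict convexity of the energy, Appendix~A), $d\nu_\infty=d\rho_E$.

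For the final assertion, let $\mu$ be regular, so $C(E)>0$ and $(a_1\dots a_n)^{1/n}\to C(E)$. Along any subsequence $(a_1\dots a_{n(j)})^{1/n(j)}\to C(E)\ne 0$; passing to a further subsequence along which $d\nu_{n(j)}$ converges weakly---possible since every $\nu_n$ is supported in the fixed compact $H$---the limit equals $d\rho_E$ by the equality case above. Hence every weak limit point of $(d\nu_n)$ is $d\rho_E$, and weak-$*$ precompactness forces $d\nu_n\to d\rho_E$. Applying \eqref{2.13} to the full sequence, now with $A=C(E)$ and $\nu_\infty=\rho_E$, and using $G_E(z)=-\log C(E)-\Phi_{\rho_E}(z)$, we get for $z\notin H$
\[
\lim_{n\to\infty}\abs{p_n(z)}^{1/n}=C(E)^{-1}e^{-\Phi_{\rho_E}(z)}=e^{G_E(z)},
\]
which is \eqref{1.20} (recall $x_n=p_n$ for OPRL). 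The only non-routine step is the subharmonic upgrade of the potential bound; everything else is bookkeeping with standard potential theory (the energy lower bound and uniqueness of the equilibrium measure), so I anticipate no real obstacle.
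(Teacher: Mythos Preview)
Your argument is correct and follows essentially the same route as the paper's: extend the bound $\Phi_{\nu_\infty}\le\log(A^{-1})$ from $\bbC\setminus H$ to all of $\bbC$ by (lower) semicontinuity, integrate against $d\nu_\infty$ to get $\calE(\nu_\infty)\le\log(A^{-1})$, and invoke the variational definition of capacity plus uniqueness of the minimizer. The only cosmetic difference is that the paper phrases the extension step via lower semicontinuity of $\Phi_{\nu_\infty}$ rather than upper semicontinuity of $-\Phi_{\nu_\infty}$, and it does not spell out the subsequence/compactness argument for the regular case as explicitly as you do.
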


\begin{remark} Thus, we have proven Corollary~\ref{C1.2} again, Theorem~\ref{T1.6}, and one
part of Theorem~\ref{T1.8}.
\end{remark}

\begin{proof} By \eqref{2.14} for $z\notin H$,
\begin{equation} \lb{2.16}
\Phi_{\nu_\infty}(z) \leq \log(A^{-1})
\end{equation}
By lower semicontinuity, this also holds on $H$. Integrating $d\nu_\infty$ using \eqref{A.4}, we obtain
\begin{equation} \lb{2.17}
\calE(\nu_\infty) \leq \log(A^{-1})
\end{equation}
Since $\inf_\nu (\calE(\nu)) =\log (C(E)^{-1})$, we obtain $\log (C(E)^{-1})\leq \log(A^{-1})$, that is,
$A\leq C(E)$. By uniqueness of minimizers, if $A=C(E)$, $d\nu_\infty =d\rho_E$ and regularity implies
$d\rho_E$ is the only limit point. By compactness, $d\nu_n\to d\rho_E$, and then, by \eqref{2.13},
we obtain \eqref{1.20} for $z\notin H$.
\end{proof}

\begin{proof}[Completion of the Proof of Theorem~\ref{T1.8} for OPRL] \ We proved above that (i)
$\Rightarrow$ (iv) and so, by the submean property of $\abs{f(z)}^{1/n}$ (alternatively, by the
subharmonicity of $\log\abs{f(z)}$) for analytic functions, we get (ii) also on $H$ and thus
have (i) $\Rightarrow$ (ii) in full. (ii) $\Rightarrow$ (iii) is trivial.

\smallskip
\noindent \ul{(iii) $\Rightarrow$ (i).} Pick a subsequence $n(j)$ so that $(a_1 \dots a_{n(j)})^{1/n(j)}
\to \liminf (a_1\dots a_{n(j)})^{1/n}=A$,  and so $\nu_{n(j)}\to \nu_\infty$. By \eqref{1.4}
and Theorem~\ref{TA.4}, (iii) implies for q.e.\ $x$ in $E$ we have that $\Phi_{\nu_\infty} (x)
\geq\log(A^{-1})$. Thus, since $d\rho_E$ gives zero weight to zero capacity sets (see
Proposition~\ref{PA.3}) and \eqref{A.2},
\begin{align}
\log(A^{-1}) &\leq\int \log\Phi_{\nu_\infty}(x)\, d\rho_E(x) \notag \\
&= \int \Phi_{\rho_E}(x)\, d\nu_\infty (x) \notag \\
&\leq \log (C(E)^{-1}) \lb{2.18}
\end{align}
by \eqref{A.17}. Thus $A^{-1}\leq C(E)^{-1}$, so $C(E)\leq A$. By Theorem~\ref{T2.4}
(or Corollary~\ref{C1.2}), we see $A=C(E)$, that is, $\mu$ is regular.

The reader may be concerned about this argument if $A=0$. But in that case, Theorem~\ref{TA.4}
and \eqref{1.19} imply that q.e.\ on $E$, $\Phi_\nu(x)=\infty$ which is inconsistent with
Theorem~\ref{TA.7A} (or with Corollary~\ref{CA.2A}). Thus \eqref{1.19} implies $A>0$.

\smallskip
\noindent \ul{(i) $\Rightarrow$ (v).} This is immediate from Theorem~\ref{T2.4}, the fact that
equality holds q.e.\ in \eqref{A.9} and in \eqref{A.17}.

\smallskip
\noindent \ul{(ii) $\Rightarrow$ (vi).} Without loss, we can redefine $Q_n$ so $\|Q_n\|_{L^2
(d\mu)}=1$. Then
\begin{equation} \lb{2.19}
Q_n(z) =\sum_{j=0}^n c_{j,n} p_j(z,d\mu)
\end{equation}
where
\begin{equation} \lb{2.20}
\sum_{j=0}^n \, \abs{c_{j,n}}^2 =1 \Rightarrow \abs{c_{j,n}} \leq 1
\end{equation}
Thus
\begin{equation} \lb{2.21}
\abs{Q_n(z)} \leq n \, \sup_{0\leq j\leq n}\, \abs{p_j(z,d\mu)}
\end{equation}
and \eqref{1.18} $\Rightarrow$ \eqref{1.22}.
\end{proof}

This completes the proof of Theorem~\ref{T1.8} for OPRL and our presentation of the key properties
of regular measures for OPRL. We turn to relations between the support of $d\mu$ and regularity of the
density of zeros that will include Theorem~\ref{T1.7}.

\begin{theorem}\lb{T2.4A} Let $d\mu$ be a measure of compact support, $E$, with Jacobi parameters,
$\{a_n,b_n\}_{n=1}^\infty$. Let $n(j)$ be a subsequence so that $d\nu_{n(j)}$ has a limit,
$d\rho_E$, the equilibrium measure for $E$. Then either
\begin{SL}
\item[{\rm{(a)}}]
\begin{equation}\lb{2.22a}
\lim_{j\to\infty}\, (a_1 \dots a_{n(j)})^{1/n(j)} =C(E)
\end{equation}
or
\item[{\rm{(b)}}] $\mu$ is carried by a set of capacity zero, that is, there is $X\subset E$
of capacity zero so $\mu(\bbR\setminus X)=0$.
\end{SL}
\end{theorem}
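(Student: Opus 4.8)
The plan is to prove the contrapositive-flavored statement: assuming (a) fails, deduce (b). Since the Jacobi parameters satisfy $a_n\le\tfrac12\diam(H)$ (Lemma~\ref{L2.1}(b)) and $\limsup(a_1\cdots a_n)^{1/n}\le C(E)$ (Corollary~\ref{C1.2}), failure of (a) means that some limit point of $(a_1\cdots a_{n(j)})^{1/n(j)}$ is strictly below $C(E)$. So I first pass from $n(j)$ to a further subsequence $m(k)$ along which $(a_1\cdots a_{m(k)})^{1/m(k)}\to A$ with $0\le A<C(E)$, while still $\nu_{m(k)}\to\rho_E$ weakly. (If $C(E)=0$ then $E=\supp(d\mu)$ itself has capacity zero and (b) is immediate, so assume $C(E)>0$.)

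The heart of the argument is a quasi-everywhere growth estimate on $E$, obtained from Theorem~\ref{T2.3} (which controls only $z\notin H$) by the same upgrade used in the proof of (iii)$\Rightarrow$(i) of Theorem~\ref{T1.8}. The zeros of all the $p_n$ form a countable, hence polar, set, so for q.e.\ $x$ the identity \eqref{1.4} reads $\tfrac1{m(k)}\log\abs{p_{m(k)}(x)}=-\Phi_{\nu_{m(k)}}(x)-\tfrac1{m(k)}\log(a_1\cdots a_{m(k)})$. Applying the lower envelope theorem (Theorem~\ref{TA.4}), which gives $\liminf_k\Phi_{\nu_{m(k)}}(x)=\Phi_{\rho_E}(x)$ for q.e.\ $x$, together with $-\tfrac1{m(k)}\log(a_1\cdots a_{m(k)})\to\log(A^{-1})$, yields for q.e.\ $x$
\[
\limsup_{k\to\infty}\,\tfrac1{m(k)}\log\abs{p_{m(k)}(x)}=\log(A^{-1})-\Phi_{\rho_E}(x).
\]
For q.e.\ $x\in E$ one has $\Phi_{\rho_E}(x)=\log(C(E)^{-1})$ (equality q.e.\ in \eqref{A.17}), so the right side equals $\delta:=\log(C(E)/A)$, which is \emph{strictly positive} since $A<C(E)$ (with $\delta=+\infty$ when $A=0$). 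This uniform strict positivity, valid off a polar set, is the crucial gain over the bound $\limsup\tfrac1n\log\abs{p_n(x)}\ge G_E(x)=0$ that one gets without passing to a subsequence where the geometric means converge.

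Next I decompose $\mu=\mu_{(0)}+\mu_{(+)}$, where $\mu_{(0)}=\mu\restriction Z_0$ for a Borel set $Z_0\subseteq E$ of capacity zero with $\mu(Z_0)$ maximal — such a $Z_0$ exists because a countable union of capacity-zero sets again has capacity zero — and $\mu_{(+)}=\mu\restriction(\bbR\setminus Z_0)$, which then assigns zero mass to every set of capacity zero; note that (b) holds precisely when $\mu_{(+)}=0$. Suppose $\mu_{(+)}\neq0$, say $\mu_{(+)}(\bbR)=m>0$. The q.e.\ estimate above then holds $\mu_{(+)}$-a.e., so for $\mu_{(+)}$-a.e.\ $x$ we have $\abs{p_{m(k)}(x)}>e^{m(k)\delta/2}$ for infinitely many $k$ (replace $\delta/2$ by any fixed positive constant if $A=0$). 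Writing $A_k=\{x\in E:\abs{p_{m(k)}(x)}>e^{m(k)\delta/2}\}$, this gives $\mu_{(+)}\bigl(\bigcap_N\bigcup_{k\ge N}A_k\bigr)=m$. On the other hand $e^{m(k)\delta}\mu(A_k)\le\int\abs{p_{m(k)}}^2\,d\mu=1$, hence $\mu_{(+)}(A_k)\le e^{-m(k)\delta}$; since $m(k)\to\infty$ and $\delta>0$, $\sum_k\mu_{(+)}(A_k)<\infty$, and Borel--Cantelli forces $\mu_{(+)}\bigl(\bigcap_N\bigcup_{k\ge N}A_k\bigr)=0$ — a contradiction. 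Therefore $\mu_{(+)}=0$, i.e.\ $\mu$ is carried by $Z_0$, which is (b).

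The main obstacle is the second paragraph: extracting from the bare convergence $\nu_{n(j)}\to\rho_E$, plus the assumed failure of (a), a genuinely positive and quantitative lower bound for $\limsup\tfrac1n\log\abs{p_n}$ that holds quasi-everywhere on $E$ (hence $\mu_{(+)}$-a.e.). The point is that one must pass to a subsequence along which $(a_1\cdots a_n)^{1/n}$ actually converges to some $A<C(E)$ before invoking the lower envelope theorem; without that refinement the estimate degrades to the useless bound $\ge G_E(x)=0$ q.e.\ on $E$. Once the strict positivity of $\delta$ is secured, the $\mu_{(0)}/\mu_{(+)}$ splitting and the Borel--Cantelli step are routine.
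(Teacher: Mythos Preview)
Your proof is correct and follows essentially the same route as the paper: pass to a subsubsequence with $(a_1\cdots a_{m(k)})^{1/m(k)}\to A<C(E)$, use the upper envelope theorem (Theorem~\ref{TA.4}) together with Frostman's theorem to get $\limsup_k\abs{p_{m(k)}(x)}^{1/m(k)}=C(E)/A>1$ for q.e.\ $x\in E$, and then contrast this with the $\mu$-a.e.\ bound $\limsup\abs{p_n(x)}^{1/n}\le 1$ coming from $\|p_n\|_{L^2(d\mu)}=1$. The only differences are cosmetic: the paper invokes the polynomial bound \eqref{4.10} (the BGK expansion) for the last step rather than your explicit Chebyshev/Borel--Cantelli argument, and it dispenses with the $\mu_{(0)}/\mu_{(+)}$ decomposition entirely---once you know the q.e.\ growth statement and the $\mu$-a.e.\ non-growth statement, the exceptional capacity-zero set from the upper envelope theorem is already the carrier $X$ you seek, so the maximal-polar-set construction is unnecessary machinery.
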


\begin{proof} Let $A$ be a limit point of $(a_1 \dots a_{n(j)})^{1/n}$. If $A=0$, interpret $A^{-1}$
as $\infty$. By \eqref{2.15} and the upper envelope theorem (Theorem~\ref{TA.4}), we see for some
subsubsequence $\ti n(j)$,
\begin{equation}\lb{2.22b}
\lim_{j\to\infty}\, \abs{p_{\ti n(j)}(x)}^{1/\ti n(j)} =A^{-1} \exp (-\Phi_{\rho_E}(x))
\end{equation}
for q.e.\ $x$. By Theorem~\ref{TA.7}, $\Phi_{\rho_E}(x)=\log(C(E)^{-1})$ for q.e.\ $x$. So for
q.e.\ $x\in E$,
\begin{equation}\lb{2.22c}
\lim_{j\to\infty}\, \abs{p_{\ti n(j)}(x)}^{1/\ti n(j)} =A^{-1} C(E)
\end{equation}

On the other hand (see \eqref{4.10} below), for $\mu$-a.e.\ $x$, we have
\begin{equation}\lb{2.22d}
\abs{p_n(x)} \leq C(x)(n+1)
\end{equation}
so for such $x$,
\begin{equation}\lb{2.22e}
\limsup \abs{p_n(x)}^{1/n} \leq 1
\end{equation}

If $A<C(E)$, then $\f{C(E)}{A} >1$, so \eqref{2.22e} can only hold on the set of capacity zero where
\eqref{2.22c} fails, that is, either $A=C(E)$ (since it is always true that $A\leq C(E)$) or $\mu$ is
carried by a set of capacity zero.
\end{proof}

Before leaving the subject of OPRL, we want to say something about nonregular situations:

\begin{theorem}\lb{T2.5} Let $\mu$ be a fixed measure of compact support on $\bbR$.
\begin{SL}
\item[{\rm{(a)}}] The set of limit points of $(a_1 \dots a_n)^{1/n}$ is always a closed interval.
\item[{\rm{(b)}}] The set of limits of zero counting measures $d\nu_n$ is always a closed compact set.
\end{SL}
\end{theorem}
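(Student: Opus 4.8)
The plan is to reduce both parts to a soft ``small increments force a connected limit set'' principle, the only analytic inputs being a bound on how fast $c_n := (a_1\cdots a_n)^{1/n}$ can \emph{increase} (for (a)) and the interlacing of zeros of consecutive OPRL (for (b)).

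For (a), note $c_{n+1}^{n+1}=c_n^{\,n}\,a_{n+1}$, so by the weighted AM--GM inequality with weights $\tfrac n{n+1},\tfrac1{n+1}$,
\[
c_{n+1}=\bigl(c_n^{\,n}\,a_{n+1}\bigr)^{1/(n+1)}\le \frac{n\,c_n+a_{n+1}}{n+1}=c_n+\frac{a_{n+1}-c_n}{n+1}\le c_n+\frac{a_{n+1}}{n+1}.
\]
By Lemma~\ref{L2.1}(b), $a_n\le \tfrac12\diam(H)=:D$, hence $0\le c_n\le D$ and $c_{n+1}-c_n\le D/(n+1)$; so $(c_n)$ is bounded and its upward increments tend to $0$. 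I would then invoke the elementary fact: \emph{if $(x_n)\subset\bbR$ is bounded with $\limsup_n(x_{n+1}-x_n)\le 0$, then its set $L$ of limit points equals $[\liminf x_n,\limsup x_n]$}. Since $L$ is always closed, contained in $[\liminf,\limsup]$, and contains both endpoints, it suffices to show every $\gamma$ strictly between them lies in $L$; if not, pick $\epsilon\in(0,\dist(\gamma,L))$ with $\liminf x_n<\gamma-\epsilon<\gamma+\epsilon<\limsup x_n$, so only finitely many $x_n$ lie in $[\gamma-\epsilon,\gamma+\epsilon]$ while $x_n>\gamma+\epsilon$ and $x_n<\gamma-\epsilon$ each occur infinitely often; once $n$ is large enough that $x_{m+1}-x_m<2\epsilon$ for $m\ge n$, a term with $x_m\le\gamma-\epsilon$ forces $x_{m+1}\le\gamma-\epsilon$ (it cannot leap over the forbidden band), hence $x_k\le\gamma-\epsilon$ for all $k\ge m$, contradicting $x_k>\gamma+\epsilon$ infinitely often. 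Applying this to $c_n$ gives (a).

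For (b), the zeros of the orthonormal polynomials $p_n$ and $p_{n+1}$ for $d\mu$ strictly interlace (classical for OPRL; see \cite{Szb}), so for the cumulative distribution function $F_n$ of $d\nu_n$ the zero counts up to any $t$ differ by at most $1$, whence $\sup_t\abs{F_n(t)-F_{n+1}(t)}\le 2/(n+1)\to 0$. Since $d\mu$ has compact support, all $d\nu_n$ are probability measures on the fixed compact interval $H=\cvh(\supp(d\mu))$, and in the compact metrizable space $\calM(H)$ of probability measures on $H$ (weak topology, metrized e.g.\ by the L\'evy metric, which is dominated by the Kolmogorov distance just bounded) one gets $d(\nu_n,\nu_{n+1})\to 0$. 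Now I would use the metric-space version of the step lemma: \emph{a sequence in a compact metric space with $d(x_n,x_{n+1})\to 0$ has connected limit set} --- were the limit set the disjoint union of two nonempty closed pieces at distance $3\eta>0$, then for large $n$ the $x_n$ lie in the $\eta$-neighborhoods of those pieces (else a subsequential limit would fall outside both), and once $d(x_n,x_{n+1})<\eta$ the orbit cannot cross from one neighborhood to the other, contradicting that it visits both infinitely often. Hence the set of weak limits of $d\nu_n$ is closed and connected, and being a closed subset of the compact space $\calM(H)$ it is compact; this is (b).

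The routine parts --- the AM--GM increment bound, the interlacing/CDF estimate, and the two topological lemmas --- are all standard. The one point requiring care is that in (a) one controls only the \emph{upward} increments of $c_n$: a single very small $a_{n+1}$ can collapse $c_{n+1}$ to near $0$ even when $c_n\approx D$, so downward jumps are not small and the symmetric ``$d(x_n,x_{n+1})\to 0$'' lemma used for (b) is unavailable. Thus in (a) one must run the asymmetric crossing argument and check that $\limsup(c_{n+1}-c_n)\le 0$ is exactly what the AM--GM bound delivers; that asymmetry is the only real subtlety.
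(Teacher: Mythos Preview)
Your proof is correct, and for part (a) it is essentially the paper's argument in different packaging. The paper bounds $\Gamma_{n+1}\le D^{1/(n+1)}\Gamma_n^{n/(n+1)}$ directly and argues ad hoc that once $\Gamma_n\le c-\veps$ it stays there; you linearize the same bound via AM--GM to $c_{n+1}-c_n\le D/(n+1)$ and then invoke a clean one-sided ``no skipping'' lemma. The key observation---only \emph{upward} jumps are small, downward ones need not be---is the same in both, and you flag it explicitly.

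For (b) the paper gives no proof at all, merely citing Stahl--Totik and saying it is ``similar in spirit.'' Your route via interlacing, the Kolmogorov/L\'evy bound $d(\nu_n,\nu_{n+1})\le 2/(n+1)$, and the compact-metric-space lemma is exactly the standard argument and is correct. One cosmetic remark: the theorem as printed says ``closed compact,'' which is trivially true for a closed subset of $\calM_{+,1}(H)$; the intended nontrivial assertion (per the remarks and the Stahl--Totik reference) is \emph{connectedness}, which is what you prove. Your final sentence covers both readings, so nothing is missing.
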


\begin{remarks} 1. As quoted in \cite{UWZ}, where the first proof of (a) appeared, (a) is a theorem of
Freud and Ziegler.

\smallskip
2. Part (b) was conjectured in Ullman \cite{Ull89} and is proven in Stahl--Totik \cite{StT} (see
Theorem~2.1.4 of \cite{StT}).

\smallskip
3. Stahl--Totik \cite{StT} also prove (their Theorem~2.2.1) that so long as no carrier of $\mu$ has
capacity zero, the existence of a limit for $d\nu_{n(j)}$ implies the existence of a limit for $(a_1
\dots a_{n(j)})^{1/n(j)}$. However, as we will see (Example~\ref{E2.7}), the converse is false.
\end{remarks}

\begin{proof} We sketch the proof of (a); the proof of (b) can be found in \cite{StT} and is similar
in spirit. The set of limit points is a closed subset of $[0,C(E)]$. If it is not connected,
we can find limit points $A<B$ and $c\in (A,B)$ which is not a limit point.

Thus, there are $N$ and $\veps$ so for $n>N$,
\begin{equation} \lb{2.22}
\Gamma_n\equiv (a_1\dots a_n)^{1/n}\notin (c-\veps, c+\veps)
\end{equation}
Suppose $\Gamma_n <c-\veps$ and let $D=\f12 \diam(\cvh(\supp(\mu)))\geq a_n$ by \eqref{2.2}. Then
\begin{equation} \lb{2.23}
\Gamma_{n+1} =a_{n+1}^{1/n+1} \Gamma_n^{n/n+1} \leq D^{1/n+1} (c-\veps)^{n/n+1}
\end{equation}

Since RHS of \eqref{2.23} converges to $c-\veps$, we can find $N_1$ so
\begin{equation} \lb{2.24}
n\geq N_1 \Rightarrow \text{RHS of \eqref{2.23}} \leq c
\end{equation}
Thus $n\geq N$, $n\geq N_1$, and $\Gamma_n \leq c-\veps$ implies $\Gamma_{n+1}\leq c-\veps$
(by \eqref{2.22}). It follows that $\Gamma_n$ cannot have both $A$ and $B$ as limit points.

This contradiction proves the set of limit points is an interval.
\end{proof}

\begin{example}\lb{E2.7} This example shows that $(a_1\dots a_n)^{1/n}$ may have a limit
(necessarily strictly less than $C(E)$) but $d\nu_n$ does not. A more complicated example
appears as Example~2.2.7 in \cite{StT}. Let $a_n\equiv 1$ (so $(a_1\dots a_n)^{1/n}\to 1$) and
\[
b_n=\begin{cases}
1 & N_{2\ell}\leq n < N_{2\ell+1} \\
-1 & N_{2\ell+1}\leq n < N_{2\ell+2}
\end{cases}
\]
where $N_\ell=2^{3^\ell}$. It is easy to see by looking at traces of powers of the cutoff
Jacobi matrix that $d\nu_{N_{2\ell}^2}\to d\rho_{[-1,3]}$ and $d\nu_{N_{2\ell+1}^2}\to
d\rho_{[-3,1]}$.
\qed
\end{example}

There is another result about the set of limit points that should be mentioned in connection
with work of Ullman and collaborators. Define $c_\mu$ to be $\inf$ of the capacity of Borel
sets, $S$, which are carriers of $\mu$ in the sense that $\mu(\bbR\setminus S) =0$. For example,
if $\mu$ is a dense pure point measure with support $E=[-2,2]$, $\mu$ is supported on a countable
set, so $c_\mu =0$ even though $C(E)=1$. Then, in general, Ullman shows that any limit point of
$(a_1 \dots a_n)^{1/n}$ lies in $[c_\mu, C(\supp(d\mu))]$, and Wyneken \cite{Wyn} proved that
given any $\mu$ and any $[A,B]\subset [c_\mu, C(\supp(d\mu))]$, there is $\eta$ mutually
equivalent to $\mu$ so the set of limit points of $\Gamma_n(\eta)$ is $[A,B]$ (see also
Theorem~\ref{T5.4} below).

In particular, these results show that if $c_\mu =C(\supp(d\mu))$, then $\mu$ is regular---a
theorem of Ullman \cite{Ull}, although Widom \cite{Wid} essentially had the same theorem
(this oversimplifies the relation between Widom \cite{Wid} and Ullman \cite{Ull}; see
\cite[Ch.~4]{StT}). We have not discussed this result in detail because the
Stahl--Totik criterion of Theorem~\ref{T1.11} essentially subsumes these earlier works
(at least for $E$ a finite union of closed intervals)
and we will prove that in Section~\ref{s5}.

\section{Regular Measures for OPUC} \lb{s3}

In this section, we will prove Theorem~\ref{T1.8} for OPUC and an analog of Theorem~\ref{T1.6}.
Here one issue will be that if $E=\partial\bbD$, the zero density may not converge to a measure
on $\partial\bbD$. The key step concerns Proposition~\ref{P2.2}, which essentially depended
on the CD formula which is only known for OPRL and OPUC, and where the OPUC version is not
obviously relevant. Instead, we will see, using operator theoretic methods \cite{Sim-cd},
that there is a kind of ``half CD formula" that suffices. We begin with an analog of
Lemma~\ref{L2.1}:

\begin{lemma}\lb{L3.1}
\begin{SL}
\item[{\rm{(a)}}] Let $\mu$ be a measure of compact support on $\bbC$ and $H$ the convex
hull of the support of $\mu$. Let $M_z$ be multiplication by $z$ on $L^2 (\bbC,d\mu)$.
Then for any $z_0\in\bbC$ and $\varphi \in L^2 (\bbC,d\mu)$, we have
\begin{equation}\lb{3.1}
\abs{\langle \varphi, (M_z-z_0)\varphi\rangle} \geq\dist (z_0, H) \|\varphi\|^2
\end{equation}
\item[{\rm{(b)}}] Let $D$ be defined by
\begin{equation}\lb{3.2}
D=\min_w\, \bigl[\, \max_{z\in H}\, \abs{z-w}\bigr]
\end{equation}
{\rm{(}}which lies between $\f12\diam(H)$ and $\diam(H)${\rm{)}}. Then
\begin{equation}\lb{3.3}
\|X_{n+1}\|_{L^2 (d\mu)} \leq D \|X_n\|_{L^2 (d\mu)}
\end{equation}
\end{SL}
\end{lemma}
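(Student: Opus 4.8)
The plan is to prove the two parts essentially in parallel with the OPRL Lemma~\ref{L2.1}, adapting each argument to the fact that $H$ is now a convex subset of $\bbC$ (not $\bbR$) and that multiplication by $z$ is a bounded normal operator rather than a bounded self-adjoint one. For part (a), I would first recall that $M_z$ on $L^2(\bbC,d\mu)$ has spectrum contained in $\supp(\mu)\subseteq H$, so for a fixed $z_0\in\bbC$ I want a scalar rotation that turns the estimate into a half-plane bound. Concretely: since $H$ is compact and convex and $\dist(z_0,H)=d$, there is a closed half-plane containing $H$ whose bounding line is at distance $d$ from $z_0$; equivalently there is $\omega\in\partial\bbD$ with $\Real[(z-z_0)\omega]\ge d$ for every $z\in H$. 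Then for any $\varphi\in L^2$,
\[
\Real\bigl(\omega\,\langle\varphi,(M_z-z_0)\varphi\rangle\bigr)
=\int \Real[(z-z_0)\omega]\,\abs{\varphi(z)}^2\,d\mu(z)\ge d\,\|\varphi\|^2,
\]
and since $\abs{\langle\varphi,(M_z-z_0)\varphi\rangle}\ge\Real(\omega\langle\varphi,(M_z-z_0)\varphi\rangle)$ we get \eqref{3.1}. This is literally the proof of Lemma~\ref{L2.1}(a) with ``$x$'' replaced by ``$z$'' and the spectral theorem for the multiplication operator; the only thing to verify carefully is the separating-half-plane geometry, which is elementary for a compact convex set.

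For part (b), I would first dispose of the parenthetical claim about $D$: for any $w$, $\max_{z\in H}\abs{z-w}\le\diam(H)$ when $w\in H$, giving $D\le\diam(H)$; and $\max_{z\in H}\abs{z-w}\ge\tfrac12\diam(H)$ for every $w$ (take $z$ to be whichever of the two endpoints realizing a diameter is farther from $w$, and use the triangle inequality), giving $D\ge\tfrac12\diam(H)$. Now let $w_0$ be a minimizer in \eqref{3.2}, so $H\subseteq\overline{D(w_0,D)}$, i.e. $\abs{z-w_0}\le D$ for all $z\in\supp(\mu)$. The key step is then the observation that $(z-w_0)X_n(z)$ is a monic polynomial of degree $n+1$, hence by the minimality property \eqref{1.12} of $X_{n+1}$,
\[
\|X_{n+1}\|_{L^2(d\mu)}\le \|(z-w_0)X_n\|_{L^2(d\mu)}
=\Bigl(\int \abs{z-w_0}^2\,\abs{X_n(z)}^2\,d\mu(z)\Bigr)^{1/2}
\le D\,\|X_n\|_{L^2(d\mu)},
\]
which is \eqref{3.3}. (This replaces the trace/integral computation $a_n=\int xp_np_{n-1}\,d\mu$ used in the OPRL case, which has no clean OPUC analog; the $L^2$-minimality argument is cleaner and works verbatim on $\bbC$.)

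I do not expect a serious obstacle here: part (a) is a direct transcription of Lemma~\ref{L2.1}(a) once one notes that the separating functional can be taken to be a rotation $\omega\in\partial\bbD$, and part (b) follows from the minimal-norm property \eqref{1.12} applied to the degree-raised polynomial $(z-w_0)X_n$. The one point deserving a line of care is the geometric sandwich $\tfrac12\diam(H)\le D\le\diam(H)$, and the observation that the minimizing center $w_0$ need not lie in $H$ but still satisfies $\sup_{z\in H}\abs{z-w_0}=D$ by definition, which is exactly what the estimate needs.
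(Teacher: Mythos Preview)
Your proposal is correct and follows essentially the same route as the paper: part~(a) is the same rotation-by-$\omega\in\partial\bbD$ argument (the paper phrases it as ``maximize over $\omega$'' rather than invoking a separating half-plane up front, but these are equivalent), and part~(b) is verbatim the paper's use of the minimality property \eqref{1.12} applied to $(z-w)X_n$. You also supply the short justification of the parenthetical bound $\tfrac12\diam(H)\le D\le\diam(H)$, which the paper leaves to the reader.
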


\begin{proof} (a) Let $\omega\in\partial\bbD$. Then
\begin{align*}
\abs{\langle\varphi, (M_z-z_0)\varphi\rangle}
& \geq \int \Real ((z-z_0)\bar\omega)\abs{\varphi(z)}^2 \, d\mu (z) \\
&\geq \min_{z\in H}\, \Real ((z-z_0)\bar\omega) \|\varphi\|^2
\end{align*}
Maximizing over $\omega$ yields \eqref{3.1}.

\smallskip
(b) Since $(z-w)X_n$ is a monic polynomial of degree $n+1$,
\[
\|X_{n+1}\| \leq \|(z-w)X_n\| \leq \max_{z\in H}\, \abs{z-w} \|X_n\|
\]
Minimizing over $w$ yields \eqref{3.3}.
\end{proof}

To get the analog of \eqref{2.6}, we need

\begin{proposition}\lb{P3.2} Let $d\mu$ be a measure of compact support on $\bbC$ and let
$M_z$ be multiplication by $z$ on $L^2 (\bbC,d\mu)$. Let $K$ be the orthogonal projection
in $L^2 (\bbC, d\mu)$ onto the $n+1$-dimensional subspace polynomials of degree at most
$n$. Then
\begin{equation}\lb{3.4}
[M_z, K] K = \f{\|X_{n+1}\|}{\|X_n\|}\, [\langle x_n, \dott\rangle x_{n+1}]
\end{equation}
\end{proposition}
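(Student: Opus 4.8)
The plan is to compute the operator $[M_z,K]K$ directly by testing it against the orthonormal basis $\{x_0,x_1,\dots\}$ of the orthonormal polynomials, using that $K$ is the projection onto $V_n := \spann\{x_0,\dots,x_n\}$ (polynomials of degree $\le n$). First I would observe that the operator $[M_z,K]K$ annihilates $V_n^\perp$ (since it ends in $K$) and annihilates $V_{n-1}$ as well: for $f\in V_{n-1}$, $Kf=f$, and $zf\in V_n$, so $KM_zKf = KM_z f = M_z f = M_z K f$, giving $[M_z,K]Kf=0$. Hence $[M_z,K]K$ is supported on the one-dimensional space spanned by $x_n$, i.e.\ it has the form $\langle x_n,\dott\rangle\,g$ for some vector $g$, and it remains only to identify $g = [M_z,K]K x_n = [M_z,K]x_n = M_z x_n - K M_z x_n$.

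Next I would expand $M_z x_n = z x_n$ in the orthonormal basis. Since $z x_n$ has degree $n+1$, it lies in $V_{n+1}$, so $z x_n = \sum_{j=0}^{n+1} \langle x_j, z x_n\rangle x_j$, and therefore $KM_z x_n = \sum_{j=0}^{n} \langle x_j, z x_n\rangle x_j$, so that $g = (1-K)z x_n = \langle x_{n+1}, z x_n\rangle\, x_{n+1}$. The final step is to evaluate the scalar $\langle x_{n+1}, z x_n\rangle$. Writing $x_n = X_n/\|X_n\|$ and $x_{n+1} = X_{n+1}/\|X_{n+1}\|$, the coefficient of $z^{n+1}$ in $z X_n$ is $1$, matching the leading coefficient of $X_{n+1}$; since $X_{n+1}$ is orthogonal to all lower-degree polynomials, $\langle X_{n+1}, z X_n\rangle = \langle X_{n+1}, X_{n+1} + (\text{lower order})\rangle = \|X_{n+1}\|^2$. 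Dividing by the norms gives $\langle x_{n+1}, z x_n\rangle = \|X_{n+1}\|^2/(\|X_{n+1}\|\,\|X_n\|) = \|X_{n+1}\|/\|X_n\|$, which yields \eqref{3.4}.

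I do not expect a serious obstacle here; the only mild subtlety is making sure that $[M_z,K]K$ really does vanish on $V_{n-1}$ and on $V_n^\perp$ so that the rank-one form is justified — this uses only that $M_z$ raises degree by at most one (so $M_z V_{n-1}\subset V_n$) and the elementary algebra of commutators with a projection. Everything else is the standard leading-coefficient computation for monic versus orthonormal polynomials, valid for any compactly supported $\mu$ on $\bbC$ (no assumption that the support lie on $\bbR$ or $\partial\bbD$ is needed). One should note the harmless point that $x_{n+1}$ and $\|X_{n+1}\|$ are well defined as long as the polynomials of degree $n+1$ are not already exhausted, i.e.\ $\mu$ is not finitely supported with fewer than $n+2$ points; in the intended applications $\supp(d\mu)$ is infinite, so this is automatic.
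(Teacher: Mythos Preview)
Your proof is correct and follows essentially the same approach as the paper: both observe that $[M_z,K]K=(1-K)M_zK$ vanishes on $V_n^\perp$ and on $V_{n-1}$, hence is rank one, and then identify the image via $(1-K)zX_n=X_{n+1}$ (equivalently, your computation $\langle x_{n+1},zx_n\rangle=\|X_{n+1}\|/\|X_n\|$). The only difference is cosmetic---the paper phrases the constant in terms of monic polynomials directly rather than expanding the inner product.
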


\begin{remark} This is essentially ``half" the CD formula; operator theoretic approaches
to the CD formula are discussed in \cite{Sim-cd}.
\end{remark}

\begin{proof} For any $\varphi$,
\begin{equation}\lb{3.5}
[M_z,K] K\varphi = (1-K) z(K\varphi)
\end{equation}
This clearly vanishes if $K\varphi =0$ or if $\varphi\in\ran K_{n-1}$. Thus, it is a rank one
operator. Moreover, since $(1-K)z X_n=X_{n+1}$, we see
\[
[M_z, K] K X_n = X_{n+1}
\]
Since $X_{n+1}=\|X_{n+1}\| x_{n+1}$ and $X_n =\|X_n\| x_n$, we see that \eqref{3.4} holds.
\end{proof}

\begin{proposition}\lb{P3.3} Let $d\mu$ be a measure of compact support on $\bbC$, $x_n(z;d\mu)$
the normalized OPs, and $H$ the convex hull of the support of $d\mu$. For $z_0\notin H$, let
$d(z_0)= \dist(z_0, H)$ and let $D$ be given by \eqref{3.2}. Then for such $z_0$,
\begin{equation}\lb{3.6}
\abs{x_n(z_0; d\mu)}^2 \geq \biggl( \f{d}{D}\biggr)^2
\biggl(1+\biggl( \f{d}{D}\biggr)^2\biggr)^{n-1}
\end{equation}
In particular, $x_n(z_0)\neq 0$ for all $n$ and
\begin{equation}\lb{3.6a}
\liminf \abs{x_n(z_0; d\mu)}^{1/n} \geq \biggl(1+\biggl( \f{d}{D}\biggr)^2\biggr)^{1/2} >1
\end{equation}
\end{proposition}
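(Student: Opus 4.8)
The plan is to run the argument of Proposition~\ref{P2.2} essentially verbatim, with Lemma~\ref{L3.1} playing the role of Lemma~\ref{L2.1} and the half CD formula of Proposition~\ref{P3.2} playing the role of the three-term recursion identities \eqref{2.5}--\eqref{2.6}. Fix $z_0\notin H$, write $d=d(z_0)$, and introduce the Christoffel--Darboux kernel
\[
\varphi_n(z)=\sum_{j=0}^n \ol{x_j(z_0)}\,x_j(z),
\]
which lies in $\ran K_n$ (here $K_n$ is the projection of Proposition~\ref{P3.2}), has $\|\varphi_n\|^2=\sum_{j=0}^n \abs{x_j(z_0)}^2$ by orthonormality, and enjoys the reproducing property $\langle\varphi_n,P\rangle=P(z_0)$ for every polynomial $P$ with $\deg P\le n$. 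This is the analog of the vector in \eqref{2.4}.

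The heart of the matter is to establish the analog of \eqref{2.6}, namely
\[
\langle\varphi_n,(M_z-z_0)\varphi_n\rangle=-\f{\|X_{n+1}\|}{\|X_n\|}\,\ol{x_n(z_0)}\,x_{n+1}(z_0).
\]
I would argue as follows. The vector $(M_z-z_0)\varphi_n$ is the polynomial $(z-z_0)\varphi_n(z)$, which has degree $\le n+1$ and vanishes at $z_0$; write it as $Q+\beta\,x_{n+1}$ with $\deg Q\le n$, so that $Q(z_0)=-\beta\,x_{n+1}(z_0)$. Pairing with $\varphi_n$ and using $\varphi_n\perp x_{n+1}$ together with the reproducing property applied to $Q$ gives $\langle\varphi_n,(M_z-z_0)\varphi_n\rangle=Q(z_0)=-\beta\,x_{n+1}(z_0)$. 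It remains to identify $\beta$, the coefficient of $x_{n+1}$ in $(z-z_0)\varphi_n(z)$, equivalently in $z\varphi_n(z)$ (the term $-z_0\varphi_n$ has degree $\le n$): since $x_{n+1}\perp\ran K_n$ and $\varphi_n=K_n\varphi_n$,
\[
\beta=\langle x_{n+1},M_z\varphi_n\rangle=\langle x_{n+1},[M_z,K_n]K_n\varphi_n\rangle=\f{\|X_{n+1}\|}{\|X_n\|}\langle x_n,\varphi_n\rangle=\f{\|X_{n+1}\|}{\|X_n\|}\,\ol{x_n(z_0)},
\]
using Proposition~\ref{P3.2} and $\langle x_n,\varphi_n\rangle=\ol{x_n(z_0)}$.

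Granting this, the rest is the computation of Proposition~\ref{P2.2}. Lemma~\ref{L3.1}(a) with $\varphi=\varphi_n$ gives $\f{\|X_{n+1}\|}{\|X_n\|}\abs{x_{n+1}(z_0)x_n(z_0)}\ge d\sum_{j=0}^n\abs{x_j(z_0)}^2$, and then Lemma~\ref{L3.1}(b) (which says $\|X_{n+1}\|\le D\|X_n\|$) yields the exact analog of \eqref{2.8},
\[
\abs{x_{n+1}(z_0)x_n(z_0)}\ge\f{d}{D}\sum_{j=0}^n\abs{x_j(z_0)}^2.
\]
From here the $2\abs{xy}\le\alpha x^2+\alpha^{-1}y^2$ step \eqref{2.9}, the inequality \eqref{2.10}, and the iteration \eqref{2.11}--\eqref{2.12} go through word for word (with $x_0$ a nonzero constant in place of $p_0=1$), giving \eqref{3.6}; and \eqref{3.6} immediately implies $x_n(z_0)\ne0$ and \eqref{3.6a}.

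I expect the only genuine obstacle to be the displayed identity for $\langle\varphi_n,(M_z-z_0)\varphi_n\rangle$: trying to compute it by manipulating $K_n$ alone merely reproduces the quantity one started with, so one must really combine the reproducing property of $\varphi_n$ (to \emph{evaluate} the low-degree part at $z_0$) with the \emph{explicit} coefficient coming out of the half CD formula. Everything downstream of that identity is exactly the OPRL computation of Section~\ref{s2}.
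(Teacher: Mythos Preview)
Your proof is correct and follows essentially the same route as the paper: define the reproducing kernel vector $\varphi_n$, establish the identity $\langle\varphi_n,(M_z-z_0)\varphi_n\rangle=-\tfrac{\|X_{n+1}\|}{\|X_n\|}\,\ol{x_n(z_0)}\,x_{n+1}(z_0)$, and then copy the Proposition~\ref{P2.2} computation verbatim using Lemma~\ref{L3.1} in place of Lemma~\ref{L2.1}. The only cosmetic difference is in how that key identity is obtained: the paper writes Proposition~\ref{P3.2} in integral-kernel form, $\int(s-w)K_n(s,w)K_n(w,t)\,d\mu(w)=\tfrac{\|X_{n+1}\|}{\|X_n\|}x_{n+1}(s)\ol{x_n(t)}$, and then simply sets $s=t=z_0$; you instead split $(z-z_0)\varphi_n=Q+\beta x_{n+1}$ and read off $\beta$ from Proposition~\ref{P3.2} directly. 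Both are the same computation in slightly different packaging.
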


\begin{remark} Again, it is well known (a theorem of Fej\'er) that zeros of $x_n$ lie in $H$.
\end{remark}

\begin{proof} Define
\begin{equation}\lb{3.7}
\varphi_n (w) = \sum_{j=0}^n \, \ol{x_j(z_0)}\, x_j(w)
\end{equation}
We claim
\begin{equation}\lb{3.8}
\langle \varphi_n, (M_z-z_0)\varphi_n\rangle =-\f{\|X_{n+1}\|}{\|X_n\|}\,
\ol{x_n(z_0)}\, x_{n+1} (z_0)
\end{equation}
This is precisely an analog of \eqref{2.6}. Given this and Lemma~\ref{L3.1}, the proof
is identical to that of Proposition~\ref{P2.2}.

To prove \eqref{3.8}, we note the integral kernel of $K_n$ is
\begin{equation}\lb{3.9}
K_n(s,t) =\sum_{j=0}^n\, p_j(s) \, \ol{p_j(t)}
\end{equation}
and that \eqref{3.4} says
\begin{equation}\lb{3.10}
\int (s-w) K_n(s,w) K_n(w,t)\, d\mu(w) = \f{\|X_{n+1}\|}{\|X_n\|}\,
x_{n+1}(s)\, \ol{x_n(z_0)}
\end{equation}
\eqref{3.10} originally holds for a.e.\ $s,t$ in $\supp(d\mu)$, but since both sides are
polynomials in $s$ and $\bar t$, for all $s,t$. Setting $s=t=z_0$, \eqref{3.10} is just
\eqref{3.8}.
\end{proof}

Now we want to specialize to OPUC. The zeros in that case lie in $\bbD$. One defines
the balayage of the zeros measure, $d\nu_n$, on $\partial\bbD$ by
\begin{equation}\lb{3.11}
\calP(d\nu) = F(\theta)\, \f{d\theta}{2\pi}
\end{equation}
where
\begin{equation}\lb{3.12}
F(\theta) =\int \f{1-\abs{z}^2}{\abs{e^{i\theta}-z}^2}\, d\nu_n(z)
\end{equation}
It is the unique measure on $\partial\bbD$ with
\begin{equation}\lb{3.13}
\int z^k \calP(d\nu_n) = \int z^k \, d\nu_n(z)
\end{equation}
for $k\geq 0$ (see \cite[Prop.~8.2.2]{OPUC1}).

Since $\abs{z} >1\geq\abs{w}$ implies
\begin{equation}\lb{3.14x}
\log\abs{z-w}^{-1} =-\log\abs{z} + \Real\biggl(\, \sum_{j=1}^\infty \,
\f{1}{j}\biggl( \f{w}{z}\biggr)^j\biggr)
\end{equation}
by \eqref{3.13}, we have
\begin{equation}\lb{3.15}
\abs{z} >1 \Rightarrow \Phi_{\nu_n}(z) = \Phi_{\calP(d\nu_n)}(z)
\end{equation}

If $d\nu_n\to d\nu_\infty$, then $\calP(d\nu_n)\to\calP(d\nu_\infty)$, and this equals $d\nu_\infty$
if $d\nu_\infty$ is a measure on $\partial\bbD$. If $\supp(d\mu)\subsetneqq\partial\bbD$, then it is
known that the bulk of the zeros goes to $\partial\bbD$ (Widom's zero theorem; see \cite[Thm.~8.1.8]{OPUC1}),
so $d\nu_\infty$ is a measure on $\partial\bbD$. It is also known (see \cite[Thm.~8.2.7]{OPUC1})
that the zero counting measures for the paraorthogonal polynomials (POPUC) have the same weak limits
as $\calP(d\nu_n)$. The analogs of Theorems~\ref{T2.3} and \ref{T2.4} are thus:

\begin{theorem}\lb{T3.4} Let $d\mu$ be a measure on $\partial\bbD$, the unit circle. Let $n(j)$ be a
subsequence with $n(1) <n(2)< \dots$ so that $(\rho_1 \dots \rho_{n(j)})^{1/n(j)}$ has a nonzero
limit $A$ and so that there is a measure $d\nu_\infty$ on $\partial\bbD$ which is the weak limit
of $\calP(d\nu_{n(j)})$ {\rm{(}}equivalently, of $d\nu_{n(j)}$ if $\supp(d\mu)\neq\partial\bbD$;
equivalently, of the zero counting measures of POPUC{\rm{)}}. Then for any $\abs{z}>1$ or
$z\notin\partial\bbD \setminus\supp(d\mu)$,
\begin{equation}\lb{3.16}
\lim_{j\to\infty}\, \abs{\varphi_{n(j)}(z)}^{1/n(j)} = A^{-1} \exp (-\Phi_{\nu_\infty}(z))
\end{equation}
In particular,
\begin{equation}\lb{3.14}
\exp(-\Phi_{\nu_\infty}(z))\geq A
\end{equation}

It follows that if $E=\sigma_\ess(d\mu)$, then $A\leq C(E)$, and if $\mu$ is regular {\rm{(}}i.e.,
$(\rho_1 \dots \rho_n)^{1/n}\to C(E)${\rm{)}}, then every limit point of $\calP(d\nu_{n(j)})$ is the
equilibrium measure $d\nu_E$. So $\calP(d\nu_n)\to d\nu_E$ {\rm{(}}and if $E\neq\partial\bbD$,
$d\nu_n\to d\nu_E${\rm{)}}.
\end{theorem}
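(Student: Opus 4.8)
The strategy is to mimic the OPRL proof of Theorems~\ref{T2.3} and \ref{T2.4} almost verbatim, now using Proposition~\ref{P3.3} in place of Proposition~\ref{P2.2} and using the balayage machinery of \eqref{3.11}--\eqref{3.15} to transfer the potential-theoretic estimates from the zero measures $d\nu_{n(j)}$ (which live in $\bbD$) to the genuine weak limit $d\nu_\infty$ on $\partial\bbD$. First I would start from the analog of \eqref{2.15}, namely the identity
\begin{equation}
\abs{\varphi_{n(j)}(z)}^{1/n(j)}\,(\rho_1\dots\rho_{n(j)})^{1/n(j)} = \exp(-\Phi_{\nu_{n(j)}}(z)),\notag
\end{equation}
which follows from \eqref{1.4} and \eqref{1.10} (note $\|\Phi_n\| = \rho_0\cdots\rho_{n-1}$, so monic vs.\ orthonormal bookkeeping has to be done but is routine). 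For $\abs{z}>1$ the function $y\mapsto\log\abs{z-y}^{-1}$ is continuous on $\ol{\bbD}$, which contains all the zeros, so $\Phi_{\nu_{n(j)}}(z)\to\Phi_{\nu_\infty}(z)$ directly; for $z\in\partial\bbD\setminus\supp(d\mu)$ one uses that in this case Widom's zero theorem forces the bulk of zeros to $\partial\bbD$ and, more to the point, the zeros stay away from a neighborhood of $z$, so again the integrand is continuous near the support of the limiting measure. This gives \eqref{3.16}. Then \eqref{3.6a} of Proposition~\ref{P3.3} says the left side of \eqref{3.16} is $>1$ for $\abs{z}>1$, hence $\exp(-\Phi_{\nu_\infty}(z))\geq A$ there, and by lower semicontinuity of $-\Phi_{\nu_\infty}$ (equivalently upper semicontinuity of $\Phi_{\nu_\infty}$ fails — one wants: $\Phi_{\nu_\infty}$ is l.s.c., so $\Phi_{\nu_\infty}(z)\le\log(A^{-1})$ persists to $\partial\bbD$) we get $\exp(-\Phi_{\nu_\infty})\ge A$ on all of $\bbC$, which is \eqref{3.14}.

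For the consequences: integrating the bound $\Phi_{\nu_\infty}(z)\le\log(A^{-1})$ against $d\nu_\infty$ and using the definition of energy gives $\calE(\nu_\infty)\le\log(A^{-1})$, hence $\log(C(E)^{-1}) = \inf_\nu\calE(\nu)\le\log(A^{-1})$, i.e.\ $A\le C(E)$; here $E=\sigma_\ess(d\mu)$ because $\nu_\infty$ is supported on $\sigma_\ess$ (the discrete part of $\supp(d\mu)$ carries no mass in the limit, by the POPUC/balayage identification and the OPUC analog of Proposition~\ref{P1.6A}, and zero-capacity adjustments don't affect $C$). If $\mu$ is regular, $A = C(E)$, so $\calE(\nu_\infty) = \log(C(E)^{-1})$ and by uniqueness of the equilibrium measure $d\nu_\infty = d\nu_E$; since every subsequential limit of $\calP(d\nu_n)$ is then $d\nu_E$, compactness of probability measures on $\partial\bbD$ gives full convergence $\calP(d\nu_n)\to d\nu_E$, and when $E\neq\partial\bbD$ we have $\calP(d\nu_n)=d\nu_n$ (up to the part of $d\nu_n$ already on $\partial\bbD$, which is negligible by Widom) so $d\nu_n\to d\nu_E$ as well.

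The main obstacle, and the only place where genuine care beyond the OPRL template is needed, is the passage from the zero counting measures $d\nu_{n(j)}$ supported inside $\bbD$ to a limit measure supported on $\partial\bbD$: one must be sure that (i) the relevant potentials really do converge — for $\abs{z}>1$ this is clean via \eqref{3.15}, but for $z\in\partial\bbD\setminus\supp(d\mu)$ one needs to know the zeros do not accumulate at $z$, which requires invoking the location-of-zeros facts for OPUC (zeros in the ``gap'' $\partial\bbD\setminus\supp(d\mu)$ are controlled, cf.\ the cited results in \cite{OPUC1}); and (ii) that $\calP(d\nu_{n(j)})$, $d\nu_{n(j)}$, and the POPUC zero measures really do share weak limits, which is exactly \cite[Thm.~8.2.7]{OPUC1} together with \eqref{3.13}. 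Granting these structural OPUC inputs, the potential-theoretic core of the argument is line-for-line the OPRL proof.
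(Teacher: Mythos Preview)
Your proposal is correct and follows essentially the same approach as the paper, whose proof is literally the single sentence ``Given the above discussion and results, this is identical to the proofs of Theorems~\ref{T2.3} and \ref{T2.4}.'' You have correctly identified that Proposition~\ref{P3.3} replaces Proposition~\ref{P2.2}, that the balayage identity \eqref{3.15} handles the passage from $d\nu_{n(j)}$ to $\calP(d\nu_{n(j)})$ for $\abs{z}>1$, and that the remaining potential-theoretic steps (lower semicontinuity to extend $\Phi_{\nu_\infty}\le\log(A^{-1})$ to the boundary, integrating against $d\nu_\infty$ to bound the energy, uniqueness of the minimizer) are verbatim transcriptions of the OPRL argument.
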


\begin{proof} Given the above discussion and results, this is identical to the proofs of
Theorems~\ref{T2.3} and \ref{T2.4}.
\end{proof}

By mimicking the proof we give for Theorem~\ref{T1.8} for OPRL, we obtain the same result for OPUC.

\section{Van Assche's Proof of Widom's Theorem} \lb{s4}

In this section, we will prove Theorem~\ref{T1.10} using part of Van Assche's approach \cite{vA}.
The basic idea is simple: By a combination of Chebyshev's inequality and the Borel--Cantelli
lemma, if $\|P_{n(j)}\|_{L^2(d\mu)}^{1/n(j)}\to A$, then for $d\mu$-a.e.\ $x$, we have
$\limsup_{j\to\infty} \abs{P_{n(j)}(x)}^{1/n(j)}\leq A$. By using some potential theory,
we will find that the density of zeros measure, $d\nu$, supported on $E$ obeys for q.e.\ $x$,
$\Phi_\nu(x)\geq\log(A^{-1})$ a.e.\ $d\mu$. Since $d\rho_E$ is a.c.\ with respect to $d\mu$,
this will imply $\int \Phi_{\rho_E}(x)\, d\nu(x)\geq\log (A^{-1})$. But by potential theory again,
$\Phi_{\rho_E}(x)\leq\log (C(E)^{-1})$, so we will have $A^{-1}\leq C(E)^{-1}$, that is,
$C(E)\leq A$.

\begin{lemma}\lb{L4.1} Let $d\mu$ be a probability measure on a measure space $X$. Let $f_{n(j)}$ be a
sequence of functions indexed by integers $1 \leq n(1) <n(2) <\dots$. Suppose for some $1\leq p<\infty$,
\begin{equation}\lb{4.1}
\limsup_{j\to\infty}\, \|f_{n(j)}\|_{L^p}^{1/n(j)} =A
\end{equation}
Then for $d\mu$-a.e.\ $x$,
\begin{equation}\lb{4.2}
\limsup \abs{f_{n(j)}(x)}^{1/n(j)}\leq A
\end{equation}
\end{lemma}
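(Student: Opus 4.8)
The plan is to prove Lemma~\ref{L4.1} by a standard Chebyshev/Borel--Cantelli truncation along the subsequence $n(j)$. Fix $\veps>0$; I want to show that for $d\mu$-a.e.\ $x$ one has $\abs{f_{n(j)}(x)}^{1/n(j)}\leq A+\veps$ for all large $j$. By \eqref{4.1}, there is $j_0$ so that $\|f_{n(j)}\|_{L^p}^{1/n(j)}\leq A+\tfrac{\veps}{2}$ for $j\geq j_0$, i.e.\ $\|f_{n(j)}\|_{L^p}^p \leq (A+\tfrac{\veps}{2})^{p\,n(j)}$.

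Next I would apply Chebyshev's inequality: for $j\geq j_0$,
\[
\mu\bigl(\{x : \abs{f_{n(j)}(x)} > (A+\veps)^{n(j)}\}\bigr)
= \mu\bigl(\{x : \abs{f_{n(j)}(x)}^p > (A+\veps)^{p\,n(j)}\}\bigr)
\leq \f{\|f_{n(j)}\|_{L^p}^p}{(A+\veps)^{p\,n(j)}}
\leq \biggl(\f{A+\tfrac{\veps}{2}}{A+\veps}\biggr)^{p\,n(j)}.
\]
Since $r\equiv (A+\tfrac{\veps}{2})/(A+\veps) < 1$ and $n(j)\to\infty$ strictly increasingly (so $n(j)\geq j$), the right side is bounded by $r^{pj}$, which is summable in $j$. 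Hence $\sum_j \mu(\{\abs{f_{n(j)}} > (A+\veps)^{n(j)}\}) < \infty$, and Borel--Cantelli gives that for $d\mu$-a.e.\ $x$, only finitely many of the events occur; that is, $\abs{f_{n(j)}(x)}^{1/n(j)}\leq A+\veps$ for all $j$ large. Taking $\veps = 1/k$ and intersecting the resulting full-measure sets over $k\in\bbN$ yields $\limsup_j \abs{f_{n(j)}(x)}^{1/n(j)}\leq A$ for $d\mu$-a.e.\ $x$, which is \eqref{4.2}.

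There is essentially no serious obstacle here; the only points requiring a word of care are the degenerate cases. If $A=0$, the same argument works verbatim with $A+\veps$ replaced by $\veps$ and $A+\tfrac{\veps}{2}$ by $\veps/2$, giving $\limsup_j \abs{f_{n(j)}(x)}^{1/n(j)}\leq \veps$ for every $\veps$, hence the $\limsup$ is $0$. If $A=\infty$ (which cannot happen under \eqref{4.1} as stated, since the $\limsup$ is asserted to equal a finite $A$, but one might want robustness) the conclusion is vacuous. One should also note that $f_{n(j)}$ need only be measurable with $\|f_{n(j)}\|_{L^p}<\infty$ for the Chebyshev step to make sense, which is implicit in \eqref{4.1}. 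The use of $p\geq 1$ is not really needed beyond ensuring $\|\cdot\|_{L^p}$ is the usual thing; any fixed $p>0$ would do, but $p\geq 1$ is all that is claimed and all that is used later (with $p=2$).
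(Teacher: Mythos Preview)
Your proof is correct and follows essentially the same approach as the paper: Chebyshev's inequality followed by Borel--Cantelli, then letting the slack parameter go to zero. The paper's version is slightly terser (it fixes $B>A$ rather than $A+\veps$ and does not split the $\veps$), but the argument is identical in substance.
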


\begin{proof} Fix $B >A$. Then
\begin{equation}\lb{4.3}
\mu(S_j(B)) \equiv \mu(\{x\mid\abs{f_{n(j)}(x)} >B^{n(j)}\}) \leq
\f{\|f_{n(j)}\|_{L^p}^p}{B^{n(j)p}}
\end{equation}

By \eqref{4.1} and $B >A$, we see
\[
\sum_j \mu(S_j(B)) <\infty
\]
so for $\mu$-a.e.\ $x$, there is $J(x)$ with $x\notin S_j$ for all $j> J(x)$.
Thus, for $\mu$-a.e.\ $x$,
\[
\limsup \abs{f_{n(j)}(x)}^{1/n(j)}\leq B
\]
Since $B$ is arbitrary, we have \eqref{4.2}.
\end{proof}

\begin{proof}[Proof of Theorem~\ref{T1.10}] Let $A$ be a limit point of $\|P_{n(j)}\|_{L^2(d\mu)}^{1/n(j)}$.
By passing to a subsequence, we can suppose the zero counting measure $d\nu_{n(j)}$ has a limit
$d\nu_\infty$ which, by Proposition~\ref{P1.6A}, is supported on $E$.

By Lemma~\ref{L4.1} for a.e.\ $x(d\mu)$,
\begin{equation}\lb{4.3a}
\limsup \abs{P_{n(j)}(x)}^{1/n(j)} \leq A
\end{equation}

By \eqref{1.4} for such $x$,
\begin{equation}\lb{4.4}
\limsup \exp(-\Phi_{\nu_{n(j)}}(x))\leq A
\end{equation}
By the upper envelope theorem (Theorem~\ref{TA.4}) for q.e.\ $x\in\bbC$,
\begin{equation}\lb{4.5x}
\Phi_{\nu_\infty}(x) = \liminf \Phi_{\nu_{n(j)}}(x)
\end{equation}
Thus, there exist sets $S_1$ and $S_2$ so that $\mu(S_1)=0$ and $C(S_2)=0$, so that for
$x\in\bbC\setminus (S_1\cup S_2)$,
\begin{equation}\lb{4.5}
\Phi_{\nu_\infty}(x)\geq \log(A^{-1})
\end{equation}

We can now repeat the argument that led to \eqref{2.18}. By hypothesis, $d\rho_E$ is $d\mu$-a.c.
So $\rho_E(S_1)=0$ and, of course, since $\calE(\rho_E)<\infty$, $\rho_E(S_2)=0$. Thus,
\eqref{4.5} holds a.e.\ $d\rho_E$.

Therefore, by \eqref{A.2},
\begin{align*}
\log(A^{-1}) &\leq \int \Phi_{\nu_\infty} (x)\, d\rho_E(x) \\
&= \int \Phi_{\rho_E}(x)\, d\nu_\infty(x) \\
&\leq \log (C(E)^{-1})
\end{align*}
by \eqref{A.17}.

Thus, $A^{-1}\leq C(E)^{-1}$ or $C(E)\leq A$. Thus, $\liminf \|P_n\|^{1/n}\geq C(E)$. Since
(see \eqref{1.15}), $\limsup \|P_n\|^{1/n}\leq C(E)$, we have regularity.
\end{proof}

The above proof is basically a part of Van Assche's argument \cite{vA} which can be simplified
since he proves that $d\nu_\infty =d\rho_E$ by a direct argument using similar ideas, and we
can avoid that because of the general argument in Section~2.

This argument can also prove a related result---we will see examples of this phenomenon at
the end of the next section.

\begin{theorem}\lb{T4.2} Suppose $\mu$ is a measure of compact support on $\bbR$ so $E\subset
\supp(d\mu)$ for an essentially perfect compact set $E$ with $C(E) >0$. Suppose $d\rho_E$ is
a.c.\ with respect to $d\mu$, and for some $n(1) < n(2) < \dots$, we have
\begin{equation}\lb{4.7a}
\|P_{n(j)}\|_{L^2 (\bbR,d\mu)}^{1/n(j)} \to C(E)
\end{equation}
for the monic $P_n (x,d\mu)$. Let $d\nu_{n(j)}$ be the corresponding zero counting measure. Then
$d\nu_{n(j)}\overset{w}{\longrightarrow} d\rho_E$.
\end{theorem}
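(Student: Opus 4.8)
The plan is to mimic the proof of Theorem~\ref{T1.10} given just above, adapting it to the present hypotheses where $E$ is only assumed to be contained in $\supp(d\mu)$ (so $\sigma_\ess(d\mu)$ need not equal $E$) and where we are told a priori that a particular subsequence of norms realizes the value $C(E)$. First I would pass to a further subsequence so that the zero counting measures $d\nu_{n(j)}$ converge weakly to some limit $d\nu_\infty$; since the zeros of the monic OPRL lie in $\cvh(\supp(d\mu))$, this limit is a compactly supported probability measure, though unlike in Theorem~\ref{T1.10} it need not a priori be supported on $E$. Applying Lemma~\ref{L4.1} with $p=2$, $A=C(E)$, and $f_{n(j)}=P_{n(j)}$, together with \eqref{1.4}, gives $\limsup_j \exp(-\Phi_{\nu_{n(j)}}(x)) \le C(E)$ for $d\mu$-a.e.\ $x$; then the upper envelope theorem (Theorem~\ref{TA.4}) and \eqref{4.5x} upgrade this to $\Phi_{\nu_\infty}(x) \ge \log(C(E)^{-1})$ off a set that is the union of a $\mu$-null set $S_1$ and a capacity-zero set $S_2$.

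The next step is to integrate this inequality against $d\rho_E$. Since $E$ is essentially perfect with $C(E)>0$, the equilibrium measure $d\rho_E$ exists, has finite energy, and gives zero mass to $S_2$; since $d\rho_E$ is absolutely continuous with respect to $d\mu$, it also gives zero mass to $S_1$. Hence $\Phi_{\nu_\infty}(x) \ge \log(C(E)^{-1})$ holds $d\rho_E$-a.e., and integrating and using the symmetry of the mutual energy \eqref{A.2} followed by \eqref{A.17} yields
\begin{equation*}
\log(C(E)^{-1}) \le \int \Phi_{\nu_\infty}(x)\, d\rho_E(x) = \int \Phi_{\rho_E}(x)\, d\nu_\infty(x) \le \log(C(E)^{-1}).
\end{equation*}
So equality holds throughout; in particular $\int \Phi_{\rho_E}(x)\, d\nu_\infty(x) = \log(C(E)^{-1})$, which by the characterization of the equilibrium measure as the unique minimizer (and the fact that $\Phi_{\rho_E} \le \log(C(E)^{-1})$ everywhere with equality q.e.\ on $E$) forces $d\nu_\infty$ to be concentrated where $\Phi_{\rho_E}$ attains its maximum, i.e.\ on $E$ up to capacity zero, and then $d\nu_\infty = d\rho_E$ by the uniqueness argument used in Theorem~\ref{T2.4}.

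I expect the main obstacle to be exactly the point where Theorem~\ref{T1.10} had it for free: confirming that the limiting zero measure $d\nu_\infty$ really is supported on $E$ (and hence equals $d\rho_E$) rather than leaking mass onto the part of $\supp(d\mu)$ outside $E$. Here one cannot invoke Proposition~\ref{P1.6A} since $\sigma_\ess(d\mu)$ may be strictly larger than $E$. The resolution is that the chain of inequalities above is forced to be a chain of equalities, so $\int (\log(C(E)^{-1}) - \Phi_{\rho_E}(x))\, d\nu_\infty(x) = 0$ with a nonnegative integrand; since $\log(C(E)^{-1}) - \Phi_{\rho_E}$ is strictly positive off $E$ (the Green's function $G_E$ is positive on $\Omega$), $d\nu_\infty$ must give zero mass to $\bbC \setminus E$ modulo the q.e.-exceptional set, and then the equilibrium-measure uniqueness pins down $d\nu_\infty = d\rho_E$. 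Finally, since every weakly convergent subsequence of $d\nu_n$ has the same limit $d\rho_E$, compactness gives $d\nu_{n(j)} \overset{w}{\longrightarrow} d\rho_E$ along the original subsequence.
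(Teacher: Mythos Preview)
Your proposal is correct and follows essentially the same route as the paper's proof: pass to a subsequential limit $d\nu_\infty$, use Lemma~\ref{L4.1} and the upper envelope theorem to get $\Phi_{\nu_\infty}\ge\log(C(E)^{-1})$ off a $\mu$-null set union a capacity-zero set, integrate against $d\rho_E$ to force the chain of inequalities to be equalities, deduce from the strict inequality $\Phi_{\rho_E}<\log(C(E)^{-1})$ on $\bbC\setminus E$ that $\supp(d\nu_\infty)\subset E$, and then conclude $d\nu_\infty=d\rho_E$. The only cosmetic difference is that the paper invokes Theorem~\ref{TA.9} for the last step (since $E$ essentially perfect implies potentially perfect via \eqref{A.35}, and you have shown $\Phi_{\nu_\infty}=\log(C(E)^{-1})$ for $d\rho_E$-a.e.\ $x$ with $\supp(d\nu_\infty)\subset E$), whereas you appeal to the energy-minimizer uniqueness of Theorem~\ref{T2.4}; both are valid, though the former is slightly more direct here.
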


\begin{remarks} 1. We have in mind cases where $E$ is a proper subset of $\supp(d\mu)$. There will
be many subsets with the same capacity, but there can only be one that has $d\rho_E$ a.e.\
with respect to $d\mu$.

\smallskip
2. Since $d\mu\restriction E$ is regular (by Theorem~\ref{T1.10}) and $\|P_{n(j)}
(\dott,\mu)\|_{L^2 (d\mu)} \geq \|P_{n(j)} (\dott, \mu\restriction E)\|_{L^2 (d\mu
\restriction E)}$, we see that
\[
\liminf \|P_n\|_{L^2 (\bbR,d\mu)}^{1/n} \geq C(E)
\]
so \eqref{4.7a} is equivalent to a $\limsup$ assumption.
\end{remarks}

\begin{proof} Let $d\nu_\infty$ be a limit point of $d\nu_{n(j)}$. As in the proof of
Theorem~\ref{T1.10}, there exist sets $S_1$ with $\mu(S_1)=0$ and $S_2$ with $C(S_2)=0$,
so for $x\in\bbC\setminus (S_1\cup S_2)$, we have
\begin{equation}\lb{4.7b}
\Phi_{\nu_\infty}(x) \geq \log (C(E)^{-1})
\end{equation}

Since $\rho_E(S_1)=0$ by the assumption and $\rho_E(S_2)=0$ since $C(E)>0$, \eqref{4.7b}
holds for $\rho_E$-a.e.\ $x$. Moreover, since
\begin{equation} \lb{4.7c}
\Phi_{\rho_E}(z)\leq \log (C(E)^{-1})
\end{equation}
for all $z$,
\begin{align}
\log(C(E)^{-1}) &\leq \int \Phi_{\nu_\infty}(x)\, d\rho_E(x) \notag \\
&= \int \Phi_{\rho_E}(x)\, d\nu_\infty (x) \notag \\
&\leq\log (C(E)^{-1}) \lb{4.10a}
\end{align}

Thus, using \eqref{4.7c}, we see
\[
\Phi_{\rho_E}(x) = \log (C(E)^{-1})
\]
for $\nu_\infty$-a.e.\ $x$. But $\Phi_{\rho_E}(z)<\log(C(E)^{-1})$ for all $z\notin E$, so $\nu_\infty$
is supported on $E$. By \eqref{4.10a} and \eqref{4.7b}, $\Phi_{\nu_\infty}(x)=\log(C(E)^{-1})$ for
$d\rho_E$-a.e.\ $x$. By Theorem~\ref{TA.9}, $\nu_\infty =\rho_E$.
\end{proof}

There is an alternate way to prove \eqref{4.3a} without Lemma~\ref{L4.1} that links it to ideas
more familiar to spectral theorists. It is well known that for elliptic PDEs, there are polynomially
bounded eigenfunctions for a.e.\ energy with respect to spectral measures. This is called the BGK
expansion in \cite{Sxxi} after Berezanski\u\i \cite{Ber}, Browder \cite{Br}, G\aa rding \cite{Gar},
Gel'fand \cite{Gel}, and Kac \cite{Kac}. The translation to OPRL is discussed in Last--Simon \cite{S263}.
Since $\int\abs{p_n(x)}^2\, d\mu=1$, we have
\begin{equation} \lb{4.8}
\sum_{n=0}^\infty \, (n+1)^{-2} \int \abs{p_n(x)}^2\, d\mu <\infty
\end{equation}
and thus, for $d\mu$-a.e. $x$,
\begin{equation} \lb{4.9}
\sum_{n=0}^\infty \, (n+1)^{-2} \abs{p_n(x)}^2 <\infty
\end{equation}
so
\begin{equation} \lb{4.10}
\abs{P_n(x)} \leq C(x) (n+1) \|P_n\|_{L^2}
\end{equation}
which implies \eqref{4.3a}.

It is interesting to note that if $E$ is such that it is regular and $d\rho_E$ is purely
absolutely continuous on $E=\supp(d\rho_E)$, one can use these ideas to provide an alternate
proof (see Simon \cite{Sim-wk} for still another alternate proof in this case). For in that
case, the measure associated to the second kind polynomials, $q_n(x)$, also has a.c.\ weight
$\wti w(x)>0$ for a.e.\ $x$ in $E$, and thus
\begin{equation} \lb{4.11}
\abs{q_n(x)}\leq C(x) (n+1)
\end{equation}
which, by constancy of the Wronskian, implies
\begin{equation} \lb{4.12}
\abs{p_n(x)}^2 + \abs{p_{n+1}(x)}^2 \geq \wti C(x) (n+1)^{-1}
\end{equation}
If $d\nu_{n(j)}\to d\nu_\infty$, so does $d\nu_{n(j)+1}$ (by interlacing of zeros), and thus,
by \eqref{4.10} and \eqref{4.12}, if $\lim (a_1 \dots a_{n(j)})^{1/n(j)} \to A$, then
\begin{equation} \lb{4.13}
-\log (A) + \int \log(\abs{x-y}^{-1})\, d\nu_\infty (y)=0
\end{equation}
for $x\in E$ but with a set of Lebesgue measure zero and of capacity zero removed.
By Theorem~\ref{TA.9}, we conclude that $A =C(E)$ and $d\nu_\infty =d\rho_E$.

\begin{remark} We note that \eqref{4.13} holds a.e.\ on the a.c.\ spectrum and by the above
arguments, a.e.\ on that spectrum, $\f{1}{n} \log \|T_n(x)\|\to 0$, a deterministic
analog of the Pastur--Ishii theorem.
\end{remark}

\section{The Stahl--Totik Criterion} \lb{s5}

In this section, we will present an exposition of Stahl--Totik's proof \cite{StT} of their
result, our Theorem~\ref{T1.11}. As a warmup, we prove

\begin{theorem}\lb{T5.1} Let $d\mu$ be a measure on $\partial\bbD$ obeying
\begin{equation} \lb{5.1}
\inf_{\theta_0}\, \mu(\{e^{i\theta}\mid \abs{\theta-\theta_0} \leq \tfrac{1}{m}\})
\geq C_\delta e^{-\delta m}
\end{equation}
for all $\delta >0$. Then $\mu$ is regular.
\end{theorem}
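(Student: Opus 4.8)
The plan is to establish $\liminf_{n\to\infty}\|\Phi_n\|_{L^2(d\mu)}^{1/n}\ge 1$ and combine it with the general upper bound. Since \eqref{5.1} forces every arc of $\partial\bbD$ to carry positive $\mu$-mass, $\supp(d\mu)=\partial\bbD$; hence $\sigma_\ess(d\mu)=\partial\bbD$, $C(\partial\bbD)=1$, and Corollary~\ref{C1.2} gives $\limsup\|\Phi_n\|_{L^2(d\mu)}^{1/n}\le 1$ (replacing $\mu$ by $\mu/\mu(\partial\bbD)$ changes nothing here). So, given $\veps>0$, it suffices to produce $\delta=\delta(\veps)>0$ with $\|\Phi_n\|_{L^2(d\mu)}\ge(1-\veps)^n$ for all large $n$. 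The underlying idea is that a monic polynomial of degree $n$ cannot be exponentially small on too much of $\partial\bbD$, so the set where $\abs{\Phi_n}\ge(1-\delta)^n$ is Lebesgue-large, and \eqref{5.1} then forces it to carry enough $\mu$-mass.

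Fix a small $\delta>0$ and put $G_n=\{e^{i\theta}:\abs{\Phi_n(e^{i\theta})}\ge(1-\delta)^n\}$, $B_n=\partial\bbD\setminus G_n$. Two facts drive the argument. First, if $\nu_n$ is the (probability) zero-counting measure of $\Phi_n$, then by \eqref{1.4} we have $B_n=\{e^{i\theta}:\Phi_{\nu_n}(e^{i\theta})>\log\frac1{1-\delta}\}$, so $B_n$ lies in the superlevel set $\{z\in\bbC:\Phi_{\nu_n}(z)>\log\frac1{1-\delta}\}$; and for any probability measure $\nu$ and any $t>0$ one has $C(\{\Phi_\nu>t\})\le e^{-t}$ — indeed, writing $K=\{\Phi_\nu\ge t\}$ with equilibrium measure $\omega_K$, $t\le\int\Phi_\nu\,d\omega_K=\int\Phi_{\omega_K}\,d\nu\le\log(C(K)^{-1})$ by \eqref{A.17}, so $C(K)\le e^{-t}$. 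Hence $C(B_n)\le 1-\delta$. Also $\abs{\Phi_n(e^{i\theta})}^2$ is a trigonometric polynomial of degree at most $n$ (and not identically constant), hence with at most $2n$ zeros on the circle, so $B_n$ and $G_n$ are unions of at most $n$ arcs. Second, a complementary estimate: if $U\subset\partial\bbD$ has normalized Lebesgue measure $\eta$, then $C(\partial\bbD\setminus U)\ge 1-\kappa(\eta)$ with $\kappa(\eta)\to0$ as $\eta\to0^+$; I would get this by evaluating $\calE(\cdot)$ on the normalization of $\frac{d\theta}{2\pi}$ restricted to $\partial\bbD\setminus U$, using $\int\log\abs{e^{i\theta}-w}^{-1}\frac{d\theta}{2\pi}=0$ for $\abs w=1$ to kill the cross terms, leaving a term $\iint_{U\times U}\log\abs{z-w}^{-1}\frac{d\theta}{2\pi}\frac{d\phi}{2\pi}$ that a rearrangement bound controls by $O(\eta^2\log(1/\eta))$. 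Combining: with $\eta$ the normalized Lebesgue measure of $G_n$, $1-\delta\ge C(B_n)\ge 1-\kappa(\eta)$ forces $\eta\ge\eta_0(\delta)>0$, where $\eta_0(\delta)$ is of order $(\delta/\log(1/\delta))^{1/2}$; in particular $\delta/\eta_0(\delta)\to0$ as $\delta\to0$.

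Now invoke \eqref{5.1}. The set $G_n$ is a union of at most $n$ arcs of total length $\ge 2\pi\eta_0(\delta)$, so its longest component has length $\ge 2\pi\eta_0(\delta)/n$ and contains an arc of length $2/m$ with $m=\lceil n/(\pi\eta_0(\delta))\rceil$. Therefore $\mu(G_n)\ge C_\delta e^{-\delta m}\ge C'_\delta\,e^{-(\delta/\pi\eta_0(\delta))\,n}$. Since $\abs{\Phi_n}^2\ge(1-\delta)^{2n}$ on $G_n$,
\[
\|\Phi_n\|_{L^2(d\mu)}^2=\int_{\partial\bbD}\abs{\Phi_n}^2\,d\mu\ge(1-\delta)^{2n}\mu(G_n)\ge C'_\delta\,(1-\delta)^{2n}\,e^{-(\delta/\pi\eta_0(\delta))\,n},
\]
whence $\liminf_{n\to\infty}\|\Phi_n\|_{L^2(d\mu)}^{1/n}\ge(1-\delta)\exp(-\delta/(2\pi\eta_0(\delta)))$. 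Letting $\delta\downarrow0$ and using $\delta/\eta_0(\delta)\to0$ gives $\liminf\|\Phi_n\|_{L^2(d\mu)}^{1/n}\ge 1$; together with the upper bound, $\|\Phi_n\|_{L^2(d\mu)}^{1/n}\to 1=C(\partial\bbD)$, so $\mu$ is regular.

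The delicate point is the complementary capacity estimate with the right rate: the quadratic-in-$\eta$ (up to a log) bound on $\kappa$ is exactly what makes $\delta/\eta_0(\delta)\to0$, and a cruder route — e.g.\ using only the pointwise bound $\abs{\Phi_n}\le 2^n$ on $\partial\bbD$ — yields $\eta_0(\delta)$ merely of order $\delta$, for which $\delta/\eta_0(\delta)$ stays bounded away from $0$ and the final limit falls short of $1$. This is also the step using the special geometry of $\partial\bbD$ (equilibrium measure $\frac{d\theta}{2\pi}$, vanishing of $\int\log\abs{e^{i\theta}-w}^{-1}\frac{d\theta}{2\pi}$ on $\partial\bbD$); the analogue for a finite union of intervals, which would be needed for Theorem~\ref{T1.11}, rests on the corresponding facts about $d\rho_E$.
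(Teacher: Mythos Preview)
Your argument is correct, but the paper's proof is considerably shorter and uses a different mechanism. The paper invokes Bernstein's inequality $\sup_{\bbD}\abs{P_n'}\le n\,\|P_n\|_{\partial\bbD}$ directly: choosing $\theta_n$ where $\abs{\Phi_n}$ attains its sup on $\partial\bbD$, one has $\abs{\Phi_n(e^{i\theta})}\ge\tfrac12\|\Phi_n\|_{\partial\bbD}$ on the single arc $\abs{\theta-\theta_n}\le\tfrac1{2n}$, and since $\Phi_n$ is monic, $\|\Phi_n\|_{\partial\bbD}\ge 1$. Applying \eqref{5.1} with $m=2n$ gives $\|\Phi_n\|_{L^2(d\mu)}^2\ge\tfrac14 C_\delta e^{-2\delta n}$ and hence $\liminf\|\Phi_n\|^{1/n}\ge e^{-\delta}$ for every $\delta>0$.

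Your route replaces the derivative bound by a capacity argument: bounding $C(B_n)$ from above via the superlevel-set estimate for potentials, and from below via the energy of normalized Lebesgue measure on $B_n$ (where the identity $\iint_{A\times A}=\iint_{U\times U}$ for $A=\partial\bbD\setminus U$ is the key circle-specific input). This is essentially a hand-rolled Remez inequality, and indeed the paper immediately after its proof remarks that Bernstein's inequality is special to the circle and then switches to the genuine Remez inequality (Proposition~\ref{P5.2}) to handle finite unions of intervals in Theorem~\ref{T1.11}. So your approach is longer for this warm-up result but already anticipates the structure of the general proof; the paper's Bernstein argument is a quick shortcut available only because $\partial\bbD$ has no endpoints.
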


\begin{proof} We will use Bernstein's inequality that for any polynomial, $P_n$, of degree $n$,
\begin{equation} \lb{5.2}
\sup_{z\in\bbD}\, \abs{P'_n(z)} \leq n\, \sup_{z\in\partial\bbD}\, \abs{P_n(z)}
\end{equation}
Szeg\H{o}'s simple half-page proof of this can be found, for example, in Theorem~2.2.5 of
\cite{OPUC1}.

Applying this to the monic polynomials $\Phi_n(z; d\mu)$, we see that if $\theta_n$ is chosen with
$\abs{\Phi_n(e^{i\theta_n}; d\mu)} =\|\Phi_n\|_{\partial\bbD}$, the sup norm, and $\abs{\theta-\theta_n}
\leq \f{1}{2n}$, then
\begin{equation} \lb{5.3}
\abs{\Phi_n(e^{i\theta}; d\mu)} \geq \tfrac12\, \|\Phi_n\|_{\partial\bbD}
\end{equation}
Thus, by \eqref{5.1} with $m=2n$,
\begin{equation} \lb{5.4}
\|\Phi_n (\dott; d\mu)\|_{L^2(d\mu)}^2 \geq (\tfrac14\, \|\Phi_n\|_{\partial\bbD}^2) C_\delta
e^{-2\delta n}
\end{equation}

Since $\Phi_n (\dott; d\mu)$ is monic,
\begin{equation} \lb{5.5}
\int \Phi_n (e^{i\theta}; d\mu) e^{-in\theta}\, \f{d\theta}{2\pi} =1
\end{equation}
so the $\sup$ norm obeys
\begin{equation} \lb{5.6}
\|\Phi_n\|_{\partial\bbD} \geq 1
\end{equation}
and so \eqref{5.4} implies
\[
\liminf \|\Phi_n (\dott; d\mu)\|_{L^2}^{1/n} \geq e^{-2\delta}
\]
Since $\delta$ is arbitrary, the $\liminf$ is larger than or equal to $1$. Since $C(E)=1$,
$\mu$ is regular.
\end{proof}

There are two issues with just using these ideas to prove Theorem~\ref{T1.11}. While \eqref{5.5}
is special for $\partial\bbD$, its consequence, \eqref{5.6}, is really only an expression
of $\|T_n\|_E \geq C(E)^n$ (see \eqref{B.7}), so it is not an issue.

However, \eqref{5.2} only holds because a circle has no ends. The analog for, say, $[-1,1]$ is
Bernstein's inequality
\begin{equation} \lb{5.7}
\abs{p'(x)} \leq \f{n}{\sqrt{1-x^2}}\, \|p\|_{[-1,1]}
\end{equation}
or (Markov's inequality)
\begin{equation} \lb{5.8}
\abs{p'(x)} \leq n^2 \|p\|_{[-1,1]}
\end{equation}
Either one can be used to obtain a theorem like Theorem~\ref{T5.1} on $[-1,1]$ but $e^{-\delta m}$
needs to be replaced by $e^{-\delta \sqrt{m}}$---interesting, but weaker than Theorem~\ref{T1.11}.

The other difficulty is that \eqref{5.1} is global, requiring a result uniform in $\theta_0$, and
\eqref{1.25} needs only a result for most $\theta_0$. The problem with using bounds on derivatives
is that they only get information on a single set of size $O(\f{1}{n})$ at best. They get
$\abs{p_n(x)}\geq \f12 \|p_n\|_E$ there, but that is overkill---we only need $\abs{p_n(x)} \geq
e^{-\delta' n} \|p_n\|_E$, and that actually holds on a set of size $O(1)$! The key will thus be
a variant of the Remez inequality in the following form:

\begin{proposition}\lb{P5.2} Fix $E$ a finite union of closed bounded intervals in $\bbR$. Then
there is $c(\delta) >0$ with $c(\delta)\to 0$ as $\delta\downarrow 0$, so that for any $F\subset E$
with $\abs{E\setminus F}<\delta$, we have
\begin{equation} \lb{5.9}
\|Q_n\|_E\leq e^{c(\delta) n} \|Q_n\|_F
\end{equation}
for any polynomial, $Q_n$, of degree $n$.
\end{proposition}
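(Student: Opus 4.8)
The plan is to deduce this from the classical one-interval Remez inequality together with a simple rescaling argument over the finitely many components of $E$. Write $E=\bigcup_{k=1}^{\ell}I_k$ with the $I_k$ disjoint closed intervals, put $L_k=\abs{I_k}$ and $L=\min_k L_k>0$, and let $F\subset E$ with $\abs{E\setminus F}<\delta$. Since $\|Q_n\|_E$ is attained on one component $I_{k_0}$, and $F\cap I_{k_0}\subset F$, it suffices to prove $\|Q_n\|_{I_{k_0}}\le e^{c(\delta)n}\|Q_n\|_{F\cap I_{k_0}}$. We may assume $\|Q_n\|_{F\cap I_{k_0}}>0$: as soon as $\delta<L$ the set $F\cap I_{k_0}$ has positive Lebesgue measure, so a nonzero polynomial cannot vanish on it, and if $Q_n\equiv 0$ there is nothing to prove.

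Next, apply the affine map carrying $I_{k_0}$ onto $[-1,1]$. The image of $I_{k_0}\setminus F$ has Lebesgue measure $<\frac{2}{L_{k_0}}\abs{I_{k_0}\setminus F}<\frac{2\delta}{L}=:s$, and we restrict to $\delta<L/2$ so that $s<1<2$. The classical Remez inequality says that for any polynomial $p$ of degree $\le n$ and any measurable $A\subset[-1,1]$ with $\abs{[-1,1]\setminus A}\le s$,
\[
\|p\|_{[-1,1]}\le T_n\Bigl(\tfrac{2+s}{2-s}\Bigr)\,\|p\|_A ,
\]
with $T_n$ the classical Chebyshev polynomial of the first kind. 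Applied to the rescaled $Q_n$, with $A$ the rescaled $F\cap I_{k_0}$, this gives $\|Q_n\|_{I_{k_0}}\le T_n\bigl(\tfrac{2+s}{2-s}\bigr)\,\|Q_n\|_{F\cap I_{k_0}}$.

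It remains to estimate $T_n$ near $1$. Since $\tfrac{2+s}{2-s}=1+\tfrac{2s}{2-s}<1+2s=1+\tfrac{4\delta}{L}$ for $s<1$, and since for $x\ge 0$ one has $T_n(1+x)=\cosh\bigl(n\operatorname{arccosh}(1+x)\bigr)\le\cosh\bigl(n\sqrt{2x}\bigr)\le e^{n\sqrt{2x}}$ — the middle step because $\cosh(\sqrt{2x})\ge 1+x$, hence $\operatorname{arccosh}(1+x)\le\sqrt{2x}$, and $T_n$ is increasing on $[1,\infty)$ — we obtain $\|Q_n\|_{I_{k_0}}\le e^{n\sqrt{8\delta/L}}\,\|Q_n\|_F$. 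Thus one may take $c(\delta)=\sqrt{8\delta/L}$ for $0<\delta<L/2$ (and anything one likes for larger $\delta$, where the statement is neither needed nor, for $\delta\ge L$, even meaningful), and $c(\delta)\to 0$ as $\delta\downarrow 0$.

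The reduction to a single component and the Chebyshev estimate are routine bookkeeping; the one point that must be checked carefully is that the rescaling multiplies the exceptional measure by exactly $2/L_{k_0}$, so that the dependence of $c$ on the geometry of $E$ enters only through the shortest component length $L$. The genuinely substantive ingredient is the Remez inequality itself — the fact that among all admissible configurations the extremal polynomial is a Chebyshev polynomial with its exceptional set pushed to one end of the interval, yielding the constant $T_n\bigl(\tfrac{2+s}{2-s}\bigr)$ — which I would quote rather than reprove; if one insisted on a self-contained treatment, proving that extremal property would be the only real work.
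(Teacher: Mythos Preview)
Your proof is correct and follows essentially the same approach as the paper: reduce to a single interval, rescale to $[-1,1]$, quote the Remez inequality with constant $T_n\bigl(\tfrac{2+s}{2-s}\bigr)$, and estimate this via $T_n(\cosh x)=\cosh(nx)\le e^{nx}$. Your presentation is in fact slightly more careful than the paper's---you track the scaling factor $2/L_{k_0}$ explicitly (so the dependence on $E$ enters through $L=\min_k|I_k|$) and you note why $\|Q_n\|_{F\cap I_{k_0}}>0$, both of which the paper leaves implicit.
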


\begin{remarks} 1. This is a variant of an inequality of Remez \cite{Rem}; see the proof for his precise
result.

\smallskip
2. The relevance of Remez's inequality to regularity appeared already in Erd\"os--Tur\'an \cite{ET}
and was the key to the proof in Freud \cite{FrB} of the Erd\"os--Tur\'an theorem, Theorem~\ref{T1.9}.
Its use here is due to Stahl--Totik \cite{StT}.
\end{remarks}

\begin{proof} If $E= I_1\cup\cdots\cup I_\ell$ disjoint intervals and $\abs{E\setminus F}\leq\delta$,
then $\abs{I_j\setminus I_j\cap F}\leq\delta$ for all $j$, so it suffices to prove this result for
each single interval and then, by scaling, for $E=[-1,1]$.

In that case, Remez's inequality (due to Remez \cite{Rem}; see Borwein--Erd\'elyi \cite{BE} for a
proof and further discussion) says that if $F\subset [-1,1]$ and $\abs{[-1,1]\setminus F}\leq\delta$,
then with $T_n$ the classical first kind Chebyshev polynomials,
\begin{equation} \lb{5.10}
\|Q_n\|_E \leq T_n \biggl( \f{2+\delta}{2-\delta}\biggr) \|Q_n\|_F
\end{equation}
(This can be proven by showing the worst case occurs when $F=[-1,1-\delta]$ and $Q_n(x) =T_n
(\f{2x+\delta}{2-\delta})$.)

Since
\begin{equation} \lb{5.11}
T_n (\cosh (x)) = \cosh (nx) \leq e^{nx}
\end{equation}
and $\cosh (\veps) =1 + \f{\veps^2}{2} + O(\veps^4)$, we have
\begin{equation} \lb{5.12}
T_n \biggl( \f{2+\delta}{2-\delta}\biggr) \leq \exp \bigl( n [\sqrt{2\delta} + O(\delta^{3/2})]\bigr)
\end{equation}
so for $E=[-1,1]$, \eqref{5.9} holds with $c(\delta) =\sqrt{2\delta} + O(\delta^{3/2})$.
\end{proof}

\begin{lemma}\lb{L5.3} If $P_n$ is a real polynomial of degree $n$, and $a>0$, $S\equiv\{\lambda\in
\bbR\mid\abs{P_n(x)} > a\}$ is a union of most $(n+1)$ intervals.
\end{lemma}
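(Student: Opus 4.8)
The plan is to count sign changes of the polynomial $P_n(x)^2 - a^2$. Observe that $x \in S$ exactly when $P_n(x)^2 - a^2 > 0$, so $S$ is the set where the polynomial $R(x) = P_n(x)^2 - a^2$ is strictly positive. Since $R$ is continuous, $S$ is open, hence a countable disjoint union of open intervals; I must bound the number of these intervals.

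The key observation is that $R$ has degree exactly $2n$ (its leading coefficient is the square of that of $P_n$, hence nonzero), so $R$ has at most $2n$ real roots, counted without multiplicity at most $2n$ distinct real zeros. The boundary points of the open intervals comprising $S$ are all zeros of $R$: if $(\alpha,\beta)$ is a maximal open subinterval of $S$ with $\alpha$ finite, then $R(\alpha) = 0$ by continuity, and similarly for $\beta$ if finite. Two distinct bounded components of $S$ contribute at least one distinct zero each on their respective left endpoints (these left endpoints are distinct real numbers and each is a zero of $R$); more carefully, if $S$ has $k$ bounded components together with possibly one or two unbounded components (rays $(-\infty, c)$ or $(d,\infty)$), then each bounded component has two finite endpoints and each unbounded ray has one finite endpoint, but consecutive components share no endpoints since between them $R \le 0$. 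A clean count: list the components in increasing order; each one after the first has a finite left endpoint, and each one before the last has a finite right endpoint, and these endpoints are zeros of $R$, interlaced and distinct. If there are $N$ components total, this produces at least $N-1$ distinct zeros of $R$ from the "interior" endpoints, so $N - 1 \le 2n$, giving $N \le 2n+1$.

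To sharpen this to the stated bound of $n+1$ intervals, I would instead count using $P_n$ directly rather than $P_n^2$. Between any two disjoint components of $S$ there is a point where $\abs{P_n} \le a$, and across each such gap $P_n$ either keeps the same sign or changes sign. If $P_n$ has the same sign on two components of $S$ separated by a gap, then $P_n$ attains a local extremum of absolute value $\le a$ in the gap whose value lies strictly between that sign's values of $P_n$ on the two flanking components — more precisely, $P_n - a$ (if $P_n > a$ on those components) has a zero entering and a zero leaving each component, and $P_n'$ vanishes in each gap; counting zeros of $P_n^2 - a^2$ and zeros of $P_n'$ together and using that $P_n'$ has degree $n-1$ gives the refinement. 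The main obstacle is precisely this bookkeeping: one must argue that each "gap" between consecutive components of $S$ consumes at least two roots of $P_n^2 - a^2$ when $P_n$ changes sign across it, but at least one critical point of $P_n$ (a root of $P_n'$) when it does not — and then balance the $2n$ roots of $P_n^2 - a^2$ against the $n-1$ roots of $P_n'$ to conclude $N \le n+1$. The cleanest route is likely to note that on each component of $S$ the function $\abs{P_n}$ exceeds $a$, so $P_n$ has no zero there, hence all $n$ zeros of $P_n$ lie in the complementary closed set $\bbR \setminus S$; since that complement, as the non-strict sublevel set, has at most $n+1$ components (by the same degree-$2n$ argument applied to where $P_n^2 - a^2 \le 0$, refined), $S$ itself interleaves among at most $n+1$ gaps and so has at most $n+1$ components. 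I would write the final version around whichever of these counts is shortest to make rigorous.
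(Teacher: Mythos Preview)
Your proposal does not establish the claimed bound $n+1$; it only reaches $N\le 2n+1$. The sketches in your third paragraph for sharpening to $n+1$ are all left incomplete (you say so yourself), and the last one is in fact wrong as stated: even if the complement $\{|P_n|\le a\}$ had at most $n+1$ components, $S$ could interleave into $n+2$ pieces, not $n+1$.

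The missing idea is simply to count endpoints with multiplicity rather than distinct zeros. Each component of $S$ has two endpoints in $\bbR\cup\{\pm\infty\}$, so $N$ components give $2N$ endpoints. At most two of these are $\pm\infty$. Every finite endpoint $x_0$ is a zero of $(P_n-a)(P_n+a)$; if $x_0$ is the endpoint of a single component then $P_n^2-a^2$ changes sign there, while if two components of $S$ meet at $x_0$ (i.e.\ $P_n^2-a^2>0$ on both sides but $=0$ at $x_0$) then $x_0$ is a zero of even multiplicity, so contributes at least $2$ to the zero count. Hence the finite endpoints, counted once per incident interval, number at most $\deg(P_n^2-a^2)=2n$. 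Thus $2N\le 2n+2$, giving $N\le n+1$. This is exactly the paper's argument, stated in two sentences; your setup was correct but you stopped one observation short.
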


\begin{proof} $\partial S$ is the finite set of points where $P_n(x) =\pm a$. If all the zeros of
$P_n\pm a$ are simple, these boundary points are distinct. Including $\pm\infty$ so each interval has
two ``endpoints," these intervals have at most $2n+2$ distinct endpoints (and exactly that number if
all roots of $P_n\pm a$ are real). If some root of $P_n\pm a$ is double, two intervals can share
an endpoint but that endpoint counts twice in the zeros.
\end{proof}

\begin{proof}[Proof of Theorem~\ref{T1.11}] By Proposition~\ref{PB.3}, if $P_n(x)=P_n(x; d\mu)$, then
\begin{equation} \lb{5.14}
\sup_{x\in E}\, \abs{P_n(x)} \geq c(E)^n
\end{equation}
Fix $\delta_1$ and let
\begin{equation} \lb{5.15}
F=\{x\mid\abs{P_n(x)} \leq c(E)^n e^{-2c(\delta_1) n}\}
\end{equation}
If $\abs{E\setminus F} <\delta_1$, then \eqref{5.9} would imply $\|P_n\|_E\leq c(E)^n e^{-c(\delta_1)n}$,
violating \eqref{5.14}. So
\begin{equation} \lb{5.15x}
\abs{E\setminus F} \geq\delta_1
\end{equation}

By Lemma~\ref{L5.3}, $\bbR\setminus F$ is a union of at most $n+1$ intervals, so if $E$ is a union of
$\ell$ intervals, $E\setminus F$ consists of at most $\ell (n+1)$ intervals (a very crude overestimate
that suffices for us!).

Some of these intervals may have size less than $\f{\delta_1}{4n\ell}$, but the total size of those
is at most $\f{\delta_1}{2}$, so we can find disjoint intervals $I_1^{(n)}, \dots, I_{k(n)}^{(n)}$ in
$E\setminus F$, so
\begin{equation} \lb{5.16}
\abs{I_j^{(n)}}\geq \f{\delta_1}{4 n\ell} \qquad \biggl| \bigcup_{j=1}^{k(n)} I_j^{(n)}\biggr| \geq
\f{\delta_1}{2}
\end{equation}

Let $\ti I_j^{(n)}$ be the interval of size $\f12 \abs{I_j^{(n)}}$ and the same center. Then with
$L^{(n)}(\delta_1)=\cup_{j=1}^{k(n)} \ti I_j^{(n)}$, we have
\begin{gather}
\abs{L^{(n)} (\delta_1)} \geq \f{\delta_1}{4} \lb{5.17} \\
\abs{P_n(y)}\geq c(E)^n e^{-2c(\delta_1)n} \quad\text{if}\quad \dist (y, L^{(n)}(\delta_1)) \leq
\f{\delta_1}{16 n\ell} \lb{5.18}
\end{gather}

Now define for any $\delta_2 >0$ and $m$,
\[
J(m,\delta_2) =\{x\mid \mu (x-\tfrac{1}{m}, x+\tfrac{1}{m})\geq e^{-\delta_2 m}\}
\]
By hypothesis, for any fixed $\delta_2$,
\[
\lim_{m\to\infty}\, \abs{E\setminus J(m,\delta_2)} =0
\]
and, in particular, for any fixed integer $M$\!, for all large $n$,
\begin{equation} \lb{5.19}
\abs{E\setminus J (Mn, \delta_2)} < \f{\delta_1}{4}
\end{equation}
so, in particular,
\begin{equation} \lb{5.20}
J(Mn,\delta_2)\cap L^{(n)}(\delta_1) \neq \emptyset
\end{equation}

Given $\delta_1$, pick $M$ so large that
\begin{equation} \lb{5.21}
M^{-1} \leq \f{\delta_1}{16\ell}
\end{equation}
If $x$ lies in the set on the left side of \eqref{5.20}, let $I=\{y\mid \abs{x-y} \leq \f{1}{Mn}\}$.
Then since $\f{1}{Mn} \leq \f{\delta_1}{16n\ell}$, for $y\in I$,
\begin{equation} \lb{5.22}
\abs{P_n(y)} \geq c(E)^n e^{-2c(\delta_1)n}
\end{equation}
since $x\in L^{(n)}(\delta_1)$ and \eqref{5.18} holds. By $x\in J(Mn,\delta_2)$,
\begin{equation} \lb{5.23}
\mu(I) \geq e^{-M\delta_2 n}
\end{equation}

Thus,
\begin{equation} \lb{5.24}
\|P_n\|_{L^2} \geq c(E)^n e^{-2c(\delta_1)n} e^{-MN\delta_2/2}
\end{equation}
so
\begin{equation} \lb{5.25}
\liminf \|P_n\|_{L^2}^{1/n} \geq c(E) e^{-2c(\delta_1)} e^{-M\delta_2/2}
\end{equation}

First pick $\delta_1$, then fix $M$ by \eqref{5.21} (recall $\ell$ is fixed as the number of intervals
in $E$) and let $\delta_2=\f{\delta_1}{M}$. Then take $\delta_1\downarrow 0$ and get
$\liminf \|P_n\|_{L^2}^{1/n} \geq c(E)$, proving regularity.
\end{proof}

Here is a typical application of the Stahl--Totik criterion. It illustrates the limitations of
regularity criteria like those of \cite{Ull,Wid} that only depend on what sets are carriers for
$\mu$. This result is a special case of a theorem of Wyneken \cite{Wyn}.

\begin{theorem}\lb{T5.4} Let $\mu$ be a measure whose support is $E$, a finite union of closed
intervals. Then there exists a measure $\eta$ equivalent to $\mu$ which is regular.
\end{theorem}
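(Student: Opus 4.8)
The plan is to invoke the Stahl--Totik criterion, Theorem~\ref{T1.11}. We may assume the closed intervals comprising $E$ are mutually disjoint and nondegenerate (merge overlaps), so $E$ is perfect and $C(E)>0$; then any measure $\eta$ mutually absolutely continuous with $\mu$ has $\supp\eta=\supp\mu=E$, hence $\sigma_\ess(\eta)=E$. Thus it suffices to produce a finite positive Borel measure $\eta$, equivalent to $\mu$, for which the exceptional set in \eqref{1.25} is \emph{empty} for all large $m$. In fact I will arrange the strong bound $\eta([x-\tfrac1m,x+\tfrac1m])\ge m^{-3}$ for every $x\in E$, which makes \eqref{1.25} hold trivially (for \emph{every} $\veps>0$).

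The construction I would use is as follows. For each $n\ge1$, tile each component interval of $E$ by finitely many closed intervals of length at most $1/n$, and list all of these (over all components) as $I_{n,1},\dots,I_{n,k_n}$, where $k_n\le C(1+n)$ with $C$ depending only on $E$. Since each $I_{n,j}$ is a nondegenerate subinterval of $E=\supp\mu$, we have $0<\mu(I_{n,j})<\infty$. Set
\[
d\nu_n=\sum_{j=1}^{k_n}\frac{\mathbf{1}_{I_{n,j}}}{\mu(I_{n,j})}\,d\mu,\qquad d\eta=\sum_{n=1}^{\infty}\frac{1}{n^{3}}\,d\nu_n .
\]
Then $\nu_n(\bbR)=k_n$, so $\eta(\bbR)=\sum_n n^{-3}k_n<\infty$ and $\eta$ is a finite measure. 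Each $\nu_n\ll\mu$, so $\eta\ll\mu$; and if $\mu(B)>0$ then $\mu(B\cap I_{1,j})>0$ for some $j$, whence $\nu_1(B)>0$ and $\eta(B)>0$, so $\mu\ll\eta$. Hence $\eta$ is equivalent to $\mu$.

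The key estimate is then immediate: given $x\in E$ and $m\ge1$, the scale-$m$ tiling contains some $I_{m,j_0}$ with $x\in I_{m,j_0}$, and since $\abs{I_{m,j_0}}\le1/m$ this forces $I_{m,j_0}\subset[x-\tfrac1m,x+\tfrac1m]$; therefore
\[
\eta\bigl([x-\tfrac1m,x+\tfrac1m]\bigr)\ge\eta(I_{m,j_0})\ge\frac{1}{m^{3}}\,\nu_m(I_{m,j_0})\ge\frac{1}{m^{3}} .
\]
So for any $\veps>0$ the set $\{x\in E:\eta([x-\tfrac1m,x+\tfrac1m])\le e^{-\veps m}\}$ is empty once $m^{-3}>e^{-\veps m}$, i.e.\ for all large $m$, and \eqref{1.25} holds for $\eta$. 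By Theorem~\ref{T1.11}, $\eta$ is regular, and $\eta$ is equivalent to $\mu$ by construction. I do not expect a serious obstacle here; the only point requiring care is that reweighting $\mu$ by a density bounded above and below cannot change regularity, so $\eta$ must genuinely \emph{inflate} $\mu$ on the sets where $\mu$ is thin — the role of the weights $n^{-3}$, played off against $k_n=O(n)$, is precisely to achieve this uniform inflation while keeping $\eta$ finite and still equivalent to $\mu$.
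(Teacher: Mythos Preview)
Your proof is correct and follows essentially the same approach as the paper. Both construct $\eta=\sum_n n^{-3}\nu_n$ where $\nu_n$ is $\mu$ reweighted so that each interval in a scale-$1/n$ partition has unit mass; the only difference is that you tile each component of $E$ directly while the paper uses the fixed grid $(\tfrac{j}{n},\tfrac{j+1}{n}]$, which forces it to exclude a boundary strip of measure $O(1/n)$ and obtain \eqref{1.25} with a shrinking (rather than empty) exceptional set---your adapted tiling is a mild but genuine simplification.
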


\begin{proof} For any $n$, define
\begin{equation} \lb{5.26}
\mu_n =\sum_{\{j\mid \mu((\f{j}{n}, \f{j+1}{n}]) >0\}}\,
\mu \biggl(\biggl( \f{j}{n}\, , \f{j+1}{n}\biggr]\biggr)^{-1}
\mu \restriction \biggl( \f{j}{n}\, , \f{j+1}{n}\biggr]
\end{equation}
Then $\mu_n$ has total mass at most $n(\abs{E}+\ell)$ where $\ell$ is the number of intervals. Let
\begin{equation} \lb{5.27}
\eta =\sum_{n=1}^\infty n^{-3} \mu_n
\end{equation}
which is easily seen to be equivalent to $\mu$.

Notice that if $\dist(x, \bbR\setminus E) >\f{1}{n}$, then $[x-\f{1}{n}, x+\f{1}{n}]$ contains an
interval of the form $(\f{j}{2n}, \f{j+1}{2n}]$, so $\mu_{2n} ([x-\f{1}{n}, x+\f{1}{n}])\geq 1$.
Thus
\begin{equation} \lb{5.28}
\bigl| \bigl\{x\bigm| \eta ([ x-\tfrac{1}{n}\, , x+\tfrac{1}{n}]) \leq
8n^{-3}\bigr\}\bigr| \leq \tfrac{2\ell}{n}
\end{equation}
and \eqref{1.25} holds.
\end{proof}

By using point measures, it is easy to construct nonregular measures, including ones that illustrate
how close \eqref{1.26} is to being ideal. The key is

\begin{theorem}\lb{T5.5} Let $\{x_j\}_{j=1}^\infty$ be a bounded sequence in $\bbR$ and $\{a_j\}_{j=1}^\infty$
an $\ell^1$ sequence of positive numbers. Let
\begin{equation} \lb{5.29}
\mu=\sum_{j=1}^\infty a_j \delta_{x_j}
\end{equation}
Let
\begin{equation} \lb{5.30}
d=\max_{j,k}\, \abs{x_j-x_k}
\end{equation}
Then
\begin{equation} \lb{5.31}
\|P_n(x,d\mu)\|_{L^2 (\bbR,d\mu)} \leq d^n \biggl( \, \sum_{j=n+1}^\infty a_j\biggr)^{1/2}
\end{equation}
\end{theorem}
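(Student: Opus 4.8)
This is a pure upper bound, and the monic orthogonal polynomial $P_n(x;d\mu)$ is characterized by the minimization property \eqref{1.12}: $\|P_n\|_{L^2(d\mu)}\le\|Q_n\|_{L^2(d\mu)}$ for every monic $Q_n$ of degree $n$. So the plan is simply to exhibit one good monic trial polynomial of degree $n$ and estimate its $L^2(d\mu)$ norm. The obvious candidate is the polynomial that annihilates the first $n$ atoms of $\mu$, namely
\[
Q_n(x)=\prod_{k=1}^n (x-x_k).
\]

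\textbf{Steps.} First, note $Q_n$ is monic of degree $n$, so \eqref{1.12} gives $\|P_n(\dott;d\mu)\|_{L^2(d\mu)}\le\|Q_n\|_{L^2(d\mu)}$. Next, compute the norm of $Q_n$ against $d\mu=\sum_{j\ge1}a_j\delta_{x_j}$:
\[
\|Q_n\|_{L^2(d\mu)}^2=\sum_{j=1}^\infty a_j\,\abs{Q_n(x_j)}^2=\sum_{j=n+1}^\infty a_j\,\abs{Q_n(x_j)}^2,
\]
since $Q_n(x_j)=0$ for $1\le j\le n$ (this also uses only that $x_1,\dots,x_n$ are roots of $Q_n$; repetitions among the $x_j$ cause no trouble, as any $x_j$ with $j\ge n+1$ coinciding with some earlier $x_k$ only contributes $0$). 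Finally, for $j\ge n+1$ bound each factor crudely by the diameter:
\[
\abs{Q_n(x_j)}=\prod_{k=1}^n\abs{x_j-x_k}\le d^{\,n},
\]
using $\abs{x_j-x_k}\le d$ from \eqref{5.30}. Substituting this into the previous display yields $\|Q_n\|_{L^2(d\mu)}^2\le d^{2n}\sum_{j=n+1}^\infty a_j$, and combining with the minimization inequality gives \eqref{5.31}. The sum $\sum_{j=n+1}^\infty a_j$ is finite since $\{a_j\}\in\ell^1$, so the right side is meaningful.

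\textbf{Main obstacle.} There is essentially none: the argument is a one-line trial-function estimate. The only thing to be a little careful about is that the $x_j$ need not be distinct, but as noted this only helps. (One could remark that this bound is the engine behind nonregularity examples: if the atoms cluster so that $\sum_{j>n}a_j$ decays faster than any exponential—e.g.\ super-exponentially small $a_j$—then $\limsup\|P_n\|^{1/n}=0<C(\supp\mu)$ whenever the $x_j$ are dense in a set of positive capacity.)
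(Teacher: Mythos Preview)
Your proof is correct and follows essentially the same approach as the paper: define the trial polynomial $Q_n(x)=\prod_{k=1}^n(x-x_k)$, use the minimization property \eqref{1.12}, and bound $\abs{Q_n(x_j)}\le d^n$ for $j\ge n+1$. The paper's argument is the same one-line estimate, though it does not pause to comment on possible repetitions among the $x_j$ as you do.
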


\begin{proof} Let $Q_n(x) =\prod_{j=1}^n (x-x_j)$, which kills the contributions of the pure points at
$\{x_j\}_{j=1}^n$, so
\begin{equation} \lb{5.32}
\|Q_n\|^2 \leq\sum_{j=n+1}^\infty d^{2n} a_j
\end{equation}
by \eqref{5.30}. Since $\|P_n\|\leq\|Q_n\|$, \eqref{5.31} is immediate.
\end{proof}

\begin{corollary}\lb{C5.6} Let $\{x_j\}_{j=1}^\infty$ be an arbitrary bounded subset of $\bbR$. Then
there exists a pure point measure $d\mu$ with precisely this set as its set of pure points, so that
$\|P_n\|^{1/n}\to 0$. In particular, if $E$ is any compact set with $C(E)>0$, there is a measure $\mu$
with $\supp(\mu)=E$ and $\mu$ not regular.
\end{corollary}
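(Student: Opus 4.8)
The plan is to build the measure $d\mu$ essentially from a carefully chosen sequence of weights placed at the given points $\{x_j\}$, and then to squeeze the $L^2$ norms of the monic orthogonal polynomials using Theorem~\ref{T5.5}. First I would dispose of trivialities: if the set $\{x_j\}$ is finite, there is nothing to do in the literal statement, so assume the set is countably infinite; if there are repetitions, delete them so that $\{x_j\}_{j=1}^\infty$ is a genuinely infinite set of distinct points. Now I would choose the weights $a_j>0$ so that the tails $T_n\equiv\sum_{j=n+1}^\infty a_j$ decay faster than any exponential, e.g.\ $a_j=e^{-j^2}$ (rescaled if necessary so that $\sum a_j<\infty$). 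Then $d\mu=\sum_j a_j\delta_{x_j}$ is a pure point measure whose set of pure points is precisely $\{x_j\}_{j=1}^\infty$.

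The key estimate is then immediate from Theorem~\ref{T5.5}: with $d=\max_{j,k}|x_j-x_k|<\infty$ (the set $\{x_j\}$ is bounded by hypothesis), we get
\[
\|P_n(x,d\mu)\|_{L^2(d\mu)}\le d^n\,T_n^{1/2}
\]
so
\[
\|P_n\|^{1/n}\le d\,T_n^{1/(2n)}.
\]
Since $T_n=\sum_{j>n}e^{-j^2}$ decays like $e^{-(n+1)^2}$, we have $T_n^{1/(2n)}\to 0$, hence $\|P_n\|^{1/n}\to 0$. This proves the first assertion.

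For the second assertion, let $E$ be any compact set with $C(E)>0$. Such a set is uncountable and in particular contains a countably infinite bounded subset $\{x_j\}_{j=1}^\infty$ that is dense in $E$ (pick a countable dense subset of $E$, which is nonempty since $C(E)>0$ forces $E$ to be infinite). Applying the first part to this sequence produces a pure point measure $\mu=\sum_j a_j\delta_{x_j}$ with $\supp(\mu)=\overline{\{x_j\}}=E$ and $\|P_n\|^{1/n}\to 0$. Since $C(E)>0$, regularity of $\mu$ would require $\|P_n\|^{1/n}\to C(E)>0$, which is contradicted; hence $\mu$ is not regular. If one wishes the pure point set to be exactly a prescribed bounded set rather than merely dense in $E$, note that the first part already delivers that; the support is then the closure.

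The only point needing a little care---and the place I expect the mildest friction---is making sure the chosen dense sequence in $E$ actually has closure equal to $E$ and that $E$ admits such a sequence; but this is just separability of the metric space $E$ together with the observation that $C(E)>0\Rightarrow E$ infinite, so no real obstacle arises. The substance of the corollary is entirely carried by the superexponential tail bound fed into Theorem~\ref{T5.5}.
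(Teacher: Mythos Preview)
Your proof is correct and follows essentially the same approach as the paper: choose $a_j=e^{-j^2}$ and invoke Theorem~\ref{T5.5} so that $(\sum_{j>n}a_j)^{1/2n}\to 0$. The paper's proof is a single line to this effect; you have simply filled in the details, including the (implicit in the paper) step of choosing the $x_j$ dense in $E$ for the ``In particular'' clause.
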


\begin{proof} Pick $a_j=e^{-j^2}$ so $(\sum_{j=n+1}^\infty a_j)^{1/2n}\to 0$.
\end{proof}

\begin{example}\lb{E5.7} The following illuminates \eqref{1.25}. For $2^n\leq k<2^{n+1}$, let
$x_k =\f{k-2^n}{2^n}$ and let $0<y<1$. Define
\begin{equation} \lb{5.33}
d\mu =\sum_{k=1}^\infty y^k \delta_{x_k}
\end{equation}
The $x_k$ are not distinct, but that does not change the bound \eqref{5.31}. Thus
\begin{equation} \lb{5.34}
\limsup \|P_n\|^{1/n} \leq y
\end{equation}
Since $C([0,1])=\f14$, the measure is not regular if $y<\f14$. On the other hand, if $2^n\leq m\leq 2^{n+1}$
and $x_0\in [0,1]$, there is an $x_k$ with $\abs{x_k-x_0}\leq \f{1}{m}$ and $2^n\leq k <2^{n+1}$. Thus
\[
\mu ([ x_0-\tfrac{1}{m}\, , x_0 + \tfrac{1}{m}]) \geq y^k \geq y^{2^{n+1}} \geq y^{2m}
\]
so \eqref{1.25} holds for $\eta= -\log y^2$, that is, for some but not all $\eta$. This shows the
exponential rate in Theorem~\ref{T1.11} cannot be improved.
\qed
\end{example}

\begin{example}\lb{E5.8} We will give an example of a measure $d\mu$ on $[-2,2]$ which is a.c. on
$[-2,0]$ and so that among the limit points of the zero counting measures, $d\nu_n$ are both
$d\rho_{[-2,2]}$ and $d\rho_{[-2,0]}$, the equilibrium measure for $[-2,2]$ and for $[-2,0]$.
This will answer a question asked me by Yoram Last, in reaction to Remling \cite{Remppt},
whether a.c.\ spectra force the existence of a density  of states and also show that bounds
on limit points of $d\nu_n$ of Totik--Ullman \cite{TU} and Simon \cite{Sim-wk} cannot be
improved.

We define $d\mu$ by
\begin{equation} \lb{5.35}
d\mu\restriction [-2,0]=(-x(x+2))^{-1/2}\, dx
\end{equation}
picked so the OPRL for the restriction are multiples of the Chebyshev polynomials for $[-2,0]$
\begin{equation} \lb{5.36}
d\mu\restriction [0,2]=\sum_{n=1}^\infty a_n\, d\eta_n
\end{equation}
where $d\eta_n$ is concentrated uniformly at the dyadic rationals of the form $k/2^n$ not previously
``captured," that is,
\begin{equation} \lb{5.37}
d\eta_n =\sum_{j=0}^{2^n-1} \f{1}{2^n} \, \delta_{(2j+1)/2^{n-1}}
\end{equation}
The $a_n$'s are carefully picked as follows. Define $N_j$ inductively by
\begin{equation} \lb{5.38}
N_1 =1 \qquad N_{j+1} =2^{N_j^3}
\end{equation}
and
\[
a_n = \begin{cases}
\f{1}{n^2} & N_{2k-1} < N \leq N_{2k} \\
2^{-n^4} & N_{2k} < n \leq N_{2k+1}
\end{cases}
\]

Our goal will be to prove that
\begin{equation} \lb{5.39}
d\nu_{2^{N_{2k}^2}} \to d\rho_{[-2,0]} \qquad
d\nu_{2^{N_{2k+1}^2}} \to d\rho_{[-2,2]}
\end{equation}
Intuitively, for $m=2^{N_{2k+1}^2}$, the measures at level $1/m$ will be uniformly spaced out (on
an exponential scale), so by the Stahl--Totik theorem, the zeros will want to look like the equilibrium
measure for $[-2,2]$. But for $m=2^{N_{2k}^2}$, most intervals of size $1/m$ in $[0,2]$ will have tiny
measure, so the zeros will want to almost all lie on $[-2,0]$, where the best strategy for these
(to minimize $\int P_m^2\, d\mu$) will be to approximate the equilibrium measure for $[-2,0]$.

As a preliminary, we will show
\begin{alignat}{2}
&\limsup \|P_n\|^{1/n} =1 \qquad &&  \liminf \|P_n\|^{1/n} =\tfrac12 \lb{5.40} \\
&\lim_{k\to\infty} \|P_{2^{N_{2k}^2}}\|^{1/2^{N_{2k}^2}} = \tfrac12 \qquad  &&
\lim_{k\to\infty} \|P_{2^{N_{2k+1}^2}}\|_{1/2^{N_{2k+1}^2}}  =1 \lb{5.41}
\end{alignat}

We begin with
\begin{equation} \lb{5.42}
\limsup \|P_n\|^{1/n} \leq 1 \qquad
\liminf \|P_n\|^{1/n} \geq \tfrac12
\end{equation}
The first is immediate from \eqref{1.15} and $C([-2,2])=1$; the second from $\|P_n(x,d\mu)\|\geq
\|P_n(x,d\mu\restriction [-2,0])\|$ (by \eqref{1.12}), regularity of $d\mu\restriction [-2,0]$, and
$C([-2,0])=\f12$.

Next, we turn to
\begin{equation} \lb{5.43}
\limsup_{k\to\infty}\, \|P_{2^{N_{2k}^2}}\|^{1/2^{N_{2k}^2}} \leq \tfrac12
\end{equation}
Let $T_n(x; [-2,0])$ be the Chebyshev polynomials for $[-2,0]$ (which are just affinely related to the
classic Chebyshev polynomials of the first kind) and let
\begin{equation} \lb{5.44}
Q_{2^{N_{2k}^2}} (x) = T_{2^{N_{2k}^2 - N_{2k}}} (x;[-2,0]) \prod_{\ell=1}^{2^{N_{2k}}}
\biggl( x - \f{\ell}{2^{N_{2k}-1}}\biggr)
\end{equation}
so by \eqref{1.12},
\begin{equation}\lb{5.45}
\|P_{2^{N_{2k}^2}}\| \leq \|Q_{2^{N_{2k}^2}}\|\leq \boxed{1} + \boxed{2}
\end{equation}
where $\boxed{1}$ is the contribution of the integral from $[-2,0]$ and $\boxed{2}$ from $(0,2)$.

Since $\cos\ell x=2^{\ell-1}(\cos x)^\ell +$ lower order and the average of $\cos^2 x$ is $\f12$,
for any $[a,b]$,
\begin{equation} \lb{5.46}
\|T_m(x;[a,b])\| = \sqrt{2}\, C([a,b])^m
\end{equation}
where the norm is over $L^2 (\bbR,d\rho_{[a,b]})$. Since the product in \eqref{5.44} is bounded by
$4^{2^{N_{2k}}}$ on $[-2,2]$, we have
\begin{equation} \lb{5.47}
\boxed{1} \leq \sqrt{2} \, (\tfrac12)^{2^{N_{2k}^2 - N_{2k}}} 4^{2^N_{2k}}
\end{equation}

On the other hand, there is a constant $K$ so
\begin{equation} \lb{5.48}
\|T_m(x;[-2,0])\|_{L^\infty ([-2,2])} \leq K^m
\end{equation}
and the product in \eqref{5.44} kills all the pure points up to level $N_{2k}$:
\begin{align*}
\boxed{2} &\leq K^{2^{N_{2k}}} \sum_{n=N_{2k}}^\infty a_n \\
&\leq K^{2^{N_{2k}}} [N_{2k+1} 2^{-N_{2k}^4} + (N_{2k+1})^{-1}]
\end{align*}
is much smaller than the right side of \eqref{5.47} for $k$ large. Thus, by \eqref{5.47},
\begin{equation} \lb{5.59}
\limsup \left(\,\boxed{1} + \boxed{2}\,\right)^{1/2^{N_{2k}^2}} \leq \tfrac12
\end{equation}
proving \eqref{5.43}.

In verifying \eqref{5.40}, we finally prove that
\begin{equation} \lb{5.60}
\liminf_{k\to\infty}\, \|P_{2^{N_{2k+1}^2}}\|^{1/2^{N_{2k+1}^2}} \geq 1
\end{equation}
By the fact that the Chebyshev polynomials for $[-2,2]$ obey
\begin{equation} \lb{5.61}
T_n (2\cos x; [-2,2]) =2\cos nx
\end{equation}
has $\|T_n\|_{L^\infty ([-2,2])} =2$, we see
\begin{equation} \lb{5.62}
\|P_m\|_{L^\infty ([-2,2])} \geq 2
\end{equation}
By Markov's inequality \eqref{5.8}, we have
\begin{equation} \lb{5.63}
\|P'_m\|_{L^\infty ([-2,2])} \leq \tfrac{m^2}{2} \, \|P_m\|_{L^\infty ([-2,2])}
\end{equation}
so there is an interval of size $4/m^2$ where $P_m(x)\geq 1$, that is,
\begin{equation} \lb{5.64}
\|P_m(x)\|_{L^2(d\mu)}^2 \geq \inf_{y\in [-2,2]} \mu([y-\tfrac{2}{m^2}\, ,
y+\tfrac{2}{m^2}])
\end{equation}
which implies that
\begin{equation} \lb{5.65}
\|P_{2^{N_{2k+1}^2}}\|_{L^2(\mu)} \geq a_{2^{N_{2k+1}^2}} 2^{-N_{2k+1}^2}
\end{equation}
so it is bounded from below by a power of $2^{-N_{2k+1}^2}$. Since $m^{-\ell/m}\to 1$
for any fixed $\ell$, we obtain \eqref{5.60}.

Clearly, \eqref{5.42}, \eqref{5.43}, and \eqref{5.60} imply \eqref{5.40} and \eqref{5.41}.
We now only need to go from there to results on limits of $d\nu_n$. By Theorem~\ref{T2.4},
the second equality in \eqref{5.41} implies the second limit result in \eqref{5.39}.
By Theorem~\ref{T4.2}, the first equality in \eqref{5.41} implies the first limit
result in \eqref{5.39}.
\qed
\end{example}

\begin{example}\lb{E5.9} Here is an example of a measure $d\mu$ on $[0,1]$ where the density
of zeros has a limit singular relative to the equilibrium measure for $[0,1]$. Such examples
are discussed in \cite{StT} and go back to work of Ullman. Let $\Sigma$ be the classical
Cantor set and $d\rho_\Sigma$ its equilibrium measure. Let
\begin{equation} \lb{5.70}
d\mu = d\rho_\Sigma + \sum_{n=1}^\infty 2^{-n^4} \biggl(\, \sum_{j=0}^{2^{n-1}} \f{1}{2^n}\,
\delta_{(2j+1)/2^n}\biggr)
\end{equation}
As in the above construction, one shows $\|P_n\|^{1/n}\to C(\Sigma)$ and then Theorem~\ref{T4.2}
implies that $d\nu_n\to d\rho_\Sigma$ which is singular with respect to Lebesgue measure, and
so relative to $d\rho_{[0,1]}\equiv d\rho_{\supp(d\mu)}$.
\qed
\end{example}

\section{Structural Results} \lb{s6}

In this section, we will focus on the mutual regularity of related measures. There are three main
theorems, all from Stahl--Totik \cite{StT}:

\begin{theorem}\lb{T6.1} Let $\mu,\eta$ be two measures of compact support whose supports are equal up
to sets of capacity zero. If $\mu\geq\eta$ and $\eta$ is regular, then so is $\mu$.
\end{theorem}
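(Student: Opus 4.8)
The plan is to sandwich the sequence $\|X_n(\,\cdot\,;d\mu)\|_{L^2(d\mu)}^{1/n}$ between two quantities that both converge to $C(E)$, where $E=\supp(d\mu)$. Write $X_n^\mu$ and $X_n^\eta$ for the monic orthogonal polynomials of $\mu$ and $\eta$. Since $\mu\ge\eta$ forces $\supp(d\eta)\subseteq\supp(d\mu)$, the hypothesis says $N:=\supp(d\mu)\setminus\supp(d\eta)$ has capacity zero; because adjoining a set of capacity zero leaves capacity unchanged (the equilibrium measure assigns no mass to such a set, cf.\ Appendix~A), we have $C(\supp(d\eta))=C(\supp(d\mu))=C(E)$.

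The first step is the elementary monotonicity coming from $\mu\ge\eta$: for any polynomial $Q$,
\[
\|Q\|_{L^2(d\mu)}^2=\int\abs{Q}^2\,d\mu\ \ge\ \int\abs{Q}^2\,d\eta=\|Q\|_{L^2(d\eta)}^2 .
\]
Applying this with $Q=X_n^\mu$ and then invoking the minimizing property \eqref{1.12} of $X_n^\eta$ among monic degree-$n$ polynomials in $L^2(d\eta)$,
\[
\|X_n^\mu\|_{L^2(d\mu)}\ \ge\ \|X_n^\mu\|_{L^2(d\eta)}\ \ge\ \|X_n^\eta\|_{L^2(d\eta)} .
\]
Taking $n$-th roots, letting $n\to\infty$, and using that $\eta$ is regular (so the right-hand side tends to $C(\supp(d\eta))=C(E)$) yields $\liminf_{n\to\infty}\|X_n^\mu\|_{L^2(d\mu)}^{1/n}\ge C(E)$.

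For the matching upper bound I would simply quote Corollary~\ref{C1.2} applied to $\mu$, which gives $\limsup_{n\to\infty}\|X_n^\mu\|_{L^2(d\mu)}^{1/n}\le C(E)$. Combining the two bounds, $\lim_{n\to\infty}\|X_n^\mu\|_{L^2(d\mu)}^{1/n}=C(E)$, i.e.\ $\mu$ is regular in the sense of \eqref{1.13x}. Every step is a one-line inequality, so there is no real obstacle; the only point demanding care is the invariance of capacity under modification by a capacity-zero set, which is precisely why the hypothesis reads ``equal up to sets of capacity zero'' rather than simply ``equal.'' (If $C(E)=0$ the assertion is anyway vacuous, since Corollary~\ref{C1.2} already forces $\|X_n^\mu\|_{L^2(d\mu)}^{1/n}\to0$.)
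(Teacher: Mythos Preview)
Your proof is correct and is essentially identical to the paper's own argument: the paper writes the single inequality $\|X_n\|_{L^2(d\eta)}\le\|X_n\|_{L^2(d\mu)}$ (combining your two steps into one line via \eqref{1.12}) and then invokes \eqref{1.15} to conclude. Your version simply spells out the two-step chain $\|X_n^\mu\|_{L^2(d\mu)}\ge\|X_n^\mu\|_{L^2(d\eta)}\ge\|X_n^\eta\|_{L^2(d\eta)}$ and makes explicit why the capacity hypothesis gives $C(\supp(d\eta))=C(\supp(d\mu))$, which the paper leaves implicit.
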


\begin{theorem} \lb{T6.2} Let $\{E_n\}_{n=1}^\infty$ and $E_\infty$ be compact subsets of $\bbC$ so
that $E_\infty$ and $\cup_{n=1}^\infty E_n$ agree up to sets of capacity zero and $C(E_\infty) >0$.
Let $\mu$ be a measure with $\supp(d\mu)=E_\infty$ so that each $\mu\restriction E_j$ which is nonzero
is regular. Then $\mu$ is regular.
\end{theorem}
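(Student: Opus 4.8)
The plan is to combine the free upper bound of Corollary~\ref{C1.2} with a potential-theoretic lower bound that converts the regularity of each piece $\mu\restriction E_j$ into a quasi-everywhere lower bound on the potential of the limiting zero measure over \emph{all} of $E_\infty$. Since $\supp(d\mu)=E_\infty$ and $C(\sigma_\ess(d\mu))=C(E_\infty)$, Corollary~\ref{C1.2} gives $\limsup_n\|X_n(\cdot;d\mu)\|_{L^2(d\mu)}^{1/n}\le C(E_\infty)$, so it suffices to prove the matching $\liminf$. Suppose not: let $A<C(E_\infty)$ be a limit point of $\|X_n(\cdot;d\mu)\|_{L^2(d\mu)}^{1/n}$, and pass to a subsequence $n(k)$ along which this tends to $A$ and along which the zero counting measures $d\nu_{n(k)}$ converge weakly to a probability measure $d\nu_\infty$ (supported on $\sigma_\ess(d\mu)$ by Proposition~\ref{P1.6A}). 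The naive estimate $\|X_n(\cdot;d\mu)\|_{L^2(d\mu)}\ge\|X_n(\cdot;d\mu)\|_{L^2(d\mu\restriction E_j)}\ge\|X_n(\cdot;d\mu\restriction E_j)\|_{L^2(d\mu\restriction E_j)}$ with regularity of $\mu\restriction E_j$ only yields $A\ge\sup_j C(E_j)$, which can be strictly below $C(E_\infty)$ (think of the $E_j$ chopping $E_\infty$ into many small pieces), so something sharper is needed. We may assume $A>0$, since if $A=0$ the argument below forces $\Phi_{\nu_\infty}\equiv+\infty$ q.e.\ on $E_\infty$, impossible as $\calE(\rho_{E_\infty})<\infty$.

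The key step uses the full force of Theorem~\ref{T1.8}. Fix $j$ with $\mu\restriction E_j\ne 0$; replacing $E_j$ by $\sigma_\ess(\mu\restriction E_j)$ we may assume $\sigma_\ess(\mu\restriction E_j)=E_j$. Since $\mu\restriction E_j$ is regular, Theorem~\ref{T1.8}(vi) applies to it with $E=E_j$: for the polynomial sequence $Q_n:=X_n(\cdot;d\mu)$ and every $z\in\bbC$,
\[
\limsup_n\f{|X_n(z;d\mu)|^{1/n}}{\|X_n(\cdot;d\mu)\|_{L^2(d\mu\restriction E_j)}^{1/n}}\le e^{G_{E_j}(z)}.
\]
Because $\|X_{n(k)}(\cdot;d\mu)\|_{L^2(d\mu\restriction E_j)}\le\|X_{n(k)}(\cdot;d\mu)\|_{L^2(d\mu)}$, the denominator has $\limsup_k(\cdot)^{1/n(k)}\le A$; multiplying the two nonnegative factors gives $\limsup_k|X_{n(k)}(z;d\mu)|^{1/n(k)}\le A\,e^{G_{E_j}(z)}$ for all $z$. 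By \eqref{1.4}, $|X_{n(k)}(z;d\mu)|^{1/n(k)}=\exp(-\Phi_{\nu_{n(k)}}(z))$, so $\liminf_k\Phi_{\nu_{n(k)}}(z)\ge\log(A^{-1})-G_{E_j}(z)$; since $G_{E_j}$ vanishes q.e.\ on $E_j$, we get $\liminf_k\Phi_{\nu_{n(k)}}(z)\ge\log(A^{-1})$ for q.e.\ $z\in E_j$.

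Taking the countable union over the relevant $j$ and using that $\bigcup_j E_j$ and $E_\infty$ agree up to capacity zero, $\liminf_k\Phi_{\nu_{n(k)}}(z)\ge\log(A^{-1})$ for q.e.\ $z\in E_\infty$. By the upper envelope theorem (Theorem~\ref{TA.4}), $\Phi_{\nu_\infty}(z)=\liminf_k\Phi_{\nu_{n(k)}}(z)$ for q.e.\ $z$, so $\Phi_{\nu_\infty}\ge\log(A^{-1})$ q.e.\ on $E_\infty$. As $\calE(\rho_{E_\infty})<\infty$, $\rho_{E_\infty}$ gives no mass to sets of capacity zero, so integrating against $d\rho_{E_\infty}$ and using \eqref{A.2} together with the bound $\Phi_{\rho_{E_\infty}}\le\log(C(E_\infty)^{-1})$ of \eqref{A.17},
\[
\log(A^{-1})\le\int\Phi_{\nu_\infty}\,d\rho_{E_\infty}=\int\Phi_{\rho_{E_\infty}}\,d\nu_\infty\le\log(C(E_\infty)^{-1}),
\]
i.e.\ $A\ge C(E_\infty)$, contradicting $A<C(E_\infty)$. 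Hence $\mu$ is regular.

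The step I expect to be the real obstacle is the exceptional-set bookkeeping in the second and third paragraphs: one must verify that after replacing each $E_j$ by $\sigma_\ess(\mu\restriction E_j)$ the sets still cover $E_\infty$ up to capacity zero (this uses that $\mu$ gives no mass to $E_j\setminus\sigma_\ess(\mu\restriction E_j)$ and only capacity-zero-carried mass to $E_\infty\setminus\bigcup_jE_j$), and one must separately dispose of the degenerate possibility that $\mu$ itself is carried by a set of capacity zero — in which case every nonzero $\mu\restriction E_j$ is as well, and one falls back on the dichotomy available for such measures. Everything else is routine once Theorem~\ref{T1.8}(vi) has been invoked for the regular pieces.
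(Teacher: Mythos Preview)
Your proof is correct and takes essentially the same approach as the paper: apply Theorem~\ref{T1.8}(vi) to each regular restriction $\mu\restriction E_j$ with $Q_n$ the orthogonal polynomials for $\mu$, obtain $\limsup|x_n(z)|^{1/n}\le 1$ q.e.\ on each $E_j$, take the countable union over $j$, and conclude regularity. The only difference is cosmetic: the paper finishes in one line by citing (iii)$\Rightarrow$(i) of Theorem~\ref{T1.8}, while you unwind that implication inline via the upper envelope theorem and integration against $\rho_{E_\infty}$; your closing caveats about $\sigma_\ess(\mu\restriction E_j)$ versus $E_j$ are legitimate bookkeeping points that the paper simply glosses over.
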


\begin{remark} By $\mu\restriction K$, we mean the measure
\begin{equation} \lb{6.1}
(\mu\restriction K)(S) =\mu(K\cap S)
\end{equation}
\end{remark}

To understand why the next theorem is so restrictive compared to Theorem~\ref{T6.2}, consider

\begin{example}\lb{E6.3} Let $E$ be the standard Cantor set in $[0,1]$. Let $\eta$ be a measure
on $E$ which is not regular (see Corollary~\ref{C5.6}) and let
\begin{equation} \lb{6.2}
d\mu = d\eta + dx\restriction [0,1]
\end{equation}
By Theorem~\ref{T6.1}, $d\mu$ is regular. But $d\mu\restriction E =d\eta$ is not regular.
\qed
\end{example}

\begin{theorem}\lb{T6.4} Let $I=[a,b]$ be a closed interval with $I\subset E\subset\bbR$ and $E$
compact. Let $\mu$ be a regular measure with support in $E$ so $C(\supp(\mu\restriction I)) >0$.
Then $\mu\restriction I$ is regular.
\end{theorem}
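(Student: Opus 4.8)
Theorem~\ref{T6.4} asserts that if $\mu$ is regular with support in a compact set $E \subset \bbR$, and $I = [a,b] \subset E$ is an interval with $C(\supp(\mu \restriction I)) > 0$, then $\mu \restriction I$ is regular.

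\textbf{The plan.} Write $\mu_I = \mu \restriction I$ and $F = \supp(\mu_I) \subset I$. Since $F \subset I$ is compact with $C(F) > 0$, we must show $\lim_n \|X_n(\cdot; d\mu_I)\|_{L^2(d\mu_I)}^{1/n} = C(F)$. The upper bound $\limsup_n \|X_n(\cdot; d\mu_I)\|^{1/n} \le C(F)$ is automatic from Corollary~\ref{C1.2}, so the real content is the lower bound $\liminf_n \|X_n(\cdot; d\mu_I)\|^{1/n} \ge C(F)$. The strategy is to exploit regularity of the \emph{larger} measure $\mu$ to control the behavior of its orthonormal polynomials $p_n = x_n(\cdot; d\mu)$ on the interval $I$, and then transfer that control to the minimization problem for $d\mu_I$. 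Suppose for contradiction that along a subsequence $n(j)$ we have $\|X_{n(j)}(\cdot; d\mu_I)\|^{1/n(j)} \to A < C(F)$. The key point is that $\|X_n(\cdot;d\mu_I)\|_{L^2(d\mu_I)} = \|X_n(\cdot;d\mu_I)\|_{L^2(d\mu \restriction I)} \le \|Q_n\|_{L^2(d\mu \restriction I)}$ for any monic $Q_n$; in particular, taking $Q_n$ built from the Chebyshev polynomial $T_n$ of $F$ doesn't immediately help, so instead I would run the argument through the \emph{subharmonic/potential-theoretic machinery} established in Section~2, applied to $d\mu_I$.

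\textbf{Key steps.} First, passing to a further subsequence, arrange that the zero-counting measures $d\nu_{n(j)}$ for $X_{n(j)}(\cdot; d\mu_I)$ converge weakly to some $d\nu_\infty$; by Proposition~\ref{P1.6A} it is supported on $\sigma_\ess(d\mu_I) \subset F \subset I$. By Theorem~\ref{T2.4A} applied to the measure $d\mu_I$, either $A = \liminf \|X_{n(j)}\|^{1/n(j)}$ actually equals $C(F)$ (contradiction), or $d\mu_I$ is carried by a set $X \subset F$ of capacity zero --- so it suffices to rule out the latter. Here is where regularity of $\mu$ enters: by Theorem~\ref{T1.8}(i)$\Rightarrow$(ii), since $\mu$ is regular we have $\limsup |p_n(z)|^{1/n} \le e^{G_E(z)}$ uniformly on compacts; and by part (v), for q.e.\ $z \in \partial\Omega$ (in particular q.e.\ on $I \subset E$, since q.e.\ point of the interval $I$ lies on $\partial\Omega$ up to a capacity-zero set), $\limsup |p_n(z)|^{1/n} = 1$. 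Combined with the BGK-type polynomial boundedness $|p_n(x)| \le C(x)(n+1)$ for $\mu$-a.e.\ $x$ (equation \eqref{4.10}), valid in particular for $\mu_I$-a.e.\ $x$ since $\mu_I \le \mu$, we get: for $\mu_I$-a.e.\ $x$, the polynomials $p_n(x;d\mu)$ are polynomially bounded. The remaining step is to bound $\|X_n(\cdot;d\mu_I)\|_{L^2(d\mu_I)}$ from below using that $\mu_I$ is \emph{not} carried by a capacity-zero set: if it were, one contradicts $C(F) > 0$ only if $\mu_I$ places mass off every such set, which is exactly the ``no capacity-zero carrier'' hypothesis we need --- and indeed $C(\supp(\mu \restriction I)) = C(F) > 0$ does not by itself prevent a capacity-zero carrier, so one must instead argue as in Theorem~\ref{T1.11}'s proof: the essential-closure $\sigma_\capa(d\mu_I)$ has positive capacity, and on it the Remez-type / Stahl--Totik lower bound forces $\liminf \|X_n\|^{1/n} \ge C(F)$.

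\textbf{The main obstacle.} The delicate issue is the dichotomy in Theorem~\ref{T2.4A}: ruling out that $d\mu_I$ is carried by a capacity-zero set. This is \emph{not} automatic from $C(F) > 0$. The resolution must come from regularity of the ambient $\mu$: I expect the cleanest route is to combine Theorem~\ref{T1.11} (or rather its mechanism --- the Remez inequality applied on the interval $I$, which is a single interval and hence covered by Proposition~\ref{P5.2}) with the observation that regularity of $\mu$ gives $\|p_n(\cdot;d\mu)\|_{L^\infty(I)}^{1/n} \to 1$ while a capacity-zero-carried $\mu_I$ would, via the lower bound on $p_n$ off the carrier (coming from the fact that $p_n(x;d\mu)^2$ summed against $d\rho_I$ cannot decay), force a contradiction with $\|p_n\|_{L^2(d\mu_I)}^{1/n} \le \|p_n\|_{L^2(d\mu)}^{1/n} = 1/(a_1\cdots a_n)^{1/n} \to 1/C(E)$. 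Making this quantitative --- showing that no capacity-zero carrier is compatible with $\mu$ being regular once $I$ carries positive-capacity support --- is the technical heart, and I would model it directly on the Stahl--Totik argument in Section~5, using that on the single interval $I$ the Remez constant $c(\delta) \to 0$, so the ``most of $I$'' sets $L^{(n)}(\delta_1)$ are forced to meet the sets where $\mu_I$ has non-negligible local mass, which are non-negligible in Lebesgue measure precisely because a capacity-zero carrier would contradict regularity of $\mu$ on $E \supset I$.
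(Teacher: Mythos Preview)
Your proposal has a genuine gap at the crucial step. You invoke Theorem~\ref{T2.4A} to obtain the dichotomy ``either $A=C(F)$ or $\mu_I$ is carried by a capacity-zero set,'' but that theorem has as a \emph{hypothesis} that the zero-counting measures converge to the equilibrium measure $d\rho_F$. You have only arranged $d\nu_{n(j)}\to d\nu_\infty$ for some arbitrary limit, so Theorem~\ref{T2.4A} does not apply. There is no general dichotomy of this form without knowing the limit is $d\rho_F$.

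Even granting a dichotomy, your attempt to rule out the capacity-zero carrier never becomes an argument. You correctly note that $C(F)>0$ does not by itself prevent such a carrier, and then gesture toward the Remez/Stahl--Totik mechanism of Section~\ref{s5}, but that machinery requires as input quantitative local mass bounds like \eqref{1.25}, which you have no way of deducing from regularity of $\mu$ alone. Your final sentence (``non-negligible in Lebesgue measure precisely because a capacity-zero carrier would contradict regularity of $\mu$ on $E\supset I$'') is circular: it asserts exactly what needs to be proved.

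The paper's proof follows a completely different route that genuinely uses that $I$ is an interval. First (Proposition~\ref{P6.7}) it reduces to the case $\liminf\|P_n(\cdot,\mu_I)\|_{L^2(\mu_I)}^{1/n}>0$, by adding, if necessary, an equilibrium measure on a tiny subinterval. Then comes the key construction: for $x_0\in I^\intt$, multiply $P_n(x,\mu_I)$ by the polynomial bump $(1-(x-x_0)^2/d^2)^{\ell n}$ to get $Q_{n(2\ell+1)}$. Because $I$ is an interval, this bump is uniformly $\le\eta<1$ on $E\setminus I$, so for $\ell$ large the $L^2(d\mu)$-norm of $Q$ is dominated by the $L^2(d\mu_I)$-norm of $P_n$. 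Now apply regularity of $\mu$ through Theorem~\ref{T1.8}(vi) to the sequence $Q_{n(2\ell+1)}$: at $x_0\in I^\intt$ one has $G_E(x_0)=0$ and $Q_{n(2\ell+1)}(x_0)=P_n(x_0,\mu_I)$, yielding $\limsup|p_n(x_0;d\mu_I)|^{1/n}\le 1$ for q.e.\ $x_0$. Criterion (iii)$\Rightarrow$(i) of Theorem~\ref{T1.8} then gives regularity of $\mu_I$. The localization via a polynomial bump is the missing idea in your proposal.
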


\begin{remarks} 1. We do not require that $\supp(d\mu)=E$ (nor that $I\subset\supp(d\mu)$) but only
that $\supp(d\mu)\subset E$ and that $\mu$ is regular in the sense that $C(\supp(d\mu)) >0$ and
$\|P_n (\dott, d\mu)\|_{L^2(d\mu)}^{1/n}\to C(\supp(d\mu))$.

\smallskip
2. The analog of the sets $I$ in \cite{StT} must have nonempty two-dimensional interior. Our
$I$ obviously has empty two-dimensional interior, but if $I=[a,b]\subset E\subset\bbR$ and
if $D$ is the disk $\{z\mid\abs{z-\f12(a+b)} \leq \f12\abs{b-a}\}$, then $\mu\restriction D
=\mu\restriction I$.
\end{remarks}

The proofs of Theorems~\ref{T6.1} and \ref{T6.2} will be easy, but Theorem~\ref{T6.4} will be
nontrivial. Here are some consequences of these results:

\begin{corollary}\lb{C6.5} Let $\mu,\nu$ be two regular measures {\rm{(}}with different supports
allowed{\rm{)}}. Then their max, $\mu\vee\nu$, and sum, $\mu +\nu$, are regular.
\end{corollary}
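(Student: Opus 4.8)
The plan is to deduce both statements from the two structural theorems just established, Theorems~\ref{T6.1} and~\ref{T6.2}, so that no new potential theory is needed. Write $E_\mu=\supp(d\mu)$ and $E_\nu=\supp(d\nu)$; regularity of $\mu$ and $\nu$ forces $C(E_\mu)>0$ and $C(E_\nu)>0$. Let $\tau$ stand for either $\mu+\nu$ or the maximum $\mu\vee\nu$ (that is, $\max(f,g)\,d\lambda$ when $\mu=f\,d\lambda$, $\nu=g\,d\lambda$ with $\lambda=\mu+\nu$). In both cases $\mu\le\tau\le\mu+\nu$ and $\nu\le\tau\le\mu+\nu$, so, passing to supports, $\supp(d\tau)=E_\mu\cup E_\nu=:E$ and $C(E)\ge C(E_\mu)>0$.

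First I would show that the restrictions $\tau\restriction E_\mu$ and $\tau\restriction E_\nu$ are regular. Consider $\tau\restriction E_\mu$: for any open $U$ disjoint from $E_\mu$ one has $(\tau\restriction E_\mu)(U)=\tau(E_\mu\cap U)=0$, so $\supp(\tau\restriction E_\mu)\subseteq E_\mu$; conversely $\tau\restriction E_\mu\ge\mu$, hence $\supp(\tau\restriction E_\mu)\supseteq\supp(d\mu)=E_\mu$. Thus $\tau\restriction E_\mu$ and $\mu$ have the same compact support $E_\mu$, and $\tau\restriction E_\mu\ge\mu$ with $\mu$ regular, so Theorem~\ref{T6.1} gives that $\tau\restriction E_\mu$ is regular. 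The identical argument with $\nu$ in place of $\mu$ shows $\tau\restriction E_\nu$ is regular.

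Finally I would apply Theorem~\ref{T6.2} with $E_1=E_\mu$, $E_2=E_\nu$ (and $E_n=\emptyset$ for $n\ge3$), and $E_\infty=E$: the union $E_\mu\cup E_\nu$ equals $\supp(d\tau)=E$, we have $C(E)>0$, and each nonzero restriction $\tau\restriction E_j$ is regular by the previous step, so $\tau$ is regular. Since $\tau$ ranged over both $\mu+\nu$ and $\mu\vee\nu$, both are regular. There is no substantive obstacle here; the only point that repays a little care is the two-sided inclusion establishing $\supp(\tau\restriction E_\mu)=E_\mu$ exactly, which is precisely the hypothesis (equality of supports up to capacity zero) needed to invoke Theorem~\ref{T6.1}.
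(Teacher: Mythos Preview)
Your proof is correct and follows essentially the same route as the paper: use Theorem~\ref{T6.1} to see that $\tau\restriction E_\mu$ and $\tau\restriction E_\nu$ are regular (since each dominates the corresponding regular measure and has the same support), then apply Theorem~\ref{T6.2} to conclude that $\tau$ is regular. The only cosmetic difference is that the paper first reduces $\mu+\nu$ to $\mu\vee\nu$ via Theorem~\ref{T6.1} (they share support and $\mu+\nu\ge\mu\vee\nu$) and then runs the argument once, whereas you treat both cases in parallel with a single symbol~$\tau$; the substance is identical.
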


\begin{remark} See Doob \cite{Doob} for the definition of $\mu\vee\nu$.
\end{remark}

\begin{proof} $\mu+\nu$ and $\mu\vee\nu$ have the same support and $\mu+\nu\geq\mu\vee\nu$ so,
by Theorem~\ref{T6.1}, we only need the result for $\mu\vee\nu$. Let $E_1 =\supp(\mu)$ and
$E_2=\supp(\nu)$, so $\supp(\mu\vee\nu)= E_1 \cup E_2$. By definition, $(\mu\vee\nu)\restriction
E_1 \geq\mu$ and they have the same supports. So, by Theorem~\ref{T6.1}, $\mu\vee\nu\restriction
E_1$ is regular. Similarly, $\mu\vee\nu\restriction E_2$ is regular. By Theorem~\ref{T6.2},
$\mu\vee\nu$ is regular.
\end{proof}

\begin{corollary}\lb{C6.6} Let $E=I_1\cup\dots\cup I_\ell$ be a union of finitely many disjoint
closed intervals. Let $\mu$ be a measure on $E$. Then $\mu$ is regular if and only if each
$\mu\restriction I_j$ is regular.
\end{corollary}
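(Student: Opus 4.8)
The plan is to obtain Corollary~\ref{C6.6} as an immediate consequence of the two structural results already established, Theorem~\ref{T6.4} (the restriction of a regular measure to a closed interval inside its ambient set is regular) and Theorem~\ref{T6.2} (a measure whose restrictions to the pieces of its support are regular is itself regular); the only real work is a little bookkeeping about supports and degenerate pieces.

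First I would record the elementary geometric fact that, since $I_1,\dots,I_\ell$ are pairwise disjoint closed intervals and hence at mutually positive distance, one has $\supp(\mu\restriction I_j)\subseteq I_j$ for each $j$ and $\supp(\mu)=\bigcup_{j=1}^\ell\supp(\mu\restriction I_j)$: a small enough neighborhood of any point meets at most one $I_j$, and on such a neighborhood $\mu$ and $\mu\restriction I_j$ agree. Thus the sets $\supp(\mu\restriction I_j)$ are disjoint compacts whose union is exactly $\supp(\mu)$.

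For the forward direction, suppose $\mu$ is regular, so $C(\supp(\mu))>0$. Fix $j$. If $C(\supp(\mu\restriction I_j))>0$, apply Theorem~\ref{T6.4} with the interval $I=I_j$ and the ambient compact set taken to be $E=I_1\cup\dots\cup I_\ell$ itself: then $I_j\subseteq E$, $\supp(\mu)\subseteq E$, $\mu$ is regular, and $C(\supp(\mu\restriction I_j))>0$, so Theorem~\ref{T6.4} gives that $\mu\restriction I_j$ is regular (here one uses that Theorem~\ref{T6.4} does not require $\supp(\mu)=E$ or $I\subseteq\supp(\mu)$). If instead $C(\supp(\mu\restriction I_j))=0$, then either $\mu\restriction I_j=0$ (nothing to prove) or, by Corollary~\ref{C1.2}, $\|P_n(\,\cdot\,,d(\mu\restriction I_j))\|^{1/n}\to 0=C(\supp(\mu\restriction I_j))$, so $\mu\restriction I_j$ is trivially regular. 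Hence every $\mu\restriction I_j$ is regular.

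For the converse, suppose each $\mu\restriction I_j$ is regular. If every nonzero $\mu\restriction I_j$ has support of capacity zero, then $\supp(\mu)$, a finite union of capacity-zero sets, has capacity zero, and $\mu$ is trivially regular by Corollary~\ref{C1.2}; so assume some $\mu\restriction I_j$ has support of positive capacity, whence $C(\supp(\mu))>0$. Then apply Theorem~\ref{T6.2} with $E_n=\supp(\mu\restriction I_n)$ for $1\le n\le\ell$ (and $E_n=\emptyset$ for $n>\ell$) and $E_\infty=\supp(\mu)$: by the second paragraph $\bigcup_n E_n=E_\infty$ exactly, $C(E_\infty)>0$, $\supp(d\mu)=E_\infty$, and each nonzero $\mu\restriction E_n=\mu\restriction I_n$ is regular by hypothesis, so Theorem~\ref{T6.2} yields that $\mu$ is regular. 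The argument has no genuine obstacle; the only points needing care are the routine identification $\supp(\mu)=\bigcup_j\supp(\mu\restriction I_j)$ (so that Theorem~\ref{T6.2} applies with equality rather than merely up to capacity zero) and the disposal of the degenerate pieces where $\mu\restriction I_j$ vanishes or lives on a capacity-zero set.
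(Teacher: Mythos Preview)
Your proof is correct and follows exactly the paper's approach: the paper's proof reads in its entirety ``Immediate from Theorems~\ref{T6.2} and \ref{T6.4},'' and you have simply supplied the details of how those two theorems are invoked, together with careful bookkeeping on supports and on the degenerate pieces (where $\mu\restriction I_j$ vanishes or has capacity-zero support) that the paper's one-line proof leaves implicit.
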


\begin{proof} Immediate from Theorems~\ref{T6.2} and \ref{T6.4}.
\end{proof}

\begin{proof}[Proof of Theorem~\ref{T6.1}] Since \eqref{1.12} holds,
\begin{equation} \lb{6.3}
\|X_n\|_{L^2(d\eta)} \leq \|X_n\|_{L^2 (d\mu)}
\end{equation}
Given \eqref{1.15}, we have (with $E=\supp(d\mu)$)
\[
\lim \|X_n\|_{L^2(d\eta)}^{1/n} =C(E)\Rightarrow \lim \|X_n\|_{L^2 (d\mu)}^{1/n} =C(E)
\qedhere
\]
\end{proof}

\begin{proof}[Proof of Theorem~\ref{T6.2}] Let $\mu_j=\mu\restriction E_j$ and let $x_n(z)$ be the
$x_n$'s for $d\mu$. Then
\begin{equation} \lb{6.4}
\|x_n\|_{L^2 (d\mu_j)} \leq \|x_n\|_{L^2 (d\mu)} =1
\end{equation}
so
\begin{equation} \lb{6.5}
\abs{x_n(z)} \leq \f{\abs{x_n(z)}}{\|x_n\|_{L^2 (d\mu_j)}}
\end{equation}
By regularity and Theorem~\ref{T1.8}(vi), for q.e.\ $z\in E_j$ (using $G_{E_j}(z)=0$ for q.e.\
$z\in E_j$ by Theorem~\ref{TA.7}(b) and \eqref{A.26a}), we see for q.e.\ $x\in E_j$,
\begin{equation} \lb{6.6}
\limsup_{n\to\infty}\, \abs{x_n(z)}^{1/n} \leq 1
\end{equation}
Since $\cup_{j=1}^\infty E_j$ is q.e.\ $E$, we have \eqref{6.6} q.e.\ on all of $E$. By (iii)
$\Rightarrow$ (i) in Theorem~\ref{T1.8}, $\mu$ is regular.
\end{proof}

To prove Theorem~\ref{T6.4}, we first make a reduction:

\begin{proposition}\lb{P6.7} Suppose there is $I=[a,b]\subset E\subset\bbR$ {\rm{(}}with
$a<b${\rm{)}}, $\mu$ regular, $C(\supp(\mu\restriction I)) >0$ but
\begin{equation} \lb{6.7}
\liminf \|P_n (\dott,\mu\restriction I)\|_{L^2 (\mu\restriction I)}^{1/n} <
C(\supp(\mu\restriction I))
\end{equation}
Then there exists a $\mu$, perhaps distinct but also regular and supported on $E$, so that
\eqref{6.7} holds and
\begin{equation} \lb{6.8}
\liminf \|P_n (\dott, \mu\restriction I)\|_{L^2 (\mu\restriction I)}^{1/n} >0
\end{equation}
\end{proposition}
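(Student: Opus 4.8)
The plan is to modify $\mu$ only by adjoining nonnegative mass supported inside $\supp\mu$: one replaces $\mu\restriction I$ by an equivalent but more ``spread out'' measure whose monic orthogonal polynomials have a prescribed root-asymptotic rate strictly between $0$ and $C(\supp(\mu\restriction I))$. Regularity of the enlarged measure is then automatic from Theorem~\ref{T6.1}, so the only substantive input is Wyneken's theorem on prescribing the set of limit points of $\Gamma_n=(a_1\cdots a_n)^{1/n}$ within the equivalence class of a measure.

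Concretely, put $K=\supp(\mu\restriction I)$, so $C(K)>0$, and let $A_0=\liminf_n\|P_n(\dott,\mu\restriction I)\|_{L^2(\mu\restriction I)}^{1/n}$, which by hypothesis is $<C(K)$. By the Ullman bound recalled after Example~\ref{E2.7}, every limit point of $\Gamma_n(\mu\restriction I)$, hence of $\|P_n(\dott,\mu\restriction I)\|_{L^2(\mu\restriction I)}^{1/n}$ (which differs from $\Gamma_n$ only by the factor $(\mu\restriction I)(I)^{1/2n}\to1$), lies in $[c_{\mu\restriction I},C(K)]$, where $c_{\mu\restriction I}$ is the infimum of capacities of carriers of $\mu\restriction I$; in particular $c_{\mu\restriction I}\le A_0<C(K)$. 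Fix $A_1<A_2$ with $c_{\mu\restriction I}<A_1<A_2<C(K)$; note $A_1>0$. By Wyneken's theorem \cite{Wyn}, applied to $\mu\restriction I$, there is a measure $\eta$ mutually equivalent to $\mu\restriction I$ whose set of limit points of $\Gamma_n(\eta)$ is exactly $[A_1,A_2]$. Moreover that construction produces $\eta$ as $\sum_k c_k(\mu\restriction I)_k$, a sum of renormalized ``blocks'' of $\mu\restriction I$ (as in the proof of Theorem~\ref{T5.4}, but with the block-weights $c_k$ steered toward $[A_1,A_2]$ rather than set equal to $k^{-3}$); since each block satisfies $(\mu\restriction I)_k\ge(\mu\restriction I)(I)^{-1}(\mu\restriction I)$ and some $c_k>0$, one gets $\eta\ge c_0\,(\mu\restriction I)$ for a constant $c_0\in(0,1]$. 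Now set $\tilde\mu=\mu+\eta$.

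It remains to check the three requirements. Because $\eta\ll\mu\restriction I$ and $\eta\ge c_0(\mu\restriction I)$ we have $\supp\eta=K\subset\supp\mu$, hence $\tilde\mu\ge\mu$ and $\supp\tilde\mu=\supp\mu\subset E$; by Theorem~\ref{T6.1} (with the regular measure $\mu$), $\tilde\mu$ is regular and supported on $E$. Next $\tilde\mu\restriction I=\mu\restriction I+\eta$, so $\supp(\tilde\mu\restriction I)=K$; from $\eta\le\tilde\mu\restriction I\le(1+c_0^{-1})\eta$ together with the minimality property \eqref{1.12} used in both directions, $\|P_n(\dott,\eta)\|\le\|P_n(\dott,\tilde\mu\restriction I)\|\le(1+c_0^{-1})^{1/2}\|P_n(\dott,\eta)\|$, so $\|P_n(\dott,\tilde\mu\restriction I)\|^{1/n}$ has exactly the limit points of $\|P_n(\dott,\eta)\|^{1/n}$, namely those of $\Gamma_n(\eta)$, i.e.\ $[A_1,A_2]$. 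Therefore $\liminf_n\|P_n(\dott,\tilde\mu\restriction I)\|^{1/n}=A_1$, which is $>0$ (giving \eqref{6.8}) and $<C(K)=C(\supp(\tilde\mu\restriction I))$ (giving \eqref{6.7}).

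The step to pin down carefully is the Wyneken input: that the prescription of the limit-point set is available for an arbitrary compact support $K$ (Theorem~\ref{T5.4} as stated here handles only finite unions of intervals), and---more essential for us---that the resulting $\eta$ can be taken to dominate a positive multiple of $\mu\restriction I$ rather than being merely equivalent to it. It is this domination that makes Theorem~\ref{T6.1} applicable and $\tilde\mu\restriction I$ comparable to $\eta$; it is transparent from the ``renormalize the blocks, then weight the blocks'' construction, but deserves an explicit statement in the full proof.
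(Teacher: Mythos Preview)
Your route is quite different from the paper's and carries a genuine gap that you yourself flag but do not close. The paper's argument is far more elementary: if the $\liminf$ in \eqref{6.8} already holds, there is nothing to do; otherwise one adjoins to $\mu$ the equilibrium measure $\rho_F$ of a tiny subinterval $F=[x_0-\delta,x_0+\delta]\subset I$ centered at $x_0=\tfrac12(a+b)$. Regularity of $\ti\mu=\mu+\rho_F$ follows from Corollary~\ref{C6.5}; the lower bound \eqref{6.8} comes from $\ti\mu\restriction I\ge\rho_F$ and $\|P_n(\dott,\rho_F)\|^{1/n}\to C(F)>0$; and nonregularity \eqref{6.7} is obtained by testing against the explicit trial polynomial $Q_{2n(j)}(x)=P_{n(j)}(x,\mu\restriction I)\,(x-x_0)^{n(j)}$ along a subsequence where $\|P_{n(j)}(\dott,\mu\restriction I)\|^{1/n(j)}\to 0$. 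The factor $(x-x_0)^{n(j)}$ is at most $\delta^{n(j)}$ on $F$, so it damps out the added $\rho_F$-mass, and for $\delta$ small one gets $\limsup\|Q_{2n(j)}\|_{L^2(\ti\mu\restriction I)}^{1/2n(j)}\le (d\delta)^{1/2}<C(\supp(\mu\restriction I))\le C(\supp(\ti\mu\restriction I))$. No external theorem beyond Corollary~\ref{C6.5} is used.

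Your argument, by contrast, hinges on the domination $\eta\ge c_0\,(\mu\restriction I)$. Without it you lose the upper comparability $\ti\mu\restriction I\le(1+c_0^{-1})\eta$, and then you can only conclude $\liminf\|P_n(\dott,\ti\mu\restriction I)\|^{1/n}\ge A_1$; you have no control on the $\limsup$, so \eqref{6.7} does not follow. Wyneken's theorem, as stated, promises only mutual equivalence of $\eta$ and $\mu\restriction I$, not domination; your claim that domination is ``transparent from the construction'' is an assertion about the internals of a proof you do not reproduce, and the analogy with the construction in Theorem~\ref{T5.4} (which targets the single value $C(E)$, not a prescribed interval $[A_1,A_2]$) is suggestive but not a proof. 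Even granting the domination, invoking Wyneken here is disproportionate: his result is substantially deeper than the reduction lemma you are trying to prove, whereas the paper's $\rho_F$-plus-trial-polynomial argument is completely self-contained.
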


\begin{proof} If \eqref{6.8} holds for the initial $\mu$, we can stop. Otherwise, we will take
\begin{equation} \lb{6.9}
\ti\mu =\mu+\rho_F
\end{equation}
where $F=[x_0-\delta, x_0+\delta]\subset I$ with $x_0=\f12 (a+b)$ and $\delta$ sufficiently small
chosen later.

By Corollary~\ref{C6.5}, $\ti\mu$ is regular and, by \eqref{6.3},
\[
\|P_n(\dott,\ti\mu\restriction I)\|_{L^2 (\ti\mu\restriction I)}^{1/n} \geq
\|P_n(\dott, \rho_F)\|_{L^2 (\rho_F)}^{1/n}
\]
so \eqref{6.8} holds since
\[
\lim_{n\to\infty}\, \|P_n (\dott, \rho_F)\|_{L^2 (\rho_F)}^{1/n} =C(F) >0
\]

Thus, we need only prove that \eqref{6.7} holds for suitable $\delta$. Since we are supposing \eqref{6.8}
fails for $\mu$, pick $n(j)\to\infty$ so
\begin{equation} \lb{6.10}
\|P_{n(j)} (\dott, \mu\restriction I)\|_{L^2 (\mu\restriction I)}^{1/n(j)}\to 0
\end{equation}
Define
\begin{equation} \lb{6.11}
Q_{2n(j)}(x)=P_{n(j)} (x,\mu\restriction I) (x-x_0)^{n(j)}
\end{equation}

Let $d=\diam(E)$ and note that on $E$, since $x_0\in I\subset E$,
\begin{equation} \lb{6.12}
\abs{x-x_0}^{n(j)} \leq d^{n(j)}
\end{equation}
and since $P_{n(j)}$ has all its zeros in $\cvh(E)$, for $x\in E$,
\begin{equation} \lb{6.13}
\abs{P_{n(j)}(x)} \leq d^{n(j)}
\end{equation}

Thus,
\begin{align}
\|Q_{2n(j)}\|_{L^2(\ti\mu\restriction I)}^2
&= \|P_{n(j)}(\cdot -x_0)^{n(j)}\|_{L^2 (\mu\restriction I)}^2 +
\|P_{n(j)}(\cdot -x_0)^{n(j)}\|_{L^2 (d\rho_E)}^2 \notag \\
&\leq d^{2n(j)} \|P_{n(j)}\|_{L^2 (\mu\restriction I)}^2 + d^{2n(j)} \delta^{2n(j)} \lb{6.14}
\end{align}
Using \eqref{6.10}, we see
\[
\limsup\|Q_{2n(j)}\|_{L^2 (\ti\mu\restriction I)}^{1/2n(j)} \leq d^{1/2}\delta^{1/2} <
C(\supp(\mu\restriction I))\leq C(\supp(\ti\mu\restriction I))
\]
if we take $\delta$ small. Since \eqref{1.12},
\begin{equation} \lb{6.15}
\|P_{2n(j)} (\dott, \ti\mu\restriction I)\|_{L^2(\ti\mu\restriction I)} \leq
\|Q_{2n(j)}\|_{L^2( \ti\mu\restriction I)}
\end{equation}
we see $\ti\mu\restriction I$ is not regular.
\end{proof}

\begin{proof}[Proof of Theorem~\ref{T6.4}] By Proposition~\ref{P6.7}, we can find $\mu$ so $\mu$ is regular,
$\mu\restriction I$ is not regular but for some $a>0$ (with $\mu_I=\mu\restriction I$),
\begin{equation} \lb{6.16}
\int_I \, \abs{P_n(x,\mu_I)}^2\, d\mu\geq a^n
\end{equation}

Fix $x_0\in I_\intt$. Let $d=\diam (E)$ and for $\ell$ to be picked shortly, let
\begin{equation} \lb{6.17}
Q_{n(2\ell+1)}(x) = P_n(x,\mu_I) \biggl( 1-\f{(x-x_0)^2}{d^2}\biggr)^{\ell n}
\end{equation}
Since $P$ obeys \eqref{6.13}, if we define
\begin{equation} \lb{6.18}
\eta = \max_{x\in E\setminus I}\, \biggl( 1-\f{(x-x_0)^2}{d^2}\biggr) <1
\end{equation}
we have for $x\notin I$,
\begin{equation} \lb{6.19}
\abs{Q_{n(2\ell+2)}(x)}\leq \eta^{\ell n} d^n
\end{equation}
Choose $\ell$ so
\begin{equation} \lb{6.20}
(\eta^\ell d)^2 <a
\end{equation}
Then, by \eqref{6.17}, \eqref{6.19}, and \eqref{6.20},
\begin{equation} \lb{6.21}
\int_K \abs{Q_{n(2\ell+1)}(x)}^2\, d\mu \leq 2\int \abs{P_n(x,\mu_I)}^2\, d\mu
\end{equation}
so
\begin{equation} \lb{6.22}
\abs{p_n(x_0; d\mu_I)}^{1/n} \leq \f{2^{1/2n} \abs{Q_{n(2\ell +1)}(x_0)}^{1/n}}
{\|Q_{n(2\ell+1)}\|_{L^2 (K,d\mu)}^{1/n}}
\end{equation}
so, by \eqref{1.22} and regularity of $\mu$, for $\mu$-a.e.\ $x_0$ in $I^\intt\cap \supp(d\mu)$,
\[
\limsup \abs{p_n (x_0; d\mu_I)}^{1/n}\leq 1
\]

But then Theorem~\ref{T1.8} implies $d\mu_I$ is regular. This contradiction proves the theorem.
\end{proof}

\section{Ergodic Jacobi Matrices and Potential Theory} \lb{s7}

In this section, we will explore regularity ideas for ergodic half- and whole-line Jacobi operators
and see this is connected to Kotani theory (see \cite[Sect.~10.11]{OPUC2} and \cite{Dam07}
as well as the original papers \cite{Kot,S168,Ge92,Kot89}). A main goal is to prove Theorems~\ref{T1.13}
and \ref{T1.15A}.

Let $(\Omega,d\sigma)$ be a probability measure space. Let $T\colon\Omega\to\Omega$ be an invertible
ergodic transformation. Let $\ti A,\ti B$ be measurable functions from $\Omega$ to $\bbR$ with
$\ti B$ bounded, $\ti A$ positive, and both $\ti A$ and $\ti A^{-1}$ bounded. For $\omega\in\Omega$
and $n\in\bbZ$, define $a_n(\omega), b_n(\omega)$ by
\begin{equation}\lb{7.1}
a_n (\omega) =\ti A(T^n \omega) \qquad b_n(\omega) = \ti B(T^n \omega)
\end{equation}

By $J(\omega)$, we mean the Jacobi matrix with parameters $\{a_n(\omega), b_n(\omega)\}_{n=1}^\infty$.
By $\ti J(\omega)$, we mean the two-sided Jacobi matrix with parameters $\{a_n(\omega), b_n(\omega)
\}_{n=-\infty}^\infty$. Occasionally we will use $J_k^+(\omega)$ for the one-sided matrix with
parameters $\{a_{k+n}(\omega), b_{k+n}(\omega)\}_{n=1}^\infty$ and $J_k^-(\omega)$ for the
one-sided matrix with parameters $\{a_{k-n}(\omega), b_{k+1-n}(\omega)\}_{n=1}^\infty$.

Spectral measures for one-sided matrices (and vector $\delta_1$) are $d\mu_\omega, d\mu_\omega^{\pm k}$
and for $\ti J(\omega)$, we use $d\ti\mu_{\omega;k}$ for vector $\delta_k$.

For spectral theory, the transfer matrix is basic. Define for $n\in\bbZ$,
\begin{equation}\lb{7.2}
A_n (x,\omega) = \f{1}{a_{n+1}(\omega)}
\begin{pmatrix}
x-b_{n+1}(\omega) & -1 \\
a_{n+1}(\omega)^2 & 0
\end{pmatrix}
\end{equation}
then
\begin{equation}\lb{7.3}
a_{n+1} u_{n+1} + (b_{n+1}-x) u_n + a_n u_{n-1}=0
\end{equation}
is equivalent to
\begin{equation}\lb{7.4}
\begin{pmatrix} u_{n+1} \\ a_{n+1} u_n \end{pmatrix} =
A_n \begin{pmatrix} u_n \\ a_n u_{n-1} \end{pmatrix}
\end{equation}
We define for $n<m$,
\begin{equation}\lb{7.5}
T(m,n;x,\omega) = A_m (x,\omega) A_{m-1} (x,\omega) \dots A_{n+1} (x,\omega)
\end{equation}
and $T(n,n;x,\omega)=1$ and, for $m<n$, $T(m,n;x,\omega) = T(n,m;x,\omega)^{-1}$. Thus, solutions of \eqref{7.3}
obey
\begin{equation}\lb{7.6}
\begin{pmatrix} u_{m+1} \\ a_{m+1} u_m \end{pmatrix} =
T(m,n;x,\omega) \begin{pmatrix} u_{n+1} \\ a_{n+1} u_n \end{pmatrix}
\end{equation}
In particular, for $n\geq 1$,
\begin{equation}\lb{7.7}
\begin{pmatrix} p_{n+1} (x,\omega) \\ a_{n+1} p_n (x,\omega)\end{pmatrix}
=T(n,-1; x,\omega) \begin{pmatrix} 1 \\ 0 \end{pmatrix}
\end{equation}

The ergodic and subadditive ergodic theorems produce the following well-known facts:

\begin{theorem}\lb{T7.1} There exists $\Omega_0\subset\Omega$ of full $\sigma$ measure so that for
$\omega\in\Omega_0$,
\begin{SL}
\item[{\rm{(a)}}] $\sigma (\ti J(\omega))=E$, a fixed perfect subset of $\bbR$ independent of
$\omega$ {\rm{(}}in $\Omega_0${\rm{)}}. Moreover, for any $\omega\in\Omega_0$, each $J_k^\pm$ obeys
\begin{equation}\lb{7.8}
\sigma_\ess (J_k^\pm (\omega)) =E
\end{equation}

\item[{\rm{(b)}}] There is a measure $d\nu_\infty$ with
\begin{equation}\lb{7.9}
\supp(d\nu_\infty)=E
\end{equation}
If $d\nu_n^{k,\pm,\omega}$ is the zero counting measure for $J_k^\pm (\omega)$, then for any
$\omega\in\Omega_0$, as $n\to\infty$,
\begin{equation}\lb{7.10}
d\nu_n^{k,\pm,\omega} \overset{\omega}{\longrightarrow} d\nu_\infty
\end{equation}

\item[{\rm{(c)}}] Define the Lyapunov exponent $\gamma(z)$ for $z$ by
\begin{equation}\lb{7.11}
\gamma(z) =\lim_{n\to\infty}\, \bbE(\tfrac{1}{n}\log \|T(n-1, -1; z,\omega)\|)
\end{equation}
where
\begin{equation}\lb{7.11a}
\bbE(f) =\int f(\omega)\, d\sigma(\omega)
\end{equation}
and \eqref{7.11} includes that the limit exists. Moreover, for any $k\in\bbZ$,
\begin{align}
\gamma(z) &=\lim_{n\to\infty}\, \bbE(\tfrac{1}{n} \log\|T(n+k,k;z,\omega)\|) \lb{7.12} \\
\gamma(z) &=\lim_{n\to\infty}\, \bbE(\tfrac{1}{n} \log \|T(k-n,k;z,\omega)\|) \lb{7.13}
\end{align}

\item[{\rm{(d)}}] For any $\omega\in\Omega_0$ and $z\notin E$ and $k$ fixed,
\begin{align}
\lim_{n\to\infty}\, \tfrac{1}{n} \log \|T(n+k, k;z,\omega)\| &=\gamma(z) \lb{7.14} \\
\lim_{n\to\infty}\, \tfrac{1}{n} \log \|T(k-n, k;z,\omega)\| &=\gamma(z) \lb{7.15}
\end{align}

\item[{\rm{(e)}}] For any $z\in E$ and $\sigma$-a.e.\ $\omega\in\Omega_0$, \eqref{7.14} and
\eqref{7.15} hold.

\item[{\rm{(f)}}] For $\omega\in\Omega_0$, $\lim_{n\to\infty} (a_1\dots a_n)^{1/n}=A$ exists and
is $\omega$-independent, and one has the Thouless formula,
\begin{equation}\lb{7.16}
\gamma(z) =\log(A^{-1}) + \int \log(\abs{z-x})\, d\nu_\infty(x)
\end{equation}
Moreover, for all $z$,
\begin{equation}\lb{7.17}
\gamma(z) \geq 0
\end{equation}
\end{SL}
\end{theorem}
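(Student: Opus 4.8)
The plan is to assemble Theorem~\ref{T7.1} from three ergodic engines: plain ergodicity of $T$ for the almost-sure constancy statements, Birkhoff's pointwise theorem for the additive quantities ($\frac1n\log(a_1\cdots a_n)$ and the moments of the zero-counting measures), and Kingman's subadditive ergodic theorem for the Lyapunov exponent; the potential-theoretic glue is the Thouless formula, obtained from \eqref{1.4} and the upper envelope theorem. For \textbf{(a)}, for each interval $(p,q)$ with rational endpoints the event $\{\omega:\sigma(\ti J(\omega))\cap(p,q)\neq\emptyset\}$ is $T$-invariant, since $\ti J(T\omega)$ is a shift of $\ti J(\omega)$ and hence unitarily equivalent to it; ergodicity makes each such event a.s.\ trivial, and intersecting over the countably many of them produces a fixed closed $E$ with $\sigma(\ti J(\omega))=E$ on a full-measure set $\Omega_0$. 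That $E$ is perfect follows because an isolated point would be an eigenvalue whose eigenprojection has a $T$-invariant, hence a.s.\ constant, finite rank, impossible under the shift unless that rank is $0$. Each $J_k^\pm(\omega)$ differs from a direct sum containing a half-line restriction of $\ti J(\omega)$ by a finite-rank resolvent perturbation, so $\sigma_\ess(J_k^\pm(\omega))=\sigma_\ess(\ti J(\omega))=E$, the last equality using perfectness.

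For \textbf{(c)}, fix $z$ and set $F_n(\omega)=\log\|T(n-1,-1;z,\omega)\|$. The cocycle identity $T(m+n,-1;z,\omega)=T(m+n,n-1;z,\omega)\,T(n-1,-1;z,\omega)$ with $T(m+n,n-1;z,\omega)=T(m-1,-1;z,T^n\omega)$ and submultiplicativity of the norm makes $\{F_n\}$ subadditive; $F_1$ is bounded because $\ti A,\ti A^{-1},\ti B$ are, so $\log\|A_j\|,\log\|A_j^{-1}\|$ are bounded. Kingman's theorem gives $\frac1n F_n(\omega)\to\gamma(z)=\inf_n\frac1n\bbE(F_n)=\lim_n\frac1n\bbE(F_n)$ a.s.\ and in $L^1$. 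Shifting the window by $k$ as in \eqref{7.12} changes the product by one bounded matrix factor, negligible after dividing by $n$; \eqref{7.13} follows from the same bound after noting $\det A_n=1$, so $\|T(k-n,k;z,\omega)\|=\|T(k,k-n;z,\omega)^{-1}\|=\|T(k,k-n;z,\omega)\|$, reducing the reversed direction to the forward one for the reflected coefficients. Statement \textbf{(e)} is exactly the a.s.-in-$\omega$ conclusion of Kingman applied at a fixed $z\in E$, combined with these shift and reflection identities.

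For \textbf{(b)} and \textbf{(f)}: the moments of $d\nu_n^{k,\pm,\omega}$ are $\int x^j\,d\nu_n^{k,\pm,\omega}=\frac1n\tr\bigl((J_n^{k,\pm})^j\bigr)$, and the $(m,m)$ entry of the $j$-th power equals $\langle\delta_m,\ti J(\omega)^j\delta_m\rangle=g_j(T^{k\pm m}\omega)$ except for the $O(j)$ indices near the block ends; Birkhoff applied to the bounded $g_j$ gives convergence of all moments a.s.\ to those of a probability measure $d\nu_\infty$ supported in a fixed compact set, hence \eqref{7.10}. Its support is contained in $E$ since truncation eigenvalues accumulate only on $\sigma(\ti J(\omega))=E$, and contains $E$ by the standard variational/Weyl-sequence count that every open interval meeting $E$ carries order-$n$ eigenvalues of the truncation, giving \eqref{7.9}. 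Next $\frac1n\log(a_1\cdots a_n)=\frac1n\sum_{j=1}^n\log\ti A(T^j\omega)\to\bbE(\log\ti A)$ by Birkhoff, so $A=\exp(\bbE(\log\ti A))>0$ exists and is $\omega$-independent. The Thouless formula \eqref{7.16} then comes from \eqref{1.4}: $\frac1n\log\abs{p_n(z,\omega)}=\int\log\abs{z-x}\,d\nu_n^{0,+,\omega}(x)-\frac1n\log(a_1\cdots a_n)$, while \eqref{7.7} and (c) give $\frac1n\log\bigl\|\binom{p_{n+1}}{a_{n+1}p_n}\bigr\|\to\gamma(z)$ a.s.; the two exponential rates agree since $a_{n+1}$ is bounded above and below and one of $p_{n+1},p_n$ controls the norm, so \eqref{7.16} holds Lebesgue-a.e.\ in $z$, and then for all $z$ since both sides are subharmonic, harmonic off $E$, and the upper envelope theorem (Theorem~\ref{TA.4}) pins down the potential. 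Finally \eqref{7.17} is immediate: $\det A_n=1$ forces $\|T(n,-1;z,\omega)\|\ge1$, hence $\gamma(z)=\lim\frac1n\bbE(\log\|T\|)\ge0$.

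It remains to upgrade \textbf{(d)} from a.e.\ $\omega$ to every $\omega\in\Omega_0$, using $z\notin E$: starting the recursion at site $k$, \eqref{7.7} expresses $\frac1n\log\|T(n+k,k;z,\omega)\|$ through $\frac1n\log\abs{p_n(\cdot,J_k^+(\omega))}$, and since \eqref{7.10} holds for \emph{every} $\omega\in\Omega_0$ and $\log\abs{z-\cdot}$ is continuous on a neighborhood of $E$, the limit is $-\Phi_{\nu_\infty}(z)+\log A^{-1}=\gamma(z)$ for every such $\omega$; the only nuisance is that the $O(1)$ zeros of $p_n$ lying in the gaps of $E$ might approach $z$, but interlacing controls their number and rate of approach, so their contribution to the potential is $o(1)$. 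The step I expect to be the main obstacle is the Thouless formula \eqref{7.16} at the level of \emph{all} $z$ rather than Lebesgue-a.e.: reconciling the subadditive limit $\gamma(z)$ with the lower-semicontinuity and integrability of $\Phi_{\nu_\infty}$ via the upper envelope theorem, together with controlling the discrepancy between $\|T_n\|$ and $\abs{p_n}$ and the stray gap zeros that enter both \eqref{7.16} and the uniformity claim in (d). The purely ergodic ingredients (a), (b), (c) and the $(a_1\cdots a_n)^{1/n}$ part of (f) are routine once the subadditive and additive structures are identified.
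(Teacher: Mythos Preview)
Your proposal is correct and follows essentially the same route as the paper, which itself only sketches the argument and defers to \cite{CarLac,CFKS,PasFig,OPUC2}: Birkhoff for the additive pieces ((b) and the $A$ part of (f)), Kingman for (c) and (e), the Pastur invariance argument for (a), and \eqref{1.4} for Thouless. The one point worth sharpening is exactly the one you flag as the main obstacle---extending \eqref{7.16} from a.e.\ $z$ to all $z$. You assert ``both sides are subharmonic'' but do not say why $\gamma$ is; the upper envelope theorem alone would only give equality q.e. The paper's Remark~3 supplies the missing mechanism (Craig--Simon \cite{CrS83}): by subadditivity of the cocycle, $\gamma(z)=\inf_n \tfrac{1}{n}\,\bbE(\log\|T(n-1,-1;z,\omega)\|)$, and each $\tfrac{1}{n}\,\bbE(\log\|T_n\|)$ is continuous and subharmonic in $z$, so $\gamma$ is upper semicontinuous and subharmonic; two subharmonic functions equal Lebesgue-a.e.\ are equal everywhere. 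This is precisely the ``reconciling the subadditive limit $\gamma(z)$ with lower semicontinuity of $\Phi_{\nu_\infty}$'' that you anticipated.
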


\begin{remarks} 1. For proofs, see \cite{CarLac,CFKS,PasFig,OPUC2}. \eqref{7.16} is due (in the physics
literature) to Herbert--Jones \cite{HJ} and Thouless \cite{Thou}. It is, of course, just \eqref{1.4} for
$z\notin\cvh(\Sigma)$. Almost everything else here is a simple consequence of the Birkhoff ergodic
theorem/the Kingman subadditive ergodic theorem and translation invariance which implies,
for example, that the expectation in \eqref{7.12} is $k$-independent for each $n$.

\smallskip
2. There are two subtleties to OP readers. First, \eqref{7.13} comes from $\|A^{-1}\|=\|A\|$ for
$2\times 2$ matrices $A$ with $\det(A)=1$. It implies that the Lyapunov exponent is the same in both
directions. $\det(T)=1$ also implies \eqref{7.17}.

\smallskip
3. The second subtlety concerns equality in \eqref{7.16} for all $z$, including those in $\Sigma$.
This was first proven by Avron--Simon \cite{AvS83}; the simplest proof is due to Craig--Simon
\cite{CrS83} who were motivated by work of Herman \cite{Her83}. The point is that, in general,
$\limsup \f{1}{n} \log \|T(n+k,k;z,\omega)\|$ (and $\limsup \f{1}{n} \log \abs{p_n (z,\omega)}$)
may not be upper semicontinuous but $\bbE (\f{1}{n}\log \|T(n+k,k;z,\omega)\|)$ is because of
translation invariance, H\"older's inequality, and
\begin{equation}\lb{7.18}
T(n+\ell+k,k ; z,\omega)= T(n+\ell+k, \ell+k; z,T^\ell \omega)
T(\ell+k, k;z,\omega)
\end{equation}
This implies that the expectation is subadditive so the limit is an $\inf$.
\end{remarks}

Two main examples are the Anderson model and almost periodic functions. For the former, $(a_n(\omega),
b_n(\omega))$ are independent $(0,\infty)\times\bbR$-valued (bounded with $a_n^{-1}$ also bounded)
identically distributed random variables. In the almost periodic case, $\Omega$ is a finite- or
infinite-dimensional torus with $d\sigma$ Haar measure and $\ti A,\ti B$ continuous functions.
A key observation (of Avron--Simon \cite{AvS83}) is that in this almost periodic case, the
density of states exists for all, not only a.e., $\omega\in \Omega$ so we can then take $\Omega_0
=\Omega$ in Theorem~\ref{T7.1}.

Here is the first consequence of potential theory ideas in this setting:

\begin{theorem}\lb{T7/2} $E$ has positive capacity; indeed,
\begin{equation}\lb{7.19}
C(E)\geq A
\end{equation}
Moreover, $E$ is always potentially perfect {\rm{(}}as defined in Appendix~A{\rm{)}}.
Each $d\mu_\omega$ {\rm{(}}$\omega\in\Omega_0${\rm{)}} is regular if and only if equality holds
in \eqref{7.19}.
\end{theorem}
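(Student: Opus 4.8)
The plan is to extract everything from Theorem~\ref{T7.1}, the Thouless formula \eqref{7.16}, and the elementary potential theory of Appendix~A. First, $A>0$: since $\ti A$ and $\ti A^{-1}$ are bounded, there is $c\in(0,1]$ with $c\le a_n(\omega)\le c^{-1}$ for all $n$, so $(a_1\dots a_n)^{1/n}\ge c$ and hence $A\ge c>0$. This already makes ``$C(E)\ge A$'' imply ``$C(E)>0$'', so the content is the inequality itself together with potential-perfectness and the regularity dichotomy.

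For $C(E)\ge A$ I would argue exactly as in the proof of Theorem~\ref{T2.4}. Rewrite \eqref{7.16} as $\Phi_{\nu_\infty}(z)=\log(A^{-1})-\gamma(z)$; since $\gamma(z)\ge 0$ for all $z$, this gives $\Phi_{\nu_\infty}(z)\le\log(A^{-1})$ everywhere. Integrating $d\nu_\infty$ and using $\supp(d\nu_\infty)=E$ yields $\calE(\nu_\infty)\le\log(A^{-1})$, while $\calE(\nu_\infty)\ge\inf_\nu\calE(\nu)=\log(C(E)^{-1})$ over probability measures $\nu$ on $E$; hence $\log(C(E)^{-1})\le\log(A^{-1})$, i.e.\ $C(E)\ge A$. (Equivalently, \eqref{7.19} is just \eqref{1.18x}, i.e.\ Corollary~\ref{C1.2}, applied to any single $d\mu_\omega$, whose essential spectrum is $E$ by Theorem~\ref{T7.1}(a).)

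Potential-perfectness of $E$ is a byproduct of the same bound. Since $\nu_\infty$ is a probability measure supported in a bounded set, $\calE(\nu_\infty)\ge\log(\diam E)^{-1}>-\infty$, so the bound of the previous paragraph shows $\calE(\nu_\infty)$ is \emph{finite}. A measure of finite logarithmic energy assigns zero mass to every set of capacity zero (Proposition~\ref{PA.3}), and $\supp(d\nu_\infty)=E$ forces $\nu_\infty(E\cap\{z:\abs{z-x}<\veps\})>0$ for every $x\in E$ and every $\veps>0$; therefore $C(E\cap\{z:\abs{z-x}<\veps\})>0$ for all such $x,\veps$, which is precisely the Appendix~A condition that $E$ be potentially perfect.

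Finally, the regularity dichotomy. For each $\omega\in\Omega_0$ the spectral measure $d\mu_\omega$ is a probability measure, so by \eqref{1.8} one has $\|X_n(\dott;d\mu_\omega)\|_{L^2(d\mu_\omega)}=a_1(\omega)\dots a_n(\omega)$, and Theorem~\ref{T7.1}(f) gives $\|X_n(\dott;d\mu_\omega)\|^{1/n}\to A$. Because $\delta_1$ is cyclic for $J(\omega)=J_0^+(\omega)$, we have $\sigma_\ess(d\mu_\omega)=\sigma_\ess(J_0^+(\omega))=E$ by Theorem~\ref{T7.1}(a), and the discrete part of $\supp(d\mu_\omega)$ is countable, hence of capacity zero, so $C(\supp(d\mu_\omega))=C(\sigma_\capa(d\mu_\omega))=C(E)$. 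Thus by definition $d\mu_\omega$ is regular exactly when $\lim\|X_n\|^{1/n}=C(E)$, i.e.\ when $A=C(E)$; so each $d\mu_\omega$ ($\omega\in\Omega_0$) is regular if and only if equality holds in \eqref{7.19}. The only points demanding any care are the (routine) identification of $\sigma_\ess$, $\supp$, and $\sigma_\capa$ of $d\mu_\omega$ as sets of equal capacity, and matching ``potentially perfect'' to its Appendix~A definition; everything else is bookkeeping on top of Theorem~\ref{T7.1}.
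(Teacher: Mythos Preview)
Your proof is correct and follows essentially the same route as the paper's: both derive $\calE(\nu_\infty)\le\log(A^{-1})$ from the Thouless formula \eqref{7.16} and $\gamma\ge 0$, deduce $C(E)\ge A$ from the variational characterization of capacity, obtain potential-perfectness from finiteness of $\calE(\nu_\infty)$ via Proposition~\ref{PA.3} and $\supp(d\nu_\infty)=E$, and read off the regularity equivalence directly from the definition. You spell out a few points (such as $A>0$ and the identification $C(\supp d\mu_\omega)=C(E)$) that the paper leaves implicit, but the argument is the same.
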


\begin{proof} Use $\gamma\geq 0$ in \eqref{7.16}, integrating $d\nu_\infty$, to see that
\begin{equation}\lb{7.20}
\calE (\nu_\infty)\leq \log (A^{-1})
\end{equation}
so $\calE (\nu_\infty)<\infty$, implying $C(E) >0$. By \eqref{7.20}, we get \eqref{7.19}.

By \eqref{7.20}, $\nu_\infty$ has finite energy and so, by Proposition~\ref{PA.3}, $\nu_\infty$ gives
zero weight to any set of capacity zero. It follows that if $x\in\supp(d\nu_\infty)$, then $C((x-\delta,
x+\delta)) >0$ for all $\delta$. By \eqref{7.9}, $E$ is potentially perfect.

By definition of $A$, regularity for all $\omega\in\Omega_0$ is equivalent to $C(E)=A$.
\end{proof}

\begin{proof}[Proof of Theorem~\ref{T1.13}] We will prove that \eqref{1.34b} holds. By \eqref{7.20},
$\nu_\infty$ has finite Coulomb energy, so $\nu_\infty$ gives zero weight to sets of zero capacity.
Since equality holds in \eqref{A.17}, q.e.\ on $E$, we conclude that
\begin{alignat*}{2}
\log (C(E)^{-1}) &= \int d\nu_\infty (x)\, \Phi_{\rho_E}(x)  \\
&= \int d\rho_E (x)\, \Phi_{\nu_\infty}(x) \qquad&&\text{(by \eqref{A.2})} \\
&= \int d\rho_E (x)\, [\log(A^{-1}) -\gamma(x)] \qquad && \text{(by \eqref{7.16})}
\end{alignat*}
This is \eqref{1.34b}.

By \eqref{1.34b}, we have \eqref{1.27} $\Leftrightarrow \int \gamma(x)\, d\rho_E(x)=0$ which,
given that $\gamma(x)\geq 0$, holds if and only if $\gamma(x)=0$ for $\rho_E$-a.e.\ $x$.

If \eqref{1.27} holds, then each $d\mu_\omega$ is regular, so by Theorem~\ref{T1.6}, $d\nu_\infty =
d\rho_E$. The converse part follows from Theorem~\ref{T1.7}.
\end{proof}

{\it Note}: Remling remarked to me that Theorem~\ref{T1.13} has a deterministic analog with
essentially the same proof.

\smallskip
Kotani theory says something about when $\gamma(x)=0$ but we have not succeeded in making a
tight connection, so we will postpone the precise details until we discuss conjectures in the
next section. As a final topic, we want to prove Theorem~\ref{T1.15A} and a related result.

\begin{proof}[Proof of Theorem~\ref{T1.15A}] By \eqref{4.10} for $d\mu_\omega$-a.e.\ $x$, we have
\begin{equation}\lb{7.21}
\limsup \abs{p_n(x)}^{1/n} \leq 1
\end{equation}
On the other hand, by the upper envelope theorem and \eqref{1.4} for q.e.\ $x$,
\begin{align}
\lim\, \abs{p_n(x)}^{1/n} &= A^{-1} \exp (-\Phi_{\nu_\infty}(x)) \lb{7.22} \\
&= \exp (\gamma(x)) \lb{7.23}
\end{align}
by \eqref{7.16}. Let $Q_\omega$ be the capacity zero set where \eqref{7.23} fails.

On $S$, $\exp (\gamma(x)) >1$, so since \eqref{7.21} holds for a.e.\ $x$, we have $d\mu_\omega
(S\setminus Q_\omega)=0$ as claimed.
\end{proof}

\begin{remark} All we used was that $d\nu_\infty$ is the limit of $d\nu_n$, so this holds for all
$\omega\in\Omega_0$. In particular, in the almost periodic case, it holds for all $\omega$ in the
hull.
\end{remark}

One is also interested in the whole-line operator.

\begin{theorem}\lb{T7.3} Let $\ti J(\omega)$ be the whole-line Jacobi matrix associated with
$\{a_n(\omega), b_n(\omega)\}_{n=-\infty}^\infty$ and $d\mu_{\omega,k}$ its spectral measures.
Let $S\subset\bbR$ be the Borel set of $x$ with $\gamma(x) >0$. Then for each $\omega\in\Omega_0$,
there exists a set $\ti Q_\omega$ of capacity zero so that
\begin{equation}\lb{7.24}
\mu_{\omega,k}(S\setminus \ti Q_\omega)=0
\end{equation}
for all $k$.
\end{theorem}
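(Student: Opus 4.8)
The plan is to imitate the proof of Theorem~\ref{T1.15A}, using the transfer matrices of the two-sided operator $\ti J(\omega)$ based near the origin, together with the two-sided generalized eigenfunction expansion in place of the single solution $p_n$. First, since every $a_n(\omega)\neq 0$, the pair $\{\delta_0,\delta_1\}$ is a cyclic system for $\ti J(\omega)$, so $\mu:=\mu_{\omega,0}+\mu_{\omega,1}$ is a maximal spectral type and $\mu_{\omega,k}\ll\mu$ for every $k\in\bbZ$; thus it is enough to produce one set $\ti Q_\omega$ of capacity zero with $\mu(S\setminus\ti Q_\omega)=0$.

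Two inputs are needed. (i) By the generalized eigenfunction (Berezanski\u\i/BGK) expansion for the bounded self-adjoint operator $\ti J(\omega)$ --- the two-sided analog of \eqref{4.8}--\eqref{4.10}, cf.\ Last--Simon \cite{S263} --- for $\mu$-a.e.\ $x$ there is a solution $u^{(x)}\not\equiv 0$ of $\ti J(\omega)u=xu$ on $\bbZ$ with $\sum_n (1+\abs{n})^{-2}\abs{u_n^{(x)}}^2<\infty$; in particular $u^{(x)}$ is polynomially bounded on all of $\bbZ$. (ii) Apply the upper envelope theorem (Theorem~\ref{TA.4}) to the OPRL of $J_0^+(\omega)=J(\omega)$, to its second-kind polynomials (the OPRL of $J_1^+(\omega)$), and to the analogous polynomials of the left half-line matrices $J_k^-(\omega)$: by Theorem~\ref{T7.1}(b),(f) the relevant zero-counting measures all converge to $d\nu_\infty$ and the relevant $(\text{norm})^{1/n}$ all converge to $A$, so exactly as in \eqref{7.22}--\eqref{7.23} (via the Thouless formula \eqref{7.16}) we get, for q.e.\ $x$, $\lim_{n\to\infty}\tfrac1n\log\|T(n,-1;x,\omega)\|=\gamma(x)$ and $\lim_{n\to\infty}\tfrac1n\log\|T(-n,-1;x,\omega)\|=\gamma(x)$. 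Collect the capacity-zero exceptional sets of these q.e.\ statements into a single set $B_\omega$; since a countable union of sets of capacity zero again has capacity zero, $\capa(B_\omega)=0$.

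Now the Pastur--Ishii step. Fix $x\in S\setminus B_\omega$, so $\gamma(x)>0$ and $\|T(n,-1;x,\omega)\|\to\infty$ exponentially as $n\to\pm\infty$, while $\det T(n,-1;x,\omega)=1$ because each $A_m(x,\omega)$ has determinant $1$. At most a one-dimensional space of initial data at the origin can give a solution that is polynomially bounded as $n\to+\infty$ --- two independent ones would make $\|T(n,-1;x,\omega)\|$ polynomially bounded --- and similarly as $n\to-\infty$. Hence if $x$ is a generalized eigenvalue, i.e.\ the polynomially bounded solution $u^{(x)}$ exists, its initial data spans the recessive line at $+\infty$ and also that at $-\infty$, so these lines coincide; and since $\det T=1$ while any transversal solution grows like $e^{\abs{n}\gamma(x)}$, this common recessive solution decays exponentially at both ends, so $u^{(x)}\in\ell^2(\bbZ)$ and $x$ is a genuine eigenvalue of $\ti J(\omega)$. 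A bounded self-adjoint operator on a separable space has at most countably many eigenvalues, and $\mu$-a.e.\ point of $S$ is a generalized eigenvalue, so $\mu\bigl(S\setminus(B_\omega\cup\{\text{eigenvalues of }\ti J(\omega)\})\bigr)=0$. Setting $\ti Q_\omega:=B_\omega\cup\{\text{eigenvalues of }\ti J(\omega)\}$, which is a countable union of sets of capacity zero and hence of capacity zero, yields $\mu(S\setminus\ti Q_\omega)=0$, and therefore $\mu_{\omega,k}(S\setminus\ti Q_\omega)=0$ for all $k$, as claimed.

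The one genuinely delicate point is the exponential-decay assertion in the Pastur--Ishii step: that a solution polynomially bounded on $\bbZ$, at an $x$ with $\gamma(x)>0$ and with the Lyapunov-norm limits of step~(ii), is automatically in $\ell^2$ --- this is the analytic heart of the argument, while the remainder is bookkeeping with absolute continuity and capacity. What is new compared with the classical Pastur--Ishii theorem is that $\ti Q_\omega$ has capacity zero, not merely Lebesgue measure zero, and this improvement is exactly what the upper envelope theorem supplies in step~(ii), precisely as in the proof of Theorem~\ref{T1.15A}.
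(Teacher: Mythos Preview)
Your proof is correct and follows essentially the same route as the paper's: upper envelope theorem applied to first and second kind polynomials on both half-lines to get the Lyapunov-norm limits q.e., then the Pastur--Ishii dichotomy to force any polynomially bounded solution into $\ell^2$, then BGK to conclude. The paper makes one point more explicit than you do: the ``delicate point'' you flag---that a polynomially bounded solution at an $x$ with $\gamma(x)>0$ and with the q.e.\ Lyapunov limits must actually decay exponentially---is exactly the content of the Ruelle--Osceledec theorem (cited in the paper as \cite[Sect.~10.5]{OPUC2}), which gives the dichotomy $\|T(n,-1;x,\omega)w\|^{1/n}\to e^{\pm\gamma(x)}$ for every nonzero $w$; your $\det T=1$ observation alone is not quite enough, since it controls singular values but not the existence of a fixed contracting direction. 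Naming that theorem would close the one place you left open.
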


\begin{proof} By \eqref{7.7}, the transfer matrix $T(n,-1;x,\omega)$ has matrix elements given by
$p_{n+1},p_n$ and the second kind polynomials $q_{n+1}, q_n$. As in the last proof, there is a set
$\ti Q_\omega^{(1)}$ of capacity zero so for $x\notin Q_\omega^{(1)}$,
\begin{equation}\lb{7.25}
\lim\, \abs{p_n(x)}^{1/n} = \exp (\gamma(x))
\end{equation}
and (zeros of $p_n$ and $q_n$ interlace, so the zero counting measure for $q_n$ also converges to
$d\nu_\infty)$
\begin{equation}\lb{7.26}
\lim\, \abs{q_n(x)}^{1/n} =\exp(\gamma(x))
\end{equation}

In particular, for $x\notin \ti Q_\omega^{(1)}$, we have
\begin{equation}\lb{7.27}
\lim_{n\to\infty}\, \tfrac{1}{n}\, \log \|T(n, -1; x,\omega)\| =\gamma(x)
\end{equation}
By the Ruelle--Osceledec theorem (see, e.g., \cite[Sect.~10.5]{OPUC2}), for any $w\neq 0
\in\bbC^2$, either
\begin{equation}\lb{7.28}
\|T(n, -1;x,\omega)w\|^{1/n} \to e^{\gamma(x)}
\end{equation}
or
\begin{equation}\lb{7.29}
\|T(n, -1; x,\omega)w\|^{1/n} \to e^{-\gamma(x)}
\end{equation}
Similarly, there is a set $\ti Q_\omega^{(2)}$ of capacity zero with similar behavior as $n\to\infty$.

This says that every solution of \eqref{7.3} for $x\notin \ti Q_\omega^{(1)}\cup\ti Q_\omega^{(2)}$
either grows exponentially at $\pm\infty$ or decays exponentially. Thus, polynomial boundedness
implies $\ell^2$ solutions. If $\ti Q_\omega^{(3)}$ is the set of eigenvalues of $\ti J(\omega)$
which is countable and so of capacity zero, and if $\ti Q_\omega = \ti Q_\omega^{(1)} \cup
\ti Q_\omega^{(2)} \cup \ti Q_\omega^{(3)}$, then
\[
\ti J(\omega)u=xu \text{ with $u$ polynomially bounded} \Rightarrow x\in \ti Q_\omega
\]
By the BGK expansion discussed in Section~\ref{s4}, this implies the spectral measures of $\ti J(\omega)$
are supported on $\ti Q_\omega$, that is, \eqref{7.24} holds.
\end{proof}

\begin{remarks} 1. The reader will recognize this proof as a slight variant of the Pastur--Ishii
argument \cite{Pas80,Ishii} that proves absence of a.c.\ spectrum on $S$.

\smallskip
2. As above, in the almost periodic case, this holds for all $\omega$ in the hull.

\smallskip
3. This is the first result on zero Hausdorff dimension in this generality. But for suitable
analytic quasi-periodic Jacobi matrices, the result is known; see Jitomirskaya--Last \cite{JL2}
and Jitomirskaya \cite{Jit07}.
\end{remarks}

\section{Examples, Open Problems, and Conjectures} \lb{s8}

Here we consider a number of illustrative examples and raise some open questions and conjectures.
The conjectures are sometimes mere guesses and could be wrong. Indeed, when I started writing
this paper, I had intended to make a conjecture for which a counterexample appears below as
Example~\ref{E8.11}. So the reader should regard the conjectures as an attempt to stimulate
work with my own guesses. I will try to explain my guesses, but they are not always compelling.

\begin{example}[Random and Decaying Random OPRL]\lb{E8.1} Let $\Omega=\bigtimes_{n=1}^\infty [(0,\infty)
\times\bbR]$ with $d\sigma (\{a_n, b_n\})=\otimes d\eta (a_n, b_n)$, where $\eta$ is a measure of compact
support on $(0,\infty)\times\bbR$. For each $\omega\in\Omega$, there is an associated Jacobi matrix,
and we want results on $J(\omega)$ that hold for $\sigma$-a.e.\ $\omega$. The traditional Anderson model
is the case where $a_n\equiv 1$ and $b_n$ is uniformly distributed on $[\alpha,\beta]$, that is,
$d\eta (a,b)=\delta_{a1} \f{1}{\beta-\alpha} \chi_{(\alpha,\beta)}(b)\, db$. The decaying random
model has two extra parameters, $\lambda\in (0,\infty)$ and $\gamma\in (0,1)$, takes $\ti a_n(\omega)
\equiv 1$, $\ti b_n(\omega)$ the Anderson model with $\beta=-\alpha =1$, and takes
\begin{align}
b_n (\omega) &=\lambda n^{-\gamma} \ti b_n(\omega) \lb{8.1} \\
a_n (\omega) &=1 \lb{8.2}
\end{align}
The Anderson model is ergodic; the decaying random model is not. The Anderson model goes back to
his famous work \cite{Ander} with the first mathematical results by Kunz--Souillard \cite{KS}
and the decaying model to Simon \cite{Sim157} (see also \cite{S265}).

For the Anderson model, it is known for a.e.\ $\omega$, $\sigma_\ess (J(\omega)) = [-2+\alpha,
2+\beta]$, while for the decaying random model, $\sigma_\ess (J(\omega))=[-2,2]$ by Weyl's theorem
(i.e., $J(\omega)$ is in Nevai class). Clearly, $(a_1\dots a_n)^{1/n}=1$. For the Anderson model,
\begin{equation}\lb{8.3}
C(\sigma_\ess (J(\omega))) =\tfrac14\, (4+(\beta-\alpha)) >1
\end{equation}
while for the decaying Anderson model,
\begin{equation}\lb{8.4}
C(\sigma_\ess (J(\omega))) =1
\end{equation}
so the former is not regular, while the latter is.

Of course, for the regular model, the density of zeros is the equilibrium measure where $\rho_E(x)
=\f{d\rho}{dx}=\f{1}{\pi} (4-x^2)^{-1/2}$ by \eqref{A.22}. For the Anderson model, on the other
hand, $\f{d\nu}{dx}$ is very different. It is $C^\infty$ even at the endpoints (by \cite{S188})
and decays exponentially fast to zero at the ends of the spectrum (Lifshitz tails; see
\cite{Kirsch2007}).

The Anderson model is known to have dense pure point spectrum and so is the decaying model
if $\gamma <\f12$. It is known for the Anderson model (see \cite{S250}) that for some
$\omega$-dependent labeling of the eigenvalues,
\begin{equation}\lb{8.5}
d\mu_\omega =\sum w_n (\omega) \delta_{e_n(\omega)}
\end{equation}
where for some $c>0$,
\begin{equation}\lb{8.6}
\abs{w_n(\omega)} \leq e^{-c\abs{n}}
\end{equation}

The same methods should allow one to prove for the decaying model on each $[-A,A]\subset (-2,2)$
that there is a labeling so that
\begin{equation}\lb{8.7}
\abs{w_n(\omega)} \leq e^{-c\abs{n}^{1-2\gamma}}
\end{equation}

One expects that there are lower bounds of the same form and that the labels are such that
the $e_n(\omega)$ are quasi-uniformly distributed (i.e., for $n \gg m$, the first $n$
$e_j(\omega)$ are at least within $\f1m$ of each point away from the edge of the spectrum).
If these expectations are met, this example nicely illustrates Theorems~\ref{T1.11} and
\ref{T1.12}.

In the not regular Anderson case, one expects $\mu_\omega ([\f{j}{m}, \f{j+1}{m}])\sim e^{-cm}$
for fixed $c$, while in the regular decaying random model, one expects $\mu_\omega ([\f{j}{m},
\f{j+1}{m}]) \sim e^{-cm^{1-2\gamma}} > C_\eta e^{-\eta m}$ for any $\eta$.
\qed
\end{example}

\begin{example}[Generic Regular Measures]\lb{E8.2} Fix $a_n\equiv 1$, $0<\gamma <\f12$,
and let $\calB=\{\{b_n\}\mid \lim n^\gamma b_n\to 0\}$ normed by $|||b|||=\sup_n
\abs{n^\gamma b_n}$. It is known (\cite{S234}; see also \cite{LS,CN}) that for a dense
$G_\delta$ in $\calB$, the associated Jacobi matrix has singular continuous measure.
We believe there is some suitable sense in which a generic regular measure is singular
continuous.
\qed
\end{example}

\begin{example}[Almost Mathieu Equation]\lb{E8.3} Perhaps the most studied model in spectral
theory is the whole-line Jacobi matrix with $a_n\equiv 1$ and
\begin{equation}\lb{8.8}
b_n=\lambda \cos(n\alpha+\theta)
\end{equation}
where $\lambda,\alpha,\theta$ are parameters with $\f{\alpha}{\pi}$ irrational. (See \cite{Jit07}
for a review on the state of knowledge.) We will use some of the most refined results and
comment on whether they are needed for the main potential theoretic conclusions.  We fix
$\alpha,\lambda$. $\theta\in\Omega = [0,2\pi)$ labels the hull of an almost periodic
family.

It is known since Avron--Simon \cite{AvS83} that for $\abs{\lambda}>2$, there is no a.c.\
spectrum for almost all $\theta$ (and by Kotani \cite{Kot97} and Last--Simon \cite{S263},
for all $\theta$) and by \cite{AvS83} for $\alpha$ which are Liouville numbers (irrational
but very well approximated by rationals) only singular continuous spectrum for all $\theta$.
Jitomirskaya \cite{Jito99} proved that for $\alpha$'s with good Diophantine properties
and $\abs{\lambda} >2$, there is dense pure point spectrum for a.e.\ $\theta$ (and there
is also singular continuous spectrum for a dense set of $\theta$'s \cite{S236}). On the other
hand, Last \cite{Last92} proved that for $\abs{\lambda} <2$ and all irrational $\alpha$
that the spectrum is a.c.\ for almost all $\theta$ (now known for all $\theta$ \cite{Kot97,
S263}). It is now known the spectrum in this region is purely a.c.\ (see \cite{AJ,AD,Av-prep1}).

At the special point $\lambda =2$, it is known  that for all irrational $\alpha$, the spectrum
has measure zero \cite{Last92,AK}, and therefore for all irrational $\alpha$ and a.e.\ $\theta$,
the spectrum is purely singular continuous \cite{GJLS}.

An important special feature for our purposes is Aubry duality (found by Aubry \cite{Aubry80};
proven by Avron--Simon \cite{AvS83}) that relates the Lyapunov exponent $\gamma(z)$ and
integrated density of states, $k(E)=\int_{-\infty}^E d\nu_\infty(x)$, for $\alpha$ fixed
(they are $\theta$-independent) at $\lambda$ and $\f{4}{\lambda}$. Making the $\lambda$-dependence
explicit,
\begin{equation}\lb{8.9}
k\biggl( E,\,\f{4}{\lambda}\biggr) = k\biggl( \f{2E}{\lambda}\, , \lambda\biggr) \qquad
\gamma\biggl( z,\, \f{4}{\lambda}\biggr) = \gamma \biggl(\f{2z}{\lambda}\, ,\lambda\biggr)
+ \log\biggl( \f{\lambda}{2}\biggr)
\end{equation}

Kotani theory implies in the a.c.\ region (i.e., $\lambda <2$) that $\gamma(E)=0$ for a.e.\
$E\in\spec(J)$, and Bourgain--Jitomirskaya \cite{BJ} proved continuity of $\gamma$. So using
\eqref{8.9},
\begin{equation}\lb{8.10}
E\in\spec(J)\Rightarrow \gamma(E) = \max\biggl( 0, \log \biggl( \f{\lambda}{2}\biggr)\biggr)
\end{equation}
and, in particular, $\gamma(E)=0$ on the spectrum if $\abs{\lambda}\leq 2$.
\qed
\end{example}

We thus have:

\begin{theorem}\lb{T8.4} The density of zeros for the almost Mathieu equation is the
equilibrium measure for its spectrum. For $\lambda\leq 2$, the measures are regular
{\rm{(}}for all $\omega${\rm{)}}. For $\lambda >2$, they are not regular since
\begin{equation}\lb{8.11}
\lim (a_1 \dots a_n)^{1/n} =1 < C(\spec(J_0(\omega))) =\log\biggl( \f{\lambda}{2}\biggr)
\end{equation}
\end{theorem}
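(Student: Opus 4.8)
The plan is to read everything off the Thouless formula \eqref{7.16}, the almost--Mathieu Lyapunov identity \eqref{8.10}, and the uniqueness of the equilibrium measure from Appendix~A; granting \eqref{8.10}, all that remains is bookkeeping. Since $a_n\equiv1$ we have $A\equiv\lim(a_1\cdots a_n)^{1/n}=1$, and since the family is almost periodic we may take $\Omega_0=\Omega$ (the Avron--Simon remark preceding Theorem~\ref{T7/2}), so every conclusion below holds for all $\theta$. Write $E$ for the common spectrum; by Theorem~\ref{T7.1}(a),(b) one has $E=\sigma_\ess(J_0(\omega))=\supp(d\nu_\infty)$, and $\spec(J_0(\omega))$ differs from $E$ only by at most countably many eigenvalues, hence by a set of capacity zero, so $C(\spec(J_0(\omega)))=C(E)$.

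First I would identify $d\nu_\infty$. By \eqref{7.16} with $A=1$ --- which holds for every $z\in\bbC$, including $z\in E$, by the Avron--Simon/Craig--Simon argument recalled after Theorem~\ref{T7.1} --- we get $\Phi_{\nu_\infty}(z)=-\gamma(z)$ for all $z$, so by \eqref{8.10}, $\Phi_{\nu_\infty}(x)\equiv c:=-\max(0,\log(\lambda/2))$ for all $x\in E$. Now $\nu_\infty$ is a probability measure carried by $E$ with $\calE(\nu_\infty)\le\log(A^{-1})=0$ by \eqref{7.20}. Since $\Phi_{\nu_\infty}\equiv c$ on $E$ and both $\nu_\infty$ and $\rho_E$ are carried by $E$, integrating yields $\calE(\nu_\infty)=\int\Phi_{\nu_\infty}\,d\nu_\infty=c$ and, using the symmetry \eqref{A.2} and the Frostman bound $\Phi_{\rho_E}\le\log(C(E)^{-1})$ of \eqref{A.17}, $c=\int\Phi_{\nu_\infty}\,d\rho_E=\int\Phi_{\rho_E}\,d\nu_\infty\le\log(C(E)^{-1})$; but $\calE(\nu_\infty)\ge\log(C(E)^{-1})$ always, so $c=\calE(\nu_\infty)=\log(C(E)^{-1})$ and $\nu_\infty$ attains the minimal energy. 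By uniqueness of the equilibrium measure (Theorem~\ref{TA.9}), $d\nu_\infty=d\rho_E$, which is the first assertion; and reading off $c$ gives $\log(C(E)^{-1})=-\max(0,\log(\lambda/2))$, i.e.\ $C(E)=1$ for $\lambda\le2$ and $C(E)=\lambda/2$ for $\lambda>2$.

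The regularity dichotomy is then immediate from Theorem~\ref{T7/2}, which says each $d\mu_\omega$ ($\omega\in\Omega_0=\Omega$) is regular iff $A=C(E)$: for $\lambda\le2$ we have $A=1=C(E)$, so every $d\mu_\omega$ is regular, while for $\lambda>2$ we have $A=1<\lambda/2=C(E)$, so none is, which is \eqref{8.11}. (The failure is also visible directly: for OPRL with $a_n\equiv1$, $\|X_n\|_{L^2(d\mu_\omega)}^{1/n}=(a_1\cdots a_n\,\mu_\omega(\bbR)^{1/2})^{1/n}\to1$, which is strictly below $C(E)$ precisely when $\lambda>2$.)

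The one point that needs care, once \eqref{8.10} is in hand, is the passage from ``$\Phi_{\nu_\infty}$ is constant on $E$'' to ``$d\nu_\infty=d\rho_E$,'' and that is exactly the minimal--energy/uniqueness argument from Appendix~A; the only thing to watch is the $\max(0,\cdot)$ splitting the two regimes. The genuine difficulty, however, is entirely contained in \eqref{8.10}: its proof rests on Kotani theory (vanishing of $\gamma$ on the absolutely continuous spectrum for $\lambda<2$), Aubry duality \eqref{8.9}, continuity of $\gamma$ (Bourgain--Jitomirskaya), and the zero--measure result at $\lambda=2$, all of which we have taken as known.
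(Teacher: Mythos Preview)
Your proof is correct. The paper's own argument is terser and splits into cases: for $\lambda\le 2$ it quotes Theorem~\ref{T1.13} ($\gamma=0$ on the spectrum $\Rightarrow$ regularity $\Rightarrow$ $d\nu_\infty=d\rho_E$), while for $\lambda>2$ it appeals to Aubry duality \eqref{8.9} directly, transporting the density of states from the dual coupling $4/\lambda<2$ (where the identification is already known) and using that equilibrium measures scale correctly. Your route is more uniform: you use \eqref{8.10} to see that $\Phi_{\nu_\infty}$ is \emph{constant} on $E$ for every $\lambda$, and then run the Frostman/energy argument once to force $\nu_\infty=\rho_E$ and read off $C(E)$. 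Both rest on the same hard input---\eqref{8.10}, which packages Kotani theory, duality, and Bourgain--Jitomirskaya continuity---but yours avoids both the case split and the scaling argument. One small redundancy: once you have $\calE(\nu_\infty)=\log(C(E)^{-1})$, uniqueness of the energy minimizer (Theorem~\ref{TA.6}) already gives $\nu_\infty=\rho_E$; the further appeal to Theorem~\ref{TA.9} is correct but unnecessary. Finally, your computation $C(E)=\lambda/2$ for $\lambda>2$ is right; the ``$=\log(\lambda/2)$'' in the printed \eqref{8.11} is a typo (indeed $\log(\lambda/2)<1$ for $2<\lambda<2e$, which would contradict the displayed inequality).
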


\begin{proof} By Theorem~\ref{T1.12}, the measures are regular if $\lambda\leq 2$ since
$\gamma(E)=0$ on the spectrum. That the measure is the equilibrium measure even if $\lambda >2$
follows from \eqref{8.9} as does \eqref{8.11}.
\end{proof}

\begin{remarks} 1. Thus we see an example where the density of zeros is the equilibrium
measure even though $d\mu_\omega$ is not regular. Consistently with Theorem~\ref{T2.4A},
$d\mu_\omega$ lives on a set of capacity $0$ by Theorem~\ref{T7.3}.

\smallskip
2. If we knew a priori that $d\rho_{\sigma_\ess (J_\omega)}$ were absolutely continuous, Kotani
theory then would suffice for Theorem~\ref{T8.4}. But as it is, we need the continuity result
of \cite{BJ}.

\smallskip
3. It should be an exceptional situation that $J(\omega)$ has some singular spectrum but
the density of states is still $d\rho_E$. In particular, if there are separate regions in
$\sigma(J)$ of positive capacity where $\gamma(x)=0$ and where $\gamma(x) >0$, the density
of states cannot be $d\rho_E$ since, for it, $\gamma(x)$ is constant on $\supp(d\rho_E)$.
For examples with such coexistent spectrum (some only worked for the continuum case),
see \cite{Bour2002,Bour2005,FK2005,FK2006}.
\end{remarks}

\begin{example}[Rotation Invariant Anderson Model OPUC] \lb{E8.5} Let $d\sigma_0$ be a rotation
invariant measure on the disk, $\bbD$ (i.e., on $\ol{\bbD}$ with $\sigma_0 (\partial\bbD)=0$).
Let $\sigma$ on $\bigtimes_{j=0}^\infty \bbD$ be $\otimes_{j=0}^\infty d\sigma_0 (z_j)$. The
ergodic OPUC with Verblunsky coefficients $\alpha_j$ distributed by $\sigma$ is called the
rotation invariant Anderson model, and it is discussed in \cite[Sect.~12.6]{OPUC2} and
earlier in Teplyaev \cite{Tep91} and Golinskii--Nevai \cite{GN}.

If
\begin{equation}\lb{8.12}
\int -\log (1-\abs{z}) \, d\sigma_0(z) <\infty
\end{equation}
then
\begin{equation}\lb{8.13}
\lim_{n\to\infty}\, (\rho_0\dots\rho_{n-1})^{1/n} = \exp\biggl( \int \log(1-\abs{z}^2)\,
d\sigma_0(z)\biggr) >0
\end{equation}

If also
\begin{equation}\lb{8.13a}
\int -\log\abs{z}\, d\sigma_0(z) <\infty
\end{equation}
then, by a use of the ergodic theorem,
\[
\lim_{n\to\infty}\, \abs{\alpha_n}^{1/n} =1
\]
with probability $1$. By a theorem of Mhaskar--Saff \cite{MhS1} (see \cite[Thm.~8.1.1]{OPUC1}),
any limit point of the zero counting measure lives on $\partial\bbD$ so, by the ergodic theorem,
$\nu_n$ has a limit $\nu_\infty$ on $\partial\bbD$.

By the rotation invariance of $\sigma_0$, the distribution of $\{\alpha_j\}$ is invariant
under $\alpha_j\to e^{i(j+1)\theta}\alpha_j$. So the collection of measures is rotation
invariant and thus, by ergodicity, $d\nu_\infty$ is rotation invariant, that is, it is
$\f{d\theta}{2\pi}$. By the Thouless formula and \eqref{8.13},
\[
\gamma(e^{i\theta}) =-\int\log (1-\abs{z}^2)\, d\sigma_0(z) >0
\]
so long as $\sigma_0\neq \delta_{z=0}$. This is constant on $\partial\bbD$.

Thus, this family of measures is not regular, but the density of zeros is the equilibrium measure
for $\supp(d\mu_\omega)=\partial\bbD$. This is the simplest example of a nonregular measure for
which the density of zeros is the equilibrium measure. As is proven in Theorem~12.6.1 of
\cite{OPUC2}, the measure is a pure point measure, so $d\mu_w$ is for a.e.\ $\omega$ supported
on a countable set, so of zero capacity, consistent with Theorem~\ref{T2.4A}.
\qed
\end{example}

\begin{example}[Subshifts]\lb{E8.6} This is a rich class of ergodic Jacobi matrices (with
$a_n\equiv 1$), reviewed in \cite{Dam07b} (see also \cite[Sect.~12]{OPUC2}). For many of
them, it is known that $E\equiv\sigma(J)$ is a set of Lebesgue measure zero on which $\gamma(x)$
is everywhere $0$. By Theorem~\ref{T1.13}, $C(E)=1$ and a.e.\ $\omega$ has regular $d\mu_\omega$,
so, in particular, $d\nu_\infty =d\rho_E$.
\qed
\end{example}

Notice that, by Craig's argument (see Theorem~\ref{TA.8}), if $d\mu$ is any probability measure
whose support, $E$, has measure zero, then $G(z)=\int \f{d\mu(y)}{y-z}$ has the form
\begin{equation}\lb{8.13x}
G(z) =-\f{1}{\sqrt{(z-a)(z-b)}\,}\, \prod_{j=1}^\infty \,
\f{(z-\lambda_j)}{\sqrt{(z-\ell_j)(z-u_j)}\,}
\end{equation}
where the gaps in $E$ are $(\ell_j, u_j)$ and $a=\inf \ell_j$, $b=\sup \mu_j$. This is so regular
that we wildly make the following:

\begin{conjecture}\lb{Con8.7} Any ergodic matrix that has a spectrum of measure zero has vanishing
Lyapunov exponent on the spectrum; equivalently, $\gamma(x)>0$ for some $x\in\Sigma$ implies
$\abs{\Sigma} >0$. Such zero Lyapunov exponent examples would thus be regular.
\end{conjecture}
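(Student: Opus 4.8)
The plan is to reduce the conjecture, via the Thouless formula, to the assertion that zero Lebesgue measure of the spectrum forces the ergodic family to be regular, and then to attack that residual statement using the rigidity encoded in Craig's product formula \eqref{8.13x}. For the reduction: by Theorem~\ref{T7.1}(f), $\gamma(z)=\log(A^{-1})-\Phi_{\nu_\infty}(z)$ with $\Sigma=E=\supp(d\nu_\infty)$ and $\gamma\geq 0$ by \eqref{7.17}, while $C(E)>0$ and $A\leq C(E)$ by Theorem~\ref{T7/2}. So ``$\gamma\equiv 0$ on $\Sigma$'' is the same as ``$\Phi_{\nu_\infty}(x)=\log(A^{-1})$ for all $x\in\Sigma$'', and by \eqref{A.2} and Theorem~\ref{TA.9} this upgrades to $d\nu_\infty=d\rho_E$ and $A=C(E)$, i.e.\ regularity with density of states equal to the equilibrium measure, which is exactly Theorem~\ref{T1.13} and also delivers the last sentence of the conjecture. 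In the other direction, regularity gives $\gamma=0$ for $d\rho_E$-a.e.\ $x$ by \eqref{1.34b}; since $\gamma$ is subharmonic, its sub-mean-value property then forces $\gamma(x_0)=0$ at every $x_0\in\Sigma$ that is regular for the Dirichlet problem, so the literal ``everywhere'' phrasing needs in addition the (plausible, separately treatable) fact that ergodic spectra have no irregular points; otherwise the statement should be read quasi-everywhere. A clean repackaging: with $G_\Sigma$ the Green's function of $\bbC\setminus\Sigma$ with pole at infinity, defined by \eqref{A.26a}, the function $v:=\gamma-G_\Sigma$ is bounded and harmonic on $\bbC\setminus\Sigma$, has boundary values $\gamma\geq 0$ quasi-everywhere on $\partial\Sigma$, and satisfies $v(\infty)=\log C(\Sigma)-\log A=\int\gamma\,d\rho_\Sigma$; hence the conjecture is precisely $\int\gamma\,d\rho_\Sigma=0$, i.e.\ $A=C(\Sigma)$.

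To prove the residual statement I would try to exploit \eqref{8.13x}. Since $\abs{\Sigma}=0$, the Cauchy transform $G_{\nu_\infty}(z)=\int d\nu_\infty(y)/(y-z)$ has the product form \eqref{8.13x}, with exactly one zero $\lambda_j$ in each gap $(\ell_j,u_j)$ of $\Sigma$; as $G_{\nu_\infty}$ increases from $-\infty$ to $+\infty$ across the gap, $\lambda_j$ is the unique point there with $G_{\nu_\infty}(\lambda_j)=0$, equivalently the minimum of $\Phi_{\nu_\infty}$, equivalently the maximum of $\gamma$, on $(\ell_j,u_j)$. The equilibrium measure $d\rho_\Sigma$ has a Cauchy transform of the same shape, with $\lambda_j$ replaced by the critical point $\lambda_j^{\mathrm{eq}}$ of $G_\Sigma$ in that gap. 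Thus the conjecture amounts to: in every gap the maximum of $\gamma$ coincides with the maximum of $G_\Sigma$; equivalently, $d\nu_\infty$ assigns to each band of $\Sigma$ exactly the mass $d\rho_\Sigma$ does. The most natural route would be to locate the $\lambda_j$ using dynamical information --- the gap-labelling theorem to constrain the band masses, plus continuity of the integrated density of states and of $\gamma$ across gaps --- and then, once $d\nu_\infty=d\rho_\Sigma$ is established, to invoke Theorem~\ref{T1.13} (equivalently Theorem~\ref{T2.4A}) to get $A=C(\Sigma)$ by ruling out a capacity-zero carrier for $d\mu_\omega$.

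The main obstacle is that nothing in this toolkit distinguishes the ergodic density of states among probability measures carried by $\Sigma$: the form \eqref{8.13x} holds for every such measure, and $\gamma\geq 0$ yields only the one-sided bound $A\leq C(\Sigma)$ already recorded in Theorem~\ref{T7/2}; constraining the band masses to a countable module does not pin them to the (generically irrational) equilibrium values. What is really needed is a mechanism forcing the gap zeros to the $\lambda_j^{\mathrm{eq}}$, and the classical device --- Kotani theory and the reflectionless property of ergodic operators on their absolutely continuous spectrum --- is empty here, because $\abs{\Sigma}=0$ makes $\Sigma_{\ac}=\emptyset$ and the reflectionless condition on $\Sigma$ vacuous. (The ``bad'' behavior does occur once $\abs{\Sigma}>0$: for the almost Mathieu operator with $\lambda>2$ one has $d\nu_\infty=d\rho_\Sigma$ yet $A=1<C(\Sigma)$, so $\gamma>0$ on $\Sigma$; the conjecture asserts this cannot persist when $\abs{\Sigma}=0$.) Realistic partial progress would be to prove the conjecture under a Parreau--Widom-type condition $\sum_j G_\Sigma(\lambda_j^{\mathrm{eq}})<\infty$ on the spectrum, where Sodin--Yuditskii character-automorphic function theory pins the equilibrium data down, or to verify it directly in the structured families --- subshifts, the critical almost Mathieu operator --- where $d\nu_\infty$ can be computed. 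Closing the gap from ``every measure on $\Sigma$ has the form \eqref{8.13x}'' to ``$d\nu_\infty=d\rho_\Sigma$ and $A=C(\Sigma)$'' is exactly the open point.
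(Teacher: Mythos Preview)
This statement is labeled a \emph{conjecture} in the paper and is explicitly presented as open: immediately after stating it, the author remarks that it is known only in the analytic quasi-periodic case (via Bourgain and Bourgain--Jitomirskaya) and that of two experts consulted, one believed it false. There is no proof in the paper to compare against.

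Your proposal is not a proof and, to your credit, does not pretend to be one: you correctly isolate the residual open point in your final paragraph. Your reductions are sound and track the paper's own heuristics. The equivalence of ``$\gamma\equiv 0$ on $\Sigma$'' with regularity (plus $d\nu_\infty=d\rho_\Sigma$) via Theorem~\ref{TA.9} and the Thouless formula is correct and is essentially the content of Theorem~\ref{T1.13}; your repackaging as $\int\gamma\,d\rho_\Sigma=0$ is exactly \eqref{1.34b}. Your caveat about the literal ``everywhere'' versus ``quasi-everywhere'' reading is appropriate. Your diagnosis of why the Craig product formula \eqref{8.13x} cannot close the argument---it holds for \emph{every} probability measure supported on a Lebesgue-null $\Sigma$, so it cannot single out $d\nu_\infty$ among them---is precisely the point the paper is making in the paragraph preceding the conjecture, and your observation that Kotani theory is vacuous here (since $\abs{\Sigma}=0$ kills $\Sigma_\ac$) correctly identifies why the standard dynamical input fails. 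The suggested partial directions (Parreau--Widom spectra, subshifts) are reasonable but go beyond anything the paper establishes.
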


We note that for analytic functions on the circle with irrational rotation, this result
is known to be true \cite{Jito}, following from combining results from Bourgain \cite{Bour} and
Bourgain--Jitomirskaya \cite{BJ}. Of two experts I consulted, one thought it was false and
the other, ``likely true but too little support to make it a conjecture." Fools rush in
where experts fear to tread.

\begin{oq}[The Classical Cantor Set]\lb{OQ8.8} Of course, one of the simplest of measure zero
sets is the classical Cantor set. It would be a good first step to understand its ``isospectral
tori." Which whole-line Jacobi matrices have $\langle \delta_0, (J_0-z)^{-1}\delta_0\rangle =
\text{\eqref{8.13x}}$? Are they regular? As suggested by Deift--Simon \cite{S169}, are they
mainly mutually singular? Are any or all almost periodic?
\end{oq}

\begin{conjecture}[Last's Conjecture]\lb{Con8.8} A little more afield from potential theory, but
worth mentioning, is the conjecture of Last that any ergodic Jacobi matrix (whole- or
half-line) with some a.c.\ spectrum is almost periodic. Does it help to consider the
case where the spectrum is purely a.c.? We note that a result of Kotani \cite{Kot89} implies
Last's conjecture if $a_n,b_n$ take only finitely many values.
\end{conjecture}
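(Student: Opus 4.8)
The plan is to run the argument through Kotani theory and Remling's oracle theorem, with the honest caveat that closing the last step looks out of reach at present. Write $\Sigma_\ac$ for the essential closure of $\{x\mid\gamma(x)=0\}$; by the Ishii--Pastur--Kotani theorem this is exactly the a.c.\ spectrum of $\ti J(\omega)$ for $\sigma$-a.e.\ $\omega$, so the hypothesis becomes $\abs{\Sigma_\ac}>0$. First I would invoke Kotani's reflectionless theorem: for $\sigma$-a.e.\ $\omega$ the whole-line operator $\ti J(\omega)$ is reflectionless on $\Sigma_\ac$, i.e.\ the diagonal Green's function $m(x+i0)$ is purely imaginary for Lebesgue-a.e.\ $x\in\Sigma_\ac$; by ergodicity the same holds for every operator in the hull $\ol{\{\ti J(T^n\omega)\mid n\in\bbZ\}}$. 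For the half-line version one notes that almost periodicity of $J(\omega)$ is the same as almost periodicity of the $\bbZ$-sequence $n\mapsto(a_n(\omega),b_n(\omega))$ coming from \eqref{7.1}, so there is really one question to settle.

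The second step is to understand the space $\calR$ of two-sided Jacobi matrices reflectionless on $\Sigma_\ac$ with spectrum inside a fixed compact set determined by $\Sigma_\ac$. When $\Sigma_\ac$ is a finite union of intervals, $\calR$ is the classical finite-dimensional isospectral torus, the shift acts on it by a linear (hence almost periodic) flow, and one concludes. When $\Sigma_\ac$ is Carleson-homogeneous, the Sodin--Yuditskii theory makes $\calR$ a compact (generically infinite-dimensional) torus on which translation is again a rotation, so almost periodicity follows. The finitely-valued case flagged in the statement is consistent with this picture: there Kotani's sum-rule argument forces periodicity, which is the finite-gap instance of the same phenomenon, so one expects the general proof to specialize correctly.

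The third step is the genuine obstacle. A general set $\Sigma_\ac$ of positive Lebesgue measure need be neither finite-gap nor homogeneous, and then it is not known that $\calR$ is compact, let alone a topological group on which the shift is minimal and equicontinuous; the comb-domain and harmonic-measure potential theory (Marchenko--Ostrovski, Levin, Sodin--Yuditskii) that one would need is underdeveloped for such sets. One must also control the singular part: the hypothesis gives only \emph{some} a.c.\ spectrum, so $\sigma(\ti J(\omega))$ may properly contain $\Sigma_\ac$, and the extra singular spectrum could in principle prevent the hull from being compact. Thus the crux is a structure theorem: \emph{for $\sigma$-a.e.\ $\omega$ the sequence $n\mapsto(a_n(\omega),b_n(\omega))$ is almost periodic}, which I would attack by combining Remling's theorem (right limits of $\ti J(\omega)$ are reflectionless on $\Sigma_\ac$) with a quantitative input — step-by-step sum rules of Killip--Simon type bounding $\sum_n(a_n-1)^2+b_n^2$ against the a.c.\ weight on $\Sigma_\ac$, or a Denisov--Rakhmanov--type preservation of a.c.\ spectrum — to pin down the orbit. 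Restricting to \emph{purely} a.c.\ spectrum removes the singular-part pathology, and if in addition $\Sigma_\ac$ is homogeneous the program should close; the wide-open case is non-homogeneous $\Sigma_\ac$, which is exactly why the statement appears here only as a conjecture.
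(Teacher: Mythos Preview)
The statement is a \emph{conjecture}, and the paper offers no proof of it; it is presented as an open problem, with only the remark that Kotani's finitely-valued result settles a special case. So there is no ``paper's proof'' to compare your attempt against.

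Your proposal is not a proof either, and you say as much: you sketch the Kotani--Remling--Sodin--Yuditskii line of attack, correctly identify where it succeeds (finite-gap, homogeneous $\Sigma_\ac$) and where it stalls (general positive-measure $\Sigma_\ac$, possible singular part), and end by acknowledging that this is precisely why the statement is posed as a conjecture. That is an honest and well-informed summary of the state of the art, but it is a research outline, not a proof. If the assignment was to supply a proof, none exists; if it was to assess the conjecture, your discussion is reasonable and goes somewhat beyond what the paper itself says (the paper does not mention Remling's oracle theorem or the Sodin--Yuditskii homogeneous case in connection with this particular conjecture, though both appear elsewhere in Section~8).
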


And it links up to the next question:

\begin{oq}[Denisov--Rakhmanov Theorem]\lb{OQ8.9} Let $E$ be an essentially perfect set, that is,
for every $x\in E$ and $\delta >0$, $\abs{(x-\delta, x+\delta)\cap E} >0$. In \cite{DKS07}, $E$
was called a DR set if any half-line Jacobi matrix with $\sigma_\ess (J)=\Sigma_\ac(J)=E$ has
a set of right limit points which is uniformly compact (and so the limits are all almost periodic).
A classical theorem of Rakhmanov, as extended by Denisov (see \cite[Ch.~9]{OPUC2}), says that
$[-2,2]$ is a DR set. Damanik--Killip--Simon \cite{DKS07} proved a number of $E$'s, including
those associated with periodic problems, are DR sets. Remling \cite{Remppt} recently proved
any finite union of closed intervals is a DR set, and he remarks that it is possible to combine
his methods with those of Sodin--Yuditskii \cite{SY} to prove that any homogeneous set in the
sense of Carleson (see \cite{SY} for a definition) is a DR set.
\end{oq}

Following this section's trend to make (foolhardy?) conjectures:

\begin{conjecture}\lb{Con8.10} Any essentially perfect compact subset of $\bbR$ is a DR set.
\end{conjecture}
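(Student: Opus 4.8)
The plan is to route everything through reflectionlessness and the structure of isospectral families. Suppose $J$ is a half-line Jacobi matrix with $\sigma_\ess(J)=\Sigma_\ac(J)=E$; the goal is to show its set of right limits, $\calR$, is uniformly compact and that the shift acts on it so that every orbit closure consists of almost periodic operators. First I would invoke Remling's oracle theorem \cite{Remppt}: a.c.\ spectrum on $E$ forces every right limit $\ti J$ of $J$ to be reflectionless on $E$, i.e.\ the diagonal Green's function $g(x+i0;\ti J)$ is purely imaginary for Lebesgue-a.e.\ $x\in E$ (and then $\sigma(\ti J)=E$). Thus $\calR$ sits inside the family $\calR_E$ of all two-sided Jacobi matrices that are reflectionless on $E$ with $\sigma_\ess=E$, and the conjecture reduces to two assertions about $\calR_E$: (i) $\calR_E$ has uniformly bounded Jacobi parameters with $\inf_n a_n$ bounded below uniformly over $\calR_E$; and, granted (i), (ii) the orbit closures of the shift inside $\calR_E$ consist of almost periodic operators.

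For (ii) the plan is to adapt the character-automorphic machinery of Sodin--Yuditskii \cite{SY}: uniformize $\Omega$ (the unbounded component of $\bbC\setminus E$) as $\bbD/\Gamma$ for a Fuchsian group $\Gamma$, and let $\Gamma^*=\mathrm{Hom}(\Gamma,\partial\bbD)$ be its compact abelian character group; the trace/step-by-step sum rules then identify $\calR_E$ with $\Gamma^*$ and the shift with a translation on $\Gamma^*$, which is minimal and uniquely ergodic, so the Jacobi entries, being values of character-automorphic functions, are almost periodic in $n$. Boundedness of $b_n$ and of $a_n$ from above is free here: $b_n\in\cvh(E)$ and $a_n\le\f12\diam(\cvh(E))$ by Lemma~\ref{L2.1}(b), uniformly over $\calR_E$.

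The hard part — and the step I expect to be the genuine obstacle — is (i): a uniform lower bound $\inf_n a_n\ge c>0$ over $\calR_E$ (with which the function theory of (ii) also stands or falls). In \cite{SY} this rests on Widom-type hypotheses on $E$: the Green's function must satisfy $\sum_j G_E(c_j)<\infty$ over its critical points $c_j$, and $\Omega$ must satisfy the Direct Cauchy Theorem; both are guaranteed by \emph{homogeneity} of $E$ (Carleson's condition $\abs{(x-\delta,x+\delta)\cap E}\ge\veps\delta$), but can fail badly for a merely essentially perfect $E$, e.g.\ a fat Cantor set whose local density oscillates wildly. Absent such a quantitative thickness bound there is at present no mechanism forcing $\inf_n a_n>0$ for a reflectionless operator on $E$, nor preventing $\calR_E$ from being noncompact. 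So the realistic program is two-pronged: on the positive side, try to isolate the weakest thickness condition on $E$ (weaker than homogeneity — perhaps an integrated or logarithmic Widom-type condition) under which the Widom and Direct Cauchy Theorem properties, hence (i), persist; on the negative side — and, in view of Simon's own ``foolhardy'' caveat, at least as plausible — try to construct a counterexample, namely a fat Cantor set $E$ and a half-line Jacobi matrix with $\sigma_\ess=\Sigma_\ac=E$ whose right limits fail to be uniformly compact, equivalently a reflectionless operator on $E$ with $a_{n_k}\to 0$ along a subsequence. Deciding which alternative holds for a general essentially perfect $E$ is exactly the crux, and it remains open.
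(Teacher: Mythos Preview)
The statement is a \emph{conjecture} in the paper, not a theorem: the paper offers no proof and indeed immediately adds ``A counterexample would also be very interesting.'' There is therefore nothing to compare your argument against. Your write-up is not a proof either, and you say as much in the final sentence; what you have given is a research program (Remling's oracle theorem to reduce to reflectionless two-sided operators, then a Sodin--Yuditskii-type parametrization by the character group) together with an honest identification of the obstruction: the uniform lower bound $\inf_n a_n \ge c > 0$ over the reflectionless class $\calR_E$, which in the known cases rests on Widom-type/homogeneity hypotheses that an arbitrary essentially perfect $E$ need not satisfy. That diagnosis is sound and matches the state of affairs the paper alludes to (the paper notes Remling's extension to homogeneous sets via \cite{SY} and then poses the general case as open). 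So your proposal is appropriate as a discussion of approaches, but you should not present it as a proof attempt --- the conjecture remains open, as both you and the paper acknowledge.
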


A counterexample would also be very interesting. This is relevant to this paper because,
as we have explained, Widom's theorem (Theorem~\ref{T1.10}) is a kind of poor man's DR condition.

Related to this: it would be interesting to find a proof of the almost periodicity of every
reflectionless two-sided Jacobi matrix with spectrum a finite union of intervals that did not
rely on the theory of meromorphic functions on a Riemann surface.

\begin{example} \lb{E8.11} Remling \cite{Remppt} has some interesting Jacobi matrices that are regular
on $[-2,2]$, not in Nevai class, and have $\Sigma_\ac =[0,2]$. Related examples for Schr\"odinger
operators appeared earlier in Molchanov \cite{Mol}.
\qed
\end{example}

\section{Continuum Schr\"odinger Operators} \lb{s9}

The theory presented earlier was developed by the OP community dealing with discrete
(i.e., difference) equations. The spectral theory community knows there are usually close
analogies between difference and differential equations, so it is natural to ask about
regularity ideas for continuum Schr\"odinger operators---a subject that does not seem to
have been addressed before. We begin this exploration here. This is more a description
of a research project than a final report. We will be discursive without proofs.

The first problem that one needs to address is that there is no natural potential theory
for infinite unbounded sets. $\log\abs{x-y}^{-1}$ is unbounded above and below so
Coulomb energies can go to $-\infty$. Moreover, the natural measures are no longer
probability measures. There is no reasonable notion of capacity, even of renormalized
capacity. But at least sometimes there is a natural notion of equilibrium measure and
equilibrium potential.

Consider $E=[0,\infty)$. We may not know the precise right question but we know the
right answer: For $V=0$, the solutions of $-u'' +Vu=\lambda u$ with $u(0)=0$ are $u(x) =
C\sinh (x\sqrt{-\lambda})$, and so
\begin{equation} \lb{9.1}
\lim_{x\to\infty}\, \f{\log\abs{u(x)}}{x}=\sqrt{-\lambda}
\end{equation}
which must be the correct analog of the potential theorist Green's function. And there is a
huge literature on continuum density of states, which for this case is
\begin{equation} \lb{9.2}
d\rho(\lambda) = \chi_{[0,\infty)} (\lambda)(\lambda)^{-1/2}(2\pi)^{-1}\, d\lambda
\end{equation}
This comes from noting the eigenvalues on $[0,1]$ with $u(0)=u(L)=0$ boundary conditions
are $(\f{\pi n}{L})^2$, $n=1,2,\dots$. Here is a first attempt to find the right question.

It is the derivative of $\int \log\abs{x-y}^{-1}\, d\mu(x)$ that is a Herglotz function, so we
make

\begin{definition} We say $d\nu$ is an {\it equilibrium measure\/} associated to a set
$E\subset [a,\infty)$ for some $a$, if and only if there is a Herglotz function, $F_E(z)$,
on $\bbC$ so that
\begin{SL}
\item[(i)] $\Ima F(\lambda +i0)$ is supported on $\lambda\in E$.
\item[(ii)] $\Real F(\lambda +i0)=0$ for a.e.\ $\lambda\in E$.
\item[(iii)] $F(\lambda)\to 0$ as $\lambda\to -\infty$.
\item[(iv)] $\pi^{-1} F(\lambda +i\veps)\, d\lambda \overset{w}{\longrightarrow} d\nu (\lambda)$
\item[(v)] For any bounded connected component $(a,b)$ of $\bbR\setminus E$, we have
\begin{equation} \lb{9.3}
\int_a^b F(\lambda)\, d\lambda =0
\end{equation}
\end{SL}
We will say $d\nu$ is {\it normalized\/} if
\begin{equation} \lb{9.4}
F(\lambda)\sim \tfrac12\, (-\lambda)^{-1/2} (1+ o(1))
\end{equation}
near $-\infty$.
\end{definition}

The reason for choosing \eqref{9.3} and \eqref{9.4} will be made clear shortly. Once we have
$F$, we define the equilibrium potential of $E$ by
\begin{equation} \lb{9.5}
\Phi_E(z)=\Real \biggl( \int_{x_0}^z F(\omega)\, d\omega\biggr)
\end{equation}
where $x_0\in E$ and the integral is in a path in $\bbC\setminus [a,\infty)$ with $a=\inf
\{y\in\bbR\mid y\in E\}$. That $\Real F=0$ on $E$ and that \eqref{9.3} holds show $\Phi_E$ is
independent of $x_0$. \eqref{9.3} also implies $\Phi_E(z)=0$ on $E$. For this reason, we
need to take $E=\sigma_\ess (-\f{d^2}{dx^2}+V)$, not $\sigma(-\f{d^2}{dx^2}+V)$.

With \eqref{9.4}, we have
\begin{equation} \lb{9.6}
\Phi_E(z) =\Real (\sqrt{-z}\,)(1+o(1))
\end{equation}
near $-\infty$. We can explain why we normalize as we do. For regular situations, we expect
that the absolute value of the eigenfunction, $\psi_z(x)$, analogous to OPs (see below)
are asymptotic to
\[
\exp(x\Phi_E(z))
\]
as $x\to\infty$. This, in turn, is related to integrals of the negative of the real part of
\begin{equation} \lb{9.7}
m(z,x) = \f{\eta'_z(x)}{\eta_z(x)}
\end{equation}
where $\eta$ is the solution of $L^2$ at infinity.

It is a result of Atkinson \cite{Atk} (see also \cite{S272}) that in great generality that as
$\abs{z}\to\infty$, $-\f{d^2}{dx^2}+V$ is bounded from below in sectors about $(-\infty, a)$
and, in general, in sectors $\abs{\arg z}\in (c,\pi-\veps)$,
\begin{equation} \lb{9.8}
m(z,\lambda) =-\sqrt{-z} +o(1)
\end{equation}
$\psi$ should grow as the inverse of $\eta$, so $\Phi\sim -m$ as $z\to-\infty$.

This is stronger than \eqref{9.6} (if one can interchange limits $x\to\infty$ and
$z\to\infty)$ since the error in \eqref{9.6} is $o(1)\sqrt{-z}$, while in \eqref{9.7} it
is $o(1)$. The lack of a constant term is an issue to be understood.

If we take $E=[0,\infty)$ since $F'>0$ on $(-\infty, 0$), we have $F>0$ on $(-\infty, 0)$,
and so $\log F(x+i0)$ has boundary values $0$ on $(-\infty, 0)$ and $\f12$ on $(0,\infty)$.
This plus $\log F(z) =o(z)$ at $-\infty$ uniquely determine $\log F$, and so $F$, up to
an overall constant which is fixed by the normalization yielding
\begin{equation} \lb{9.9}
F(z)=\f{1}{2\sqrt{-z}\,}
\end{equation}
so there is a unique ``potential" for $[0,\infty)$ that gives the right $\Phi(z)=\sqrt{-z}$.

Similarly, for a finite number of gaps removed from $[0,\infty)$, one gets a unique $F$.
Craig's argument yields $F$ up to positions of zeros in the gap, which are then fixed by
\eqref{9.1}.

\begin{op} Develop a formal theory of equilibrium measures and equilibrium potentials for
unbounded sets that are ``close" to $[0,\infty)$ (e.g., one might require that $E\setminus
[0,\infty)$ has finite Lebesgue measure). Can one understand the $o(1)$ in \eqref{9.8}
from this theory?
\end{op}

With potentials in hand, we can define regularity. We recall first that given any $V$ on
$[0,\infty)$ which is locally in $L^1$, one can define the regular solution, $\psi(x,z)$,
obeying
\begin{gather}
-\psi'' (x,z) +V(x) \psi(x,z)= z\psi (x,z) \lb{9.10} \\
\psi (0,z) =0 \qquad \psi' (0,z) =1 \lb{9.11}
\end{gather}
Here $\psi$ is $C^1$ (and so, locally bounded), its second distributional derivative is $L^1$,
and obeys \eqref{9.10} as a distribution. For fixed $x$, $\psi$ is an entire function of $x$
of order $\f12$. If $\psi$ is not $L^2$ at infinity for (one and hence all) $z\in\bbC_+$, $V$
is called limit point at infinity and then there is a unique selfadjoint operator
$H$ which is formally $-\f{d^2}{dx^2}+V(x)$ with $u(0)=0$ boundary conditions. We only want
to consider the case where $H$ is bounded below (which never happens if $V$ is not limit point).
$\eta_z(x)$ is then the solution $L^2$ at $\infty$ determined up to a constant, so
\begin{equation} \lb{9.12}
m(x,z) = \f{\eta'_z(x)}{\eta_z(x)}
\end{equation}
is determined by $V$\!.

\begin{definition} Let $E=\sigma_\ess(H)$. We say $H$ is {\it regular\/} if and only if for all
$z\notin\sigma(H)$,
\begin{equation} \lb{9.13}
\limsup_{x\to\infty}\, \f{1}{x}\, \log\abs{\psi(x,z)} =\Phi_E(z)
\end{equation}
\end{definition}

Of course, for this to make sense, $E$ has to be a set for which there is a potential. This will
eliminate a case like $V(x)=x^2$ where $\sigma_\ess(H)$ is empty). We expect the following
should be easy to prove:

\begin{mt} \lb{MT9.2}
\begin{SL}
\item[{\rm{(a)}}] If $H$ is regular, for $z\notin\sigma(H)$, $\limsup$ in \eqref{9.13} can be
replaced by $\lim$.
\item[{\rm{(b)}}] $H$ is regular if and only if \eqref{9.13} holds q.e.\ on $E$ {\rm{(}}where
$\Phi_E(z)$ is q.e.\ $=0${\rm{)}}.
\item[{\rm{(c)}}] If $H$ is regular, the density of states exists and equals the equilibrium measure
for $E$.
\item[{\rm{(d)}}] Conversely, if the density of states exists and equals the equilibrium measure
for $E_0$, either $H$ is regular or else the spectral measure for $H$ is supported on a set
of capacity zero.
\item[{\rm{(e)}}] If $H$ is regular and
\[
\lim_{n\to\infty} \int_n^{n+1} \abs{(\delta V)(x)}\, dx =0
\]
then $H+\delta V$ is also regular.
\end{SL}
\end{mt}

\begin{remarks} 1. Here capacity zero and q.e.\ are defined in the usual way, that is, any probability
measure of compact support contained in $E$ has infinite Coulomb energy.

\smallskip
2. By density of states, we mean the following (see \cite{S147,BP,IshM,Jo88,KKL,Nab1,Pak87}).
Take $H_L$ to be the operator $-\f{d^2}{dx^2}+V$ with $u(0)=0$ boundary conditions on $L^2 ([0,L],dx)$.
This has infinite but discrete spectrum $E_{1,L} < E_{2,L} < E_{3,L} < \dots$ (the solutions of
$\psi(L,z)=0$). Let $d\nu_L$ be the infinite measure that gives weight $\f{1}{L}$ to each $E_{j,L}$.
If $\wlim d\nu_L$ (as functions on continuous functions of compact support) exist, we say the density
of states exists and the limit is called the density of states.

\smallskip
3. (e) should follow from a standard use of an iterated DuHamel's formula.
\end{remarks}

\begin{op} Verify Metatheorem~\ref{MT9.2} and explore, in particular, analogs of
\begin{SL}
\item[(a)] Widom's theorem, Theorem~\ref{T1.10}
\item[(b)] The Stahl--Totik criterion, Theorem~\ref{T1.11}
\item[(c)] For ergodic continuum Schr\"odinger operators, the analog of Theorem~\ref{T1.15A}.
\end{SL}
\end{op}

\section*{Appendix A: A Child's Garden of Potential Theory in the Complex Plane} \lb{AppA}
\renewcommand{\theequation}{A.\arabic{equation}}
\renewcommand{\thetheorem}{A.\arabic{theorem}}
\setcounter{theorem}{0}
\setcounter{equation}{0}

We summarize the elements of potential theory relevant to this paper. For lucid accounts of
the elementary parts of the theory, see the appendix of Stahl--Totik \cite{StT}, Martinez-Finkelshtein
\cite{M-F}, and especially Ransford \cite{Ran}. More comprehensive are Helms \cite{Helms},
Tsuji \cite{Tsu}, and especially Landkof \cite{Land}. We will try to sketch some of the most
important notions in remarks but refer to the texts, especially for the more technical aspects.

The two-dimensional Coulomb potential is $\log\abs{x-y}^{-1}$ which has two lacks compared to
the more familiar $\abs{x-y}^{-1}$ of three dimensions: It is neither positive nor positive
definite. We will deal with lack of positivity by only considering measures of compact
support, and conditional positive definitiveness can replace positive definitiveness in some
situations.

If $\mu$ is a positive measure of compact support on $\bbC$, its {\it potential\/} is defined by
\begin{equation} \lb{A.1}
\Phi_\mu(x) =\int \log \abs{x-y}^{-1} \, d\mu(y)
\end{equation}
Because $\mu$ has compact support, $\log\abs{x-y}^{-1}$ is bounded below for $x$ fixed, so if
we allow the value $+\infty$, $\Phi_\mu$ is always well defined and Fubini's theorem is
applicable and implies that for another positive measure, $\nu$, also of compact support, we have
\begin{equation} \lb{A.2}
\int \Phi_\mu(x)\, d\nu(x) =\int \Phi_\nu(x)\, d\mu(x)
\end{equation}

Sometimes it is useful to fix $M>0$ and define the cutoff
\begin{equation} \lb{A.3}
\Phi_\mu^M(x)=\int \log [\min(M,\abs{x-y}^{-1})]\, d\mu(y)
\end{equation}
$\Phi_\mu^M$ is continuous and $\Phi_\mu^M$ is an increasing sequence in $M$\!, so

\begin{proposition}\lb{PA.1} $\Phi_\mu(x)$ is harmonic on $\bbC\setminus\supp(d\mu)$, lower
semicontinuous on $\bbC$, and superharmonic there.
\end{proposition}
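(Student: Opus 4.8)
The plan is to derive all three assertions from the single pointwise fact that, for each fixed $y$, the function $x\mapsto\log\abs{x-y}^{-1}$ is harmonic on $\bbC\setminus\{y\}$ and superharmonic on $\bbC$, together with the observation that on any bounded set of $x$'s this function is bounded below uniformly for $y\in\supp(d\mu)$ --- that uniform lower bound being exactly what makes Tonelli's theorem and the standard convergence theorems applicable against the finite measure $\mu$. For lower semicontinuity, I would first note that each cutoff $\Phi_\mu^M$ of \eqref{A.3} is continuous: its integrand $\log[\min(M,\abs{x-y}^{-1})]$ is jointly continuous in $(x,y)$ and, for $x$ in a fixed bounded set and $y\in\supp(d\mu)$, is squeezed between two finite constants, so continuity follows from dominated convergence. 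Since $\min(M,\abs{x-y}^{-1})\uparrow\abs{x-y}^{-1}$ as $M\uparrow\infty$ and $\log$ is increasing, monotone convergence (after adding a constant to make the integrand nonnegative on a bounded region containing $\supp(d\mu)$) gives $\Phi_\mu^M\uparrow\Phi_\mu$ pointwise. Then $\Phi_\mu=\sup_M\Phi_\mu^M$ is a pointwise supremum of continuous functions, hence lower semicontinuous.

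For harmonicity on $\bbC\setminus\supp(d\mu)$, fix $x_0$ there and choose a closed disk about $x_0$ disjoint from $\supp(d\mu)$. For each $y\in\supp(d\mu)$ and each small radius $\rho$, the mean value property of the harmonic function $\log\abs{\cdot-y}^{-1}$ reads $\f{1}{2\pi}\int_0^{2\pi}\log\abs{x_0+\rho e^{i\theta}-y}^{-1}\,d\theta=\log\abs{x_0-y}^{-1}$. Integrating in $d\mu(y)$ and interchanging the two integrations, legitimate because the double integrand is bounded on the product of the disk with $\supp(d\mu)$, gives the mean value property for $\Phi_\mu$ at $x_0$; joint continuity and boundedness of $\log\abs{x-y}^{-1}$ there likewise give continuity of $\Phi_\mu$ near $x_0$. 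A continuous function with the mean value property is harmonic. (Alternatively one could differentiate twice under the integral sign.)

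For superharmonicity on $\bbC$: lower semicontinuity is already in hand, $\Phi_\mu$ is not identically $+\infty$ unless $\mu\equiv0$ (a trivial case), and it remains to check the super-mean-value inequality $\Phi_\mu(x_0)\geq\f{1}{2\pi}\int_0^{2\pi}\Phi_\mu(x_0+re^{i\theta})\,d\theta$ for all small $r$. With $x_0$ and $r$ fixed, all the points $x_0+re^{i\theta}$ and all $y\in\supp(d\mu)$ lie in one bounded set, so $\log\abs{x_0+re^{i\theta}-y}^{-1}$ is bounded below uniformly in $(\theta,y)$; Tonelli's theorem then rewrites the spherical average of $\Phi_\mu$ as $\int\bigl[\f{1}{2\pi}\int_0^{2\pi}\log\abs{x_0+re^{i\theta}-y}^{-1}\,d\theta\bigr]\,d\mu(y)$, and superharmonicity of $\log\abs{\cdot-y}^{-1}$ bounds the inner average by $\log\abs{x_0-y}^{-1}$ for every $y$. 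Integrating in $d\mu(y)$ gives the inequality.

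The main --- and essentially only --- obstacle is nothing deep: it is the bookkeeping in the repeated interchange of $\int d\mu(y)$ with a $\theta$-average or an $M\to\infty$ limit. The one device that handles all of these is the uniform lower bound on $\log\abs{x-y}^{-1}$ over products of compact sets; after adding a harmless constant one is always integrating a nonnegative (or bounded) function of $(\theta,y)$ or $(M,y)$, so Tonelli, monotone convergence, and dominated convergence apply verbatim.
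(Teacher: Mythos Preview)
Your proof is correct and follows essentially the same approach as the paper: the paper's argument is simply the sentence preceding the proposition, namely that each cutoff $\Phi_\mu^M$ is continuous and $\Phi_\mu^M\uparrow\Phi_\mu$, from which lower semicontinuity (and then superharmonicity) follows. You have spelled out in full the details the paper leaves implicit, including the direct mean-value verifications for harmonicity and superharmonicity, but the core device---the monotone approximation by the continuous $\Phi_\mu^M$---is identical.
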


One might naively think that $\Phi_\mu(x)$ only fails to be continuous because it can go
to infinity and that it is continuous in the extended sense---but that is wrong!

\begin{example}\lb{AE.1A} Let $x_n=-n^{-1}$ and let
\begin{equation}\lb{A.4a}
d\mu=\sum_{n=1}^\infty n^{-2} \delta_{x_n}
\end{equation}
Then $\Phi_\mu(x_n)=\infty$ and $x_n\to 0$, but
\begin{equation}\lb{A.4b}
\Phi_\mu(0)=\sum_{n=1}^\infty n^{-2} \log n <\infty
\end{equation}
Notice that this is consistent with lower semicontinuity, that is, $\Phi_\mu(\lim x_n) \leq
\liminf \Phi_\mu(x_n)$. Also notice, given Hydrogen atom spectra, that this example is
relevant to spectral theory.

Lest you think this kind of behavior is only consistent with unbounded $\Phi_\mu$, one
can replace $\delta_{x_n}$ by a smeared out probability measure, $\eta_n$ (using
equilibrium measures on a small interval, $I_n$, about $x_n$), so $\Phi_{\eta_n}=
\lambda n^2$ on $I_n$ and have with $\mu=\sum n^{-2}\eta_n$, then $\Phi_\mu$ is bounded,
$\Phi_\mu(x_n) \geq\lambda$ while $\Phi_\mu(0)\leq 2\sum_{n=1}^\infty n^{-2} \log n$.
Hence one loses continuity for $\lambda$ large.
\qed
\end{example}

The following is sometimes useful:

\begin{proposition}\lb{PA.1B} If $\Phi_\mu(x)$ restricted to $\supp(\mu)$ is continuous,
then $\Phi_\mu$ is continuous on $\bbC$.
\end{proposition}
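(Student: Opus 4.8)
The plan is to prove that $\Phi_\mu$ is upper semicontinuous on $\bbC$; since Proposition~\ref{PA.1} already gives lower semicontinuity, this yields continuity. On $\bbC\setminus S$, with $S:=\supp(\mu)$, there is nothing to prove because $\Phi_\mu$ is harmonic (hence continuous) there, so I would fix $x_0\in S$ and show $\limsup_{z\to x_0}\Phi_\mu(z)\le\Phi_\mu(x_0)$ (note $\Phi_\mu(x_0)<\infty$, as ``continuous on $S$'' includes finite on $S$).

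\emph{Step 1 (a uniform modulus on the support, via Dini).} Each cutoff $\Phi_\mu^M$ of \eqref{A.3} is continuous and $\Phi_\mu^M\uparrow\Phi_\mu$ pointwise; by hypothesis the limit is finite and continuous on the compact set $S$, so Dini's theorem gives $\eta_M:=\sup_{x\in S}\bigl(\Phi_\mu(x)-\Phi_\mu^M(x)\bigr)\to0$. Since $\Phi_\mu(x)-\Phi_\mu^M(x)=\int_{|x-y|<1/M}\log\frac{1}{M|x-y|}\,d\mu(y)\ge0$, restricting this nonnegative integral to the ball of radius $M^{-2}$ gives, for $x\in S$, $\log M\cdot\mu(\{y:|x-y|<M^{-2}\})\le\eta_M$; consequently $\log(1/\rho)\,\mu(\{y:|x-y|<\rho\})\le2\,\eta_{\rho^{-1/2}}\to0$ as $\rho\downarrow0$, uniformly over $x\in S$.

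\emph{Step 2 (splitting off the near part).} Fix a small $r>0$ and write $\mu=\mu_1+\mu_2$ with $\mu_1=\mu\restriction\overline{B(x_0,r)}$. Then $\Phi_{\mu_2}$ is harmonic on $B(x_0,r)$, so $\Phi_{\mu_2}(z)\to\Phi_{\mu_2}(x_0)\le\Phi_\mu(x_0)$ as $z\to x_0$ (using $\Phi_{\mu_1}(x_0)\ge0$). For the near part I would invoke the maximum principle for potentials (the supremum of a potential over $\bbC$ equals its supremum over the support — available in Appendix~A): $\Phi_{\mu_1}(z)\le\sup_{y\in\supp(\mu_1)}\Phi_{\mu_1}(y)$ for all $z$. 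For $y\in\supp(\mu_1)\subset S\cap\overline{B(x_0,r)}$ one has $\overline{B(x_0,r)}\subset B(y,3r)$, so
\[
\Phi_{\mu_1}(y)\le\int_{|y-w|<3r}\log\tfrac{1}{|y-w|}\,d\mu(w)=\bigl(\Phi_\mu(y)-\Phi_\mu^{1/(3r)}(y)\bigr)+\log\tfrac{1}{3r}\;\mu\bigl(\{w:|y-w|<3r\}\bigr).
\]
Since $y\in S$, the bracket is $\le\eta_{1/(3r)}$, and comparing with the cutoff at $M=(3r)^{-1/2}$ gives $\log\frac{1}{3r}\,\mu(\{w:|y-w|<3r\})\le2\bigl(\Phi_\mu(y)-\Phi_\mu^{1/\sqrt{3r}}(y)\bigr)\le2\,\eta_{1/\sqrt{3r}}$. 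Hence $\Phi_{\mu_1}(z)\le\kappa(r):=\eta_{1/(3r)}+2\,\eta_{1/\sqrt{3r}}$ for every $z$, with $\kappa(r)\to0$ as $r\downarrow0$ by Step~1.

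\emph{Conclusion.} For every small $r$, $\limsup_{z\to x_0}\Phi_\mu(z)\le\kappa(r)+\Phi_{\mu_2}(x_0)\le\kappa(r)+\Phi_\mu(x_0)$; letting $r\downarrow0$ gives $\limsup_{z\to x_0}\Phi_\mu(z)\le\Phi_\mu(x_0)$, as required. The one genuinely nontrivial ingredient is the maximum principle for potentials used in Step~2: it is what allows me to transfer a bound on $\Phi_{\mu_1}$ from $\supp(\mu_1)\subset S$ — the only place where the uniform estimate of Step~1 is available — to all of $\bbC$; without it, one would be forced to estimate the offending local integrals at points $z\notin S$, where the Dini control fails. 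Everything else is routine bookkeeping with the cutoff potentials $\Phi_\mu^M$ and Dini's theorem.
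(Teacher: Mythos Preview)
Your argument is correct and works in the full generality of the proposition (compact support in $\bbC$), whereas the paper only sketches the special case $\supp(\mu)\subset\bbR$ and defers the general case to Landkof. The two approaches are genuinely different. The paper exploits real-line geometry: first it uses $\abs{w-x-iy}^{-1}\le\abs{w-x}^{-1}$ to reduce a putative bad sequence $z_n\to z_\infty\in S$ to a real sequence, and then uses convexity of $\Phi_\mu$ on each gap $(\alpha,\beta)\subset\bbR\setminus\supp(\mu)$ to push the $z_n$ onto $\supp(\mu)$ itself, contradicting the hypothesis. This is short and elementary but has no obvious two-dimensional analogue. Your route---Dini on the compact $S$ to get a uniform local-mass modulus, then a near/far splitting with Maria's maximum principle to transfer the estimate from $\supp(\mu_1)\subset S$ to all of $\bbC$---is the standard general-purpose proof and is what one finds (in essence) in Landkof. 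One small correction: the maximum principle you invoke (that $\sup_{\bbC}\Phi_{\mu_1}=\sup_{\supp(\mu_1)}\Phi_{\mu_1}$) is \emph{not} actually stated in Appendix~A; it is Maria's theorem and is standard, but you should cite it externally rather than claim it is available in the appendix.
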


\begin{remarks} 1. The general case can be found in \cite[Theorem~1.7]{Land}. Here
we will sketch the case where $\supp(\mu)\subset\bbR$ which is most relevant to OPRL.

\smallskip
2. By lower semicontinuity, if $\Phi_\mu$ fails to be continuous on $\bbC$, there exists
$z_n\to z_\infty$, so $\Phi_\mu (z_n)\to a> \Phi_\mu (z_\infty)$. Continuity off $\supp(\mu)$
is easy, so we must have $z_\infty\in\bbR$ (since we are supposing $\supp(\mu)\subset\bbR$).

\smallskip
3. If $w,x,y\in\bbR$, then $\abs{w-x-iy}^{-1}\leq \abs{w-x}^{-1}$, so
\[
\Phi_\mu (x+iy) \leq \Phi_\mu(x)
\]
and thus $\liminf \Phi_\mu (\Real z_n) \geq a > \Phi_\mu (\Real z_\infty)$, so without loss, we
can suppose $z_n$ are real.

\smallskip
4. If $(\alpha,\beta)\subset\bbR\setminus\supp(\mu)$ with $\alpha,\beta\in\supp(\mu)$, it is easy
to see that $\Phi_\mu(x)$ is continuous when restricted to $[\alpha,\beta]$ (using monotone
convergence at the endpoints) and convex on $[\alpha,\beta]$ since $\log\abs{x}^{-1}$ is convex.
Thus, $\max_{[\alpha,\beta]} \Phi_\mu(x)=\max (\Phi_\mu(\alpha), \Phi_\mu(\beta))$. From this,
it is easy to see that if such a $z_n\in\bbR$ exists, one can take $z_n\in\supp(\mu)$ and so
get a contradiction to the assumed continuity of $\Phi_\mu$ restricted to $\supp(\mu)$.
\end{remarks}

The {\it energy\/} or {\it Coulomb energy\/} of $\mu$ is defined by
\begin{equation} \lb{A.4}
\calE(\mu) =\int \Phi_\mu(x)\, d\mu(x) =\int \log\abs{x-y}^{-1}\, d\mu(x) \, d\mu(y)
\end{equation}
where, again, the value $+\infty$ is allowed. If $E\subset\bbC$ is compact, we say it has
{\it capacity zero\/} if $\calE(\mu)=\infty$ for all $\mu\in\calM_{+,1}(E)$, the probability
measures on $E$. If $E$ does not have capacity zero, then the {\it capacity\/}, $C(E)$,
of $E$ is defined by
\begin{equation} \lb{A.5}
C(E)=\exp(-\min(\calE(\rho)\mid\rho\in\calM_{+,1}(E)))
\end{equation}
One indication that this strange-looking definition is sensible is seen by, as we will
show below (see Example~\ref{EA.10}),
\begin{equation} \lb{A.5a}
C([a,b])=\tfrac14\, (b-a)
\end{equation}

It is useful to define the capacity of any Borel set. For bounded open sets, $U$,
\begin{equation} \lb{A.6}
C(U)=\sup(C(K)\mid K\subset U,\, K \text{ compact})
\end{equation}
and then for arbitrary bounded Borel $X$,
\begin{equation} \lb{A.7}
C(X) =\inf (C(U)\mid X\subset U,\, U \text{ open})
\end{equation}
It can then be proven (see \cite[Thm.~2.8]{Land}) that
\begin{equation} \lb{A.8}
C(X)=\sup(C(K)\mid K\subset X,\, K \text{ compact})
\end{equation}
for any Borel sets and that \eqref{A.7} holds for compact $X$. In particular, $C(X)=0$ if and
only if $\calE(\mu)=\infty$ for any measure $\mu$ with $\supp(\mu)\subset X$.

The key technical fact behind Theorem~\ref{T1.15A} is the following:

\begin{proposition}\lb{PA.2} If $C(X) >0$ for a Borel set $X$, there exists a probability measure,
$\mu$, supported in $X$ so that $\Phi_\mu(x)$ is continuous on $\bbC$.
\end{proposition}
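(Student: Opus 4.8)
The plan is to reduce to a compact set, take a probability measure of finite energy on it, and then excise a compact piece of positive mass on which the truncated potentials converge uniformly; a renormalized restriction of the measure to that piece will be the desired $\mu$. Concretely: by \eqref{A.8}, since $C(X)>0$ there is a compact $K\subseteq X$ with $C(K)>0$, and then --- by \eqref{A.5}, or just because $C(K)>0$ --- there is a probability measure $\rho$ on $K$ with $\calE(\rho)=\int\Phi_\rho\,d\rho<\infty$ (the equilibrium measure $\rho_K$ will do). In particular $\Phi_\rho(x)<\infty$ for $\rho$-a.e.\ $x$; this finiteness, coming straight from $C(X)>0$, is the one substantive input.

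Next, recall from \eqref{A.3} that the truncations $\Phi_\rho^M$ are continuous and increase pointwise to $\Phi_\rho$ as $M\to\infty$, hence converge to $\Phi_\rho$ $\rho$-a.e. By Egorov's theorem for the finite measure $\rho$, for any $\varepsilon>0$ there is a Borel set $B\subseteq K$ with $\rho(K\setminus B)<\varepsilon$ on which $\Phi_\rho^M\to\Phi_\rho$ uniformly; shrinking $B$ by inner regularity of $\rho$, we obtain a compact $K'\subseteq K\subseteq X$ with $\rho(K')\geq 1-2\varepsilon$ and $\Phi_\rho^M\to\Phi_\rho$ still uniformly on $K'$. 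Fix $\varepsilon<\tfrac12$ so that $\rho(K')>0$, and set $\mu=\rho(K')^{-1}\,(\rho\restriction K')$, a probability measure supported in $K'\subseteq X$.

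It remains to show $\Phi_\mu$ is continuous, for which --- by Proposition~\ref{PA.1B} --- it suffices to check that $\Phi_{\rho\restriction K'}$ is continuous when restricted to $K'$. The key observation is that splitting $\rho=(\rho\restriction K')+(\rho\restriction(K\setminus K'))$ and splitting the truncations the same way gives $\Phi_\rho-\Phi_\rho^M=(\Phi_{\rho\restriction K'}-\Phi_{\rho\restriction K'}^M)+(\Phi_{\rho\restriction(K\setminus K')}-\Phi_{\rho\restriction(K\setminus K')}^M)$, in which both summands are nonnegative; hence $0\leq\Phi_{\rho\restriction K'}-\Phi_{\rho\restriction K'}^M\leq\Phi_\rho-\Phi_\rho^M$ pointwise. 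Since the right-hand side tends to $0$ uniformly on $K'$, so does the left, i.e.\ the continuous functions $\Phi_{\rho\restriction K'}^M$ converge to $\Phi_{\rho\restriction K'}$ uniformly on $K'$, so $\Phi_{\rho\restriction K'}\restriction K'$ is continuous. Proposition~\ref{PA.1B} then upgrades this to continuity of $\Phi_{\rho\restriction K'}$ on all of $\bbC$, and dividing by $\rho(K')$ shows $\Phi_\mu$ is continuous on $\bbC$.

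I expect the only real delicacy to be the combination of Egorov's theorem with inner regularity of $\rho$ --- bookkeeping the two $\varepsilon$-losses so that $K'$ still has positive mass --- together with the $\rho$-a.e.\ finiteness of $\Phi_\rho$. The pointwise domination inequality that transfers uniform convergence from $\Phi_\rho$ to $\Phi_{\rho\restriction K'}$ is the crux of the argument, but it is very short once the decomposition is written down.
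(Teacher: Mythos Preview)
Your proof is correct and follows essentially the same strategy as the paper's: take a finite-energy measure, restrict it to a compact subset of positive mass on which its potential is continuous, and then invoke Proposition~\ref{PA.1B}. The only difference is that the paper finds this subset via Lusin's theorem and deduces continuity of the restricted potential by a semicontinuity trick (writing $\Phi_{\rho\restriction K'}=\Phi_\rho-\Phi_{\rho\restriction(K\setminus K')}$, which is upper semicontinuous on $K'$ since the subtrahend is lower semicontinuous), whereas you use Egorov's theorem on the truncations $\Phi_\rho^M$ together with the pointwise domination $\Phi_{\rho\restriction K'}-\Phi_{\rho\restriction K'}^M\leq\Phi_\rho-\Phi_\rho^M$; the two devices are interchangeable here.
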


\begin{remarks} 1. Let $\mu$ have finite energy so $\int \Phi_\mu(x)\, d\mu(x) <\infty$. By Lusin's
theorem (see, e.g., the remark after Theorem~6 of Appendix~A of Lax \cite{Lax} for the truly
simple proof), we can find compact sets $K\subset\supp(d\mu)$ so $\mu(K)>0$ and $\Phi_\mu \restriction
K$ is continuous.

\smallskip
2. Let $\nu=\mu\restriction K$, that is, $\nu(S)=\mu(S\cap K)$. Since $\mu(K) >0$, $\nu$ is
a nonzero measure. By general principles, both $\Phi_\nu$ and $\Phi_{\mu-\nu}$ are lower
semicontinuous on $K$, so since $\Phi_\mu$ is continuous,
\begin{equation}\lb{A.9a}
\Phi_\nu = \Phi_\mu - \Phi_{\mu-\nu}
\end{equation}
is upper semicontinuous on $K$. Thus, $\Phi_\nu$ is continuous on $K$ (since $\Phi_\mu$ is
continuous on $K$, it is bounded there, so $\Phi_\nu$ and $\Phi_{\mu-\nu}$ are both bounded
there, so there are no $\infty$--$\infty$ cancellations in \eqref{A.9a}).

\smallskip
3. By Proposition~\ref{PA.1B}, $\Phi_\nu$ is continuous on $\bbC$.
\end{remarks}

Now suppose $\mu$ is an arbitrary measure of compact support and that $C(\{x\mid \Phi_\mu(x)=
\infty\}) >0$. Then, by the above proposition, there is an $\eta$ supported on that set with
$\Phi_\eta$ continuous and so bounded above on $\supp(d\mu)$. Thus,
\begin{equation}\lb{A.9b}
\int \Phi_\eta(x)\, d\mu(x) <\infty
\end{equation}
On the other hand, $\Phi_\mu(x)=\infty$ on $\supp (d\eta)$, so
\begin{equation}\lb{A.9c}
\int \Phi_\mu(x)\, d\eta(x)=\infty
\end{equation}
This contradicts \eqref{A.2}. We thus see that the last proposition implies:

\begin{corollary}\lb{CA.2A} For any measure of compact support, $\mu$, $\{x\mid\Phi_\mu(x)=\infty\}$
has capacity zero.
\end{corollary}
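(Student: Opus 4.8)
The plan is a short proof by contradiction resting entirely on Proposition~\ref{PA.2} together with the symmetry relation \eqref{A.2}. Write $Y=\{x\mid\Phi_\mu(x)=\infty\}$ and suppose, for contradiction, that $C(Y)>0$. Then Proposition~\ref{PA.2} supplies a probability measure $\eta$ supported in $Y$ whose potential $\Phi_\eta$ is continuous on $\bbC$. In particular $\Phi_\eta$ is bounded above on the compact set $\supp(d\mu)$, so that
\[
\int \Phi_\eta(x)\,d\mu(x)<\infty .
\]

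On the other hand, since $\supp(d\eta)\subset Y$ and $\Phi_\mu\equiv\infty$ on $Y$, we have
\[
\int \Phi_\mu(x)\,d\eta(x)=\infty .
\]
But $\mu$ and $\eta$ both have compact support, so the kernel $\log\abs{x-y}^{-1}$ is bounded below on $\supp(d\mu)\times\supp(d\eta)$ and \eqref{A.2} applies with values allowed in $(-\infty,+\infty]$; it gives $\int\Phi_\eta\,d\mu=\int\Phi_\mu\,d\eta$, i.e.\ a finite quantity equal to $+\infty$, which is absurd. Hence $C(Y)=0$, as claimed.

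The one subtlety worth spelling out is the legitimacy of \eqref{A.2} when one side is infinite: there is no ``$\infty-\infty$'' issue, because compact support makes the double integrand bounded below, so the equality in \eqref{A.2} holds as an identity in $(-\infty,+\infty]$. Everything else is immediate once Proposition~\ref{PA.2} is in hand; indeed the real work here --- producing a compactly supported probability measure with \emph{continuous} potential sitting inside a set of positive capacity --- was already carried out in that proposition via Lusin's theorem, so the corollary is essentially just a one-line application of Fubini.
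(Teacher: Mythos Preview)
Your proof is correct and follows essentially the same route as the paper: assume the set has positive capacity, invoke Proposition~\ref{PA.2} to get a measure $\eta$ with continuous potential supported there, and derive a contradiction from \eqref{A.2} since $\int\Phi_\eta\,d\mu<\infty$ while $\int\Phi_\mu\,d\eta=\infty$. Your added remark about why \eqref{A.2} remains valid in $(-\infty,+\infty]$ is a welcome clarification the paper leaves implicit.
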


A main reason for defining capacity for any Borel set is that it lets us single out sets of
capacity zero (also called {\it polar sets}), which are very thin sets (e.g., of Hausdorff
dimension zero; see Theorem~\ref{TA.16A}). We say an event (i.e., a Borel set) occurs
{\it quasi-everywhere} (q.e.) if and only if it fails on a set of capacity zero.
``Nearly everywhere" is also used. A countable union of capacity zero sets is capacity
zero. Note that if $\mu$ is any measure of compact support, with $\calE(\mu)<\infty$, then
$\calE(\mu\restriction E)<\infty$ for any compact $E$ (because $\log\abs{x-y}^{-1}$ is bounded
below) and thus, $\mu(E)=0$ if $C(E)=0$. It follows (using \eqref{A.8}) that

\begin{proposition}\lb{PA.3} If $\calE(\mu)<\infty$, then $\mu(X)=0$ for any $X$ with
$C(X)=0$.
\end{proposition}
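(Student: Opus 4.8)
The plan is to unwind the definition of capacity zero and reduce everything to compact sets. Since $\mu$ has compact support, only $X\cap\supp(\mu)$ contributes to $\mu(X)$, so I may assume $X$ is bounded. The first step is the reduction: by the inner regularity of capacity \eqref{A.8}, every compact $K\subset X$ satisfies $C(K)\le C(X)=0$, hence $C(K)=0$; and since $\mu$ is a finite Borel measure on $\bbC$, it is inner regular, so $\mu(X)=\sup\{\mu(K)\mid K\subset X,\ K\text{ compact}\}$. Thus it suffices to prove $\mu(K)=0$ whenever $K$ is compact with $C(K)=0$.

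The second step is to control the energy of the restriction $\nu:=\mu\restriction K$. Because $\supp(\mu)$ is bounded, there is a constant $M\ge 0$ with $\log\abs{x-y}^{-1}\ge -M$ for all $x,y\in\supp(\mu)$, so the integrand $\log\abs{x-y}^{-1}+M$ is nonnegative on $\supp(\mu)\times\supp(\mu)$; since $0\le\nu\le\mu$, monotonicity of the integral gives
\[
\calE(\nu) + M\nu(\bbC)^2 = \int (\log\abs{x-y}^{-1}+M)\, d\nu(x)\, d\nu(y) \le \int (\log\abs{x-y}^{-1}+M)\, d\mu(x)\, d\mu(y) = \calE(\mu) + M\mu(\bbC)^2 < \infty,
\]
so $\calE(\nu)<\infty$. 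This is exactly the point where the compact-support hypothesis — which makes the Coulomb kernel in \eqref{A.1} bounded below and keeps all these energy integrals meaningful — is used.

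The third step is the contradiction. If $\mu(K)>0$, then $\rho:=\nu/\mu(K)$ lies in $\calM_{+,1}(K)$ and, by bilinearity of the energy form, $\calE(\rho)=\calE(\nu)/\mu(K)^2<\infty$. This contradicts the definition \eqref{A.5} of $C(K)=0$, namely that $\calE(\rho)=\infty$ for every $\rho\in\calM_{+,1}(K)$. Hence $\mu(K)=0$, and combining this with the reduction in the first step yields $\mu(X)=0$.

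I do not expect a genuine obstacle: the proposition is essentially immediate once the definitions are unwound. The only items that demand a moment's care are the two regularity facts invoked in the reduction step (inner regularity of capacity, \eqref{A.8}, and inner regularity of the finite measure $\mu$) and the elementary-but-essential observation that $\log\abs{x-y}^{-1}$ is bounded below on $\supp(\mu)\times\supp(\mu)$.
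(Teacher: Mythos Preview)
Your proof is correct and follows essentially the same route as the paper: reduce to compact subsets via \eqref{A.8} and inner regularity of $\mu$, use boundedness below of $\log\abs{x-y}^{-1}$ on $\supp(\mu)\times\supp(\mu)$ to get $\calE(\mu\restriction K)<\infty$, then normalize to contradict $C(K)=0$. The paper's argument, given in the paragraph immediately preceding the proposition, is the same in outline but more terse.
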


Here is an important result showing the importance of sets of zero capacity. It is the key
to Van Assche's proof in Section~\ref{s4} and the proof of our new Theorem~\ref{T1.15A}
in Section~\ref{s7}.

\begin{theorem}\lb{TA.4} Let $\nu_n,\nu$ be measures with supports contained in
a fixed compact set $K$ and $\sup_n \nu_n(K)<\infty$. If $\nu_n\to\nu$ weakly, then
\begin{equation} \lb{A.9}
\liminf_{n\to\infty}\, \Phi_{\nu_n}(x) \geq\Phi_\nu(x)
\end{equation}
for all $x\in\bbC$ and equality holds q.e.
\end{theorem}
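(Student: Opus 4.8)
The statement splits into two classical facts: the pointwise inequality \eqref{A.9} (the ``principle of descent'') and the quasi-everywhere equality (the ``lower envelope theorem''). The plan is to handle the first by the bounded cutoff kernels already introduced in \eqref{A.3}, and the second by combining the Fubini symmetry \eqref{A.2}, Fatou's lemma, and the existence of a compactly supported measure with continuous potential on any set of positive capacity (Proposition~\ref{PA.2}).

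For \eqref{A.9}, fix $M>0$ and note that $k_M(x,y)=\log[\min(M,\abs{x-y}^{-1})]$ is bounded and jointly continuous on $\bbC\times\bbC$ (it equals the constant $\log M$ in a neighborhood of the diagonal and equals $\log\abs{x-y}^{-1}$ away from it). Hence for each fixed $x$ the function $y\mapsto k_M(x,y)$ is bounded and continuous, so weak convergence $\nu_n\to\nu$ gives $\Phi^M_{\nu_n}(x)\to\Phi^M_\nu(x)$. Since $\log\abs{x-y}^{-1}\geq k_M(x,y)$ we have $\Phi_{\nu_n}(x)\geq\Phi^M_{\nu_n}(x)$, so $\liminf_n\Phi_{\nu_n}(x)\geq\Phi^M_\nu(x)$ for every $M$; letting $M\to\infty$ and using that $\Phi^M_\nu\uparrow\Phi_\nu$ (noted before Proposition~\ref{PA.1}) yields \eqref{A.9}.

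For the q.e.\ equality, set $u(x)=\liminf_n\Phi_{\nu_n}(x)$, so $u\geq\Phi_\nu$ everywhere, with equality automatic wherever $\Phi_\nu(x)=\infty$. Thus the exceptional set is $A=\bigcup_{k\geq1}A_k$, where $A_k=\{x:\Phi_\nu(x)<\infty,\ u(x)\geq\Phi_\nu(x)+\tfrac{1}{k}\}$, and since a countable union of polar sets is polar it suffices to show $C(A_k)=0$ for each $k$. Suppose $C(A_k)>0$. By Proposition~\ref{PA.2} choose a probability measure $\sigma$ supported in a compact subset of $A_k$ with $\Phi_\sigma$ continuous on $\bbC$, hence bounded on $K$. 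Then: (i) since $\Phi_{\nu_n}\geq-\log\diam(K)\cdot\sup_m\nu_m(K)$ is bounded below on $K$ uniformly in $n$, Fatou's lemma gives $\int u\,d\sigma\leq\liminf_n\int\Phi_{\nu_n}\,d\sigma$; (ii) by \eqref{A.2} and then weak convergence against the bounded continuous $\Phi_\sigma$, $\int\Phi_{\nu_n}\,d\sigma=\int\Phi_\sigma\,d\nu_n\to\int\Phi_\sigma\,d\nu=\int\Phi_\nu\,d\sigma$; (iii) $\int\Phi_\nu\,d\sigma=\int\Phi_\sigma\,d\nu\leq\|\Phi_\sigma\|_{L^\infty(K)}\,\nu(K)<\infty$. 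Chaining (i)--(ii) gives $\int u\,d\sigma\leq\int\Phi_\nu\,d\sigma$, while $u\geq\Phi_\nu+\tfrac{1}{k}$ on $\supp\sigma$ gives $\int u\,d\sigma\geq\int\Phi_\nu\,d\sigma+\tfrac{1}{k}$; by (iii) the finiteness lets us subtract and obtain $\tfrac{1}{k}\leq0$, a contradiction. Hence $C(A_k)=0$ for all $k$, so $C(A)=0$.

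The main obstacle is the q.e.\ equality rather than the inequality: the limit $\int\Phi_{\nu_n}\,d\sigma\to\int\Phi_\nu\,d\sigma$ can only be extracted if $\sigma$ is chosen with a \emph{continuous} potential, which is exactly the content of Proposition~\ref{PA.2} and is what forces us to restrict attention to sets of positive capacity. Along the way one should record the routine points that $u$ is Borel (a $\liminf$ of lower semicontinuous functions), that each $A_k$ is therefore Borel so that ``$C(A_k)>0$ implies $A_k$ contains a compact set supporting a finite-energy measure'' applies via \eqref{A.8}, and that the hypothesis $\sup_n\nu_n(K)<\infty$ is used precisely to secure the uniform lower bound needed for Fatou in step (i).
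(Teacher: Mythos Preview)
Your proof is correct and follows essentially the same route as the paper's: both parts use the cutoff kernels $\Phi^M$ for the descent inequality, and for the q.e.\ equality both assume positive capacity of the exceptional set, invoke Proposition~\ref{PA.2} to produce a measure with continuous potential, and derive a contradiction via \eqref{A.2} and Fatou's lemma. Your countable decomposition into the sets $A_k$ (fixing a quantitative gap $1/k$ and excluding points where $\Phi_\nu=\infty$) makes the strict-inequality step and the finiteness bookkeeping more explicit than the paper's terser argument; one small slip is that the uniform lower bound for Fatou should be stated on $\supp\sigma$ rather than on $K$, but since $\supp\sigma$ is compact the bound $\Phi_{\nu_n}(x)\geq -\nu_n(K)\log\bigl(\diam(K)+\dist(\supp\sigma,K)\bigr)$ works just as well.
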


\begin{remarks} 1. \eqref{A.9} is called the ``Principle of Descent" and the equality q.e.\
is the ``Upper Envelope Theorem."

\smallskip
2. Suppose $\nu_n$ has a point mass of weight $\f{1}{2^n}$ at $\{\f{j}{2^n}\}_{j=0}^{2^n-1}$.
Then $d\nu_n\to dx\equiv d\nu$, Lebesgue measure. $\Phi_{\nu_n} (\f{j}{2^n})=\infty$ so
$\liminf \Phi_{\nu_n}(x)=\infty$ at any dyadic rational, while $\Phi_\nu(x)<\infty$ for
all $x$. This shows equality may not hold everywhere. This example is very relevant to
spectral theory. For the Anderson model, we expect $\limsup \abs{p_n(x)}^{1/n} =
e^{\gamma(x)}$ for almost all $x$ and $\limsup \abs{p_n(x)}^{1/n}=e^{-\gamma(x)}$ at
the eigenvalues. Thus, with $\nu_n$ the zero counting measure for $p_n$, so
$\Phi_{\nu_n}(x) =-\log \abs{p_n(x)}^{1/n}$, we have $\liminf \Phi_{\nu_n}(x)=
-\gamma(x)$ for almost all $x$ and $\gamma(x)$ at the eigenvalue consistent with
\eqref{A.9}, and with \eqref{A.9} failing on a capacity zero set, including the
countable set of eigenvalues.

\smallskip
3. \eqref{A.9} is easy. For $\Phi_\nu^M$ is the convolution with a continuous function
so $\lim_{n\to\infty} \Phi_{\nu_n}^M(x)=\Phi_\nu^M(x)$. Since $\Phi_{\nu_n}(x) \geq
\Phi_{\nu_n}^M(x)$, we see $\liminf_{n\to\infty} \Phi_{\nu_n}(x)\geq\Phi_\nu^M(x)$.
Taking $M\to\infty$ yields \eqref{A.9}.

\smallskip
4. Let $X$ be the set of $x$ for which the inequality in \eqref{A.9} is strict. Suppose $C(X)>0$.
Then, by Proposition~\ref{A.2}, there is $\eta$ supported on $X$ with $\Phi_\eta (x)$ continuous
so
\begin{equation} \lb{A.10}
\lim_{n\to\infty}\int \Phi_\eta(x)\, d\nu_n =\int \Phi_\eta(x)\, d\nu
\end{equation}
By \eqref{A.2} and Fatou's lemma ($\Phi_{\nu_n}(x)$ is uniformly bounded below),
\begin{align}
\lim_{n\to\infty} \int \Phi_\eta(x)\, d\nu_n
&= \lim\int \Phi_{\nu_n}(x)\, d\eta \notag\\
&\geq \int \liminf \Phi_{\nu_n}(x)\, d\eta \notag \\
&> \int \Phi_\nu(x)\, d\eta \lb{A.11} \\
&= \int \Phi_\eta(x)\, d\nu \notag
\end{align}
where \eqref{A.11} comes from the assumptions $\supp(d\eta)\subset X$ and \eqref{A.9} is
strict on $X$. This contradiction to \eqref{A.10} shows $C(X)=0$, that is, equality holds in
\eqref{A.9} q.e.
\end{remarks}

If $\calE_M(\mu) =\int \Phi_\mu^M\, d\mu(x)$, then it is easy to prove $\calE_M$ is weakly
continuous and conditionally positive definite in that
\begin{equation} \lb{A.12}
\mu(\bbC)=\nu(\bbC)\Rightarrow \calE_M(\mu-\nu)\geq 0
\end{equation}
where boundedness of $\log(\min(\abs{x-y}^{-1},M))$ implies $\calE_M$ makes sense for any signed
measure. By taking $M$ to infinity, one obtains

\begin{theorem}\lb{TA.5} The map $\mu\mapsto\calE(\mu)$ is weakly lower semicontinuous on
$\calM_{+,1}(E)$ for any compact $E\subset\bbC$. Moreover, it is conditionally positive
definite in the sense that for $\mu,\nu\in\calM_{+,1}(E)$, $\calE(\mu)<\infty$ and
$\calE(\nu)<\infty$ imply
\begin{equation} \lb{A.13}
\int \Phi_\nu (x)\, d\mu(x) \leq \tfrac12\, \calE(\mu) + \tfrac12\, \calE(\nu)
\end{equation}
with strict inequality if $\mu\neq\nu$.
\end{theorem}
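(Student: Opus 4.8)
The plan is to deduce Theorem~\ref{TA.5} from the already-noted properties of the truncated energies $\calE_M$ by letting $M\to\infty$. The key elementary fact is that for $\mu$ of compact support the truncated potentials increase to the true one: $\Phi_\mu^M(x)\uparrow\Phi_\mu(x)$ for every $x$ (since $\min(M,\abs{x-y}^{-1})$ increases to $\abs{x-y}^{-1}$ and $\log$ is increasing), and, because competing measures all live in a fixed compact set, each $\Phi_\mu^M$ is bounded below uniformly in $M\geq1$ there. By monotone convergence $\calE_M(\mu)=\int\Phi_\mu^M\,d\mu\uparrow\int\Phi_\mu\,d\mu=\calE(\mu)$, so on $\calM_{+,1}(E)$ we have $\calE=\sup_M\calE_M$. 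Since each $\calE_M$ is weakly continuous on $\calM_{+,1}(E)$---the integrand $\log\min(M,\abs{x-y}^{-1})$ is a bounded continuous function on $E\times E$, so $\mu_n\to\mu$ weakly gives $\mu_n\times\mu_n\to\mu\times\mu$ weakly and hence $\calE_M(\mu_n)\to\calE_M(\mu)$---the functional $\calE$ is a pointwise supremum of weakly continuous functionals, hence weakly lower semicontinuous. This is the first assertion.

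For the inequality, fix $\mu,\nu\in\calM_{+,1}(E)$ with $\calE(\mu),\calE(\nu)<\infty$. By symmetry of the truncated kernel and Fubini, $\int\Phi_\nu^M\,d\mu=\int\Phi_\mu^M\,d\nu$, so \eqref{A.12} applied to the total-mass-zero signed measure $\mu-\nu$ reads $\calE_M(\mu)+\calE_M(\nu)-2\int\Phi_\nu^M\,d\mu\geq0$, i.e.\ $\int\Phi_\nu^M\,d\mu\leq\tfrac12(\calE_M(\mu)+\calE_M(\nu))\leq\tfrac12(\calE(\mu)+\calE(\nu))$. Letting $M\to\infty$ and using monotone convergence on the left ($\Phi_\nu^M\uparrow\Phi_\nu$, bounded below on $\supp(\mu)$) gives \eqref{A.13}. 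In particular $\int\Phi_\nu\,d\mu$ is finite, so $\calE(\mu-\nu):=\calE(\mu)-2\int\Phi_\nu\,d\mu+\calE(\nu)$ is well defined and finite; and since each $\calE_M(\mu-\nu)=\calE_M(\mu)-2\int\Phi_\nu^M\,d\mu+\calE_M(\nu)$ is $\geq0$ and converges to $\calE(\mu-\nu)$, we also get $\calE(\mu-\nu)\geq0$.

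The remaining point---strict inequality when $\mu\neq\nu$---is equivalent to $\calE(\mu-\nu)>0$, i.e.\ to strict conditional positive definiteness of the logarithmic kernel: for a compactly supported signed measure $\sigma$ with $\sigma(\bbC)=0$ and $\calE(\abs{\sigma})<\infty$, one must have $\calE(\sigma)=0\Rightarrow\sigma=0$. I would establish this through the Fourier representation $\calE(\sigma)=c\int_{\bbR^2}\abs{\widehat\sigma(\xi)}^2\,\abs{\xi}^{-2}\,d\xi$ with $c>0$, which is legitimate because $\widehat\sigma(0)=\sigma(\bbC)=0$ kills the singularity at the origin while compact support and finite energy control the integrand at infinity; the right side vanishes only if $\widehat\sigma\equiv0$, i.e.\ $\sigma=0$. (One can instead quote this as \cite[Thm.~1.16]{Land} or the corresponding statement in \cite{Ran}.) Applying this with $\sigma=\mu-\nu$ gives the strict inequality in \eqref{A.13}. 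The main obstacle is exactly this last step: making the Fourier identity rigorous for a merely finite-energy signed measure rather than a smooth one; the rest is routine bookkeeping with monotone convergence.
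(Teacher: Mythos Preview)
Your proof is correct and follows essentially the same route as the paper: deduce lower semicontinuity and \eqref{A.13} from the truncated energies $\calE_M$ via monotone convergence (the paper's ``By taking $M$ to infinity''), and then obtain strict inequality from the Fourier representation \eqref{A.14}, which is exactly the content of the paper's Remark after the theorem. Your version simply fills in more of the routine details (the product-measure argument for weak continuity of $\calE_M$, the explicit monotone-convergence bookkeeping) that the paper leaves implicit.
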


\begin{remark} The strict inequality requires an extra argument. One can prove that if
$\mu,\nu\in\calM_{+,1}(E)$ with finite energy, then $\widehat\mu(k)-\widehat\nu(k)$ is
analytic in $k$ vanishing at $k=0$ and
\begin{equation} \lb{A.14}
\calE(\mu) +\calE(\nu)-2 \int \Phi_\nu(x)\, d\mu(x) =\f{1}{2\pi}\int
\biggl|\f{\widehat\mu(k) -\widehat\nu(k)}{k}\biggr|^2\, d^2 k
\end{equation}
\end{remark}

Since the inequality in \eqref{A.13} is strict and
\begin{equation} \lb{A.15}
\calE(\tfrac12\,\mu + \tfrac12\, \nu) =\tfrac14\, \calE(\mu) + \tfrac14\, \calE(\nu) +
\tfrac12 \int \Phi_\nu(x)\, d\mu(x)
\end{equation}
we see that $\calE(\mu)$ is strictly convex on $\calM_{+,1}(E)$, and thus

\begin{theorem}\lb{TA.6} Let $E$ be a compact subset of $\bbC$ with $C(E)>0$. Then there
exists a unique probability measure, $d\rho_E$, called the equilibrium measure for $E$,
that has \begin{equation} \lb{A.16}
\calE(\rho_E)=\log(C(E)^{-1})
\end{equation}
\end{theorem}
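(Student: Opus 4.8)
The plan is to extract the equilibrium measure as the minimizer of the energy functional $\calE$ over the compact convex set $\calM_{+,1}(E)$, using the two properties of $\calE$ established just above: weak lower semicontinuity (Theorem~\ref{TA.5}) and strict convexity (which follows from \eqref{A.13}, \eqref{A.15}). First I would note that since $C(E)>0$, the quantity $\min\{\calE(\rho)\mid\rho\in\calM_{+,1}(E)\}$ appearing in the definition \eqref{A.5} of capacity is finite, so there is at least one measure of finite energy on $E$; thus the infimum of $\calE$ over $\calM_{+,1}(E)$ is a finite real number, call it $I$, and $\log(C(E)^{-1}) = I$ by \eqref{A.5}.

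Next I would establish existence of a minimizer. The set $\calM_{+,1}(E)$ is weak-$*$ compact (Banach--Alaoglu, or Helly's selection theorem, since $E$ is compact), so take a minimizing sequence $\rho_n\in\calM_{+,1}(E)$ with $\calE(\rho_n)\to I$ and pass to a weakly convergent subsequence $\rho_{n_k}\to\rho_E$. Since $\calM_{+,1}(E)$ is weak-$*$ closed, $\rho_E\in\calM_{+,1}(E)$, and by the weak lower semicontinuity from Theorem~\ref{TA.5},
\[
\calE(\rho_E)\le\liminf_{k\to\infty}\calE(\rho_{n_k})=I.
\]
Since $I$ is the infimum, $\calE(\rho_E)=I=\log(C(E)^{-1})$, which is \eqref{A.16}.

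Finally I would prove uniqueness. Suppose $\mu,\nu\in\calM_{+,1}(E)$ both achieve the minimum $I<\infty$, and suppose $\mu\ne\nu$. Then $\tfrac12\mu+\tfrac12\nu\in\calM_{+,1}(E)$, and by the identity \eqref{A.15} together with the \emph{strict} inequality in \eqref{A.13} (valid precisely because both have finite energy and $\mu\ne\nu$),
\[
\calE(\tfrac12\mu+\tfrac12\nu)
=\tfrac14\calE(\mu)+\tfrac14\calE(\nu)+\tfrac12\int\Phi_\nu(x)\,d\mu(x)
<\tfrac14\calE(\mu)+\tfrac14\calE(\nu)+\tfrac14\calE(\mu)+\tfrac14\calE(\nu)
=I,
\]
contradicting minimality of $I$. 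Hence $\mu=\nu$, so the minimizer is unique; we name it $d\rho_E$ and call it the equilibrium measure for $E$.

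The only genuinely delicate point is the strict inequality in \eqref{A.13}, i.e., that $\calE$ is \emph{strictly} convex rather than merely convex — but this is exactly what Theorem~\ref{TA.5} and its accompanying remark (via the Fourier/energy identity \eqref{A.14}, which forces $\widehat\mu=\widehat\nu$ hence $\mu=\nu$ when equality holds) already supply, so I may invoke it directly. Everything else is a routine compactness-plus-lower-semicontinuity argument, and the main obstacle in a fully detailed write-up would merely be bookkeeping the finiteness of energy along the minimizing sequence so that the convexity machinery applies.
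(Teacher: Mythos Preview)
Your proof is correct and follows the same approach as the paper: the paper simply writes ``and thus'' after observing (via \eqref{A.13} and \eqref{A.15}) that $\calE$ is strictly convex and (via Theorem~\ref{TA.5}) weakly lower semicontinuous, leaving the standard compactness-plus-lower-semicontinuity argument implicit. You have spelled out exactly those omitted details, and there is nothing to correct.
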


The properties of $\rho_E$ are summarized in

\begin{theorem}\lb{TA.7} Let $E\subset\bbC$ be compact. Let $\Omega$ be the unbounded
component of $\bbC\setminus E$ and $\wti\Omega=\bbC\setminus (\Omega\cup E)$ the union
of the bounded components of $\bbC\setminus E$. Suppose
$C(E)>0$ and $d\rho_E$ is its equilibrium measure. Then
\begin{SL}
\item[{\rm{(a)}}] For all $x\in\bbC$,
\begin{equation} \lb{A.17}
\Phi_{\rho_E}(x) \leq \log(C(E)^{-1})
\end{equation}
\item[{\rm{(b)}}] Equality holds in \eqref{A.17} q.e.\ on $E$ and on $\wti\Omega$.
\item[{\rm{(c)}}] Strict inequality holds in \eqref{A.17} on $\Omega$.
\item[{\rm{(d)}}] $\rho_E$ is supported on $\partial\Omega$, the boundary viewed as a set in $\bbC$.
\item[{\rm{(e)}}] $\Phi_{\rho_E}$ is continuous on $\bbC$ if and only if it is continuous when
restricted to $\supp(d\rho_E)$ if and only if equality holds in \eqref{A.17} on $\supp(d\rho_E)$.
\item[{\rm{(f)}}] If $I\subset E\subset\bbR$ with $I=(a,b)$, then $d\rho_E\restriction I$
is absolutely continuous with respect to Lebesgue measure, $\f{d\rho_E}{dx}\restriction I$
is real analytic, and equality holds in \eqref{A.17} on $I$.
\end{SL}
\end{theorem}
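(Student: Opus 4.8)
The plan is to establish the six assertions in a logical order that exploits the variational characterization of $d\rho_E$ and classical facts about subharmonic/superharmonic functions. First I would set $K = \log(C(E)^{-1}) = \calE(\rho_E)$ and prove (a) by a standard variational argument: if $\Phi_{\rho_E}(x_0) > K$ on a set of positive capacity, one perturbs $\rho_E$ toward a measure $\eta$ supported there (using Proposition~\ref{PA.2} to get $\eta$ with continuous potential) and checks via \eqref{A.15} and the strict conditional positive definiteness in Theorem~\ref{TA.5} that $\calE((1-t)\rho_E + t\eta)$ can be made strictly smaller than $\calE(\rho_E)$ for small $t>0$, contradicting minimality. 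This simultaneously forces equality in \eqref{A.17} q.e.\ on $\supp(d\rho_E)$; the harder half of (b) — equality q.e.\ on all of $E$ and on $\wti\Omega$ — follows by integrating the reverse inequality: $\int \Phi_{\rho_E}\,d\rho_E = K$ together with $\Phi_{\rho_E}\le K$ everywhere means no ``room'' is lost, and a Frostman-type argument (or the domination principle for potentials, \cite[Land]{Land}) upgrades this to q.e.\ equality on $E$. For $\wti\Omega$, one uses that $\Phi_{\rho_E} - K$ is subharmonic on $\wti\Omega$, $\le 0$, and $= 0$ q.e.\ on $\partial\wti\Omega \subset \partial\Omega$, hence (by the maximum principle, since polar sets are negligible for subharmonic functions) $=0$ throughout $\wti\Omega$.

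For (c), on $\Omega$ the function $u = K - \Phi_{\rho_E}$ is harmonic, $\ge 0$, nonconstant (it tends to $+\infty$ at $\infty$ like $\log|z|$), so by the strong minimum principle $u > 0$ on all of $\Omega$; this is exactly strict inequality in \eqref{A.17}. For (d), if $\rho_E$ charged a point of $\wti\Omega$ or $\Omega$, superharmonicity of $\Phi_{\rho_E}$ together with the equality/strict-inequality structure just established would be violated — more cleanly, one shows the ``balayage'' of $\rho_E$ onto $\partial\Omega$ has energy no larger and the same exterior potential, so by uniqueness $\rho_E$ must already live on $\partial\Omega$. Part (e) is the localized continuity principle: the three conditions are linked by Proposition~\ref{PA.1B} (continuity on the support implies continuity on $\bbC$) and by the fact that for an equilibrium potential, failure of the upper bound \eqref{A.17} to be an equality on $\supp(d\rho_E)$ is precisely where lower semicontinuity fails to be genuine continuity; I would cite \cite[Ch.~II]{Land} for the precise statement and give the short argument via \eqref{A.9a}-style cancellation of semicontinuous pieces.

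The most substantial piece is (f): real-analyticity of $\tfrac{d\rho_E}{dx}$ on an interval $I=(a,b)\subset E\cap\bbR$. Here the plan is to use the Stieltjes/Herglotz transform $G(z) = \int (y-z)^{-1}\,d\rho_E(y)$, which satisfies $G = -2\,\partial_z \Phi_{\rho_E}$ off $E$; by (b), $\Phi_{\rho_E} \equiv K$ on a neighborhood (in $\bbR$) of each point of $I$, so the two boundary values $G(x\pm i0)$ differ only by $2\pi i \tfrac{d\rho_E}{dx}$ and their \emph{sum} has vanishing real part there, which by the reflection principle lets one continue $G$ analytically across $I$; then $\tfrac{d\rho_E}{dx} = \tfrac{1}{\pi}\,\Ima G(x+i0)$ inherits real-analyticity, and absolute continuity on $I$ is immediate since a real-analytic (in particular continuous) density exists. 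The main obstacle I anticipate is making the reflection/continuation argument fully rigorous without circular appeal to regularity of $\rho_E$ — one must know \emph{a priori} that $\rho_E\restriction I$ has no singular part before differentiating, which I would obtain from (b) plus the fact that a measure whose potential is constant on an open real interval can have no mass with a singular continuous or pure point component there (a point mass or a singular measure produces a non-constant, indeed unbounded-variation, contribution to $\Phi_{\rho_E}$ on that interval). Once absolute continuity is in hand, the Plemelj-type formula and Schwarz reflection finish the analyticity.
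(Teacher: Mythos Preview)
Your overall architecture matches the paper's sketch (given in the remarks following the theorem), and parts (c), (e), (f) are essentially identical: the paper also uses the maximum principle on $\Omega$ for (c), Proposition~\ref{PA.1B} for (e), and the Schwarz reflection/Herglotz boundary-value argument for (f). Your worry about a priori absolute continuity in (f) is unnecessary: once reflection shows $\Phi_{\rho_E}$ extends harmonically across $I$, the standard boundary-value theory of Herglotz functions delivers absolute continuity and the density formula $\tfrac{d\rho_E}{dx}=\tfrac{1}{\pi}\Ima F(x+i0)$ in one stroke, so there is no circularity to avoid.

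There is, however, a genuine direction error in your argument for (a). You propose: if $\{\Phi_{\rho_E}>K\}$ has positive capacity, perturb $\rho_E$ toward a measure $\eta$ supported there and obtain lower energy. But the first-order variation of $\calE((1-t)\rho_E+t\eta)$ at $t=0$ is $2\bigl(\int\Phi_{\rho_E}\,d\eta-K\bigr)$, which is \emph{positive} when $\Phi_{\rho_E}>K$ on $\supp(\eta)$; energy goes up, and you get no contradiction. The perturbation argument runs in the opposite direction: it shows that $\{x\in E:\Phi_{\rho_E}(x)<K\}$ has capacity zero (this is how the paper proves (b), Remark~6), by moving mass toward where the potential is \emph{small}. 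The global upper bound (a) is obtained by a different route (Remark~5): first the Euler--Lagrange condition---perturbing by $(1+\veps f)\,d\rho_E$ with $\int f\,d\rho_E=0$---gives $\Phi_{\rho_E}=K$ for $\rho_E$-a.e.\ $x$; then lower semicontinuity forces $\Phi_{\rho_E}\le K$ on all of $\supp(\rho_E)$; finally, since $\Phi_{\rho_E}$ is harmonic on $\bbC\setminus\supp(\rho_E)$ and tends to $-\infty$ at infinity, the maximum principle extends the bound to all of $\bbC$. With this correction your outline goes through.
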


\begin{remarks} 1. For example, if $E=\partial\bbD$, $\Omega =\bbC\setminus\ol{\bbD}$
and $\wti\Omega=\bbD$.

2. See \cite{Land,Ran} for complete proofs.

\smallskip
3. (a)$+$(b) is called Frostman's theorem.

\smallskip
4. Equality in \eqref{A.17} may not hold everywhere on $E$; for example, if $E=[-1,1]\cup
\{2\}$, the equilibrium measure gives zero weight to $\{2\}$, so is the same as the
equilibrium measure for $[-1,1]$ and that $d\rho_E$ has inequality on $\bbC\setminus [-1,1]$
by (c).

\smallskip
5. If $f$ is supported on $\supp (d\rho_E)$ and $f$ bounded and Borel, and $\int f\,
d\rho_E=0$, then $(1+\veps f) d\rho_E$ is a probability measure for $\veps$ small
with $\calE((1+\veps f)\, d\rho_E)<\infty$. Since $\f{d}{d\veps}\calE((1+\veps f)
d\rho_E)= 2\int f(x) \Phi_{\rho_E}(x)\, d\rho_E(x)$, we see $\Phi_{\rho_E}(x)$ is a
constant for $d\rho_E$-a.e.\ $x$. Since $\calE(\rho_E)=\int d\rho_E\, \Phi_{\rho_E}(x)$,
the constant must be $\calE(\rho_E)=\log (C(E)^{-1})$. By lower semicontinuity,
\eqref{A.17} holds on $\supp(d\rho_E)$. Since $\Phi_{\rho_E}$ is harmonic on $\bbC
\setminus\supp(d\rho_E)$ and goes to $-\infty$ as $\abs{x}\to\infty$, \eqref{A.17}
holds by the maximum principle.

\smallskip
6. Let $\eta$ be a probability measure on $E$ with $\calE(\eta) <\infty$. Then
\begin{equation} \lb{A.18}
\calE((1-t)d\rho_E +td\eta) =\calE(d\rho_E) + t\biggl( \int \Phi_{\rho_E}(x)
[d\eta -d\rho_E]\biggr) + O(t^2)
\end{equation}
Since $\int d\rho_E \Phi_{\rho_E}(x)=\calE(d\rho_E)=\log (C(E)^{-1})$, if $\eta$ is
supported on a set where strict inequality holds in \eqref{A.17}, $\calE((1-t)d\rho_E
+td\eta) <\calE(d\rho_E)$ for small $t$, violating minimality. Thus the set where
\eqref{A.17} has inequality cannot support a measure of finite energy, that is,
it has zero capacity, proving (b).

\smallskip
7. Since $\Phi_{\rho_E}$ is harmonic on $\Omega$ and goes to $-\infty$ at $\infty$,
the maximum principle implies $\Phi_{\rho_E}(x)$ cannot take its maximum
(which is $\log (C(E)^{-1})$) on $\Omega$. (e) follows from Proposition~\ref{PA.1B}.
(d) is left to the references; see \cite{Land,Ran}.

\smallskip
8. If $I\subset E\subset\bbR$, one first shows equality holds in \eqref{A.17} on $I$
and that $\Phi$ is continuous there. (This uses the theory of ``barriers"; see
\cite{Land,Ran}. One can also prove this using periodic Jacobi matrices and approximations;
see \cite{Rice}). Then one can apply the reflection principle to see that $\Phi_{\rho_E}$
has a harmonic continuation across $I$. Indeed, $\Phi_{\rho_E}$ is then the real part of
a function analytic on $I$ with zero derivative there. That derivative for $\Ima z >0$ is
the real part of
\begin{equation} \lb{A.19}
F(z) =\int \f{d\rho_E(x)}{x-z}
\end{equation}
so, by the standard theory of boundary values of Herglotz functions (see
\cite[Sect.~1.3]{OPUC1}), we have that $d\rho_E\restriction I$ is absolutely
continuous and
\begin{equation} \lb{A.20}
\f{d\rho_E}{dx} = \f{1}{\pi}\, \Ima F(x+i0)
\end{equation}
proving real analyticity of this derivative.

\smallskip
9. The same argument as in Remark~8 applies if $I$ is replaced by an analytic arc with
a neighborhood $N$ obeying $N\cap E=I$. In particular, if $I$ is an ``interval" in
$\partial\bbD$ and $I \subset E \subset\partial\bbD$, we have absolute continuity and
analyticity on $I$.
\end{remarks}

Here is an interesting consequence of \eqref{A.2}:

\begin{theorem}\lb{TA.7A} Let $\nu$ be a measure of compact support, $E$, so that
$C(E) >0$. Then
\begin{equation} \lb{A.22x}
\Phi_\nu(x) <\infty \qquad\text{for $d\rho_E$ a.e.\ $x$}
\end{equation}
\end{theorem}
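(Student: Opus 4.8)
The plan is to read off \eqref{A.22x} from the symmetry of the Coulomb kernel together with Frostman's bound on the equilibrium potential. First I would invoke \eqref{A.2} for the pair of measures $\nu$ and $d\rho_E$: since $C(E)>0$, the equilibrium measure $d\rho_E$ exists (Theorem~\ref{TA.6}) and is a probability measure supported in the compact set $E$, so \eqref{A.2} gives
\[
\int \Phi_\nu(x)\, d\rho_E(x) = \int \Phi_{\rho_E}(x)\, d\nu(x).
\]
Fubini is legitimate here exactly as in the discussion after \eqref{A.2}, because $\log\abs{x-y}^{-1}$ is bounded below when $x$ and $y$ both range over the compact set $E$.

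Next I would bound the right-hand side. By Frostman's bound, Theorem~\ref{TA.7}(a), we have $\Phi_{\rho_E}(x)\le \log(C(E)^{-1})$ for all $x\in\bbC$, and the constant $\log(C(E)^{-1})$ is finite since $C(E)>0$; moreover $\Phi_{\rho_E}$ is bounded below on $E$, again because the kernel is bounded below on $E\times E$ (recall $\supp(\rho_E)\subset E$). Hence $\Phi_{\rho_E}$ is bounded on $\supp(\nu)=E$, and since $\nu$ is a finite measure,
\[
\int \Phi_{\rho_E}(x)\, d\nu(x) < \infty .
\]
Combining with the displayed identity yields $\int \Phi_\nu(x)\, d\rho_E(x) < \infty$. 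Finally, since $\rho_E$ is supported in the compact set $E$ and $\Phi_\nu$ is bounded below there (same boundedness-below of the kernel), there is a constant $c$ with $\Phi_\nu+c\ge 0$ on $\supp(\rho_E)$; a nonnegative function with finite integral is finite $d\rho_E$-a.e., so $\Phi_\nu(x)<\infty$ for $d\rho_E$-a.e.\ $x$, which is \eqref{A.22x}.

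There is no real obstacle here; the only points needing a word of care are that \eqref{A.2} genuinely applies (both measures compactly supported, kernel bounded below on the relevant compact set) and the elementary fact that a function bounded below whose integral is finite must be finite almost everywhere. I note an even shorter route: by Corollary~\ref{CA.2A} the set $\{x\mid\Phi_\nu(x)=\infty\}$ has capacity zero, and since $\calE(\rho_E)=\log(C(E)^{-1})<\infty$, Proposition~\ref{PA.3} shows $d\rho_E$ assigns it zero measure; this gives \eqref{A.22x} at once, though without localizing the estimate to $E$ as the first argument does.
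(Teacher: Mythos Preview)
Your proof is correct and follows essentially the same route as the paper: use Frostman's bound \eqref{A.17} to get $\int \Phi_{\rho_E}\,d\nu<\infty$, then apply the symmetry \eqref{A.2} to obtain $\int \Phi_\nu\,d\rho_E<\infty$, and conclude \eqref{A.22x}. You supply a bit more care than the paper does on the last step (bounded below on the compact support, hence finite integral forces a.e.\ finiteness), and your closing remark about the alternative via Corollary~\ref{CA.2A} and Proposition~\ref{PA.3} is a valid and pleasant shortcut not mentioned in the paper.
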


\begin{remarks} 1. This can happen even if $\calE(\nu)=\infty$ so $\int \Phi_\nu(x)\,
d\nu(x)=\infty$.

\smallskip
2. For \eqref{A.17} implies
\[
\int \Phi_{\rho_E}(x)\, d\nu \leq \log (C(E)^{-1})\nu(E) <\infty
\]
so \eqref{A.2} implies
\begin{equation} \lb{A.22y}
\int \Phi_\nu(x)\, d\rho_E(x) <\infty
\end{equation}
\end{remarks}

The following illustrates the connection between potential theory and polynomials:

\begin{theorem}[Bernstein--Walsh Lemma]\lb{TA.7B} Let $E$ be a compact set in $\bbC$
with $C(E) >0$ and let $\Omega$ be the unbounded component of $\bbC\setminus E$. Let
$p_n$ be a polynomial of degree $n$ and let
\begin{equation} \lb{A.21a}
\|p_n\|_E=\sup_{z\in E}\, \abs{p_n(z)}
\end{equation}
Then for all $z\in\Omega$,
\begin{equation} \lb{A.21b}
\abs{p_n(z)} \leq C(E)^{-n} \|p_n\|_E [\exp (-n\Phi_{\rho_E}(z))]
\end{equation}
\end{theorem}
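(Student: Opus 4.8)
The plan is to prove the Bernstein--Walsh lemma by a standard maximum-principle argument applied to a suitable subharmonic comparison function. First I would consider the function
\[
u(z) = \tfrac{1}{n}\log\abs{p_n(z)} + \Phi_{\rho_E}(z),
\]
which is subharmonic on $\Omega$ (as the sum of $\tfrac1n\log\abs{p_n}$, which is subharmonic wherever $p_n\neq 0$ and has the right behaviour at zeros of $p_n$, and $\Phi_{\rho_E}$, which is harmonic on $\Omega$ since $\rho_E$ is supported on $\partial\Omega\subset E$). The goal is to show $u(z)\le \log(\|p_n\|_E^{1/n}/C(E))$ throughout $\Omega$, which is exactly \eqref{A.21b} after exponentiating and multiplying through by $n$.

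Next I would check the boundary behaviour needed to invoke the maximum principle for subharmonic functions on the (possibly unbounded) region $\Omega$. At $\infty$: by \eqref{A.26a}-type asymptotics (the defining property of the Green's function recalled before Theorem~\ref{T1.8}), $\Phi_{\rho_E}(z) = -\log\abs{z} + \log C(E) + o(1)$, while $\tfrac1n\log\abs{p_n(z)} = \log\abs{z} + O(1/\abs{z})$, so $u(z) \to \log C(E) + 0$ is bounded at $\infty$; in fact $u$ extends continuously there with value $\log C(E)\le \log(\|p_n\|_E^{1/n})$ trivially. On $\partial\Omega$: by Theorem~\ref{TA.7}(b), $\Phi_{\rho_E}(x) = \log(C(E)^{-1})$ for q.e.\ $x\in E$, hence for q.e.\ $x\in\partial\Omega$; and $\abs{p_n(x)}\le \|p_n\|_E$ there, so $\limsup u \le \log(\|p_n\|_E^{1/n}) + \log(C(E)^{-1}) = \log(\|p_n\|_E^{1/n}/C(E))$ for q.e.\ boundary point. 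Then the maximum principle for subharmonic functions, in the version that permits an exceptional polar (capacity-zero) set on the boundary where the $\limsup$ bound may fail, gives $u(z)\le \log(\|p_n\|_E^{1/n}/C(E))$ for all $z\in\Omega$.

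The main obstacle I expect is the careful invocation of the maximum principle with a capacity-zero exceptional set on $\partial\Omega$: the ordinary maximum principle requires the boundary $\limsup$ bound to hold everywhere, but Theorem~\ref{TA.7}(b) only gives equality q.e. One needs the generalized maximum principle (sometimes attributed to the fact that polar sets are removable for bounded-above subharmonic functions); this is standard potential theory and I would simply cite \cite{Land,Ran}, noting that $u$ is bounded above near the exceptional set because $\Phi_{\rho_E}\le\log(C(E)^{-1})$ everywhere by Theorem~\ref{TA.7}(a), so no genuine blow-up occurs and the polar set is indeed negligible. A minor secondary point is handling zeros of $p_n$ inside $\Omega$, but this causes no trouble since $\tfrac1n\log\abs{p_n}$ is subharmonic (not merely harmonic) precisely because it can have the value $-\infty$ at such zeros, and subharmonicity is all the maximum principle needs.
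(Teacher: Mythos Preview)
Your proposal is correct and follows essentially the same maximum-principle argument as the paper. The only notable difference is that the paper removes the zeros of $p_n$ from $\Omega$ and works with the \emph{harmonic} function $g(z)=\log\abs{p_n(z)}+n\Phi_{\rho_E}(z)+n\log C(E)$, whereas you keep the zeros and use subharmonicity---a cosmetic distinction.

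One simplification worth noting: the ``main obstacle'' you anticipate never actually arises. You invoke Theorem~\ref{TA.7}(b) to get $\Phi_{\rho_E}(x)=\log(C(E)^{-1})$ \emph{q.e.}\ on $\partial\Omega$, and then worry about a polar exceptional set. But only the inequality $\Phi_{\rho_E}(x)\le\log(C(E)^{-1})$ is needed for the boundary bound, and by Theorem~\ref{TA.7}(a) this holds \emph{everywhere}. So $\limsup_{z\to x}u(z)\le \tfrac{1}{n}\log\|p_n\|_E+\log(C(E)^{-1})$ at \emph{every} $x\in\partial\Omega$, and the ordinary maximum principle suffices. The paper does exactly this: from \eqref{A.17} it gets $g(z)\le\log\abs{p_n(z)}$ for all $z$, hence $g\le\log\|p_n\|_E$ near $E$, with no exceptional set. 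You do eventually note this yourself, which is why your argument goes through; but it means the detour through the generalized maximum principle is unnecessary.
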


\begin{remarks} 1. This is named after Bernstein and Walsh \cite{Walsh}, although
the result appears essentially in Szeg\H{o} \cite{Sz24}.

\smallskip
2. Let $\{z_j\}_{j=1}^n$ be the zeros of $p_n$. Define
\begin{equation} \lb{A.21c}
g(z)=\log\abs{p_n(z)} + n\Phi_{\rho_E}(z) + n\log (C(E))
\end{equation}
on $\Omega\cup\{\infty\}\setminus\{z_j\}_{j=1}^n=\Omega'$. $g$ is harmonic on $\Omega'$
including at $\infty$ since both $\log\abs{p_n(z)}$ and $-n\Phi_{\rho_E} (z)$ are
$n\log\abs{z}$ plus harmonic near $\infty$. Since $g_n(z)\to-\infty$ at the $z_j\in\Omega$,
we see
\begin{equation} \lb{A.21d}
\sup_{z\in\Omega'}\, \abs{g(z)} \leq\lim_{\delta\downarrow 0}\,
\biggl[\sup_{\substack{\dist (w,E)=\delta \\ w\in\Omega }} \abs{g(w)}\biggr]
\end{equation}
But, by \eqref{A.17}, $g(z)\leq \log \abs{p_n(z)}$, so
\[
g(z)\leq \log\|p_n\|_E
\]
which is \eqref{A.21b} on $\Omega'\setminus\{\infty\}$. \eqref{A.21b} holds trivially
at the $z_j$, completing the proof.
\end{remarks}

Following ideas of Craig \cite{Craig}, one can say much more about $\f{d\rho_E}{dx}$
when $E$ contains an isolated closed interval:

\begin{theorem}\lb{TA.8} Let $E\subset\bbR$ be compact and $a<c<d<b$ so $E\cap (a,b)=[c,d]$.
Then there exists $g$ real, real analytic, and strictly positive on $[c,d]$ so that
\begin{equation} \lb{A.21}
d\rho_E \restriction [c,d] = g(x) [(d-x)(x-c)]^{-1/2}\, dx
\end{equation}
If $E=\cup_{j=1}^{\ell+1} [a_j,b_j]$ with $a_1 <b_1 < a_2 < \cdots < b_{\ell+1}$,
there are $x_j\in (b_j, a_{j+1})$ for $j=1,2,\dots, \ell$ so that
\begin{equation} \lb{A.22}
d\rho_E(x) = \f{1}{\pi} \biggl[\, \prod_{j=1}^\ell \f{x-x_j}{\sqrt{(x-b_j)(x-a_{j+1})}}\biggr]
\f{1}{\sqrt{(x-a_1)(b_{\ell+1}-x)}}\, dx
\end{equation}
\end{theorem}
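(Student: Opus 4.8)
\emph{Proof proposal.} The tool is the Cauchy transform
\[
F(z)=\int\frac{d\rho_E(x)}{x-z},
\]
a Herglotz function, analytic off $\supp(d\rho_E)\subset\bbR$, together with two facts from Theorem~\ref{TA.7}: on an open interval $I$ with $I\subset E\subset\bbR$ one has $\frac{d\rho_E}{dx}=\frac1\pi\Ima F(x+i0)$ and $\Phi_{\rho_E}\equiv\log(C(E)^{-1})$ on $I$, the latter giving $\Real F(x+i0)=-\frac{d}{dx}\Phi_{\rho_E}(x)=0$; also $F(z)=2\,\partial_z\Phi_{\rho_E}(z)$, so estimates on $F$ near $\bbR$ are estimates on $\nabla\Phi_{\rho_E}$.

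First I would prove the local assertion, which is really all that is needed for the second one as well. Put $m=\tfrac{c+d}2$, fix $r<\min(c-a,b-d)$, $\Delta=\{|z-m|<r\}$; then $\supp(d\rho_E)\cap\Delta\subset[c,d]$, so $F$ is analytic on $\Delta\setminus[c,d]$. Let $\sqrt{(z-c)(z-d)}$ be the branch on $\bbC\setminus[c,d]$ behaving like $z-m$ at $\infty$, and set $\psi(z)=F(z)\sqrt{(z-c)(z-d)}$. I claim $\psi$ continues analytically to $\Delta$: across $(a,c)\cup(d,b)$ trivially; across $(c,d)$ because the square root changes sign while $F(x-i0)=\overline{F(x+i0)}=-F(x+i0)$ (the last since $F(x+i0)$ is purely imaginary), so the two boundary values of $\psi$ agree; and at $z=c,d$ because $F$ blows up no faster than $\dist(z,\{c,d\})^{-1/2}$ there — this is the one genuinely potential‑theoretic input, obtained from $F=2\,\partial_z\Phi_{\rho_E}$ and the classical fact that $G_E=\log(C(E)^{-1})-\Phi_{\rho_E}$ vanishes like $\dist^{1/2}$ at the tip of a slit of $E$ (the local slit‑disc model; Landkof \cite{Land}, Ransford \cite{Ran}). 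Since $\rho_E$ is real, $\psi(\bar z)=\overline{\psi(z)}$, hence $\psi$ is real on $(c,d)$; computing $\frac1\pi\Ima F(x+i0)$ and using that $\sqrt{(x+i0-c)(x+i0-d)}$ is a purely imaginary multiple of $[(d-x)(x-c)]^{1/2}$ yields $\frac{d\rho_E}{dx}\restriction(c,d)=g(x)[(d-x)(x-c)]^{-1/2}$ with $g=-\tfrac1\pi\psi$ real analytic on $\Delta$, in particular on $[c,d]$. Strict positivity of $g$ on $(c,d)$ is immediate from positivity of the density; at the endpoints it amounts to $\psi(c)\neq0\neq\psi(d)$, which follows from the matching \emph{lower} bound $G_E\gtrsim\dist^{1/2}$ in the same slit‑disc model (equivalently, the true $|x-\text{endpoint}|^{-1/2}$ blow‑up of equilibrium densities).

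For the second assertion, $E=\bigcup_{j=1}^{\ell+1}[a_j,b_j]$, so $\bbC\setminus E$ is connected and $\supp(d\rho_E)=E$, while $G_E>0$ off $E$ and $=0$ on $E$. On each gap $(b_j,a_{j+1})$ one has $\partial_xG_E=-F$ and $F'=\int(y-x)^{-2}\,d\rho_E>0$, so $F$ is strictly increasing there; as $G_E$ vanishes at both endpoints of the gap it has an interior critical point, so $F$ has exactly one zero $x_j\in(b_j,a_{j+1})$. On the two unbounded real rays $F$ has constant sign (dictated by monotonicity of $G_E$) and no zero, and off $\bbR$ it has none by the Herglotz property; so $x_1,\dots,x_\ell$ are all the zeros of $F$ in $\bbC\setminus E$. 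Now let $R(z)=\prod_{k=1}^{\ell+1}(z-a_k)(z-b_k)$, $\sqrt{R}$ the branch on $\bbC\setminus E$ with $\sqrt{R(z)}\sim z^{\ell+1}$ at $\infty$, and
\[
\Psi(z)=\frac{F(z)\,\sqrt{R(z)}}{\prod_{j=1}^\ell(z-x_j)}.
\]
$\Psi$ is analytic on $\bbC\setminus E$, extends across each gap (all factors analytic and nonvanishing), extends across each band $(a_k,b_k)$ by the same reflection as before (using $\Real F(x+i0)=0$ there and the sign change of $\sqrt R$ across the cut), and has removable singularities at the $a_k,b_k$ by the $\dist^{-1/2}$ bound on $F$. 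Thus $\Psi$ is entire, and $\Psi(z)\to-1$ as $z\to\infty$ since $F(z)\sim-1/z$ and $\sqrt{R(z)}\big/\prod_{j=1}^\ell(z-x_j)\sim z$; by Liouville $\Psi\equiv-1$, i.e. $F(z)=-\prod_{j=1}^\ell(z-x_j)\big/\sqrt{R(z)}$. Taking $\frac1\pi\Ima F(x+i0)$ on each band $(a_m,b_m)$ — where $R(x)<0$, so $\sqrt{R(x+i0)}$ is purely imaginary — gives \eqref{A.22}, the branch choices for the several square roots on each band being fixed (and automatically consistent) by the requirement that the density be nonnegative.

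The hard part is the endpoint behaviour of $\rho_E$ — the two‑sided estimate $\dist^{\pm1/2}$ on $G_E$ near a tip of $E$ — on which both the removable‑singularity step and the strict positivity in the first assertion rest; everything downstream is Herglotz algebra and Liouville's theorem. I would obtain it from the explicit conformal model of a neighbourhood of a slit tip, or import it from the sources cited after Theorem~\ref{TA.8} (Landkof \cite{Land}, Ransford \cite{Ran}, Craig \cite{Craig}).
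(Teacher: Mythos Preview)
Your argument is correct and complete in its essentials, but it follows a different path from the paper's. The paper proceeds by observing that $\arg F(x+i0)$ takes only the values $0$, $\tfrac{\pi}{2}$, $\pi$ on the real axis (namely $\tfrac{\pi}{2}$ on the bands, and $0$ or $\pi$ on each piece of $\bbR\setminus E$ according to the sign of the real function $F$ there), and then writes down the exponential Herglotz representation of $\log F$; formulas \eqref{A.21} and \eqref{A.22} drop out directly, with the square‑root endpoint behavior appearing automatically and the $x_j$ characterized by $\int_{b_j}^{a_{j+1}}F(x)\,dx=0$. Your route instead clears the singularities and zeros of $F$ by hand (multiplying by $\sqrt{R}$ and dividing by $\prod(z-x_j)$), continues across the bands by reflection, and invokes Liouville. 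The trade‑off is that the paper's method produces the $\dist^{-1/2}$ endpoint behavior as output, whereas you must feed it in as input (your two‑sided $G_E\asymp\dist^{1/2}$ estimate at a slit tip) to justify the removable singularities and the strict positivity of $g$ at $c,d$; you acknowledge this and cite the right sources. Your approach is more bare‑handed and perhaps more transparent about the mechanism; the paper's is shorter once one knows the exponential Herglotz trick.

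One small technical slip: with $m=\tfrac{c+d}{2}$ and your constraint $r<\min(c-a,b-d)$, the disc $\Delta$ need not contain all of $[c,d]$ (take $c-a$ tiny and $d-c$ large). The fix is painless: require instead $(d-c)/2<r<\min(m-a,b-m)$, which is always possible since $m-a>(d-c)/2$ and $b-m>(d-c)/2$; or simply replace the disc by the rectangle $\{a<\Real z<b,\ |\Ima z|<\delta\}$ and run the same continuation argument there.
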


\begin{remarks} 1. \eqref{A.22} is from Craig \cite{Craig}.

\smallskip
2. The idea behind the proof is simple. One lets $F(z)=\int \f{d\rho_E(x)}{x-z}$. By the
arguments above, $F$ is pure imaginary on $[c,d]$ as the derivative of $\Phi_{\rho_E}(x)$.
Thus, $\arg F(x+i0)$ is $\f{\pi}{2}$ on $[c,d]$, and by a simple argument, $0$ on
$[c-\delta, c)$ and $\pi$ on $(d,d+\delta]$. A Herglotz representation for $\log F(x+i0)$
yields \eqref{A.21} and \eqref{A.22}.

\smallskip
3. The $x_j$'s are uniquely determined by
\begin{equation} \lb{A.23}
\int_{b_j}^{a_{j+1}} F(x)\, dx =0
\end{equation}
\end{remarks}

Recall a set $S$ is called {\it perfect\/} if it is closed and has no isolated points.
A standard argument shows that any compact $E$ has a unique decomposition into disjoint
sets, $D\cup S$ where $D$ is a countable set and $S$ is perfect (similarly, any compact
$E\subset\bbR$ can be written $Z\cup F$ where $Z$ has Lebesgue measure zero and $F$ is
{\it essentially perfect}, that is, $\abs{F\cap (x-\delta,x+\delta)} >0$
for any $x\in F$ and $\delta >0$).

Similarly, we call a set $P$ {\it potentially perfect} (the terminology is new) if $P$
is closed and $C(P\cap\{x\mid\abs{x-x_0} <\delta\}) >0$ for all $x_0\in P$ and $\delta >0$.
It is easy to see that any compact $E\subset\bbC$ can be uniquely written as a disjoint
union $E=Q\cup P$ where $C(Q)=0$ and $P$ is potentially perfect.

These notions are related to equilibrium measures. If $\capa(E)>0$ and $E=Q\cup P$ is this
decomposition, then
\begin{equation} \lb{A.28}
P=\supp(d\rho_E)
\end{equation}
In particular, $\supp(d\rho_E)=E$ if and only if $E$ is potentially perfect.

Just as one writes $\sigma(d\mu)=\sigma_\disc (d\mu)\cup\sigma_\ess (d\mu)$, we
single out the potentially perfect part of $\sigma(d\mu)$ and call it $\sigma_\capa (d\mu)$.

Next, we want to state a kind of converse to Frostman's theorem.

\begin{theorem}\lb{TA.9} Let $E\subset\bbC$ be compact. Suppose $E$ is potentially perfect. Let
$\eta\in\calM_{+,1}(E)$ be a probability measure on $E$ with $\supp(d\eta)\subseteq  E$ so that
for some constant, $\alpha$,
\begin{equation} \lb{A.24}
\Phi_\eta(x)=\alpha \qquad d\rho_E\text{-a.e. } x
\end{equation}
Then $\eta=\rho_E$, the equilibrium measure, and $\alpha =\log(C(E)^{-1})$.
\end{theorem}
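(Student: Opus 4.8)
Write $W=\log(C(E)^{-1})$. The plan is to bound the energy $\calE(\eta)$ from above by $W$ using the hypothesis, and then invoke uniqueness of the minimizer of $\calE$ on $\calM_{+,1}(E)$. Throughout I will use the symmetry $\int\Phi_\eta\,d\rho_E=\int\Phi_{\rho_E}\,d\eta$ from \eqref{A.2} (valid by Fubini for any two compactly supported positive measures), Frostman's theorem (Theorem~\ref{TA.7}(a),(b): $\Phi_{\rho_E}\le W$ everywhere, with equality q.e.\ on $E$), and the fact that, since $E$ is potentially perfect, $\supp(d\rho_E)=E$ (see the discussion around \eqref{A.28}).

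First, since $\Phi_{\rho_E}$ is bounded on $E$ (below by compactness, above by $W$) and $\eta$ is a probability measure on $E$, the symmetry relation gives $\alpha=\int\Phi_\eta\,d\rho_E=\int\Phi_{\rho_E}\,d\eta\in(-\infty,W]$; in particular $\alpha$ is finite and $\alpha\le W$. The key step is to upgrade the a.e.\ hypothesis to a pointwise inequality on $E$: since $\Phi_\eta$ is lower semicontinuous (Proposition~\ref{PA.1}), $\{x\mid\Phi_\eta(x)>\alpha\}$ is open, and it is $\rho_E$-null by hypothesis, so it is disjoint from $\supp(d\rho_E)=E$. Hence $\Phi_\eta\le\alpha$ everywhere on $E\supseteq\supp(d\eta)$, whence
\[
\calE(\eta)=\int\Phi_\eta\,d\eta\le\alpha<\infty.
\]
So $\eta$ has finite energy, and by Proposition~\ref{PA.3} it assigns zero mass to every polar set.

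Now I can evaluate the symmetry relation the other way: the set where $\Phi_{\rho_E}\neq W$ on $E$ is polar, hence $\eta$-null, so $\alpha=\int\Phi_{\rho_E}\,d\eta=W=\log(C(E)^{-1})$, which is the claimed value of $\alpha$. Combining, $\calE(\eta)\le\alpha=W=\calE(\rho_E)$ by \eqref{A.16}; since $\rho_E$ is the unique minimizer of $\calE$ on $\calM_{+,1}(E)$ (Theorem~\ref{TA.6}, resting on the strict conditional positive definiteness of Theorem~\ref{TA.5}), it follows that $\eta=\rho_E$. The only place potential perfectness of $E$ enters is the identification $\supp(d\rho_E)=E$ used in the key step; I expect that step — converting ``$\Phi_\eta=\alpha$ $\rho_E$-a.e.'' into ``$\Phi_\eta\le\alpha$ on all of $E$'' via semicontinuity — to be the one genuinely delicate point, since without $\supp(d\rho_E)=E$ one only controls $\Phi_\eta$ on the potentially-perfect part of $E$ and the finiteness-of-energy argument becomes circular. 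Everything else is routine manipulation with \eqref{A.2} and Frostman's theorem.
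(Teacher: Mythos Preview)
Your proof is correct and follows essentially the same route as the paper's: use lower semicontinuity together with $\supp(d\rho_E)=E$ to upgrade the $\rho_E$-a.e.\ equality to $\Phi_\eta\le\alpha$ on all of $E$, deduce $\calE(\eta)\le\alpha<\infty$, then use Proposition~\ref{PA.3} and the symmetry \eqref{A.2} to force $\alpha=\log(C(E)^{-1})$ and conclude by uniqueness of the minimizer. Your preliminary observation that $\alpha=\int\Phi_{\rho_E}\,d\eta\le W$ is a small extra detail not made explicit in the paper, but the argument is otherwise the same.
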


\begin{remark} By lower semicontinuity, $\Phi_\eta(x)\leq\alpha$ on $\supp(d\rho_E)=E$ by
hypothesis. Thus,
\begin{equation} \lb{A.25}
\calE(\eta) =\int \Phi_\eta (x)\, d\eta(x) \leq \alpha <\infty
\end{equation}
so $\eta$ must give zero weight to zero capacity sets. Thus, $\Phi_{\rho_E}(x)=\log(C(E)^{-1})$
for $d\eta$-a.e.\ $x$ and thus,
\begin{equation} \lb{A.26}
\int \Phi_{\rho_E}(x)\, d\eta(x) =\log(C(E)^{-1})
\end{equation}
By \eqref{A.2} and \eqref{A.24},
\[
\text{LHS of \eqref{A.26}}=\int \Phi_\eta(x)\, d\rho_E(x)=\alpha
\]
Thus, $\alpha =\log(C(E)^{-1})$, and by \eqref{A.25} and uniqueness of minimizers, $\eta=\rho_E$.
\end{remark}

Next, we note that the {\it Green's function\/} for a compact $E\subset\bbC$ is defined by
\begin{equation}\lb{A.26a}
G_E(z)=-\Phi_{\rho_E}(z) + \log(C(E)^{-1})
\end{equation}
It is harmonic on $\bbC\setminus E$, $G_E(z)-\log\abs{z}$ is harmonic at infinity, and $G_E(z)$
has zero boundary values q.e.\ on $E$. Notice that $G_E(z)\geq 0$ on $\bbC$. If
\begin{equation} \lb{A.27}
\lim_{z\to E}\, G_E(z)=0
\end{equation}
in the sense that
\begin{equation} \lb{A.28x}
\lim_{\delta\downarrow 0}\, \sup_{\dist(z,E)<\delta} G_E(z)=0
\end{equation}
we say $E$ is {\it regular for the Dirichlet problem\/} (just called regular). By Theorem~\ref{TA.7}(c),
this is true if and only if $\Phi_{\rho_E}(x)=\log (C(E)^{-1})$ for all $x\in E$. By
Theorem~\ref{TA.8}, this is true for finite unions of disjoint closed intervals.

Notice that the Bernstein--Walsh lemma \eqref{A.21b} can be rewritten
\begin{equation} \lb{A.29}
\abs{p_n(z)}\leq \|p_n\|_E \exp (nG_E(z))
\end{equation}

Closely related are comparison theorems and limit theorems. We will state them for subsets
of $\bbR$:

\begin{theorem}\lb{TA.10A} Let $E_1\subset E_2\subset\bbR$ be compact sets. Then
\begin{SL}
\item[{\rm{(i)}}] $C(E_1)\leq C(E_2)$
\item[{\rm{(ii)}}]
\begin{equation}\lb{A.35x}
G_{E_2}(z)\leq G_{E_1}(z)
\end{equation}
for all $z\in\bbC$
\item[{\rm{(iii)}}]
\begin{equation}\lb{A.35a}
d\rho_{E_2}\restriction E_1 \leq d\rho_{E_1}
\end{equation}
\item[{\rm{(iv)}}] If $I=(a,b)\subset E_1$, then on $I$,
\begin{equation}\lb{A.35b}
\f{d\rho_{E_2}}{dx} \leq \f{d\rho_{E_1}}{dx}
\end{equation}
for all $x\in I$.
\end{SL}
\end{theorem}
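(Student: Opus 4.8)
The plan is to prove the four assertions in the order stated, with part (ii) doing the real work and (iii), (iv) following from it. Part (i) is immediate: since $E_1\subseteq E_2$ we have $\calM_{+,1}(E_1)\subseteq\calM_{+,1}(E_2)$, so the minimum of $\calE$ over the larger class is no larger, and by \eqref{A.5}, $C(E_1)=\exp(-\min_{\calM_{+,1}(E_1)}\calE)\le\exp(-\min_{\calM_{+,1}(E_2)}\calE)=C(E_2)$ (with the obvious reading if $C(E_1)=0$). From now on assume $C(E_1)>0$.

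For (ii), let $\Omega_j$ be the unbounded component of $\widehat{\bbC}\setminus E_j$ (so $\infty\in\Omega_j$); since $E_1\subseteq E_2$ we get $\Omega_2\subseteq\Omega_1$, and $G_{E_1}=-\Phi_{\rho_{E_1}}+\log C(E_1)^{-1}$ is harmonic on $\widehat{\bbC}\setminus E_1\supseteq\Omega_2$. Set $w:=G_{E_1}-G_{E_2}$ on $\Omega_2$; it is harmonic there, including at $\infty$, where (using $G_{E_j}(z)=\log\abs{z}+\log C(E_j)^{-1}+o(1)$ and part (i)) $w(z)\to\log C(E_1)^{-1}-\log C(E_2)^{-1}=\log\bigl(C(E_2)/C(E_1)\bigr)\ge 0$. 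At q.e.\ $\zeta\in\partial\Omega_2\subseteq E_2$ one has $G_{E_2}(z)\to 0$ while $G_{E_1}\ge 0$, so $\liminf_{z\to\zeta}w(z)\ge 0$ q.e.\ on $\partial\Omega_2$; as $w$ is bounded and harmonic on $\Omega_2$ and polar sets carry no harmonic measure, the extended minimum principle gives $w\ge 0$, i.e.\ $G_{E_2}\le G_{E_1}$ on $\Omega_2$. Off $\Omega_2$ we have $G_{E_2}\equiv 0$ on the bounded components of $\widehat{\bbC}\setminus E_2$ and $G_{E_2}=0$ q.e.\ on $E_2$, while $G_{E_1}\ge 0$; the finitely many remaining irregular points of $E_2$ are dealt with using upper semicontinuity of $G_{E_2}=-\Phi_{\rho_{E_2}}+\log C(E_2)^{-1}$ and the bound already proved on $\Omega_2$ (the standard fine-topology argument of \cite{Land,Ran}). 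This yields \eqref{A.35x} for all $z\in\bbC$.

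For (iii) I would invoke the identification of the equilibrium measure with harmonic measure of the complement from $\infty$: $\rho_{E_j}(S)=\omega(\infty,S,\Omega_j)$ for Borel $S$, a standard fact consistent with Theorem~\ref{TA.7}(d) (see \cite{Ran}). Fix a Borel $S\subseteq E_1$ and let $u(z)=\omega(z,S,\Omega_1)$, the bounded harmonic function on $\Omega_1$ with boundary values $\bdone_S$ q.e.\ on $\partial\Omega_1$. Restrict $u$ to $\Omega_2\subseteq\Omega_1$: at q.e.\ $\zeta\in\partial\Omega_2$, either $\zeta\in\partial\Omega_1$ (hence $\zeta\in E_1$) and the boundary value is $\bdone_S(\zeta)$, or $\zeta\in\Omega_1$ (hence $\zeta\in E_2\setminus E_1$, so $\zeta\notin S$) and the boundary value equals $u(\zeta)\ge 0=\bdone_S(\zeta)$; in both cases the boundary value of $u|_{\Omega_2}$ is $\ge\bdone_S$ q.e.\ on $\partial\Omega_2$. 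The maximum principle gives $u\ge\omega(\cdot,S,\Omega_2)$ on $\Omega_2$, and evaluation at $\infty$ gives $\rho_{E_1}(S)\ge\rho_{E_2}(S)$, which is \eqref{A.35a}. (One can also run this through $w=G_{E_1}-G_{E_2}\ge 0$ from (ii) using $\tfrac1{2\pi}\Delta G_{E_j}=\rho_{E_j}$.) For (iv), note $I=(a,b)\subseteq E_1\subseteq E_2$, so by Theorem~\ref{TA.7}(f) (applied to both $E_1$ and $E_2$) equality holds in \eqref{A.17} on $I$, hence $G_{E_1}$ and $G_{E_2}$ vanish on $I$, are real analytic up to $I$ from above, and (by \eqref{A.20} and the Cauchy--Riemann relation for the Herglotz function $F_{E_j}(z)=\int d\rho_{E_j}(x)/(x-z)$) satisfy $\tfrac{d\rho_{E_j}}{dx}(x)=\tfrac1\pi\lim_{y\downarrow0}G_{E_j}(x+iy)/y$ for $x\in I$. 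Since $0\le G_{E_2}(x+iy)\le G_{E_1}(x+iy)$ by (ii), dividing by $y>0$ and letting $y\downarrow0$ gives $\tfrac{d\rho_{E_2}}{dx}(x)\le\tfrac{d\rho_{E_1}}{dx}(x)$ for every $x\in I$, i.e.\ \eqref{A.35b}; alternatively (iv) follows from (iii) and real analyticity of the densities on $I$.

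The main obstacle is part (iii): the statement is genuinely delicate because "larger set $\Rightarrow$ more equilibrium mass" is false globally, and it is essential that $S$ be confined to $E_1$. Establishing the comparison for arbitrary Borel subsets (not merely on arcs where $\rho_E$ has a real-analytic density) is what forces the detour through harmonic measure (or balayage) and the careful boundary bookkeeping on $\partial\Omega_2$. The only other point requiring care is upgrading (ii) from a q.e.\ statement to a pointwise-everywhere inequality at the irregular points of $E_2$, which is routine but must be invoked from the potential-theory texts rather than proved afresh.
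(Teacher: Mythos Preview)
Your proposal is correct and shares the core mechanisms with the paper---the maximum principle for $G_{E_1}-G_{E_2}$ for (ii), and the density formula $\tfrac{d\rho_E}{dx}(x)=\tfrac{1}{\pi}\lim_{\veps\downarrow 0}\veps^{-1}G_E(x+i\veps)$ for (iv)---but the overall architecture differs. The paper does not attack general compact $E_1\subset E_2\subset\bbR$ directly: it first proves (ii)--(iv) only when $E_1,E_2$ are finite unions of closed intervals (where both sets are regular for the Dirichlet problem, so $G_{E_j}\equiv 0$ on $E_j$ and the maximum-principle step needs no q.e.\ caveats, the extension to $z\in E_2$ being handled by the circle-average identity \eqref{A.35e}), then proves Theorem~\ref{TA.10B}, and finally recovers the general case by approximating each $E_j$ from outside by $\{x:\dist(x,E_j)\le\tfrac{1}{n}\}$. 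In particular, the paper derives (iii) \emph{from} (iv): in the finite-union case each $\rho_{E_j}$ is absolutely continuous on $E_j$, so the measure inequality \eqref{A.35a} is just the density inequality \eqref{A.35b}, and the approximation carries it over. Your route is more direct: you run the extended maximum principle with q.e.\ boundary data for general $E_j$, and you prove (iii) independently via the identification $\rho_{E_j}=\omega(\infty,\cdot,\Omega_j)$ and domain monotonicity of harmonic measure. This buys you (iii) without densities or approximation, at the cost of importing the harmonic-measure machinery and the subtler q.e.\ maximum principle; the paper's detour through finite unions keeps everything elementary. One slip: the irregular points of $E_2$ form a polar set, not a finite set---your phrase ``the finitely many remaining irregular points'' is wrong, though the argument you actually invoke (upper semicontinuity of $G_{E_2}$, or equivalently \eqref{A.35e}) does not depend on finiteness.
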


\begin{theorem}\lb{TA.10B} Let $E_1 \supset E_2 \supset \dots$ be compact subsets of $\bbR$. Let
$E_\infty =\cap_{j=1}^\infty E_j$. Then
\begin{SL}
\item[{\rm{(i)}}]
\begin{equation}\lb{A.35c}
\lim_{n\to\infty}\, C(E_n) =C(E_\infty)
\end{equation}
\item[{\rm{(ii)}}] $\rho_{E_n}\to \rho_{E_\infty}$ weakly
\item[{\rm{(iii)}}] For $z\in\bbC\setminus E_\infty$,
\begin{equation}\lb{A.35d}
\lim_{n\to\infty}\, G_{E_n}(z) =G_{E_\infty} (z)
\end{equation}
and \eqref{A.35d} holds q.e.\ on $E_\infty$.
\item[{\rm{(iv)}}] If $I=(a,b)\subset E_\infty$, then uniformly on compact subsets of $I$,
\begin{equation}\lb{A.35g}
\lim_{n\to\infty}\, \f{d\rho_{E_n}}{dx} = \f{d\rho_{E_\infty}}{dx}
\end{equation}
\end{SL}
\end{theorem}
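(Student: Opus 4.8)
The plan is to establish (i) and (ii) together by soft means (weak-$*$ compactness, lower semicontinuity of the energy, and uniqueness of the equilibrium measure), then to read off (iii) from (ii) combined with (i), and finally to obtain (iv) by a reflection-principle argument near the interval $I$.

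\textbf{Steps for (i) and (ii).} By Theorem~\ref{TA.10A}(i) the sequence $n\mapsto C(E_n)$ is nonincreasing and $C(E_n)\geq C(E_\infty)$, so $c:=\lim_n C(E_n)$ exists with $c\geq C(E_\infty)$. If $c=0$ then $C(E_\infty)=0$, (i) is trivial, and (ii)--(iv) are empty, so assume $c>0$. Each $\rho_{E_n}\in\calM_{+,1}(E_1)$, which is weakly compact, so every subsequence of $(\rho_{E_n})$ has a weakly convergent sub-subsequence $\rho_{E_{n_k}}\to\mu$. Since $\bigcap_n E_n=E_\infty$, for every open $U\supset E_\infty$ one has $E_N\subset U$ for some $N$ (finite intersection property for compacts), hence $\mu(\bbC\setminus U)\leq\liminf_k \rho_{E_{n_k}}(\bbC\setminus U)=0$; thus $\supp(\mu)\subset E_\infty$. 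By weak lower semicontinuity of $\calE$ (Theorem~\ref{TA.5}), $\calE(\mu)\leq\liminf_k\calE(\rho_{E_{n_k}})=\log c^{-1}<\infty$, so $C(E_\infty)>0$ and $\log C(E_\infty)^{-1}=\min_{\calM_{+,1}(E_\infty)}\calE\leq\calE(\mu)\leq\log c^{-1}$, i.e.\ $C(E_\infty)\geq c$. Hence $C(E_\infty)=c$, which is (i), and $\calE(\mu)=\log C(E_\infty)^{-1}$, so $\mu=\rho_{E_\infty}$ by uniqueness of the minimizer (Theorem~\ref{TA.6}). As every weak subsequential limit of $(\rho_{E_n})$ equals $\rho_{E_\infty}$ and these measures lie in the weakly compact set $\calM_{+,1}(E_1)$, the full sequence converges weakly to $\rho_{E_\infty}$, giving (ii).

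\textbf{Steps for (iii).} If $z\notin E_\infty$ then $z\notin E_N$ for some $N$, so $\dist(z,E_n)\geq\dist(z,E_N)>0$ for all $n\geq N$; hence $y\mapsto\log\abs{z-y}^{-1}$ is continuous on $E_N$, and weak convergence gives $\Phi_{\rho_{E_n}}(z)\to\Phi_{\rho_{E_\infty}}(z)$. Combined with (i) and $G_E=-\Phi_{\rho_E}+\log C(E)^{-1}$ (see \eqref{A.26a}), this yields $G_{E_n}(z)\to G_{E_\infty}(z)$. For the q.e.\ statement on $E_\infty$: by Theorem~\ref{TA.7}(b), for each $n$ there is $Z_n$ of capacity zero with $G_{E_n}=0$ on $E_n\setminus Z_n\supset E_\infty\setminus Z_n$, and likewise $G_{E_\infty}=0$ off a capacity-zero set $Z_0\subset E_\infty$; the countable union $Z=Z_0\cup\bigcup_n Z_n$ has capacity zero, and for $z\in E_\infty\setminus Z$ one has $G_{E_n}(z)=0=G_{E_\infty}(z)$ for all $n$.

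\textbf{Steps for (iv), and the main obstacle.} Fix a compact $K\subset I=(a,b)$ and a set $W\subset\bbC$ open, symmetric about $\bbR$, with $W\cap\bbR$ a compact subinterval of $I$ containing $K$. Since each $E_n\subset\bbR$ and $I\subset E_\infty\subset E_n$, Theorem~\ref{TA.7}(f) (equivalently Theorem~\ref{TA.8}) shows $\Phi_{\rho_{E_n}}\equiv\log C(E_n)^{-1}$ on $I$, is continuous on $W$, and harmonic on $W\setminus\bbR$; by the Schwarz reflection principle it extends to a harmonic $u_n$ on $W$, with $\tfrac{d\rho_{E_n}}{dx}(x)=-\tfrac{1}{\pi}\,\partial_y u_n(x,0)$. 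On $W\cap\bbC_+$ the potentials $\Phi_{\rho_{E_n}}$ are uniformly bounded and, by (ii), converge pointwise to $\Phi_{\rho_{E_\infty}}$; bounded harmonic functions converging pointwise converge uniformly on compacts, so by symmetry $u_n\to u_\infty$ uniformly on compact subsets of $W$, and interior derivative estimates then give $\partial_y u_n\to\partial_y u_\infty$ uniformly on $K$, i.e.\ $\tfrac{d\rho_{E_n}}{dx}\to\tfrac{d\rho_{E_\infty}}{dx}$ uniformly on $K$. I expect the main obstacle to be exactly this last part: checking cleanly that $\Phi_{\rho_{E_n}}$ is genuinely constant (not merely q.e.) and continuous across all of $I$ so that reflection applies, and then upgrading pointwise convergence of the harmonic extensions to uniform convergence of their normal derivatives; the arguments for (i)--(iii) are routine soft analysis.
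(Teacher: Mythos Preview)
Your proposal is correct and follows essentially the same line as the paper. The core of (ii) is identical: weak compactness of $\calM_{+,1}(E_1)$, any limit point supported on $E_\infty$, lower semicontinuity of the energy, and uniqueness of the minimizer. For (iii) off $E_\infty$ you use the same continuity argument, and for (iv) you use the reflection principle exactly as the paper does (the paper phrases it as ``uniform bounds on derivatives of $d\rho/dx$ on $I$ coming from the proof of \eqref{A.21},'' but that proof is precisely the reflection argument in Remark~8 to Theorem~\ref{TA.7}).

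There are two small organizational differences worth noting. First, for (i) the paper invokes the outer-capacity characterization \eqref{A.7} directly (if $U\supset E_\infty$ is open then eventually $E_n\subset U$, so $\lim C(E_n)\leq C(U)$, then take the infimum over $U$); you instead extract (i) as a byproduct of the energy argument, which is arguably cleaner since it avoids appealing to the nontrivial fact that \eqref{A.7} holds for compact sets. Second, for the q.e.\ statement in (iii) on $E_\infty$, the paper's remark is rather cryptic (it cites the mean-value identity \eqref{A.35e}); your argument via the countable union of the exceptional sets $Z_n$ is explicit and perfectly fine. Your caution about (iv) is well placed but the needed ingredients are exactly those supplied by Theorem~\ref{TA.7}(f): $\Phi_{\rho_{E_n}}$ is genuinely constant and continuous on $I$, so reflection applies; the uniform bound plus pointwise convergence then gives local uniform convergence of the harmonic extensions by normal-family arguments, and Cauchy-type interior estimates pass this to the normal derivatives.
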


\begin{remarks} 1. \eqref{A.35a} has the pleasing physical interpretation that if one conductor is
connected to another, charge leaks out in a way that there is less charge everywhere in the original
conductor.

\smallskip
2. Part (i) of each theorem is easy. For Theorem~\ref{TA.10A}, it follows from the minimum energy
definition. For Theorem~\ref{TA.10B}(i), we note that if $U$ is open with $E_\infty \subset U$, then
eventually $E_n\subset U$, so \eqref{A.7} implies \eqref{A.35c}.

\smallskip
3. One proves (ii)--(iv) of Theorem~\ref{TA.10A} first for $E$, a finite union of closed intervals,
then proves Theorem~\ref{TA.10B}, and then for general compact $E_\infty \subset\bbR$ defines
$E_n=\{x\mid\dist(x, E_\infty)\leq\f{1}{n}\}$ and proves $\cap_n E_n=E_\infty$ and
each $E_n$ is a finite union of closed intervals. Theorem~\ref{TA.10B} then yields
Theorem~\ref{TA.10A} for general $E$'s (see \cite{Rice}).

\smallskip
4. For $E_1, E_2$ finite union of closed intervals and $z\notin E_2$, one gets \eqref{A.35x} by noting
the difference $G_{E_1}(z)-G_{E_2}(z)$ is harmonic on $\bbC\setminus E_2$, zero on $E_1$, and
positive on $E_2\setminus E_1$, where $G_{E_1} >0$ and $G_{E_2}=0$. The inequality for $z\in E_2$
then follows from the fact that any subharmonic function $h$ obeys
\begin{equation}\lb{A.35e}
h(z_0)=\lim_{r\downarrow 0}\, \f{1}{2\pi r} \int_0^{2\pi} h(z_0 +re^{i\theta})\, d\theta
\end{equation}

\smallskip
5. For this case, one gets \eqref{A.35a}/\eqref{A.35b} by noting that \eqref{A.20} can be rewritten
\begin{equation}\lb{A.35f}
\f{d\rho_E(x_0)}{dx} = \f{1}{\pi}\, \lim_{\veps\downarrow 0}\, \veps^{-1} G_E (x_0+i\veps)
\end{equation}
for $x_0\in E$ and using \eqref{A.35x} for $x\in E_1$.

\smallskip
6. If $\{E_n\}_{n=1}^\infty$, $E_\infty$ are as in Theorem~\ref{TA.10B} and $d\eta$ is a weak limit
point of $d\rho_{E_n}$, then $\eta$ is supported on $E_\infty$, and by lower semicontinuity of the
Coulomb energy $\calE$,
\begin{align*}
\calE(\eta) &\leq \lim \calE(\rho_{E_n}) \\
&= \lim\log (C(E_n)^{-1}) \\
&= \log (C(E_\infty)^{-1})
\end{align*}
by \eqref{A.35c}, so $\eta=\rho_{E_\infty}$, that is, $\rho_{E_n}\to \rho_{E_\infty}$ weakly.
\eqref{A.35d} then follows for $z\notin E_\infty$ from \eqref{A.35c} and continuity of
$\Phi_\nu(z)$ in $\nu$ for $z\notin\supp(d\nu)$. \eqref{A.35e} implies convergence for
$z\in E_\infty$.

\smallskip
7. \eqref{A.35g} follows from $\rho_{E_n}\to \rho_{E_\infty}$ and uniform bounds on derivatives of
$\f{d\rho}{dx}$ on $I$, which in turn follow from the proof of \eqref{A.21}.
\end{remarks}

\begin{example}\lb{EA.10} Harmonic functions are conformally invariant, which means (since
Green's functions are normalized by $G_E(z)=\log\abs{z} + O(1)$ near infinity and boundary
values of $0$ on $E$), if $Q$ is an analytic bijection of $\bbC\setminus\ol{\bbD}\cup\{\infty\}$
to $\Omega\cup\{\infty\}$ with $Q(z)=Cz+O(1)$ near infinity, then, since $\log\abs{z}$ is the
Green's function for $E=\partial\bbD$,  $\log\abs{Q^{-1}(z)}$ is the Green's function for $E$
and $C$ its capacity. In particular, with $Q(z)=z+\f{1}{z}$, we see
\begin{equation} \lb{A.30}
C([-2,2])=1
\end{equation}
and consistent with \eqref{A.22}
\begin{equation} \lb{A.31}
d\rho_{[-2,2]}(x) =\f{1}{\pi}\, \f{dx}{\sqrt{4-x^2}\,}
\end{equation}
Notice that, by scaling, if $\lambda >0$ and $\lambda E=\{\lambda z\mid z\in E\}$ and
$\mu\in\calM_{+,1}(E)$ is mapped to $\mu_\lambda$ in $\calM_{+,1}(\lambda E)$ by scaling,
then
\begin{equation} \lb{A.32}
\calE(\mu_\lambda) =-\log(\lambda) + \calE(\mu)
\end{equation}
so
\begin{equation} \lb{A.33}
C(\lambda E) =\lambda C(E)
\end{equation}
This plus translation invariance shows
\[
C([a,b]) =\tfrac14\, (b-a)
\]

Now, let $E\subset\bbR$, $E_- =E\cap (-\infty, 0)$, $E_+ =E\setminus E_-$, and for $a>0$,
let $E^{(a)}=E_-\cup (E_+ +a)$. Let $d\rho_E$ be equilibrium measure for $E$. Since $\log
\abs{x+a-y}^{-1} < \log\abs{x-y}^{-1}$ for $x>0$, $y<0$, $a>0$, we see
\begin{equation} \lb{A.34}
\calE(\rho\restriction E_- + T_a (\rho\restriction E_+)) <\calE(\rho)
\end{equation}
(where $T_a \mu$ is the translate of $\mu$). Thus
\[
C(E^{(a)}) \geq C(E)
\]
This is an expression of the repulsive nature of the Coulomb force! Thus, by joining together
all the pieces of $E$ (via a limiting argument), one sees that if $\abs{E}$ is the Lebesgue
measure of $E\subset\bbR$, then
\begin{equation} \lb{A.35}
C(E)\geq\tfrac14\, \abs{E}
\end{equation}
and sets of capacity zero have Lebesgue measure zero.
\qed
\end{example}

\begin{example}\lb{EA.11} Let $d\mu$ be the conventional Cantor measure on $[0,1]$ which can be
thought of as writing $x=\sum\f{a_n(x)}{3^n}$ with $a_n =0,1$ or $2$ and taking $d\mu$ as the
infinite product of measures given weight $\f12$ to $a_n=0$ or $2$. Looking at $a_1$, we get the
usual two pieces of mass $\f12$ with minimum distance $\f13$ between them. Look at $a_1,\dots,a_k$
and we have $2^k$ pieces of mass $2^{-k}$ and minimum distance $3^{-k}$. Given $x,y$ in the
Cantor set, $\dist\abs{x-y} <3^{-k}$ if and only if they are in the same pieces, that is,
\[
\mu (\{x\mid\abs{x-y} <3^{-k}\}) =2^{-k}
\]
Thus
\begin{align*}
\int \log\abs{x-y}^{-1}\, d\mu(x)\, d\mu(y)
&\leq \sum_k \, (k+1) (\log 3)\, \mu (\{x,y\mid \abs{x-y} <3^{-k}\}) \\
&= \sum_k \, (k+1)(\log 3)\, 2^{-k} <\infty
\end{align*}
This shows the Cantor set has positive capacity. Generalizing, we get sets of any Hausdorff
dimension $\alpha >0$ with positive capacity. In fact, as we will see shortly, any set of
positive Hausdorff dimension has positive capacity.
\qed
\end{example}

\begin{example}\lb{EA.12} Fix $a>0$ and let $E=(-\f{a}{2} -\Delta, -\f{a}{2})\cup (\f{a}{2},
\f{a}{2} +\Delta)$ where $\Delta =\f{4}{a}$. When $a$ is very large, the equilibrium measure
is very close to the average of the equilibrium measure for the two individual intervals.
This measure has energy approximately
\[
2\, \f{1}{4} \, \log \biggl( \f{4}{\Delta}\biggr) + 2\, \f14\, \log(a) =0
\]
so the asymptotic capacity is $1$. This phenomenon of distant pieces of individually small
capacity having total capacity bounded away from zero is a two-dimensional phenomenon.
\qed
\end{example}

Sets of capacity zero not only have zero Lebesgue measure, but they also have zero
$\alpha$-dimensional Hausdorff measure for any $\alpha >0$:

\begin{theorem}\lb{TA.16A} Any compact set $E$ of capacity zero has zero Hausdorff dimension.
\end{theorem}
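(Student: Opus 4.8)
\emph{The plan is to prove the contrapositive:} a compact $E\subset\bbC$ with $\dim_H(E)>0$ has $C(E)>0$. It is enough to show that whenever the $\alpha$-dimensional Hausdorff measure satisfies $\calH^\alpha(E)>0$ for some $\alpha>0$, one has $C(E)>0$: indeed, if $\dim_H(E)>0$ we may pick $\alpha\in(0,\dim_H(E))$, for which $\calH^\alpha(E)=\infty>0$. So fix such an $\alpha$ and aim to exhibit a probability measure on $E$ of finite Coulomb energy.

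\emph{First} I would invoke Frostman's lemma: since $\calH^\alpha(E)>0$, there is a nonzero finite Borel measure $\mu$ with $\supp(\mu)\subset E$ obeying the growth bound $\mu(\{y\mid\abs{x-y}<r\})\leq r^\alpha$ for all $x\in\bbC$ and $r>0$. \emph{Next}, using the scaling relations \eqref{A.32} and \eqref{A.33} (under $E\mapsto\lambda E$ capacity and energy change by explicit amounts, the Frostman bound rescales, and $\dim_H$ is unchanged), I may assume $\diam(E)<1$, so that $\log\abs{x-y}^{-1}\geq 0$ on $E\times E$. \emph{Then} a layer-cake estimate bounds the potential uniformly: for each fixed $x$,
\[
\Phi_\mu(x)=\int\log\abs{x-y}^{-1}\,d\mu(y)=\int_0^\infty\mu(\{y\mid\abs{x-y}<e^{-t}\})\,dt\leq\int_0^\infty e^{-\alpha t}\,dt=\frac{1}{\alpha}.
\]
Integrating $d\mu$ and using \eqref{A.4} gives $\calE(\mu)=\int\Phi_\mu(x)\,d\mu(x)\leq\mu(E)/\alpha<\infty$, and passing to $\nu=\mu/\mu(E)\in\calM_{+,1}(E)$ (which scales the energy by $\mu(E)^{-2}$) yields $\calE(\nu)<\infty$. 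By the definition \eqref{A.5} of capacity this forces $C(E)>0$; equivalently, $C(E)=0$ implies $\calH^\alpha(E)=0$ for every $\alpha>0$, i.e.\ $\dim_H(E)=0$.

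\emph{The hard part} is Frostman's lemma itself — the converse to the mass-distribution principle — which is the only non-elementary ingredient (the layer-cake bound and the scaling normalization are routine). I would either cite it from the potential-theory texts already referenced (\cite{Land}, \cite{Ran}) or sketch the standard proof: a dyadic stopping-time construction that builds $\mu$ by distributing mass on the maximal dyadic squares meeting $E$ for which a trial ``$\alpha$-content'' has not yet been exhausted, the total mass being bounded below precisely because $\calH^\alpha(E)>0$. Note this argument simultaneously recovers the already-known fact that capacity-zero sets are Lebesgue-null, since it treats all $\alpha>0$ uniformly.
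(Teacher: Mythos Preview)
Your proof is correct and is the standard textbook route: contrapositive via Frostman's lemma, then a layer-cake bound to show the resulting measure has finite logarithmic energy. The paper, by contrast, argues directly rather than by contrapositive. Starting from $C(E)=0$, it builds a finite measure $\mu$ (not supported on $E$) with $\Phi_\mu(x)=\infty$ for every $x\in E$, by summing rescaled equilibrium measures of shrinking neighborhoods $E_m\supset E$ whose capacities tend to zero fast. From $\Phi_\mu(x)=\infty$ it deduces that $\mu$ violates every Frostman-type bound at each $x\in E$, i.e.\ $\limsup_{r\downarrow 0}(2r)^{-\alpha}\mu(I_r^x)=\infty$; a Vitali-type covering lemma then converts this into arbitrarily efficient $\alpha$-covers of $E$, giving $\calH^\alpha(E)=0$ for every $\alpha>0$.

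What each buys: your argument is cleaner and works verbatim in $\bbC$ (or $\bbR^n$), but it imports Frostman's lemma as a black box --- as you acknowledge, that is where the real work hides. The paper's argument is more self-contained in that it replaces Frostman's lemma with an explicit construction plus a covering lemma; it also yields the side fact (used elsewhere in the appendix) that a capacity-zero $G_\delta$ set is exactly the $\{\Phi_\mu=\infty\}$ set of some potential. On the other hand, the paper only sketches the case $E\subset\bbR$, whereas your approach needs no such restriction.
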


\begin{remarks} 1. We will sketch a proof where $E\subset\bbR$. What one needs to do, for any
$\alpha >0$, $\veps >0$, is to find a cover of $E$ by intervals $I_1,\dots, I_n\dots$ of length
$\abs{I_j}$ so that
\begin{equation}\lb{A.52a}
\sum\, \abs{I_j}^\alpha <\veps
\end{equation}

\smallskip
2. We begin by noting that there is a measure $\mu$ (not necessarily supported by $E$) so that
$\Phi_\mu(x)=\infty$ for all $x\in E$ (we do not care that $E$ is exactly the set where
$\Phi_\mu=\infty$ but note that by combining the ideas here with Corollary~\ref{CA.2A},
one can show $E$ is the set where some potential is infinite if and only if $E$ is a
$G_\delta$-set of zero capacity). Here is how to construct $\mu$. Let $E_m =\{x\in\bbR
\mid\dist(x,E)\leq \f{1}{m}\}$. $E_m$ is a finite union of closed intervals and, by
\eqref{A.7}, $C(E_m)\downarrow 0$. Pass to a subsequence $\ti E_m$, so $C(\ti E_m)\leq
\exp (-\f{1}{m^2})$ so $\Phi_{\rho_{\ti E_m}}(x)\geq m^2$ on $\ti E_m$ and so on $E$.
Let $\mu =\sum_m m^{-2} \rho_{\ti E_m}$. $\mu$ is a finite measure with $\Phi_\mu=\infty$
on $E$.

\smallskip
3. Let $x\in E$. Suppose for some $\alpha >0$ and $c>0$, we have with $I_r^x =(x-r, x+r)$,
\begin{equation}\lb{A.52b}
\mu (I_r^x) \leq c(2r)^\alpha
\end{equation}
Then picking $r=2^{-n}$, we see (with $n_0$ large and negative so $\supp(d\mu)\subset
I_{2^{-n}}^x$)
\begin{align*}
\int \log \abs{y-x}^{-1}\, d\mu(y) &\leq \sum_{n=n_0}^\infty [(n+1)\log 2]
\mu (I_{2^{-n}}^x) \\
&<\infty
\end{align*}
We thus conclude \eqref{A.52b} always fails, that is, for any $x\in E$ and any $\alpha$,
\begin{equation}\lb{A.52c}
\limsup_{r\downarrow 0}\, (2r)^{-\alpha} \mu (I_r^x) =\infty
\end{equation}

\smallskip
4. Given $\alpha >0$, $\delta >0$ fixed, by \eqref{A.52c}, we can find for each $x\in E$,
so $I_{r_x}^x$ with
\begin{equation}\lb{A.52d}
\mu (I_{r_x}^x) \geq \delta^{-1} (2r_x)^\alpha
\end{equation}

\smallskip
5. There is standard covering lemma used in the proof of the Hardy--Littlewood maximal
theorem (see \cite{Katz}, p.~74, the proof of the lemma) that we can find a suitable
sequence $x_j$ with
\begin{equation}\lb{A.52e}
I_{r_{x_j}}^{x_j} \cap I_{r_{x_k}}^{x_k}=\emptyset
\end{equation}
and
\begin{equation}\lb{A.52f}
\bigcup_x I_{r_x}^x \subset \bigcup_{j=1}^\infty I_{4r_{x_j}}^{x_j}
\end{equation}

\smallskip
6. Thus, $\{I_{4r_{x_j}}^{x_j}\}$ cover $E$ and, by \eqref{A.52d},
\begin{align*}
\sum_j\, \abs{I_{4r_{x_j}}^{x_j}}^\alpha
&\leq 4^\alpha \sum_j \, \abs{I_{r_{x_j}}^{x_j}}^\alpha \\
&\leq 4^\alpha \delta \sum_j \mu (I_{r_{x_j}}^{x_j}) \\
&\leq 4^\alpha \delta \mu(\bbR)
\end{align*}
by \eqref{A.52e}. Since $\delta$ is arbitrary, we have the required covers to see $\dim (E)=0$.
\end{remarks}

A final comparison result will be needed in Appendix~B:

\begin{theorem}\lb{TA.11} Let $\mu,\nu$ be two probability measures on $\bbR$ so that for all
$z$ near infinity,
\begin{equation} \lb{A.64}
\Phi_\mu(z)\geq\Phi_\nu(z)
\end{equation}
Then $\mu=\nu$. In particular, if either $\Phi_\mu(z)\geq \Phi_{\rho_E}(z)$ or $\Phi_\mu(z)
\leq \Phi_{\rho_E}(z)$ for all $z$ near infinity, then $\mu=\rho_E$.
\end{theorem}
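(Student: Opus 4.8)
The plan is to combine the Laurent expansion of a potential near $\infty$ with the minimum principle for harmonic functions. Since $\Phi_\mu$ and $\Phi_\nu$ are well defined, $\mu$ and $\nu$ have compact support; fix $R$ with $\supp(\mu)\cup\supp(\nu)\subset\{|z|<R\}$. For $|z|>R$ the expansion $\log|z-y|^{-1}=-\log|z|+\Real\sum_{j\geq1}\frac1j(y/z)^j$ converges uniformly for $y$ in the supports, so
\begin{equation*}
\Phi_\mu(z)=-\log|z|+\Real F_\mu(z),\qquad F_\mu(z)=\sum_{j=1}^\infty\frac{1}{j\,z^j}\int y^j\,d\mu(y),
\end{equation*}
with $F_\mu$ analytic on $\{|z|>R\}\cup\{\infty\}$ and $F_\mu(\infty)=0$; likewise for $\nu$. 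Hence $h(z)\equiv\Phi_\mu(z)-\Phi_\nu(z)=\Real\bigl(F_\mu(z)-F_\nu(z)\bigr)$ is harmonic on $\{|z|>R\}$, is bounded there, and tends to $0$ as $|z|\to\infty$.

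Next I would bring in the hypothesis. By assumption $h\geq0$ on $\{|z|>R_0\}$ for some $R_0\geq R$. In the coordinate $w=1/z$ the function $w\mapsto h(1/w)$ is harmonic and bounded on a punctured disk about $w=0$, hence extends harmonically across $w=0$, with value $\lim_{|z|\to\infty}h(z)=0$ there. On the full disk it is $\geq0$ and attains its minimum value $0$ at the interior point $w=0$, so the minimum principle for harmonic functions forces it to be identically $0$; equivalently, $h\equiv0$ on $\{|z|>R_0\}$.

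Consequently $\Real(F_\mu-F_\nu)\equiv0$ on the connected open set $\{|z|>R_0\}$, so the analytic function $F_\mu-F_\nu$ is constant there, and since it vanishes at $\infty$ it is identically zero. Matching Laurent coefficients gives $\int y^j\,d\mu=\int y^j\,d\nu$ for all $j\geq1$, while $\int 1\,d\mu=1=\int1\,d\nu$; since both measures are compactly supported and polynomials are dense in the continuous functions on a compact set (Stone--Weierstrass), $\mu=\nu$. For the final assertion: if $\Phi_\mu\geq\Phi_{\rho_E}$ near $\infty$, apply the theorem with $\nu=\rho_E$; if instead $\Phi_\mu\leq\Phi_{\rho_E}$ near $\infty$, then $\Phi_{\rho_E}\geq\Phi_\mu$ near $\infty$, so apply the theorem to $\rho_E$ and $\mu$ in place of $\mu$ and $\nu$. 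In either case $\mu=\rho_E$.

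The only delicate point is the passage to $\infty$: one must observe that once the $-\log|z|$ singularities cancel, the difference of the two potentials is a \emph{bounded} harmonic function on a punctured neighborhood of $\infty$, so that $\infty$ is a genuine interior point of its (extended) domain of harmonicity and may serve as the interior extremal point in the minimum principle. Everything else is routine.
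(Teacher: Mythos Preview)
Your proof is correct and follows essentially the same approach as the paper: both expand $\Phi_\mu(z)+\log|z|$ as the real part of an analytic function near $\infty$, observe that the difference $\Phi_\mu-\Phi_\nu$ is then harmonic near $\infty$ with value $0$ there, and use the maximum/minimum principle to conclude it vanishes identically in a neighborhood of $\infty$. The only difference is in the endgame: the paper extends $\Phi_\mu=\Phi_\nu$ from a neighborhood of $\infty$ to all of $\bbC$ by harmonicity off $\bbR$ (and \eqref{A.35e} on $\bbR$) and then reads off $\mu=\nu$ from $-\Delta\Phi_\mu=2\pi\mu$, whereas you match Laurent coefficients to get equality of all moments and invoke Stone--Weierstrass. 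Both routes are standard and equally short; yours has the minor advantage of making the role of the minimum principle at $\infty$ fully explicit.
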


\begin{remark}
\begin{align*}
\Phi_\mu(z) + \log\abs{z} &= -\Real \int \log (1-\tfrac{w}{z})\, d\mu(w) \\
&= \Real \biggl[\, \sum_{n=1}^\infty z^{-n} \int w^n\, d\mu(w)\biggr]
\end{align*}
Thus $\ti\Phi_\mu(z)\equiv \Phi_\mu(z) +\log\abs{z}$ is harmonic near infinity with
$\ti\Phi_\mu(\infty) =0$. Thus, $\Phi_\mu - \Phi_\nu =\ti\Phi_\mu -\ti\Phi_\nu$ is
harmonic and vanishing at $\infty$. The only way it can have a definite sign near
infinity is if it is identically $0$. By harmonicity off $\bbR$, $\Phi_\mu=\Phi_\nu$
on $\bbC\setminus\bbR$ and then, by \eqref{A.35e}, on $\bbR$. Thus, $\Phi_\mu=
\Phi_\nu$ as distributions. Since $-\Delta\Phi_\mu =2\pi\mu$, we see $\mu=\nu$.
\end{remark}

\section*{Appendix B: Chebyshev Polynomials, Fekete Sets, and Capacity} \lb{AppB}
\renewcommand{\theequation}{B.\arabic{equation}}
\renewcommand{\thetheorem}{B.\arabic{theorem}}
\setcounter{theorem}{0}
\setcounter{equation}{0}

For further discussion of the issues in this appendix, see Andrievskii--Blatt \cite{AB},
Goluzin \cite{Go}, and Saff--Totik \cite{ST} whose discussion overlaps ours here. Let
$E\subset\bbC$ be compact and infinite. The Chebyshev polynomials, $T_n(x)$, are defined
as those monic polynomials of degree $n$ which minimize
\begin{equation} \lb{B.1}
\|T_n\|_E =\sup_{z\in E}\, \abs{T_n(z)}
\end{equation}
By $T_n^R$, the restricted Chebyshev polynomials, we mean the monic polynomials, all of
whose zeros lie in $E$, which minimize $\|\cdot\|_E$ among all such polynomials. They can
be distinct: for example, if $E=\partial\bbD$, $T_n(z)=z^n$ while $T_n^R(z)=1+z^n$ (not
unique). It can be proven (see \cite[Thm.\ III.23]{Tsu}) that Chebyshev (but not restricted
Chebyshev) polynomials are unique.

Clearly,
\begin{equation} \lb{B.2}
\|T_n\|_E \leq \|T_n^R\|_E
\end{equation}
and since $T_nT_m$ is a monic polynomial of degree $n+m$,
\begin{equation} \lb{B.3x}
\|T_{n+m}\|_E \leq \|T_n\|_E \|T_m\|_E
\end{equation}
so $\lim_{n\to\infty} \|T_n\|_E^{1/n}$ exists, and similarly, so does $\lim_{n\to\infty} \|T_n^R\|_E^{1/n}$.

An {\it $n$ point Fekete set\/} is a set $\{z_j\}_{j=1}^n\subset E$ that maximizes
\begin{equation} \lb{B.3}
q_n(z_1,\dots, z_n) = \prod_{i\neq j} \abs{z_i-z_j}
\end{equation}
There are $n(n-1)$ terms in the product and the Fekete constant is defined by
\begin{equation} \lb{B.4x}
\zeta_n(E)=q_n (z_1, \dots, z_n)^{1/n(n-1)}
\end{equation}
for the maximizing set. The set may not be unique: for example, if $E=\partial\bbD$ and $\omega_n$
is an $n$th root of unity, $\{z_k=z_0 \omega_n^k\}$ is a minimizer for any $z_0\in\partial\bbD$.

Let $z_1, \dots, z_{n+1}$ be an $n+1$-point Fekete set. For each $j$,
\begin{equation} \lb{B.5}
\prod_{\substack{k,\ell\neq j\\ \ell\neq k}}\, \abs{z_k-z_\ell} \leq \zeta_n^{n(n-1)}
\end{equation}
Thus, taking the product over the $n+1$ values of $j$ and noting that each $z_k-z_\ell$ occurs
$n-1$ times,
\begin{equation} \lb{B.6}
[\zeta_{n+1}^{(n+1)n}]^{n-1} \leq [\zeta_n^{n(n-1)}]^{n+1}
\end{equation}
so $\zeta_n$ is monotone decreasing. Thus $\zeta_n$ has a limit, called the {\it transfinite diameter}.
The main theorem relating these notions and capacity is

\begin{theorem}\lb{TB.1} For any compact set $E\subset\bbC$, we have
\begin{equation} \lb{B.7}
C(E) \leq \|T_n\|_E^{1/n} \leq\|T_n^R\|_E^{1/n} \leq \zeta_{n+1}(E)
\end{equation}
Moreover,
\begin{equation} \lb{B.8}
\lim_{n\to\infty}\, \zeta_n (E) \leq C(E)
\end{equation}
{\rm{(}}so all limits equal $C(E)${\rm{)}}. Finally, if $C(E) >0$,
\begin{SL}
\item[{\rm{(i)}}] The normalized density of Fekete sets converges to $d\rho_E$, the equilibrium
measure for $E$.
\item[{\rm{(ii)}}] If $E\subset\bbR$, the normalized zero counting measure for $T_n$ and
for $T_n^R$ converges to $d\rho_E$.
\end{SL}
\end{theorem}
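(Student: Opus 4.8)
The plan is to prove the chain \eqref{B.7}, then the reverse estimate \eqref{B.8}, and finally (i) and (ii). The middle inequality in \eqref{B.7} is just \eqref{B.2}. For the last inequality I would use the extremal property of Fekete points: given an $(n+1)$-point Fekete set $\{z_1,\dots,z_{n+1}\}$, put $F_j(z)=\prod_{k\ne j}(z-z_k)$, a monic polynomial of degree $n$ with all zeros in $E$. Replacing $z_j$ by an arbitrary $w\in E$ cannot increase $q_{n+1}$, which says $|F_j(w)|\le|F_j(z_j)|$; hence $\|F_j\|_E\le|F_j(z_j)|$, and since $F_j$ competes in the definition of $T_n^R$, $\|T_n^R\|_E\le|F_j(z_j)|$. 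Multiplying over $j=1,\dots,n+1$ gives $\|T_n^R\|_E^{\,n+1}\le\prod_j\prod_{k\ne j}|z_k-z_j|=\zeta_{n+1}^{(n+1)n}$, i.e.\ $\|T_n^R\|_E^{1/n}\le\zeta_{n+1}$. For the first inequality (nontrivial only when $C(E)>0$) I would apply the Bernstein--Walsh lemma \eqref{A.29} to $T_n$, namely $|T_n(z)|\le\|T_n\|_E\exp(nG_E(z))$, and compare growth as $z\to\infty$, where $|T_n(z)|$ is asymptotically $|z|^n$ and $G_E(z)=\log|z|-\log C(E)+o(1)$; this yields $C(E)\le\|T_n\|_E^{1/n}$.

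For the reverse bound \eqref{B.8} and part (i): for an $n$-point Fekete set set $\delta_n=-\log\zeta_n=\frac{1}{n(n-1)}\sum_{i\ne j}\log|z_i-z_j|^{-1}$ and $\mu_n=\frac1n\sum_j\delta_{z_j}\in\calM_{+,1}(E)$, and pass to a weakly convergent subsequence $\mu_{n_k}\to\mu$. In the truncated energy $\calE_M$ of \eqref{A.3} the $n$ diagonal terms contribute only $\tfrac{\log M}{n}$, so $\calE_M(\mu_n)\le\tfrac{\log M}{n}+(1-\tfrac1n)\delta_n$; since $\calE_M$ is weakly continuous, $\calE_M(\mu)\le\lim_n\delta_n$ (the limit exists since $\zeta_n$ decreases), and letting $M\to\infty$ gives $\calE(\mu)\le\lim_n\delta_n$. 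But $\calE(\mu)\ge\calE(\rho_E)=\log(C(E)^{-1})$ (read as $+\infty$ when $C(E)=0$), so $\lim_n\zeta_n\le C(E)$; together with \eqref{B.7} this forces all the limits to equal $C(E)$, reproving Theorem~\ref{T1.1}. Moreover, every such $\mu$ then satisfies $\calE(\mu)=\calE(\rho_E)$, so uniqueness of the minimizer (Theorem~\ref{TA.6}) gives $\mu=\rho_E$, hence $\mu_n\to\rho_E$, which is (i).

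For part (ii), assume $C(E)>0$ and $E\subset\bbR$, and let $\nu$ be a weak limit of the zero counting measures $\nu_{n_k}$ of $T_{n_k}$ (resp.\ $T_{n_k}^R$), which is supported on $\cvh(E)$ (resp.\ on $E$). By \eqref{1.4}, for $z\in E$ we have $\Phi_{\nu_n}(z)=-\tfrac1n\log|T_n(z)|\ge-\tfrac1n\log\|T_n\|_E$, and since $\|T_n\|_E^{1/n}\to C(E)$ the upper envelope theorem (Theorem~\ref{TA.4}) gives $\Phi_\nu(z)\ge\log(C(E)^{-1})$ for q.e.\ $z\in E$, hence $d\rho_E$-a.e. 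Integrating against $d\rho_E$ and using \eqref{A.2} and \eqref{A.17},
\[
\log(C(E)^{-1})\le\int\Phi_\nu\,d\rho_E=\int\Phi_{\rho_E}\,d\nu\le\log(C(E)^{-1}),
\]
so $\Phi_\nu=\log(C(E)^{-1})$ $\rho_E$-a.e.\ and $\Phi_{\rho_E}=\log(C(E)^{-1})$ $\nu$-a.e. Because $E\subset\bbR$ the complement $\bbC\setminus E$ is connected ($\wti\Omega=\emptyset$), so by Theorem~\ref{TA.7}(c) the set $\{\Phi_{\rho_E}<\log(C(E)^{-1})\}$ contains $\cvh(E)\setminus\supp(d\rho_E)$; since $\nu$ gives this set zero mass, $\nu$ is concentrated on the potentially perfect set $P=\supp(d\rho_E)$. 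Theorem~\ref{TA.9} applied to $P$ then yields $\nu=\rho_E$, and as every subsequential limit is $\rho_E$, the zero counting measures converge to $\rho_E$.

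The step I expect to be the main obstacle is (ii): one must rule out that mass of $\nu$ escapes to a capacity-zero subset of $E$ or, for the unrestricted $T_n$, to $\cvh(E)\setminus E$. The hypothesis $E\subset\bbR$ (equivalently, $\bbC\setminus E$ connected, so $\wti\Omega=\emptyset$) is exactly what prevents this; the example $E=\partial\bbD$, $T_n(z)=z^n$, $\nu=\delta_0\ne\rho_{\partial\bbD}$ shows the assertion fails in general. A secondary delicacy is the truncation bookkeeping in \eqref{B.8}: each $\mu_n$ is atomic, so $\calE(\mu_n)=+\infty$ and lower semicontinuity of $\calE$ is useless, and it is the passage through $\calE_M$, whose diagonal contributes the negligible $\tfrac{\log M}{n}$, that makes the comparison work.
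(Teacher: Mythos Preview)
Your argument for \eqref{B.7}, \eqref{B.8}, and (i) is essentially identical to the paper's: Propositions~\ref{PB.2} and \ref{PB.3} give the chain exactly as you do, and your truncated-energy argument is the same as Proposition~\ref{PB.4} (your $\calE_M$ is $-\int g_m$ with $m=M^{-1}$), followed by uniqueness of the energy minimizer.

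For (ii) the routes genuinely diverge. The paper first proves Lemma~\ref{BL.5}, showing directly that for $E\subset\bbR$ the zeros of $T_n$ are real and lie in $\cvh(E)$ with at most one per gap, so any limit $\eta$ is supported on $E$; then it applies Bernstein--Walsh \eqref{A.21b} on $\bbC\setminus E$ to get $\Phi_\eta\ge\Phi_{\rho_E}$ near infinity and invokes Theorem~\ref{TA.11}. You instead work on $E$ via the upper envelope theorem, run the integration argument against $d\rho_E$ (mirroring the proof of (iii)$\Rightarrow$(i) in Theorem~\ref{T1.8}), and finish with Theorem~\ref{TA.9}. Your route is internally consistent with the machinery already used in Sections~\ref{s2} and \ref{s4} and avoids introducing the separate comparison Theorem~\ref{TA.11}; the paper's route is shorter once Lemma~\ref{BL.5} is in hand and sidesteps the need to localize the support of $\nu$ onto $\supp(d\rho_E)$ after the fact.

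One small gap: you assert that the limit $\nu$ (for unrestricted $T_n$) is supported on $\cvh(E)$, but this is not free---it requires knowing the zeros of $T_n$ lie in $\cvh(E)$, which is exactly the content of Lemma~\ref{BL.5} (replace a non-real zero by its real part, or a zero outside $[\min E,\max E]$ by the nearest endpoint, and the sup norm strictly decreases). Without this you cannot apply Theorem~\ref{TA.4}, which requires the $\nu_n$ to sit in a fixed compact. The fix is a two-line argument, but it should be stated.
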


\begin{remarks} 1. Normalized densities and zero counting measure are the point measures that give
weight $k/n$ to a point in the set of multiplicity $k$ (for Fekete sets, $k=1$, but for polynomials
there can be zeros of multiplicity $k >1$).

\smallskip
2. If $E=\partial\bbD$, $T_n(z) =z^n$, so (ii) fails for $T_n$. If $E=\bbD$, $T_n^R(z)=z^n$ and
(ii) fails for $T_n^R$ also. It can be shown that if $E\subset\partial\bbD$, $E\neq\partial\bbD$,
(ii) also holds.

\smallskip
3. Fekete sets have the interpretation of sets minimizing the point Coulomb energy $\sum_{j\neq k}
\log\abs{z_j-z_k}^{-1}$. Parts of this theorem can be interpreted as saying the point minimizer and
associated energy without self-energies converge to the minimizing continuum distribution and
energy, which is physically pleasing!

\smallskip
4. The equality of $\lim\zeta_n$ and $\lim\|T_n\|^{1/n}$ is due to Fekete \cite{Fek}. The
rest is due to Szeg\H{o} \cite{Sz24}, whose proof we partly follow.

\smallskip
5. Stieltjes \cite{Stie} considered what we call Fekete sets for $E=[-1,1]$, proving that, in that
case, the set is unique and consists of $1$, $-1$, and the $n-2$ zeros of a suitable Jacobi polynomial
(see \cite{Szb}). The general set up is due to Fekete \cite{Fek}.

\smallskip
6. When $E\subset\partial\bbD$, there are two other sets of polynomials related to minimizing
$\|P_n\|_{\infty,E}$. We can restrict to either
\begin{SL}
\item[(a)] ``Quasi-real" monic polynomials, that is, degree $n$ polynomials, so for some $\varphi$,
$e^{-i\varphi} e^{-in\theta/2} P_n (e^{i\theta})$ is real for $\theta$ real (these are exactly polynomials
for which $P_n^*(z)=e^{-2i\varphi} P_n(z)$ where $\,^*\,$ is the Szeg\H{o} dual). Equivalently, zeros
are symmetric about $\partial\bbD$.
\item[(b)] Monic $P_n$ all of whose zeros lie on $\partial\bbD$. These Chebyshev-like polynomials
are used in \cite{Sim-wk}.
\end{SL}

Since there are classes of polynomials between all monic and monic with zeros on $E$, the $n$th
roots of the norms also converge to $C(E)$.
\end{remarks}

$\|T_n\|_E^{1/n}\leq \|T_n^R\|_E^{1/n}$ is \eqref{B.2}. Here is the last inequality in \eqref{B.7}:

\begin{proposition}\lb{PB.2}
\begin{equation} \lb{B.9}
\|T_n^R\|_E \leq \zeta_{n+1} (E)^n
\end{equation}
\end{proposition}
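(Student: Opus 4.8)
The plan is to use the defining maximality of an $(n+1)$-point Fekete set together with the fact that $T_n^R$, having all its zeros in $E$, can be compared against a polynomial built from Fekete points. First I would fix an $(n+1)$-point Fekete set $\{z_0,z_1,\dots,z_n\}\subset E$, so that $q_{n+1}(z_0,\dots,z_n)=\zeta_{n+1}(E)^{(n+1)n}$ is maximal. For each index $j$, consider the monic polynomial of degree $n$
\begin{equation}\lb{B.10}
F_j(z)=\prod_{\substack{k=0\\ k\neq j}}^{n}(z-z_k),
\end{equation}
all of whose zeros lie in $E$. By the minimality defining $T_n^R$, we have $\|T_n^R\|_E\le\|F_j\|_E$, and since $F_j$ attains its sup norm over $E$ somewhere on $E$, in particular $\abs{F_j(z_j)}\le\|F_j\|_E$ is \emph{not} what we want; rather, the key point runs the other way.

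The heart of the argument is the following: evaluating $F_j$ at the Fekete point $z_j$ and multiplying over all $j$,
\begin{equation}\lb{B.11}
\prod_{j=0}^{n}\abs{F_j(z_j)}=\prod_{j=0}^{n}\ \prod_{\substack{k=0\\ k\neq j}}^{n}\abs{z_j-z_k}=\prod_{j\neq k}\abs{z_j-z_k}=\zeta_{n+1}(E)^{(n+1)n}.
\end{equation}
On the other hand, I claim $\|T_n^R\|_E^{\,n+1}\ge\prod_{j=0}^n\abs{F_j(z_j)}$ is \emph{false in this direction} — so instead the correct inequality to extract is that the Fekete set maximizes, hence for \emph{any} choice of $n+1$ points $w_0,\dots,w_n\in E$ one has $\prod_{i\neq k}\abs{w_i-w_k}\le\zeta_{n+1}(E)^{(n+1)n}$. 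Apply this with $w_0,\dots,w_{n-1}$ taken to be the $n$ zeros of $T_n^R$ (call them $\zeta^{(1)},\dots,\zeta^{(n)}$, all in $E$ since $T_n^R$ is restricted) and $w_n$ an arbitrary point $x\in E$: then
\begin{equation}\lb{B.12}
\Biggl(\prod_{i\neq k,\ i,k\le n-1}\abs{\zeta^{(i)}-\zeta^{(k)}}\Biggr)\cdot\abs{T_n^R(x)}^2\le\zeta_{n+1}(E)^{(n+1)n}
\end{equation}
since $\prod_{i=1}^n\abs{x-\zeta^{(i)}}=\abs{T_n^R(x)}$ and this factor appears twice (once as $\abs{w_n-w_i}$, once as $\abs{w_i-w_n}$). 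Now I would take the supremum over $x\in E$ on the left, giving $\|T_n^R\|_E^2\le\zeta_{n+1}(E)^{(n+1)n}/D$, where $D=\prod_{i\neq k}\abs{\zeta^{(i)}-\zeta^{(k)}}$ is the product over the $n$ zeros of $T_n^R$ — but $D\le\zeta_n(E)^{n(n-1)}\le\zeta_{n+1}(E)^{n(n-1)}$ only goes the wrong way. The clean route, which I expect to be the one the author intends, is to instead note directly that fixing the first $n$ points to be \emph{any} $n$-tuple and varying the last shows that for each fixed $n$-tuple $\{w_1,\dots,w_n\}\subset E$ the monic polynomial $x\mapsto\prod(x-w_i)$ has sup-norm bounded in terms of $\zeta_{n+1}$; optimizing the $n$-tuple to be a Fekete $n$-set and using monotonicity $\zeta_{n+1}\le\zeta_n$ closes it. Concretely: with $\{w_1,\dots,w_n\}$ an $n$-point Fekete set, \eqref{B.12}-type reasoning gives $\bigl(\prod_{i\neq k}\abs{w_i-w_k}\bigr)\|P\|_E^2\le\zeta_{n+1}(E)^{(n+1)n}$ for $P(x)=\prod(x-w_i)$, i.e. $\zeta_n(E)^{n(n-1)}\|P\|_E^2\le\zeta_{n+1}(E)^{(n+1)n}$, and since $\|T_n^R\|_E\le\|P\|_E$ and $\zeta_{n+1}\le\zeta_n$, we get $\|T_n^R\|_E^2\le\zeta_{n+1}(E)^{(n+1)n-n(n-1)}=\zeta_{n+1}(E)^{2n}$, which is exactly \eqref{B.9}.

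The main obstacle is bookkeeping with the exponents: one must keep careful track of how many times each factor $\abs{z_i-z_k}$ appears when the big product over Fekete points is reorganized, and be sure the polynomial inequality $\|T_n^R\|_E\le\|P\|_E$ is being applied to a genuinely monic, genuinely restricted competitor $P$. I would double-check the edge case where some Fekete points coincide (they cannot, since a repeated point makes $q_{n+1}=0$ while $C(E)>0$ forces $\zeta_{n+1}>0$, so all Fekete points are distinct and the polynomials $F_j$, $P$ are honestly degree $n$). Everything else is a direct computation.
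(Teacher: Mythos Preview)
Your final argument is correct, but it takes a different route from the paper and, ironically, you brush right past the paper's key observation. The paper works entirely with an $(n{+}1)$-point Fekete set $\{z_1,\dots,z_{n+1}\}$ and the associated Fekete polynomials $P_k(z)=\prod_{j\neq k}(z-z_j)$. The crucial point you dismissed is that Fekete maximality gives not merely $\abs{P_k(z_k)}\le\|P_k\|_E$ but the reverse inequality as well: replacing $z_k$ by any $z\in E$ can only decrease $\prod_{j\neq k}\abs{z-z_j}$, so in fact $\|P_k\|_E=\abs{P_k(z_k)}=\prod_{j\neq k}\abs{z_k-z_j}$. Then $\|T_n^R\|_E\le\|P_k\|_E$ for each $k$, and multiplying over the $n{+}1$ values of $k$ gives
\[
\|T_n^R\|_E^{\,n+1}\le\prod_{k=1}^{n+1}\prod_{j\neq k}\abs{z_k-z_j}=\zeta_{n+1}^{\,n(n+1)},
\]
which is \eqref{B.9} directly.

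Your route instead builds one competitor $P$ from an $n$-point Fekete set, bounds $\|P\|_E$ via the $(n{+}1)$-point Fekete maximality, and then closes the gap using the monotonicity $\zeta_{n+1}\le\zeta_n$ (proved earlier in the paper). This is valid, but it costs you an extra ingredient the paper does not need, and the exposition meanders through several abandoned attempts before settling. The paper's argument is shorter and more self-contained; what your approach buys is perhaps a slightly more concrete feel for why a single restricted competitor already suffices once monotonicity is in hand.
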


\begin{proof} Let $\{z_j\}_{j=1}^{n+1}$ be an $(n+1)$-Fekete set. Let
\begin{equation} \lb{B.10}
P_k(z) =\prod_{j\neq k} (z-z_j)
\end{equation}
called a Fekete polynomial. ({\it Note}: There is a different set of polynomials occurring
in a different context also called Fekete polynomials.) By the maximizing property of Fekete sets,
\begin{equation} \lb{B.11}
\|P_k\|_E =\prod_{j\neq k} \, \abs{z_k-z_j}
\end{equation}
since if $z'_j=z_j$ ($j\neq k$), $z'_k=z$, then $\prod_{\ell\neq k} \abs{z'_\ell -z'_k}
\leq \prod_{\ell\neq k} \abs{z_\ell-z_k}$. Since $\|T_n^R\|_E \leq \|P_k\|_E$ (by the
minimizing property of $\|T_n^R\|_E$), taking the $n+1$ choices of $k$,
\[
\|T_n^R\|_E^{n+1} \leq \prod_{k=1}^{n+1} \|P_k\|_E =\prod_{\text{all } j\neq k} \abs{z_k-z_j}
=\zeta_{n+1}^{n(n+1)}
\]
which is \eqref{B.9}.
\end{proof}

The following completes the proof of \eqref{B.7}:

\begin{proposition}\lb{PB.3} For any monic polynomial $P_n(z)$,
\begin{equation} \lb{B.12}
\|P_n\|_E \geq C(E)^n
\end{equation}
\end{proposition}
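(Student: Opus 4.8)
The plan is to translate the sup-norm lower bound into a statement about the potential of the zero-counting measure of $P_n$ and then compare that measure with the equilibrium measure $\rho_E$. If $C(E)=0$ then \eqref{B.12} is trivial, so assume $C(E)>0$ and let $\rho_E$ be the equilibrium measure of $E$, a probability measure with $\supp(\rho_E)\subseteq E$.

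First I would introduce $d\nu_n=\f{1}{n}\sum_{j=1}^n\delta_{z_j}$, the normalized zero-counting measure of $P_n$, so that by \eqref{1.4} one has $\abs{P_n(z)}^{1/n}=\exp(-\Phi_{\nu_n}(z))$ and hence
\[
\|P_n\|_E^{1/n}=\sup_{z\in E}\exp(-\Phi_{\nu_n}(z))=\exp\Bigl(-\inf_{z\in E}\Phi_{\nu_n}(z)\Bigr).
\]
Since $E$ is infinite and $P_n$ is monic, $\|P_n\|_E\in(0,\infty)$, so this infimum is a finite real number, and \eqref{B.12} becomes equivalent to the estimate $\inf_{z\in E}\Phi_{\nu_n}(z)\le\log(C(E)^{-1})$.

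To prove the latter, I would integrate $\Phi_{\nu_n}$ against $\rho_E$: because $\rho_E$ is a probability measure supported on $E$,
\[
\inf_{z\in E}\Phi_{\nu_n}(z)\le\int\Phi_{\nu_n}(z)\,d\rho_E(z)=\int\Phi_{\rho_E}(z)\,d\nu_n(z)\le\log(C(E)^{-1}),
\]
where the middle equality is \eqref{A.2} and the last inequality is Frostman's bound $\Phi_{\rho_E}(z)\le\log(C(E)^{-1})$ (Theorem~\ref{TA.7}(a), i.e.\ \eqref{A.17}) integrated against the probability measure $\nu_n$. This yields $\|P_n\|_E^{1/n}\ge C(E)$, which is \eqref{B.12}.

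The one point that needs care is bookkeeping with the value $+\infty$: $\Phi_{\nu_n}$ can equal $+\infty$ at a zero $z_j$ lying in $E$, and $\Phi_{\rho_E}$ can equal $-\infty$ on a polar set, so one must check that both iterated integrals in \eqref{A.2} are well defined in $[-\infty,+\infty]$ and coincide --- which they do, by applying Tonelli's theorem to $\log\abs{x-y}^{-1}+K$ for a constant $K$ making the integrand nonnegative on the (compact) product $\supp(\nu_n)\times\supp(\rho_E)$. As a cross-check, the same conclusion drops out of the Bernstein--Walsh lemma (Theorem~\ref{TA.7B}): writing \eqref{A.21b} for the monic $P_n$ and sending $\abs{z}\to\infty$, where $\Phi_{\rho_E}(z)=-\log\abs{z}+o(1)$ and $\abs{P_n(z)}=\abs{z}^n(1+o(1))$, inequality \eqref{A.21b} collapses to $1\le C(E)^{-n}\|P_n\|_E$.
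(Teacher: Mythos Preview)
Your main argument is correct and takes a different route from the paper. The paper's proof applies the Bernstein--Walsh lemma \eqref{A.21b} directly, divides by $\abs{z}^n$, and lets $\abs{z}\to\infty$; the leading terms $\abs{P_n(z)}/\abs{z}^n\to 1$ and $\exp(-n\Phi_{\rho_E}(z))/\abs{z}^n\to 1$ collapse the inequality to $1\le C(E)^{-n}\|P_n\|_E$. This is exactly what you sketch as your ``cross-check,'' so you have in fact reproduced the paper's proof as well.

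Your primary argument via the zero-counting measure is arguably more self-contained: it uses only the Fubini identity \eqref{A.2} and Frostman's bound \eqref{A.17}, bypassing the Bernstein--Walsh lemma (whose proof in the paper needs a maximum-principle argument on $\Omega$). On the other hand, the paper's route is shorter once Bernstein--Walsh is in hand, and it avoids the $\pm\infty$ bookkeeping you flag. Your handling of that bookkeeping is fine: both supports are compact, so $\log\abs{x-y}^{-1}$ is bounded below and Tonelli applies; moreover $\int\Phi_{\rho_E}\,d\nu_n\le\log(C(E)^{-1})<\infty$ forces both iterated integrals to be finite and equal.
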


\begin{proof} There is nothing to prove if $C(E)=0$, so suppose $C(E) >0$. By the
Bernstein--Walsh lemma \eqref{A.21b},
\begin{equation}\lb{B.14}
\abs{P_n(z)}\leq \|P_n\|_E C(E)^{-n} \exp (-n \Phi_{\rho_E}(z))
\end{equation}
Divide by $\abs{z}^n$ and take $z\to\infty$. The left side of \eqref{B.14} goes to $1$. Since
$\Phi_{\rho_E}(z)=-\log\abs{z} + o(1)$, the right side goes to $\|P_n\|_E C(E)^{-n}$.
\end{proof}

Next we turn to the convergence of Fekete set counting measures to $d\rho_E$.

\begin{proposition} \lb{PB.4} Let $d\nu_n$ be finite point probability measures supported
at $\{z_j^{(n)}\}_{j=1}^{N_n}$ with weight $\nu_{n,j} =\nu (\{z_j^{(n)}\})$. Suppose $d\nu_n
\to d\eta$ weakly for some measure $\eta$. Suppose there is a compact $K\subset\bbC$ containing
all the $\{z_j^{(n)}\}$ and that as $n\to\infty$,
\begin{equation} \lb{B.22}
\sum_j \nu_{n,j}^2 \to 0
\end{equation}
Then
\begin{equation} \lb{B.23}
\limsup_{n\to\infty} \prod_{j\neq k} \abs{z_j^{(n)} -z_k^{(n)}}^{\nu_{n,j}\nu_{n,k}} \leq
\exp\biggl( \int d\eta(z) d\eta(w) \log\abs{z-w}\biggr)
\end{equation}
\end{proposition}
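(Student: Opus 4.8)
The plan is to take logarithms and recast \eqref{B.23} as a lower bound on Coulomb energies. With $k(z,w)=\log\abs{z-w}^{-1}$, and since $\exp$ is continuous and increasing while $\calE(\eta)=\int\!\!\int k(z,w)\,d\eta(z)\,d\eta(w)$, the inequality \eqref{B.23} is equivalent to
\[
\liminf_{n\to\infty}\ \sum_{j\neq k}\nu_{n,j}\nu_{n,k}\,k(z_j^{(n)},z_k^{(n)})\ \geq\ \calE(\eta).
\]
The only subtlety is the omission of the diagonal $j=k$, where $k=+\infty$; this is exactly what hypothesis \eqref{B.22} controls, using the same truncation already employed for $\Phi_\mu^M$ in Appendix~A.

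First I would fix $M>0$ and set $k_M(z,w)=\min(M,\log\abs{z-w}^{-1})$, which is continuous and bounded on the compact set $K\times K$ (bounded below since $\diam(K)<\infty$) and satisfies $k\geq k_M$ everywhere. Because $k_M(z,z)=M$,
\[
\sum_{j\neq k}\nu_{n,j}\nu_{n,k}\,k(z_j^{(n)},z_k^{(n)})\ \geq\ \int\!\!\int k_M\,d\nu_n\,d\nu_n\ -\ M\sum_j\nu_{n,j}^2 .
\]
Since the $\nu_n$ are probability measures supported in the fixed compact $K$ and converge weakly to $\eta$, the product measures satisfy $\nu_n\times\nu_n\to\eta\times\eta$ weakly on $K\times K$ (by Stone--Weierstrass, finite sums $\sum f_i(z)g_i(w)$ are dense in $C(K\times K)$), so $\int\!\!\int k_M\,d\nu_n\,d\nu_n\to\int\!\!\int k_M\,d\eta\,d\eta$ as $n\to\infty$. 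Combining this with \eqref{B.22} and taking $\liminf_n$ of the displayed inequality gives
\[
\liminf_{n\to\infty}\ \sum_{j\neq k}\nu_{n,j}\nu_{n,k}\,k(z_j^{(n)},z_k^{(n)})\ \geq\ \int\!\!\int k_M(z,w)\,d\eta(z)\,d\eta(w).
\]

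To finish, let $M\to\infty$: $k_M\uparrow k$ pointwise, so monotone convergence pushes the right-hand side up to $\calE(\eta)$, which is the displayed equivalent of \eqref{B.23}. The only point needing care is the bookkeeping that lets the diagonal correction $M\sum_j\nu_{n,j}^2$ be absorbed in the $n$-limit — precisely the role of \eqref{B.22} — while everything else is weak convergence of the product measures against the bounded continuous kernel $k_M$ together with the standard truncation estimate; I do not expect any serious obstacle beyond organizing these steps.
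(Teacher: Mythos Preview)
Your proof is correct and is essentially the same argument as the paper's: truncate the logarithmic kernel to a bounded continuous function, use weak convergence of the product measures $\nu_n\times\nu_n$ for the truncated integral, absorb the diagonal via \eqref{B.22}, and then remove the truncation by monotone convergence. The only cosmetic difference is that the paper works directly with $g_m(z,w)=\log(\max(m,\abs{z-w}))$ and keeps the estimate in exponential form (so the diagonal appears as the prefactor $m^{\sum_j\nu_{n,j}^2}\to 1$), whereas you take logarithms first and use $k_M=\min(M,\log\abs{z-w}^{-1})$; these are the same truncation up to the reparametrization $M=-\log m$.
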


\begin{remark} Since $\sum_j \nu_{n,j}=1$,
\begin{equation} \lb{B.24}
(\max_j \nu_{n,j})^2 \leq\sum_j \nu_{n,j}^2 \leq \max \nu_{n,j}
\end{equation}
\eqref{B.22} is equivalent to
\[
\max_j \nu_{n,j} \to 0
\]
\end{remark}

\begin{proof} Fix $m\geq 0$ and let
\begin{equation} \lb{B.25}
g_m (z,w) \equiv \log(\max (m,\abs{z-w}))
\end{equation}
Then
\begin{equation} \lb{B.26}
m^{\sum_j \nu_{n,j}^2} \prod_{j\neq k} \abs{z_j^{(n)} -z_k^{(n)}}^{\nu_{n,j}\nu_{n,k}}
\leq\exp\biggl( \int d\nu_n (z)d\nu_n(w) g_m(z,w)\biggr)
\end{equation}

Now take $n\to\infty$. By \eqref{B.22}, $m^{\sum \nu_{n,j}^2}\to 1$, and by continuity of
$g_m (z,w)$ and the weak convergence,
\begin{equation} \lb{B.27}
\int d\nu_n (z) d\nu_n(w) g_m (z,w)\to \int d\eta (z) d\eta(w) g_m(z,w)
\end{equation}
we have
\begin{equation} \lb{B.28}
\text{LHS of \eqref{B.23}} \leq \exp\biggl( \int d\eta(x) d\eta(y) g_m(z,w)\biggr)
\end{equation}

Now take $m\to 0$ using monotone convergence to get \eqref{B.23}.
\end{proof}

\begin{lemma}\lb{BL.5} Let $E\subset\bbR$. Let $(a,b)\cap E=\emptyset$. Then $T_n(z)$ has at
most one zero in $(a,b)$ which is simple. If $(a,b)\cap \cvh(E)=\emptyset$, $T_n$ has no
zero in $(a,b)$ {\rm{(}}where $\cvh(E)$ is the convex hull of $E${\rm{)}}. In particular,
if $d\eta$ is a limit point of the normalized zero counting measure for $T_n$, then
$\supp (d\eta)\subset E$.
\end{lemma}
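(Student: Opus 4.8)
The plan is to exploit the minimality of $T_n$ under perturbations that "fold" a zero lying in a gap of $E$ (or outside $\cvh(E)$) back toward $E$. First I would handle the case $(a,b)\cap\cvh(E)=\emptyset$: without loss assume $b\le\min\cvh(E)$, so every $x\in E$ lies to the right of $b$. Suppose $T_n$ has a zero $z_0\in(a,b)$; replace the factor $(z-z_0)$ by $(z-b)$ in $T_n$, producing another monic polynomial $\widetilde T_n$ of degree $n$. For every $x\in E$ we have $x>b>z_0$, hence $|x-b|<|x-z_0|$, so $|\widetilde T_n(x)|<|T_n(x)|$ pointwise on $E$, giving $\|\widetilde T_n\|_E<\|T_n\|_E$ (using that $E$ is infinite so the sup is attained at a point of $E$ where $T_n$ does not vanish, or a simple limiting remark). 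This contradicts minimality, so $T_n$ has no zero in such an interval.

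Next I would treat a bounded gap $(a,b)$ with $a,b\in E$ (or more precisely $(a,b)\cap E=\emptyset$, $a,b$ in $\cvh(E)$). Suppose $T_n$ has two zeros $z_1\le z_2$ in $(a,b)$ (counting multiplicity). The idea is that the quadratic $(z-z_1)(z-z_2)$ can be replaced on $[a,b]$ by one whose modulus is strictly smaller at every point of $E\setminus(a,b)$. Concretely, for $x\notin(a,b)$ one computes that $|(x-z_1)(x-z_2)|$ is, for fixed $z_1+z_2=:2c$, minimized in a suitable sense by pushing $z_1,z_2$ to the endpoints of the gap, or by collapsing them to a single point $z_*=c$ together with moving the excess to an endpoint; a clean version is: replace $(z-z_1)(z-z_2)$ by $(z-a')(z-b')$ with $a'\le a$, $b'\ge b$ chosen inside $\cvh(E)$ with $(z-a')(z-b')\le (z-z_1)(z-z_2)$ in absolute value on $E$. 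Since this can always be arranged strictly when $z_1,z_2$ are genuinely interior to the gap, $\|\widetilde T_n\|_E<\|T_n\|_E$, again contradicting minimality. Hence at most one zero lies in $(a,b)$; simplicity follows because a double zero is two zeros. (This is exactly the Stieltjes-type argument; cf. the remark after Theorem~\ref{TB.1}.)

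Finally, the statement about limit points of the zero counting measure is a direct corollary. If $d\eta$ is a weak limit of $d\nu_n$ (the normalized zero counting measure of $T_n$ along a subsequence), fix any open interval $(a,b)$ with $\overline{(a,b)}\cap E=\emptyset$. By the first two paragraphs, $\nu_n((a,b))\le 1/n$ for all $n$, so $\eta((a,b))=0$ by the portmanteau theorem (open sets), and since $\bbR\setminus E$ is a countable union of such intervals, $\eta(\bbR\setminus E)=0$, i.e. $\supp(d\eta)\subset E$.

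The main obstacle is making the "folding" inequality in the bounded-gap case precise and strict: one must verify that for $z_1,z_2\in(a,b)$ not both at the endpoints, there genuinely exist $a'\le a\le b\le b'$ in $\cvh(E)$ with $|(x-a')(x-b')|\le|(x-z_1)(x-z_2)|$ for all $x\in E$ and strict at the point realizing $\|T_n\|_E$. I would do this by studying the function $x\mapsto \log|x-z_1|+\log|x-z_2|$ as a function of the pair $(z_1,z_2)$ under the constraint that it stay monic (degree preserved), differentiating in a one-parameter family that slides $z_1\to a$, $z_2\to b$, and checking the derivative of $\log|x-z_1(t)|+\log|x-z_2(t)|$ has the right sign for every $x$ outside the gap — this is where a short convexity computation for $\log|\cdot|$ enters, entirely analogous to the convexity used in Appendix~A. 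Everything else is routine.
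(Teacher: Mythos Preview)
Your overall strategy---perturb offending zeros to contradict minimality---is the same as the paper's, and your treatment of the case $(a,b)\cap\cvh(E)=\emptyset$ and of the limit-point conclusion is essentially identical. The difference lies in the two-zeros-in-a-gap step, where you propose to slide the pair $(z_1,z_2)$ out to the gap endpoints and verify a pointwise inequality via a one-parameter derivative computation. This can be made to work, but it is considerably heavier than necessary. The paper dispatches the case in one line via the identity
\[
(z-(x_1-\delta))(z-(x_2+\delta)) = (z-x_1)(z-x_2) - \delta(x_2-x_1) - \delta^2,
\]
which shows that spreading the two zeros apart by $\delta$ subtracts a \emph{positive constant} from the quadratic factor. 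Since $(z-x_1)(z-x_2)>0$ for every $z\in E$ (as $E\cap(a,b)=\emptyset$), the absolute value drops strictly and uniformly on $E$, contradicting minimality. No endpoint choices, no convexity of $\log|\cdot|$, no derivative-sign analysis is needed; the same identity with $x_1=x_2$ handles the double-zero case.

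One small gap in your write-up: you never rule out complex zeros of $T_n$, which is needed for the final assertion $\supp(d\eta)\subset E$ (otherwise mass could sit off $\bbR$). The paper covers this with the observation that replacing a nonreal zero $x_j$ by $\Real x_j$ strictly decreases $|z-x_j|$ for every real $z$, hence strictly decreases $\|T_n\|_E$. Your own ``move the zero toward $E$'' argument adapts immediately to give this, so the fix is one sentence.
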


\begin{proof} Suppose $x_1, x_2$ are two zeros in $(a,b)$ with $x_1 <x_2$. Then
\[
(z-(x_1-\delta))(z-(x_2+\delta)) = (z-x_1)(z-x_2)-\delta (x_2-x_1)-\delta^2
\]
so uniformly on $E$ where $(z-x_1)(z-x_2)>0$,
\[
\abs{(z-(x_1-\delta))(z-(x_2+\delta))} < \abs{(z-x_1)(z-x_2)}
\]
for $\delta$ small. Thus, $\|T_n(z)\|_E$ is decreased by changing those two zeros.
Similarly, if $x$ is a zero below $\cvh(E)$, $T_n$ is decreased by moving the zero
up slightly. If $x_j$ is a complex zero, $\|T_n(z)\|_E$ is decreased by replacing
$x_j$ by $\Real x_j$.

The final statement is immediate if we note that if $f$ is a continuous function supported
in $(a,b)$, then $\int f\, d\eta =0$.
\end{proof}

\begin{proof}[Proof of Theorem~\ref{TB.1}] We have already proved \eqref{B.7}. The Fekete
points are distinct, so $\nu_{n,j}=1/n$, in the language of Proposition~\ref{PB.4}. So
if we pass to a subsequence for which $d\nu_{n(j)}$ has a weak limit $\eta$, we see
(using $\lim\zeta_n$ exists)
\begin{align}
\lim\zeta_n = \lim_{j\to\infty}\, \zeta_{n(j)}^{(n(j)-1)/n(j)}
&\leq \exp (-\calE (d\eta)) \notag \\
&\leq \exp \bigl( -\inf_{\text{all } d\rho}\, \calE(d\rho)\bigr) \lb{B.27x} \\
&= C(E) \notag
\end{align}

By \eqref{B.7}, $\lim\zeta_n \geq C(E)$, so we have equality in \eqref{B.27x} and $d\eta
=d\rho_E$. Thus, any limit point is $d\rho_E$. By compactness, we have (i).

That leaves the proof of (ii). By the Berstein--Walsh lemma \eqref{A.21b}, for $z\in
\bbC\setminus E$,
\begin{equation} \lb{B.28x}
\f{1}{n}\, \log \abs{T_n(z)} \leq \log \biggl( \f{\|T_n\|_E^{1/n}}{C(E)} \biggr) -
\Phi_{\rho_E}(z)
\end{equation}
and similarly for $T_n^R$.

Now let $d\eta$ be a limit point of the normalized density of zeros of $T_n(z)$. By the last
lemma, $d\eta$ is supported on $E$, so \eqref{B.28x} plus $\lim\|T_n\|_\infty^{1/n}=C(E)$ implies
\begin{equation} \lb{B.30}
\Phi_\eta(z) \geq \Phi_{\rho_E}(z)
\end{equation}
for $z\in\bbC\setminus E$. By Theorem~\ref{TA.11}, this implies $d\eta=d\rho_E$.  Thus,
$d\rho_E$ is the only limit point of the zeros, and so the limit is $d\rho_E$.
\end{proof}

\medskip
{\it Note added in proof.} Since completion of this manuscript, I have found a result (to appear in
``Regularity and the Ces\`aro--Nevai class", in prep.) relevant to the subject
of the current review.  In the simplest case, it states that if a measure
has $[-2,2]$ as its essential support and is regular, then \eqref{1.24a} holds.

\bigskip

\end{document}